\numberwithin{equation}{section}
\newcommand{\omitted}[1]{}
\newcommand{\heikodetail}[1]{}
\newcommand{\hide}[1]{}
\newcommand{\tpitchfork}{%
	\vbox{
		\baselineskip\z@skip
		\lineskip-.52ex
		\lineskiplimit\maxdimen
		\m@th
		\ialign{##\crcr\hidewidth\smash{$-$}\hidewidth\crcr$\pitchfork$\crcr}
	}%
}
\DeclareMathOperator{\dist}{dist}
\DeclareMathOperator{\graph}{graph}
\DeclareMathOperator{\lip}{Lip}
 \newcommand{\va}{\ang}
\newcommand\ang{\mathop{\mbox{$<\!\!\!)$}}\nolimits}
\newcommand{\R}{\mathbb{R}}
\newcommand{\N}{\mathbb{N}}
\renewcommand{\S}{\mathbb{S}}
\newcommand\Id{{{\rm Id}}}
\newcommand{\RM}{\mathscr{M}}
\newcommand{\HM}{\mathscr{H}}
\newcommand{\eps}{\varepsilon}
\renewcommand{\rho}{\varrho}
\newcommand{\Sigmaa}{\Sigma^\ast}
\newcommand{\Aam}{\mathscr{A}^m(\alpha,M)}  
\definecolor{mygreen}{RGB}{30,50,0}
\newtheorem{thm}{Theorem}[section]
\newtheorem*{thm*}{Theorem}
\newtheorem{prop}[thm]{Proposition}
\newtheorem{lem}[thm]{Lemma}
\newtheorem{cor}[thm]{Corollary}
\newtheorem{defin}[thm]{Definition}
\newtheorem*{introthm*}{Regularity Theorem}
\theoremstyle{definition}
\newtheorem{rem}[thm]{Remark}
\newtheorem*{rem*}{Remark}
\newcommand{\Fo}{\,\,\,\text{for }\,\,}
\newcommand{\Foa}{\,\,\,\text{for all }\,\,}
\newcommand{\With}{\,\,\,\text{with }\,\,}
\newcommand{\If}{\,\,\,\text{if }\,\,}
\newcommand{\AND}{\,\,\,\text{and }\,\,}
\def\li{\left(}
\def\ri{\right)}
\author{Bastian K\"afer}
\address[B.~K\"afer]{
	\newline{}
	Institut f{\"u}r Mathematik
	\newline{}
	RWTH Aachen University
	\newline{}
	Templergraben~55
	\newline{}
	D-52062 Aachen, Germany}
\email{kaefer@instmath.rwth-aachen.de}
\author{Heiko von~der~Mosel}
\address[H.~von~der~Mosel]{
	\newline{}
	Institut f{\"u}r Mathematik
	\newline{}
	RWTH Aachen University
	\newline{}
	Templergraben~55
	\newline{}
	D-52062 Aachen, Germany}
\email{heiko@instmath.rwth-aachen.de}
\keywords{}
\subjclass{}
\date{\today}
\title[M{\"o}bius-invariant self-avoidance energies
]
{M{\"o}bius-invariant self-avoidance energies for non-smooth sets in arbitrary dimensions
}
\begin{document}
	\frenchspacing

	\begin{abstract}
In the present paper we investigate generalizations of O'Hara's
M\"obius
energy on curves \cite{ohara_1991a}, to M\"obius-invariant energies
on non-smooth subsets of
$\R^n$ of arbitrary dimension and co-dimension. In particular, we show under
mild assumptions on the local flatness of an admissible possibly unbounded
set $\Sigma\subset
\R^n$ that locally finite energy implies that $\Sigma$ is, in fact,
an embedded
Lipschitz submanifold of $\R^n$ -- sometimes even smoother (depending on the
a priorily given additional regularity of the admissible set). 
We also prove, on the other hand,
that a local graph structure of low fractional Sobolev regularity on
a set $\Sigma$ is already
    sufficient to guarantee finite energy of $\Sigma$.
       This type of Sobolev regularity is exactly what one would expect
 in view of Blatt's characterization \cite{blatt_2012a}
  of the correct energy space for the M\"obius energy on
  closed curves. Our results hold in particular for Kusner and Sullivan's
  cosine energy $E_\textnormal{KS}$
  \cite{kusner-sullivan_1997} since one of the energies
  considered here is equivalent to $E_\textnormal{KS}$.		
	\end{abstract}
	
	\maketitle


\section{Introduction} \label{sec:intro}
\subsection{Motivation and outline}\label{sec:1.1}
One of the most prominent examples of a repulsive energy on curves
is the M\"obius\footnote{This name originates in the energy's invariance under
M\"obius transformations of the ambient space 
as first shown in \cite[Theorem 2.1]{freedman-etal_1994}.}
energy introduced by J. O'Hara \cite{ohara_1991a}, which
can be written as
\begin{equation}\label{eq:moebius-curves}
E_{\textnormal{M\"ob}}(\gamma)=
\int\nolimits_\gamma\int\nolimits_\gamma \Big(\,\frac{1}{|x-y|^2}-
\frac{1}{d_\gamma(x,y)^2}\,\Big)\,dxdy,
\end{equation}
where $d_\gamma(x,y)$ denotes the intrinsic distance of two 
points $x,y\in\gamma$ along the curve $\gamma$. Ever since the seminal work
of M. Freedman, Zh.-Xu He, and Zh. Wang \cite{freedman-etal_1994}
it is clear that the M\"obius energy can be used as a fundamental
tool in Geometric Knot Theory, and since then a lot of geometric and
analytic work has been done.  Variational and gradient
formulas were derived and
analyzed  
\cite{he_2000,reiter_2012,blatt_2012a,ishizeki-nagasawa_2014,ishizeki-nagasawa_2015,ishizeki-nagasawa_2016,reiter-schumacher_2020}, the regularity
of minimizers and critical points was established in
\cite{freedman-etal_1994,he_2000, reiter_2012,blatt-etal_2016,blatt-vorderobermeier_2019},
and the $L^2$-gradient flow was studied in 
\cite{he_2000,blatt_2012b,blatt_2020a}. Various
discrete versions of the M\"obius energy were examined 
\cite{kusner-sullivan_1997,rawdon-worthington_2010,scholtes_2014b,blatt-etal_2019a,blatt-etal_2019b}, and one knows that the round circle is the
absolutely minimizing closed curve 
\cite{freedman-etal_1994,abrams-etal_2003}, whereas the
stereographic projection of the standard Hopf link uniquely minimizes
the corresponding version of the M\"obius energy on non-split links in $\R^3$
\cite{agol-etal_2016}.

Apart from the  very recent contribution by O'Hara on the self-repulsiveness
of Riesz potentials on smooth immersions \cite{ohara_2020}, 
comparatively little is known for 
higher-dimensional versions of the 
M\"obius energy such as the ones discussed in \cite{auckly-sadun_1997},
\cite{kusner-sullivan_1997}, and \cite{ohara-solanes_2018}.
It is the aim of this paper to 
initiate a systematic study of a family of M\"obius-invariant
energies on non-smooth subsets $\Sigma\subset\R^n$ of arbitrary dimension
and co-dimension. Such an investigation has been carried out
for other (not M\"obius-invariant)
self-avoiding energies such as integral Menger curvature
or tangent-point energies in arbitrary dimensions 
\cite{strzelecki-vdm_2011a,strzelecki-vdm_2013b,
kolasinski_2012,blatt_2013b,kolasinski-szumanska_2013,
kolasinski-etal_2013a,blatt-kolasinski_2012}

Following the ideas of R. B. Kusner and J. M. Sullivan
in \cite{kusner-sullivan_1997}
we first describe in an informal way how
to use only first order information encoded
in the class of admissible sets to define M\"obius-invariant
energies.  As observed by P. Doyle and O. Schramm \cite{kusner-sullivan_1997},
\cite[Chapter 3.4]{ohara_2003} the M\"obius
energy for closed curves $\gamma:
\S^1\to\R^n$
can be rewritten -- up to an additive constant --
in terms of the \emph{conformal angle}
$\vartheta_\gamma(x,y)$ as
\begin{equation}\label{eq:moebius-angle-curves}
E_{\textnormal{M\"ob}}(\gamma)=
\int_\gamma\int_\gamma\frac{1-\cos\vartheta_\gamma(x,y)}{|x-y|^2}\,dxdy.
\end{equation}
The conformal angle $\vartheta_\gamma(x,y)$ is defined as the angle between
the circle $\S^1(x,x,y)$ through the points $x,y\in\gamma$ and tangent
to $\gamma $ at $x$, and the circle $\S^1(y,y,x)$ also containing $x,y$ but 
now tangent to $\gamma$ at $y$. 

With that idea in mind  Kusner and  Sullivan \cite{kusner-sullivan_1997}
created M\"obius-invariant energies defined on embedded,
oriented
$m$-dimensional $C^1$-submanifolds $\mathscr{M}^m\subset\R^n$ of the
form
\begin{equation}\label{eq:moebius-angle-submanifolds}
E_L(\mathscr{M})=\int_\mathscr{M}\int_\mathscr{M}
\frac{L\big(\vartheta_\mathscr{M}(x,y)\big)}{|x-y|^{2m}}
\,d\textnormal{vol}_\mathscr{M}(x)d\textnormal{vol}_\mathscr{M}(y),
\end{equation}
where now the conformal angle $\vartheta_\mathscr{M}(x,y)$ is the angle
between the two unique 
$m$-dimensional spheres $\S^m(x,x,y)$ and
$\S^m(y,y,x)$, tangent to $\mathscr{M}$ at $x$ and $y$, respectively, and
both containing $x$ and $y$.  In principle, $L$ in the numerator of
the Lagrangian could be any non-negative function vanishing sufficiently
fast at zero to  balance the singularity of the denominator. Kusner and 
Sullivan, however, investigate more closely, in particular, numerically,
the specific energy $E_\textnormal{KS}:=E_{L_\textnormal{KS}}$ with the numerator 
\begin{equation}\label{eq:kusner-sullivan-integrand}
L_\textnormal{KS}(\vartheta):=(1-\cos\vartheta)^m.
\end{equation}
Notice that any choice of $L$ in \eqref{eq:moebius-angle-submanifolds}
requires first order information about the submanifold, and Kusner and Sullivan
provide in \cite[Section 11]{kusner-sullivan_1997} a convenient method to calculate the angle $\vartheta_\mathscr{M}(x,y)$
in terms of the tangent $m$-planes $T_x\mathscr{M}$ and $T_y\mathscr{M}$ by a 
simple reflection. 
We adopt this idea in Definition \ref{def:energies} below, but aiming at \emph{non-smooth} sets $\Sigma\subset\R^n$ we need to replace classic
tangent planes at points $p\in\Sigma$ by suitably approximating
$m$-planes
$H(p)$ that serve as ``mock tangent planes'',
similarly
as in previous collaborations of the second author
on various
geometric curvature energies 
\cite{strzelecki-vdm_2011a,strzelecki-vdm_2013b,kolasinski-etal_2013a}.
These mock tangent planes  enter our 
definition of admissible sets; see Definition \ref{def:admissible_sets}.
We then introduce in Definition \ref{def:energies}
a  family of M\"obius-invariant energies
$E^\tau\equiv E_{L_\tau}$ parametrized by a scalar $\tau\in\R$,
on non-smooth admissible sets $\Sigma\subset\R^n$
by replacing the numerator $L$ in \eqref{eq:moebius-angle-submanifolds} by
functions
$L_\tau=L_\tau(x,y,H(x),H(y))$ that roughly correspond to
$\tau$-dependent powers of the conformal angle $\vartheta_\Sigma(x,y)$. 
Instead of principal angles as in \cite{kusner-sullivan_1997}, however, 
we prefer to work with the \emph{angle metric}
\begin{equation}\label{eq:angle-metric}
\ang 
\big(F,G\big):=\|\Pi_F-\Pi_G\|\quad\textnormal{for two $m$-planes $F$, $G$},
\end{equation}
where $\Pi_F$ denotes the orthogonal projection onto the subspace $F$, and
$\|\cdot\|$  stands for the operator norm, so that our new
energies turn out to resemble the ``sin-energies'' that are discussed
only briefly
in \cite[Section 4]{kusner-sullivan_1997}. 
Nevertheless, we show in Appendix \ref{app:angles}
that the choice $\tau=1$ for our 
numerator $L_\tau$ generates
a Lagrangian bounded from above and below by constant multiples 
of $L_\textnormal{KS}$
in \eqref{eq:kusner-sullivan-integrand}. So all of the results described
in Section \ref{sec:1.2} below also hold for the specific 
M\"obius energy  $E_\textnormal{KS}$
studied
by Kusner and Sullivan.

\subsection{Main results}\label{sec:1.2}
Let $\mathscr{G}(n,m)$ denote the Grassmannian consisting of all $m$-di\-men\-sion\-al linear subspaces of $\R^n$ equipped with the angle metric defined in
\eqref{eq:angle-metric}. For $x\in\R^n$ and $F\in\mathscr{G}(n,m)$ the 
orthogonal projection onto the 
\emph{affine $m$-plane} $x+F$ is defined by
\begin{equation}\label{eq:affine-projection}
\Pi_{x+F}(z):=x+\Pi_F(z-x)\quad\Fo z\in\R^n.
\end{equation}
In addition,  let
\begin{equation}\label{eq:cone}
C_x(\beta,F):=\big\{z\in\R^n\colon \big|\Pi_{F^\perp}(z-x)\big|
\le\beta\,\big|\Pi_F(z-x)\big|\big\}
\end{equation}
be the cone around the affine $m$-plane $x+F$, centered at $x$ with 
opening angle $2\arctan \beta$. Throughout the paper, $B_r(x)$ 
denotes the open ball with radius $r>0$ centered at $x\in\R^n$.

We start with the definition of admissible sets.
\begin{defin}[Admissible sets] \label{def:admissible_sets}
Let $m,n \in \N$, $1\leq m \leq n$, $\alpha > 0$, $M>0$, and define the 
\emph{admissibility class $\Aam$} to be the set of all subsets 
$\Sigma \subset \R^n$ satisfying the following two properties.

\noindent
{\rm (i)}\,
$\Sigma$ is closed, 
and there exists a  function;
$H\colon\Sigma\to  \mathscr{G}(n,m)$. 

\noindent
{\rm (ii)}\,
There exists a dense subset $\Sigmaa \subset \Sigma$, with the following property:
For all compact sets $K\subset \Sigma$, there exist a radius $R_K\in (0,1]$ and a 
constant $c_K>0$, such that for all $p\in\Sigmaa \cap K$, 
there is a dense subset $D_p\subset (p+H(p)) \cap B_{R_K}(p)$, 
such that for all $x \in D_p$, there exists a point $\eta_x \in \Sigma \cap 
C_p(\alpha,H(p))$ 
with
             $  \Pi_{p+H(p)}(\eta_x)=x $,
	                and
\[\HM^m\left(E_{\alpha,M}(p) \cap B_r(\eta_x)\right) \geq c_K r^m 
\Foa r \in\left(0, {R_K}/{10^5}\right],\]
			                where
$ E_{\alpha,M}(p):=\left\{ \mu \in \Sigma : \ \ang(H(\mu),H(p)) < M\alpha \right\}.$ 
\end{defin}
As an immediate consequence of that definition we notice the monotonicity relation
\begin{equation}\label{eq:monotonicity}
\mathscr{A}^m(\alpha_1,M_1)\subset\mathscr{A}^m(\alpha_2,M_2)
\Foa 0<\alpha_1\le\alpha_2,\, 0<M_1\le M_2. 
\end{equation}
Intuitively speaking, an admissible set $\Sigma$ possesses a dense subset $\Sigma^*
$ of ``good points'' $p$ such that the conical portion $\Sigma\cap
C_p(\alpha,H(p))$ projects densely onto the affine plane $p+H(p)$ locally
near $p$,
and in each of the fibres under this projection there is
at least one $\Sigma$-point $q$ near which there is
sufficient mass of other $\Sigma$-points (possibly stretching
beyond the cone) whose
mock tangent
planes are close to $H(q)$. We should emphasize that we neither
require 
$\Sigma$  to be contained in the cone near $p$, nor a topological
linking condition as postulated, e.g., in the admissibility
class described in \cite[Section 2.3]{strzelecki-vdm_2013b}, nor do we assume
any relation 
between the mock tangent planes
$H(q)$ of various points $q\in\Sigma\cap
C_p(\alpha,H(p))$, or between $H(q)$ and $H(p)$. On the other hand, 
we do require
a \emph{uniform} local radius $R_K$
once we have fixed a compact subset $K\subset\Sigma$, which 
excludes some of the admissible example sets of previous work such as  in \cite[Example 2.14 \& Figure 1]{strzelecki-vdm_2013b}
with smaller and smaller structures accumulating locally.

\medskip

It is easy to see that embedded $m$-dimensional submanifolds 
$\Sigma:=\mathscr{M}^m
\subset\R^n$ of class $C^2$ without boundary
are admissible, since one can choose $H(p):=T_p\Sigma$ for all $p\in
\Sigma^*:=\mathscr{M}$, so that $\mathscr{M}\subset\mathscr{A}^m(\alpha,M)$ for all
$\alpha>0$ and $M>0$. Also finite unions $\Sigma:=\bigcup_{i=1}^N\mathscr{M}_i
$ of such $C^2$-submanifolds are admissible for any $\alpha,M>0$ (see Figure
\ref{fig:examples} a), since one
can define $H(p):=T_p\mathscr{M}_{i(p)}$, where $
i(p):=\min\{j\in\{1,\ldots,N\}:p\in\mathscr{M}_j\}. $
For these examples one can allow $c_K=\omega_m/2$ 
in Definition \ref{def:admissible_sets}, 
where $\omega_m$ denotes
the volume of the $m$-dimensional unit ball $B_1(0)\subset\R^m$.
Higher-dimensional sets that are foliated by lower-dimensional
$C^2$-submanifolds with
a uniform curvature bound can also be admissible, such  as the
 two-dimensional
planar
annulus  generated by uncountably many circles of varying radius depicted
in
Figure \ref{fig:examples} b.
Lipschitz submanifolds, countable collections of Lipschitz graphs, and
the images of compact $C^1$-manifolds under $C^1$-immersions, as well
as finite unions of those, also turn out to 
be admissible; the detailed proofs of these last statements are carried out 
in  Section \ref{sec:admissible_sets}.

It is possible to relax condition (ii) in Definition 
\ref{def:admissible_sets} requiring only sufficiently large projection
of $\Sigma\cap C_p(\alpha,H(p))$ onto an affine \emph{halfspace} $p+H_*(p)$,
at the cost of adding a condition relating different measures of flatness, as done
for the definition of $m$-fine sets in 
\cite{kolasinski_2011,kolasinski-etal_2013a}. 
This generates a new admissibility class $\mathscr{A}^m_*(\alpha,M)$ that 
also 
contains non-smooth examples with accumulation zones or certain unions
of manifolds with and without boundaries; see 
the bottom of Figure \ref{fig:examples}. All results mentioned below
also hold for this modified admissibility class $\mathscr{A}^m_*(\alpha,M)$;
for its precise definition and the necessary
modifications  in the proofs we refer to Remark \ref{rem:modified-admissible}
at the end of Section 
\ref{sec:lip-c1-mfds}.

\begin{figure}[!t]
	\begin{center}
		\begin{tikzpicture}[scale=0.15]
		
		\begin{scope}[shift={(-18,0)}]
		\node[inner sep=0pt] (russell) at (-11,10)
		{\includegraphics[width=.35\textwidth]{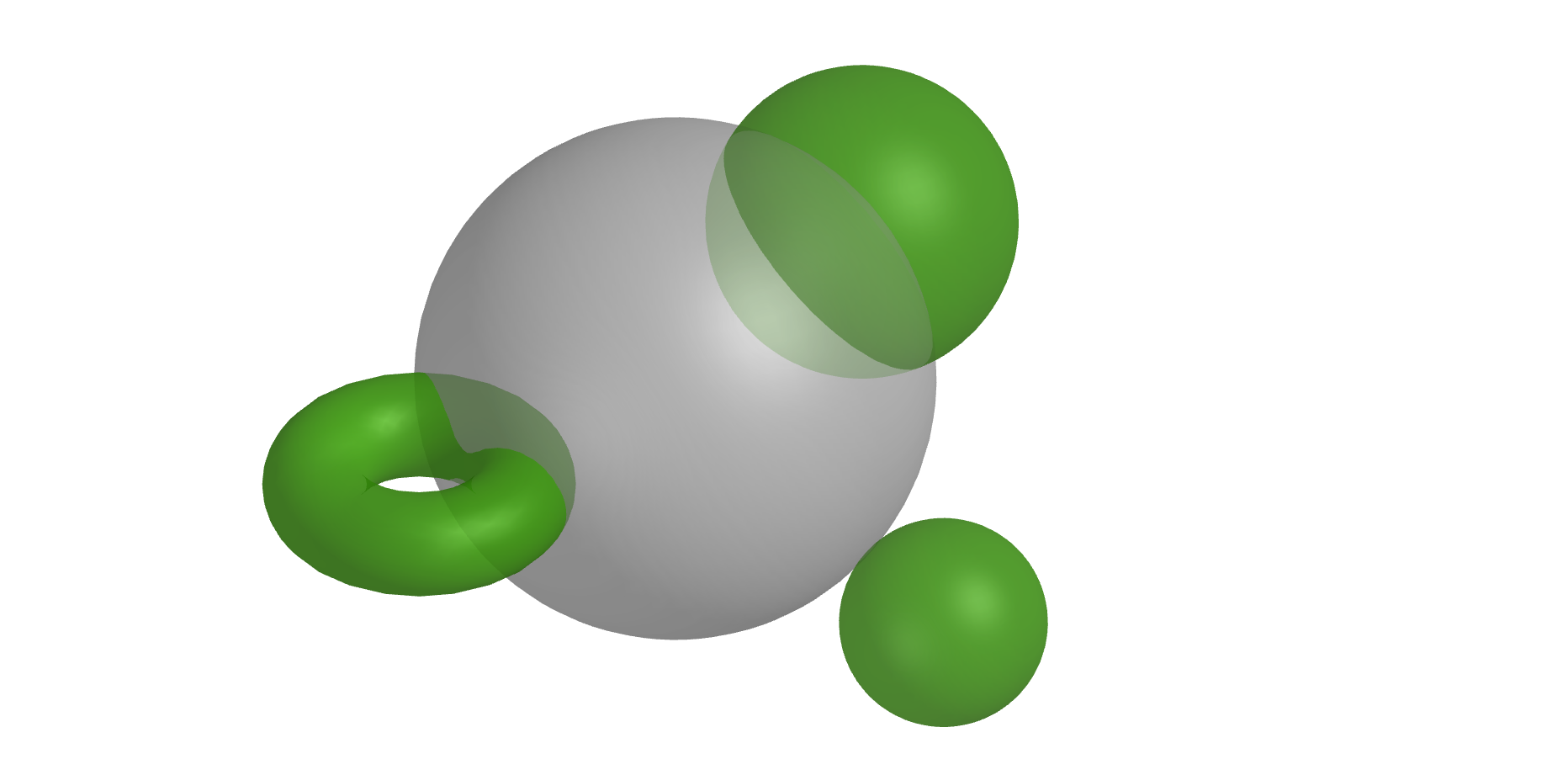}};
		\draw[color=black] (-10,2) node {\textbf{a}};
		\end{scope}
		
		
		\begin{scope}[shift={(5,0)}]
		\fill[black, opacity=0.2] (10,10) circle (6);
		\fill[white] (10,10) circle (3);
		\draw[black, thick] (10,10) circle (6);
		\draw[black, thick] (10,10) circle (3);
		\draw[color=black] (10,2) node {\textbf{b}};
		\end{scope}
		
		
		\begin{scope}[shift={(-10,0)}, xscale=1.8]
		
		\draw[black, thick] (0,-4) -- (-20,-4);
		\draw[black, thick] (0,-14) -- (-20,-14);
		\draw[black,thick, dotted] (0,-4) -- (4,-4); \draw[black,thick, dotted] (-24,-4) -- (-20,-4);
		\draw[black,thick, dotted] (0,-14) -- (4,-14); \draw[black,thick, dotted] (-24,-14) -- (-20,-14);
		\draw[black, thick] (-10,-4) -- (-10,-14);
		\draw[black, thick] (-18,-9) -- (-11,-9); \draw[black,thick, dotted] (-11,-9) -- (-9,-9); \draw[black, thick] (-9,-9) -- (-2,-9);
		\draw[black, thick] (-14,-6.5) -- (-11,-6.5); \draw[black,thick, dotted] (-11,-6.5) -- (-9,-6.5); \draw[black, thick] (-9,-6.5) -- (-6,-6.5);
		\draw[black, thick] (-14,-11.5) -- (-11,-11.5); \draw[black,thick, dotted] (-11,-11.5) -- (-9,-11.5); \draw[black, thick] (-9,-11.5) -- (-6,-11.5);
		\draw[black, thick] (-12,-5.25) -- (-11,-5.25); \draw[black,thick, dotted] (-11,-5.25) -- (-9,-5.25); \draw[black, thick] (-9,-5.25) -- (-8,-5.25);
		\draw[black, thick] (-12,-7.75) -- (-11,-7.75); \draw[black,thick, dotted] (-11,-7.75) -- (-9,-7.75); \draw[black, thick] (-9,-7.75) -- (-8,-7.75);
		\draw[black, thick] (-12,-10.25) -- (-11,-10.25); \draw[black,thick, dotted] (-11,-10.25) -- (-9,-10.25); \draw[black, thick] (-9,-10.25) -- (-8,-10.25);
		\draw[black, thick] (-12,-12.75) -- (-11,-12.75); \draw[black,thick, dotted] (-11,-12.75) -- (-9,-12.75); \draw[black, thick] (-9,-12.75) -- (-8,-12.75);
		\draw[black, thick] (-11,-4) -- (-11,-14); \draw[black, thick] (-12,-4) -- (-12,-14); \draw[black, thick] (-14,-4) -- (-14,-14); \draw[black, thick] (-18,-4) -- (-18,-14);
		\draw[black, thick] (-9,-4) -- (-9,-14); \draw[black, thick] (-8,-4) -- (-8,-14); \draw[black, thick] (-6,-4) -- (-6,-14); \draw[black, thick] (-2,-4) -- (-2,-14);
		
		\draw [red, fill] (-10,-7.05) circle (6pt);
		\draw [red, dashed] (-20,-13.26) -- (0,-0.76); \draw [red, dashed, opacity=0.8] (-20,-0.76) -- (0,-13.26);
		\draw [->,red,dashed] (-20.4,-1.5) arc (164:196:20) ;
		\draw[color=red]  (-22.5,-7.05) node [ fill=white, fill opacity=0.7, text opacity =1] {$2 \arctan \alpha$};
		\draw[color=black] (-10,-18) node {\textbf{c}};
		\end{scope}
		
		
		\begin{scope}[shift={(5,0)}]
		\draw[black, thick] (10,-4) circle (2);
		\draw[black, thick] (10,-6) -- (10,-12);
		\draw[black, thick] (10,-8) -- (6,-7);
		\draw[black, thick] (10,-8) -- (14,-7);
		\draw[black, thick] (10,-12) -- (6,-14);
		\draw[black, thick] (10,-12) -- (14,-14);
		\draw[black, thick] (14.986, -6.835) circle (1);
		\draw[black, thick] (5.083, -6.835) circle (1);
		\draw[black, thick] (5.051, -14.316) circle (1);
		\draw[black, thick] (14.948, -14.316) circle (1);
		\draw[color=black] (10,-18) node {\textbf{d}};
		\end{scope}
		
		\end{tikzpicture}
	\end{center}

\caption{\label{fig:examples}
{\bf Admissible sets.}  Finite unions of embedded $C^2$-submanifolds ({\bf a}) and
the annulus ({\bf b}) as the uncountable union of circles with positively bounded
radii are contained in $\mathscr{A}^m(\alpha,M)$. {\bf c.} An unbounded
set with fine structures accumulating within a compact
subset is in the modified class $\mathscr{A}^m_*(\alpha,M)$ for a certain $\alpha$.
{\bf d.} Finite unions of smooth manifolds with and without boundary may also 
be
in $\mathscr{A}^m_*(\alpha,M)$ for any positive $\alpha$ and $M$.
}
\end{figure}

\smallskip

The
conformal angle between the tangential spheres $\S^m(x,x,y)$ and $\S^m(y,y,x)$
is  defined as the angle
between the spheres' tangent planes $T_z\S^m(x,x,y)$ and $T_z\S^m(y,y,x)$ at an \emph{arbitrary} point
$z$ contained in the intersection of the two spheres, so, e.g., at $z:=x$, where one has
 $T_x\S^m(x,x,y)=H(x)$. This leaves us to  compute the tangent plane $T_x\S^m(y,y,x)$ for which  it suffices to
 reflect $T_y\S^m(y,y,x)=H(y)$ at the subspace $(x-y)^\perp$ by virtue of the 
  mapping
 \begin{align}\label{eq:reflection}
\mathcal{R}_{xy}\colon \R^n\to\R^n,\ 
z\mapsto z-\frac 2{\vert x-y\vert^2}\langle z,x-y\rangle \cdot (x-y).
\end{align}
Therefore, we can express the conformal angle $\vartheta_\Sigma$
 of an admissible set $\Sigma$ as 
 $$
 \vartheta_\Sigma(x,y)=\ang\big(H(x),\mathcal{R}_{xy}(H(y))\big)=
 \ang\big(\mathcal{R}_{xy}(H(x)),H(y)\big).
 $$ 
 This 
 leads to the following definition of energies $E^\tau$.
\begin{defin}[M{\"o}bius energies]\label{def:energies}
Let $\alpha>0$ and $M>0$. 
For $\tau\in\R$ and $\Sigma\in\mathscr{A}^m(\alpha,M)$ we define
the energy
\begin{equation}\label{eq:energies}
E^\tau(\Sigma,H)\equiv E^\tau(\Sigma):=\int_\Sigma\int_\Sigma
\frac{\ang\big(\mathcal{R}_{xy}(H(x)),H(y)\big)^{(1+\tau)m}}{|x-y|^{2m}}\,d\HM^m(x)d\HM^m(y).
\end{equation}
\end{defin}
In Lemma \ref{lem:princ_angles} it is shown that the angle in the numerator coincides 
with the sine of
the largest principal angle, which is invariant under M\"obius transformations,
so that  according to
\cite[Section 2]{kusner-sullivan_1997}  all these energies $E^\tau$ are
M\"obius invariant. For $\tau=1$ we prove in Corollary \ref{cor:energy_comparison}
that $E^\tau=E^1$ is equivalent to the Kusner-Sullivan energy $E_\textnormal{KS}$
with the numerator \eqref{eq:kusner-sullivan-integrand}.

Since unbounded sets are not excluded in the admissibility class we can in general
not expect finite energy of the whole set. So, we say  \emph{$\Sigma$
has locally finite energy $E^\tau$} if and only if 
\begin{equation}\label{eq:locally-bdd-energy}
E^\tau(\Sigma\cap B_N(0))<\infty\quad\Foa N\in\N.
\end{equation}
For admissible sets with locally finite energy we prove
the following self-avoidance result, under a smallness condition 
on the product of the parameters $\alpha $ and $M$, which balances locally
the degree of flatness with the mass of regions of mildly varying mock
tangent planes.

\begin{thm}[Self-avoidance]\label{thm:self-avoidance}
For fixed dimensions $2\le m\le n$ there is a universal constant 
$\delta=\delta(m)$ such that for any $\alpha,M>0$ with
\begin{equation}\label{eq:alphaM}
\alpha (M+1) < \delta/50
\end{equation}
every admissible set $\Sigma\in
\mathscr{A}^m(\alpha,M)$
with locally finite M\"obius energy $E^\tau$, $\tau\in  (-1,\infty) $, is
an embedded Lipschitz submanifold of $\R^n$.
\end{thm}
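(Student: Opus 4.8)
The plan is to localize, to extract from the finiteness of $E^\tau$ a quantitative flatness estimate for $\Sigma$ near its ``good'' points, and then to read off the Lipschitz graph structure from the cone condition in Definition~\ref{def:admissible_sets}. Since $\Sigma=\bigcup_{N\in\N}(\Sigma\cap\overline{B_N(0)})$, I would fix $N$, set $K:=\Sigma\cap\overline{B_N(0)}$, a compact set, and take the associated constants $R_K\in(0,1]$ and $c_K>0$ from Definition~\ref{def:admissible_sets}(ii). It then suffices to show that every $p\in\Sigma\cap B_{N-1}(0)$ has a relatively open neighbourhood $V_p\subset\Sigma$ which, after an isometry of $\R^n$, is the graph over an open subset of $H(p)$ of a function with Lipschitz constant at most a universal multiple of $\alpha(M+1)$: the uniformity of $R_K$ makes these neighbourhoods of uniform size, the local graphs are forced to coincide on overlaps, and exhausting $\R^n$ by the $B_N(0)$ then yields the global statement. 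As $\Sigmaa$ is dense in $\Sigma$ and $\Sigma$ is closed, it is enough to establish the graph property at good points $p\in\Sigmaa\cap K$ and afterwards pass to the closure.

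The core is the following estimate, proved simultaneously with its injectivity companion: there are a universal $c_0=c_0(m)$ and, for each good $p\in\Sigmaa\cap K$, a radius $r_p\in(0,R_K/10^5]$ with $\Sigma\cap B_{r_p}(p)\subset C_p\big(c_0\alpha(M+1),H(p)\big)$ and $\Pi_{p+H(p)}|_{\Sigma\cap B_{r_p}(p)}$ injective. I would argue by contradiction against $E^\tau(\Sigma\cap B_N(0))<\infty$: if either assertion fails, then at a sequence of scales $\rho_j\downarrow 0$ one finds two subsets $P_j^1,P_j^2\subset\Sigma$ that are spatially $\rho_j$-close yet ``stacked'' with respect to $H(p)$ — because two $\Sigma$-points share a fibre of $\Pi_{p+H(p)}$, respectively because a $\Sigma$-point leaves the enlarged cone — and such that, via Definition~\ref{def:admissible_sets}(ii) applied at $p$ and at good points supplied by the density of $\Sigmaa$ near the offending points, each $P_j^i$ carries $\HM^m$-mass at least $c_K(\kappa\rho_j)^m$ for some $\kappa=\kappa(\alpha,M)>0$, with all mock tangent planes on $P_j^i$ within $M\alpha$ of $H(p)$. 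For $x\in P_j^1$, $y\in P_j^2$ the vector $x-y$ then makes an angle bounded away from $0$ and from $\pi/2$ with $H(p)$, so evaluating the reflection $\mathcal{R}_{xy}$ from \eqref{eq:reflection} as in the discussion preceding Definition~\ref{def:energies}, and using $\alpha(M+1)<\delta/50$ to keep the perturbation terms subordinate, one obtains $\ang(\mathcal{R}_{xy}(H(x)),H(y))\ge c_1=c_1(m)>0$. Hence $P_j^1\times P_j^2$ contributes to $E^\tau$ at least
\[
\frac{c_1^{(1+\tau)m}\,\big(c_K(\kappa\rho_j)^m\big)^2}{(C\rho_j)^{2m}}\ \ge\ c_2(m,\alpha,M,\tau,c_K)\ >\ 0 ,
\]
a constant independent of $j$; after passing to a subsequence along which the product sets $P_j^1\times P_j^2$ are pairwise disjoint, summation over $j$ forces $E^\tau(\Sigma\cap B_N(0))=\infty$, the desired contradiction.

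Granting this, the conclusion is elementary geometry. By confinement every $q\in\Sigma\cap B_{r_p}(p)$ satisfies $|\Pi_{H(p)^\perp}(q-p)|\le c_0\alpha(M+1)\,|\Pi_{H(p)}(q-p)|$, and by injectivity $\Pi_{p+H(p)}$ is one-to-one on $\Sigma\cap B_{r_p}(p)$; combining the two, any $q,q'\in\Sigma\cap B_{r_p/2}(p)$ obey $|\Pi_{H(p)^\perp}(q-q')|\le 2c_0\alpha(M+1)\,|\Pi_{p+H(p)}(q)-\Pi_{p+H(p)}(q')|$, so $\Sigma\cap B_{r_p/2}(p)$ is the graph of a map $f$ on $\Pi_{p+H(p)}(\Sigma\cap B_{r_p/2}(p))\subset p+H(p)$ with $\lip(f)\le 2c_0\alpha(M+1)$. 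By Definition~\ref{def:admissible_sets}(ii) this domain is dense in a neighbourhood of $p$ in $p+H(p)$, and by closedness of $\Sigma$ together with the uniform Lipschitz bound the graph is closed, so $f$ extends to a Lipschitz function on an open subset of $p+H(p)$ whose graph equals $\Sigma\cap V_p$ for a suitable relatively open $V_p\ni p$; thus $\Sigma$ is an embedded Lipschitz submanifold of $\R^n$. I expect the genuinely hard point to be the lower bound $\ang(\mathcal{R}_{xy}(H(x)),H(y))\ge c_1$ in the second step: it is delicate precisely because Definition~\ref{def:admissible_sets} gives no a priori control on $H$ away from the distinguished points $\eta_x$, so one must select the mass-carrying pieces $P_j^1,P_j^2$ and the auxiliary good points carefully and keep every error term dominated by the smallness of $\alpha(M+1)$.
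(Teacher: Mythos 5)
Your overall architecture --- localize to $K=\Sigma\cap\overline{B_N(0)}$, derive a contradiction with locally finite energy from a non-flat configuration by pairing two mass-carrying pieces supplied by Definition~\ref{def:admissible_sets}(ii), then read off a Lipschitz graph from bijectivity of $\Pi_{p+H(p)}$ --- matches the paper's (Theorem~\ref{thm:dist_sigma_to_plane}, Lemmas~\ref{lem:injective_projection} and \ref{lem:surjective_projection}, Theorem~\ref{thm:lipschitz_graph}). But the central step has a genuine gap. You assert that both pieces $P_j^1,P_j^2$ carry mock tangent planes within $M\alpha$ of $H(p)$. Definition~\ref{def:admissible_sets}(ii), applied at a good point $q'$ near the offending point, only controls $\ang(H(\cdot),H(q'))$ on the set $E_{\alpha,M}(q')$; the admissibility class imposes \emph{no} relation between $H(q')$ and $H(p)$ (the paper stresses this explicitly right after Definition~\ref{def:admissible_sets}). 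If $\ang(H(p),H(q'))$ is large, the piece near $q'$ is transversal to the one near $p$ and your angle estimate collapses. This is precisely why the paper's proof of Theorem~\ref{thm:dist_sigma_to_plane} splits into Lemma~\ref{lem:dist_sigma_to_plane_1} (almost parallel strands) and Lemma~\ref{lem:dist_sigma_to_plane_2} (transversal strands); the latter needs a genuinely different construction --- a unit vector $e^*\in H(\mu)$ with $|\Pi_{H(p)^\perp}(e^*)|$ bounded below and a macroscopic shift along $v\in H(p)\cap(\R e^*)^\perp$ --- and this is where the hypothesis $m\ge 2$ enters (to guarantee such a $v$ exists). Your proposal never uses $m\ge 2$, which signals the missing case.

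A second, more technical problem: a purely ``stacked'' pair does not by itself produce conformal angle. If $x-y\in H(p)^\perp$ (the shared-fibre scenario you invoke for injectivity) and $H(x),H(y)$ are both close to $H(p)$, then $\langle e,x-y\rangle\approx 0$ for $e\in H(x)$, so $\mathcal{R}_{xy}$ acts almost trivially on $H(x)$ and $\ang(\mathcal{R}_{xy}(H(x)),H(y))\approx\ang(H(x),H(y))$, which can be arbitrarily small. You do state that $x-y$ should make an angle bounded away from both $0$ and $\pi/2$ with $H(p)$, but you do not construct the pieces so as to guarantee this; the paper does it by choosing $\eta_x$ via Definition~\ref{def:admissible_sets}(ii) at the prescribed lateral offset $\Pi_{p+H(p)}(\mu+\eps Rw/2)$, which is exactly what makes $|\langle e,\mu-\eta\rangle|$ and $|\Pi_{H(\eta)^\perp}(\mu-\eta)|$ simultaneously large in \eqref{scalar_prod_1} and \eqref{proj_bound_1}. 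Your concluding step (cone confinement plus injectivity plus density of the projected image, then closedness of $\Sigma$) is a workable and somewhat more elementary substitute for the paper's surjectivity argument via Proposition~\ref{lem:surjective_projection} together with Lemma~\ref{cor:dist_plane_to_sigma}, but it cannot be reached until the two gaps above are closed.
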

In fact, we prove in Theorem \ref{thm:lipschitz_graph}
that every compact subset $K\subset\Sigma$ possesses a 
local graph representation by Lipschitz functions,
which according to 
\cite{naumann-simader_2007} is even slightly stronger than $\Sigma$
being a Lipschitz submanifold.

In Section \ref{subsec:immersion} we prove that the image
$\Sigma:=f(\mathscr{M})$ of a compact abstract $m$-dimensional
 $C^1$-manifold $\mathscr{M}$
 under an immersion $f:\mathscr{M}\to\R^n$
 is admissible for \emph{any} $\alpha>0$ and $M>0$. 
Consequently, one can guarantee that assumption \eqref{eq:alphaM} of
Theorem \ref{thm:self-avoidance}  holds true so that finite
M\"obius energy implies that $\Sigma$ is an embedded
$C^{0,1}$-submanifold of $\R^n$. But the initially granted regularity
of $\mathscr{M}$ and $f$ 
leads to the corresponding smoothness of the embedded
submanifold. 
\begin{cor}\label{cor:c1mfd}
Let $k\in\N$ and suppose 
$\Sigma=f(\mathscr{M})$ satisfies $E^\tau(\Sigma)<\infty$ for
$\tau\in (-1,\infty)$, where $\mathscr{M}$ is an $m$-dimensional compact 
$C^{k}$-manifold, 
and $f:\mathscr{M}\to\R^n$ is a $C^{k}$-immersion. Then $\Sigma$
is an embedded $m$-dimensional $C^{k}$-submanifold of $\R^n$.
\end{cor}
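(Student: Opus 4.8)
The plan is to run Theorem~\ref{thm:self-avoidance} to get that $\Sigma$ is at least an embedded Lipschitz submanifold, and then to bootstrap this to $C^{k}$-regularity by a purely local argument that exploits $f$ as a $C^{k}$-parametrisation, together with Brouwer's invariance of domain.

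To feed Theorem~\ref{thm:self-avoidance}, note first that $\Sigma=f(\mathscr{M})$ is compact, hence bounded, so $E^\tau(\Sigma\cap B_N(0))\le E^\tau(\Sigma)<\infty$ for every $N\in\N$; that is, $\Sigma$ has locally finite energy in the sense of \eqref{eq:locally-bdd-energy}. By the results of Section~\ref{subsec:immersion}, the image of a $C^{1}$-immersion of a compact $C^{1}$-manifold lies in $\mathscr{A}^m(\alpha,M)$ for \emph{every} $\alpha,M>0$; thus we may fix $\alpha,M>0$ with $\alpha(M+1)<\delta(m)/50$, where $\delta(m)$ is the constant of Theorem~\ref{thm:self-avoidance}. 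That theorem, more precisely Theorem~\ref{thm:lipschitz_graph}, then applies and shows that $\Sigma$ is an embedded Lipschitz submanifold of $\R^n$; in particular, being locally the graph of a Lipschitz function over an affine $m$-plane, $\Sigma$ is a topological $m$-manifold.

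Now fix $p_0\in\Sigma$ and pick $q_0\in\mathscr{M}$ with $f(q_0)=p_0$. Since $f$ is a $C^{k}$-immersion, the local normal form for immersions provides a $C^{k}$-chart domain $U\subset\mathscr{M}$ around $q_0$ on which $f|_U$ is a $C^{k}$-embedding, so that $N:=f(U)$ is an embedded $m$-dimensional $C^{k}$-submanifold of $\R^n$ through $p_0$, and in particular $N$ is itself a topological $m$-manifold. The key point is that $N$ is relatively open in $\Sigma$: given $p\in N$, one compares a chart of the topological $m$-manifold $\Sigma$ near $p$ with a chart of $N$ near $p$ whose domain $W$ lies in $N$; the resulting transition map is a continuous injection between open subsets of $\R^m$, hence open by invariance of domain, which forces $W$ (intersected with the $\Sigma$-chart domain) to be open in $\Sigma$ and contained in $N$. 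Consequently every point of $\Sigma$ has a relatively open neighbourhood that is an embedded $C^{k}$-submanifold of $\R^n$, which is exactly the assertion that $\Sigma$ is an embedded $m$-dimensional $C^{k}$-submanifold of $\R^n$.

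The main obstacle is precisely this relative-openness step: a priori $f$ need not be injective, so near $p_0$ the set $\Sigma$ could conceivably consist of several distinct $C^{k}$-sheets $f(U_1),f(U_2),\dots$ with the $U_i$ pairwise disjoint, and this must be excluded. The resolution is that Theorem~\ref{thm:self-avoidance} has already identified $\Sigma$ as a genuine topological $m$-manifold without self-crossings, and invariance of domain then forces any single $C^{k}$-sheet through $p_0$ to fill out an entire $\Sigma$-neighbourhood of $p_0$, so the several sheets in fact coincide as subsets of $\R^n$ near $p_0$. All remaining ingredients -- boundedness and hence local finiteness of the energy, admissibility of $\Sigma$ for arbitrarily small $\alpha,M$, and the local embedding property of a $C^{k}$-immersion -- are either immediate or cited from earlier parts of the paper.
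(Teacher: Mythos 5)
Your argument is correct, but the final upgrade from ``Lipschitz submanifold'' to ``locally a single $C^{k}$-sheet'' is carried out by a genuinely different mechanism than in the paper. The paper covers $\Sigma$ by finitely many $C^{k}$-graph patches with arbitrarily small Lipschitz constant (as in Lemma \ref{lem:imm_local_graph}) and then reruns the machinery of Corollary \ref{cor:lip_collection}: the injectivity and surjectivity of the projections onto approximating planes (Lemma \ref{lem:injective_projection}, Proposition \ref{lem:surjective_projection}), organized through the combinatorial Lemmas \ref{lem:1}--\ref{lem:3}, to show directly that $\Sigma\cap B_r(p)$ coincides with exactly one graph patch. You instead take the conclusion of Theorem \ref{thm:lipschitz_graph} -- that $\Sigma$ is a topological $m$-manifold -- as a black box and invoke Brouwer's invariance of domain: the locally embedded $C^{k}$-sheet $f(U)$ is an $m$-manifold sitting inside the $m$-manifold $\Sigma$, hence relatively open in $\Sigma$, so that $f(U)=\Sigma\cap O$ for some open $O\subset\R^n$ and the $C^{k}$-chart of $f(U)$ near $p_0$ serves as a $C^{k}$-chart of $\Sigma$ itself. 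Both routes rest on the same analytic core (Proposition \ref{prop:imm_ex} giving admissibility for every $\alpha,M>0$, and the $\beta$-number estimate behind Theorem \ref{thm:lipschitz_graph}); yours trades the paper's quantitative projection argument for a soft topological one, which is shorter and cleaner but imports invariance of domain, a deep tool the authors otherwise go out of their way to avoid (they stress not relying on Reifenberg's disk theorem). One caveat, shared with the paper's own proof: the argument only covers $m\ge 2$, since Theorem \ref{thm:self-avoidance} and the transversal-strands Lemma \ref{lem:dist_sigma_to_plane_2} underlying it are stated for $2\le m\le n$.
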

So,  finite M\"obius energy yields  embedded submanifolds, which
then inherit some additional presupposed regularity of the admissible set. 
This effect of a transferred initial regularity can also be observed
in the Lipschitz category; see Corollary \ref{cor:lip_collection}.

Very recently O'Hara proved the self-repulsiveness of the
Kusner-Sullivan energy  $E_\textnormal{KS}$ in the $C^2$-topology
on the class
of  embedded $C^2$-submanifolds with uniform curvature bounds; see
\cite[Theorem 3.3]{ohara_2020}. 
The equivalence of $E_\textnormal{KS}$ and $E^\tau$ for $\tau=1$ proven in
  Corollary \ref{cor:energy_comparison}
 in Appendix \ref{app:angles}, together with 
Corollary \ref{cor:c1mfd} might help to generalize
O'Hara's result to self-repulsiveness on suitably normalized
$C^1$-submanifolds in the $C^1$-topology.

\medskip

While Theorem \ref{thm:self-avoidance} 
and Corollary \ref{cor:c1mfd} describe self-avoidance effects
of the M\"obius energies $E^\tau$, one may ask, on the other hand,
under which 
regularity assumptions on (topological) embedded submanifolds one obtains 
finite energy. It is easy to show that 
$C^2$-regularity implies  finite
energy $E^\tau$ for any $\tau>0$; 
see Lemma \ref{lem:C2_sufficiency} and Corollary \ref{cor:embedded-c2}. 
But we prove, in addition, that a relatively mild fractional Sobolev
regularity of the local graph representations is already
sufficient to produce
finite energy. To state the precise result we 
recall the notion of these fractional
spaces (see, e.g., \cite{valdinoci-etal_2012}): For an open set $\Omega\subset\R^m$, and parameters
$k\in\N\cup\{0\}$, $s\in (0,1)$, and $\rho\in [1,\infty)$, the 
\emph{Sobolev-Slobodecki\v{\i}-space} $W^{k+s,\rho}(\Omega)$ is the set
of all Sobolev functions $f\in W^{k,p}(\Omega)$ such that 
$$
[\partial^\beta f]^\rho_{s,\rho}:=\int_\Omega\int_\Omega \frac{|\partial^\beta f(x)-\partial^\beta f(y)|^\rho}{|x-y|^{m+s\rho}}
\,dxdy<\infty \quad\textnormal{for all multi-indices $\beta$ with
$|\beta|=k$.}
$$
\begin{thm}[Sufficient fractional Sobolev regularity]
\label{thm:sufficient-sobolev}
If $\mathscr{M}^m\subset\R^n$ is an embedded compact submanifold
with local graph representations of class
$C^{0,1}\cap W^{\frac{2+\tau}{1+\tau},(1+\tau)m}$ for some
$\tau\in (0,\infty)$, then $E^\tau(\mathscr{M})<\infty.$
\end{thm}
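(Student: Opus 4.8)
\textbf{The plan} is to localize by compactness, reduce to a double integral over a ball $B\subset\R^m$ in terms of a local graph map $f$, bound the conformal angle from above by a ``second order remainder'' term plus an ``oscillation of $Df$'' term, and recognize both contributions as controlled by the Gagliardo seminorm $[Df]_{\frac1{1+\tau},(1+\tau)m}$, which is finite exactly because $\tfrac{2+\tau}{1+\tau}=1+\tfrac1{1+\tau}$. Concretely, I cover the compact $\mathscr M$ by finitely many relatively open sets $U_i$, each being the graph of a map $f_i\colon B_i\to\R^{n-m}$ over a \emph{ball} $B_i\subset\R^m$ with $\|Df_i\|_{L^\infty}\le L$ and $[Df_i]_{\frac1{1+\tau},(1+\tau)m}<\infty$, and I let $r_0>0$ be a Lebesgue number of the cover. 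Since the angle metric is bounded by $2$, the part of $E^\tau(\mathscr M)$ over $\{(x,y):|x-y|\ge r_0\}$ is at most $2^{(1+\tau)m}r_0^{-2m}\HM^m(\mathscr M)^2<\infty$, and any pair with $|x-y|<r_0$ lies in some $U_i\times U_i$; so it remains to bound $\int_{U_i\times U_i\cap\{|x-y|<r_0\}}$ for each $i$.

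\textbf{Coordinates and the angle bound.} Fixing a patch, write $f:=f_i$, $x=(u,f(u))$, $y=(v,f(v))$; the induced measure is $J(u)\,du$ with $1\le J\le(1+L^2)^{m/2}$, and $|u-v|\le|x-y|\le\sqrt{1+L^2}\,|u-v|$. With $H(x)=T_x\mathscr M=\graph Df(u)$, the triangle inequality for $\ang$ gives
\[
\ang\big(\mathcal R_{xy}(H(x)),H(y)\big)\le\ang\big(\mathcal R_{xy}(H(x)),H(x)\big)+\ang\big(H(x),H(y)\big).
\]
Since $A\mapsto\Pi_{\graph A}$ is Lipschitz on $\{\|A\|\le L\}$, the last term is $\le C(L)\,|Df(u)-Df(v)|$. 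For the reflection term I use the elementary lemma: for any $m$-plane $P$ and $w\ne0$ one has $\ang(\mathcal R_w P,P)\le 2|\Pi_{P^\perp}w|/|w|$, where $\mathcal R_w z=z-2\langle z,w\rangle|w|^{-2}w$. Indeed, with $a:=\Pi_P w$ the plane splits orthogonally as $P=(P\cap a^\perp)\oplus\R a$; $\mathcal R_w$ fixes $P\cap a^\perp$ pointwise and sends $\R a$ to $\R\mathcal R_w(a)$ with $\mathcal R_w(a)\perp(P\cap a^\perp)$, so $\Pi_P-\Pi_{\mathcal R_w P}=\Pi_{\R a}-\Pi_{\R \mathcal R_w(a)}$ has norm $\sin\gamma=2|a|\,|\Pi_{P^\perp}w|/|w|^2\le 2|\Pi_{P^\perp}w|/|w|$. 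Applying this with $P=H(x)$, $w=x-y$, and computing the projection onto $H(x)^\perp=\{(-Df(u)^{\mathsf T}k,k):k\in\R^{n-m}\}$, one gets $|\Pi_{H(x)^\perp}(x-y)|\le\sqrt{1+L^2}\,|f(u)-f(v)-Df(u)(u-v)|$. Hence
\[
\ang\big(\mathcal R_{xy}(H(x)),H(y)\big)\le C(L)\Big(\tfrac{|f(u)-f(v)-Df(u)(u-v)|}{|u-v|}+|Df(u)-Df(v)|\Big).
\]

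\textbf{Reduction to the Gagliardo seminorm.} Inserting this, bounding $J\le C$ and $|x-y|^{-2m}\le|u-v|^{-2m}$, and using $(a+b)^p\le 2^{p-1}(a^p+b^p)$ with $p=(1+\tau)m$, the patch contributes at most $C(\mathrm A)+C(\mathrm B)$, where
\[
(\mathrm A)=\int_{B_i}\!\int_{B_i}\frac{|Df(u)-Df(v)|^{(1+\tau)m}}{|u-v|^{2m}}\,du\,dv=[Df]_{\frac1{1+\tau},(1+\tau)m}^{(1+\tau)m}
\]
because $2m=m+\tfrac1{1+\tau}(1+\tau)m$, and $(\mathrm B)=\int\int|f(u)-f(v)-Df(u)(u-v)|^{(1+\tau)m}|u-v|^{-(3+\tau)m}\,du\,dv$. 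Writing $f(u)-f(v)-Df(u)(u-v)=\int_0^1[Df(v+t(u-v))-Df(u)](u-v)\,dt$, applying Jensen, and then (using convexity of $B_i$) the substitution $w=tu+(1-t)v$ with $|u-w|=(1-t)|u-v|$, $dv=(1-t)^{-m}dw$, $w\in B_i$, gives $(\mathrm B)\le\int_0^1(1-t)^m\,dt\cdot[Df]_{\frac1{1+\tau},(1+\tau)m}^{(1+\tau)m}=\tfrac1{m+1}[Df]_{\frac1{1+\tau},(1+\tau)m}^{(1+\tau)m}$. Both are finite since $\tfrac1{1+\tau}\in(0,1)$ and $f\in C^{0,1}\cap W^{1+\frac1{1+\tau},(1+\tau)m}(B_i)=C^{0,1}\cap W^{\frac{2+\tau}{1+\tau},(1+\tau)m}(B_i)$; summing over the finitely many patches yields $E^\tau(\mathscr M)<\infty$.

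\textbf{Main obstacle.} The technical heart is the angle estimate of the second step — extracting the second order remainder $f(u)-f(v)-Df(u)(u-v)$ from $\ang(\mathcal R_{xy}(H(x)),H(y))$ via the reflection lemma together with the normal-projection computation — since once this is available the exponent bookkeeping in the third step is automatic. A minor but genuine point is to take the coordinate patches convex, so that the change of variables used to estimate $(\mathrm B)$ stays inside $B_i$.
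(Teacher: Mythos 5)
Your proof is correct and follows essentially the same route as the paper: localize to finitely many Lipschitz graph patches, split the numerator into an oscillation-of-$Df$ term plus a Taylor-remainder term, and recognize both as the Gagliardo seminorm $[Df]_{1/(1+\tau),(1+\tau)m}$ after Jensen and the substitution $w=tu+(1-t)v$ (this is exactly the paper's Lemma \ref{lem:sobolev_sufficiency} followed by the covering argument). The only cosmetic difference is that you obtain the two-term splitting via the triangle inequality for the angle metric together with a hand-proved reflection lemma, whereas the paper reads it off directly from the explicit formula \eqref{eq:new-Ftau} for $F_\tau$.
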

By  Corollary \ref{cor:energy_comparison},  
 the Kusner-Sullivan energy $E_\textnormal{KS}$ is bounded from above
 by a constant multiple of $E^\tau$ for $\tau\in (0,1)$ and 
even equivalent to $E^1$, so that $C^{0,1}\cap W^{3/2,2m}$-regularity
suffices to guarantee
finite $E_\textnormal{KS}$. This fractional Sobolev regularity corresponds to the exact
regularity
that \emph{characterizes} finite M\"obius energy in dimension $m=1$ by
the work of S. Blatt
\cite[Theorem 1.1]{blatt_2012a}. For arbitrary dimensions
$m\ge 2$, however, Theorem \ref{thm:sufficient-sobolev}
establishes only \emph{one} direction of such a characterization.
It is open at this
point -- even for $\tau=1$ -- if the energies $E^\tau$ exhibit sufficiently
strong regularizing
effects\footnote{That the energies $E^\tau$ do regularize at least to a 
certain extent is reflected in the fact 
that certain types of singularities, like
a wedge-shaped crease, lead to infinite $E^\tau$-energy for any
$\tau>-1$; see Remark \ref{rem:wedge} in Appendix \ref{app:angles}.}
to guarantee that finite $E^\tau$-energy yields embedded
$W^{\frac{2+\tau}{1+\tau},(1+\tau)m}$-submanifolds, even if we add the
extra assumption that the admissible set is already an embedded 
$C^1$-submanifold.  Such a characterization, however, holds true in arbitrary 
dimensions for the scale-invariant tangent point energy introduced in
\cite{strzelecki-vdm_2013b} but analyzed there only in the regime \emph{above}
scale-invariance. In an upcoming note we prove the self-avoidance property
on a wider class of admissible non-smooth sets and use Blatt's technique
developed in \cite{blatt_2012a}  to prove the characterization of the exact
energy space for the scale-invariant tangent-point energy. It seems,
however, that this energy is slightly more singular than $E^1$.

\subsection{Strategy of proofs}\label{sec:1.3}
The crucial step to prove the self-avoidance property stated in Theorem 
\ref{thm:self-avoidance} is to bound for  any given 
$\delta\in (0,1)$ the \emph{beta number} 
\begin{equation}\label{eq:beta-number}
\beta_\Sigma(x,r):=
\inf_{F\in\mathscr{G}(n,m)}
\sup_{y\in \Sigma \cap  B_r(x)}\dist\big(y,(x+F)\cap B_r(x)\big)/r \quad\Fo x\in\Sigma
\end{equation}
from above by $\delta$  on small scales $r$.
This is carried out in Theorem \ref{thm:dist_sigma_to_plane} with an indirect argument as follows. Assuming
the contrary one finds points $p\in\Sigma$ and $q\in \Sigma\cap B_r(p)$ such 
that $\dist(q,p+H(p))>\delta r.$ This geometric situation contributes 
substantially to the energy in a way that 
depends on the angle $\ang\big(H(p),H(q)\big)$ between the mock tangent
planes at $p$ and $q$. If that angle happens to be small then, loosely
speaking, a sufficient amount of mass 
of $\Sigma$ near $q$   interacts a lot with strands of $\Sigma$
through suitable points contained in the cone $C_p(\alpha,H(p))$ near $p$,
because according to Part (ii) in
Definition \ref{def:admissible_sets} the 
deviation of the mock tangent planes from $H(p)$ is also small on these 
$\Sigma$-strands
near $p$. These ``almost parallel'' sheets of $\Sigma$ thus
generate a
certain quantum of energy basically through many almost identical but
mutually shifted
tangent-point spheres so that the respective conformal angles are large; see
Lemma \ref{lem:dist_sigma_to_plane_1}. 
If $\ang(H(p),H(q))$ is large, on the other hand,  then for each mock tangent plane at points near $q$ there is at least
one basis vector that deviates substantially 
from any basis of $H(p)$. That basis
vector can be used to define a controlled
macroscopic shift orthogonal to its projection
onto $H(p)$ to find sufficiently many pairs of tangent-point spheres
with a fairly large conformal angle; see Lemma 
\ref{lem:dist_sigma_to_plane_2}. 
It is interesting to note that the explicit estimates in those lemmas reveal 
that two close-by 
almost parallel sheets of $\Sigma$ seem to contribute a lot
more energy than transversal sheets close to self-intersection. 
Such a phenomenon was first observed for the suitably
desingularized M\"obius
energy on immersed planar curves ($m=1$) 
with self-intersections by R. Dunning \cite{dunning_2011}; see also the
work of D. Kube \cite{kube_2018} who derived a 
limit energy depending only on  the angle between two  self-intersecting arcs, 
and which is  
uniquely minimized by the intersection angle
$\pi/2$. Similarly,
 O'Hara observed in \cite[Section 3.1.2]{ohara_2020}
  different
energy contributions to the regularized Riesz energies comparing tangential
 with orthogonal self-intersections
of smooth surfaces ($m=2$).

Combining the bounds of the beta numbers with a uniform estimate on 
$\dist(\xi,\Sigma\cap B_r(p))$ for points $\xi\in (p+H(p))\cap B_r(p)$
which can actually
be derived for
\emph{all}  sets in $\Aam$, one establishes Reifenberg flatness of $\Sigma$ (Corollary
\ref{cor:dist_plane_to_sigma}). This implies
by virtue of Reifenberg's famous topological disk lemma 
\cite{reifenberg_1960,simon_1996a,hong-wang_2010} that $\Sigma$ is a 
topological manifold locally
by-H\"older homeomorphic to the open unit ball in $\R^m$ as stated in
Corollary \ref{cor:top_mfd}. But we do \emph{not} rely on Reifenberg's deep
result, we can take an easier and more direct route instead to
prove the better Lipschitz regularity of local graph representations
in Theorem \ref{thm:self-avoidance}.
For that it suffices to show
that 
the orthogonal projections
onto approximating planes restricted to sufficiently small balls  are bijective; see 
Lemmas \ref{lem:injective_projection} and \ref{lem:surjective_projection}.
This approach partly inspired by the proof of \cite[Proposition 9.1]{david-etal_2001} 
also leads to the improved $C^k$-regularity 
of the graph representations in Corollary \ref{cor:c1mfd}, as 
well as to improved Lipschitz constants as stated in Corollary 
\ref{cor:lip_collection}.

In Section \ref{sec:sufficient_regularity} we estimate the integrand,
first assuming
$C^2$-smoothness (Lemma \ref{lem:C2_sufficiency}), and then assuming only
fractional Sobolev regularity (Lemma \ref{lem:sobolev_sufficiency}), which
proves Theorem \ref{thm:sufficient-sobolev}.

  In Appendix \ref{app:angles} we
express the angle metric \eqref{eq:angle-metric} in terms of principal 
angles and demonstrate how our M\"obius-invariant energies relate to the ones
considered by Kusner and Sullivan, in particular to $E_\textnormal{KS}$; see
Corollary \ref{cor:energy_comparison}. 
In Appendix \ref{app:graphs} we prove various results on general Lipschitz graphs. Of particular and
independent interest is Lemma \ref{lem:intersect_lip_graphs}
stating in a quantitative way 
that the intersection of two $m$-dimensional Lipschitz graphs
in $\R^n$ is contained in a lower-dimensional Lipschitz
graph as long as the $m$-planar
domains of the graph functions
intersect in an angle that is sufficiently large compared to
the Lipschitz constants. This, in some way, generalizes the well-known
fact that the intersection of
two  transversal $C^1$-submanifolds  is a lower-dimensional $C^1$-submanifold, 
which is usually proven using the implicit function theorem; see, e.g.,
\cite[p. 30]{guillemin-pollack_1974}.


\section{Examples of admissible sets} \label{sec:admissible_sets}
We already mentioned in the introduction that the class of admissible sets
contains immersed compact $C^1$-manifolds as well as countable collections
of Lipschitz graphs, which we prove now. 

\subsection{Immersed compact $C^1$-manifolds} \label{subsec:immersion}
\begin{prop} \label{prop:imm_ex}
	Let $\RM$ be an $m$-dimensional, compact $C^1$-manifold and
	$f\colon \RM\to\R^n$  a $C^1$-immersion. Then, $\Sigma:=f(\RM)\in \Aam$ for all
 $\alpha>0$ and all $M>0$.
\end{prop}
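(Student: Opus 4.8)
The plan is to exploit the compactness of $\RM$ together with the fact that a $C^1$-immersion is locally a bi-Lipschitz embedding onto a nearly-flat graph. First I would fix, for each point $p_0\in\RM$, a chart $\varphi\colon U\to\R^m$ around $p_0$ such that $f\circ\varphi^{-1}$ is a $C^1$-embedding whose derivative varies little on $U$; shrinking $U$ if necessary, $f(U)$ is a $C^1$-graph over the affine plane $f(p_0)+\df_{p_0}(T_{p_0}\RM)$ with Lipschitz constant less than, say, $\tfrac13$, and the tangent planes $T_{f(p)}f(\RM)=:H(f(p))$ satisfy $\ang\big(H(f(p)),H(f(p_0))\big)<\tfrac1{10}$ for all $p\in U$. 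By compactness of $\RM$, finitely many such charts cover $\RM$; this produces a single uniform radius and a single uniform constant once a compact $K\subset\Sigma$ is fixed, which is exactly what Definition \ref{def:admissible_sets}(ii) demands. I would take $\Sigma^\ast:=\Sigma$ (every point is good) and $H(f(p)):=T_{f(p)}f(\RM)$, which is well-defined since $f$ is an immersion and, at a double point, one simply makes a fixed choice among the finitely many preimages, as in the finite-union example in the introduction. Note $H$ need not be continuous at self-intersections, but Definition \ref{def:admissible_sets}(i) only asks for a function, so this is harmless.

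The core of the verification is the mass lower bound. Given a good point $p=f(q)\in\Sigma\cap K$, the portion $f(U_q)$ of $\Sigma$ inside the relevant chart is a Lipschitz graph of constant $<\tfrac13$ over $H(p)$; hence the orthogonal projection $\Pi_{p+H(p)}$ restricted to $f(U_q)$ is a bi-Lipschitz homeomorphism onto an open subset of $p+H(p)$ containing a ball $B_{R_K}(p)\cap(p+H(p))$, and in particular $f(U_q)\subset C_p(\alpha,H(p))$ once $R_K$ is small, because a graph of small Lipschitz constant lies inside any fixed cone. So for each $x$ in the (open, hence dense) set $D_p:=B_{R_K}(p)\cap(p+H(p))$ there is a unique $\eta_x\in f(U_q)\subset\Sigma\cap C_p(\alpha,H(p))$ with $\Pi_{p+H(p)}(\eta_x)=x$. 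Near $\eta_x$ the set $\Sigma$ contains the graph piece $f(U_q)$, which has $m$-dimensional Hausdorff measure comparable to that of its nearly-isometric projection; this gives $\HM^m(\Sigma\cap B_r(\eta_x))\ge c\,r^m$ for small $r$, with $c$ depending only on the (uniform over $K$) Lipschitz bound. Finally, since $\ang\big(H(\mu),H(p)\big)\le\ang\big(H(\mu),H(\eta_x)\big)+\ang\big(H(\eta_x),H(p)\big)$ stays below $\tfrac1{10}+\tfrac1{10}<M\alpha$ for every $M,\alpha>0$ once one is in a fine enough cover (here one uses $M\alpha>0$ is fixed, so just refine the charts so that the tangent-plane oscillation on each chart is $<\tfrac12 M\alpha$), one has $\Sigma\cap B_r(\eta_x)\subset E_{\alpha,M}(p)$, so the same mass bound yields $\HM^m\big(E_{\alpha,M}(p)\cap B_r(\eta_x)\big)\ge c_K r^m$ for all $r\in(0,R_K/10^5]$.

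The main obstacle is bookkeeping the uniformity: one must ensure that the radius $R_K$, the cone aperture condition, and the tangent-plane oscillation bound can all be achieved simultaneously with constants depending only on $K$ (equivalently, on finitely many charts) and not on the individual good point $p$. This is where compactness of $\RM$ is essential and where the argument would otherwise fail for non-compact $\RM$ — the Lebesgue-number lemma applied to the finite chart cover of the compact set $f^{-1}(K)$ supplies the uniform $R_K$, and continuity of $\df$ on the closures of the finitely many charts supplies the uniform oscillation bound. Everything else — bi-Lipschitz estimates for graphs of small Lipschitz constant, comparability of Hausdorff measure under bi-Lipschitz maps, the cone inclusion — is standard and I would relegate it to the general graph lemmas of Appendix \ref{app:graphs}. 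One last point: since we are free to choose $\alpha$ and $M$ arbitrarily, there is no smallness to fight, so the proof genuinely only needs that $f(\RM)$ is locally a $C^1$-graph, which in turn is just the immersion hypothesis plus compactness.
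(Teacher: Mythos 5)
Your overall architecture matches the paper's proof: local $C^1$-graph patches with prescribed small slope, a finite subcover giving a uniform radius $R_K$, the choice $\Sigma^\ast=\Sigma$, the map $H$ defined by tangent planes with a fixed choice at multiple points, the cone inclusion, the surjectivity of the projection onto $p+H(p)$, and the measure bound via the nearly isometric projection of the graph. But there is one genuine gap, and it is exactly the step the paper spends the second half of its proof on: the lower bound for $\HM^m\big(E_{\alpha,M}(p)\cap B_r(\eta_x)\big)$.

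You assert $\Sigma\cap B_r(\eta_x)\subset E_{\alpha,M}(p)$ on the grounds that tangent planes oscillate little within a chart. This fails at self-intersections of the immersion: $E_{\alpha,M}(p)$ is defined through the single-valued function $H$, and at a point $\mu$ where the sheet $\Gamma_i$ containing $p$ crosses another sheet $\Gamma_l$, your ``fixed choice among the finitely many preimages'' may assign $H(\mu)=T_\mu\Gamma_l$ with $\ang(H(\mu),H(p))$ large, so $\mu\notin E_{\alpha,M}(p)$ even though $\mu$ lies on the graph piece through $\eta_x$. Refining the charts does not cure this, since the self-intersection structure is intrinsic to $f$. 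What rescues the estimate --- and what your proposal is missing --- is that the offending points form an $\HM^m$-null set: if $\Gamma_l$ is a sheet whose tangent planes differ from $H(p)$ by at least $M\alpha/2$, then $\Gamma_l$ and $\Gamma_i$ intersect transversally (their tangent planes at any common point differ by at least $M\alpha/2-2\beta>0$), hence $\dim_{\HM}(\Gamma_i\cap\Gamma_l)\le m-1$; removing all such sheets from $\Gamma_i\cap B_r(\eta_x)$ therefore does not decrease its $\HM^m$-measure, and on the remainder $H$ is necessarily given by a sheet whose tangent planes are within $M\alpha/2+2\beta<M\alpha$ of $H(p)$. Without this transversality and dimension-reduction argument the mass bound for $E_{\alpha,M}(p)$ is unproved. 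A secondary, fixable slip: the chart slope must be taken $\lesssim\alpha$, not merely $<1/3$ or $\lesssim M\alpha$, for the inclusion $f(\mathscr{U}_{x,r_x})\subset C_p(\alpha,H(p))$ to hold for arbitrarily small $\alpha$; the paper's choice $\beta=M\alpha(M+1)^{-1}(\alpha+1)^{-1}/5<\alpha/5$ does both jobs at once.
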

The proof of this proposition  is based on the well-known fact that the 
images of sufficiently  small portions
of the manifold $\mathscr{M}$ under $f$ can be expressed as graph patches with
arbitrarily small $C^1$-norm; a detailed  proof of the following lemma
is carried out in
\cite[Section 4.3]{kaefer_2020}.
\begin{lem}[Local graph representation of immersed coordinate patches] \label{lem:imm_local_graph}
	Let $\RM$ be an $m$-dimensional $C^1$-manifold for $1\leq m\leq n$ and 
$f\in C^1\li\RM,\R^n\ri$ a $C^1$-immersion. 
Then for every $\beta>0$ and $x\in \RM$ there exist a radius $r_x=r_x(\beta,f,\RM)>0$ and a 
function $u_x \in C^1(T_xf,T_xf^\perp)$ 
satisfying $u_x(0)=0$, $Du_x(0)=0$, and
		\begin{align}\label{imm_C0_bound}
			\Vert Du_x\Vert_{C^0}< \beta,
		\end{align}	
	such that
		\begin{align}\label{imm_local_graph}
			f\li \mathscr {U}_{x,r_x}\ri= \big( f(x) + \graph u_x \big) \cap B_{r_x}(f(x)),
		\end{align}
		where we set $T_xf:=Df(x)(T_x\mathscr{M})$, and
		 \begin{align}\label{imm_U}
		 	\mathscr {U}_{x,r}\subset \RM
		 \end{align}
		 is defined as the largest connected open subset of the preimage 
$f^{-1}\li B_r(f(x))\ri$ containing 
the point $x \in \RM$.
\end{lem}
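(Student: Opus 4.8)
The plan is to prove Lemma~\ref{lem:imm_local_graph} by the classical fact that an immersion is, near each point, a graph over its tangent plane --- made quantitative --- followed by a careful identification of the intrinsically defined patch $\mathscr{U}_{x,r_x}$. After translating the ambient space I may assume $f(x)=0$; I write $P:=T_xf$, so $\R^n=P\oplus P^\perp$, and (by monotonicity of the claim in $\beta$) assume $\beta<1$.

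First I would pick a $C^1$-chart $\phi$ around $x$ with $\phi(x)=0$ and, after composing it with a linear automorphism of $\R^m$, arrange that $g:=f\circ\phi^{-1}$ satisfies $g(0)=0$ and $Dg(0)=\iota_P$, the inclusion of $P\cong\R^m$ into $\R^n$ (possible since $f$ is an immersion, so $Dg(0)$ is injective onto $P$). Splitting $g=(g_1,g_2)$ along $P\oplus P^\perp$ gives $g_1(0)=0$, $Dg_1(0)=\Id_P$, $g_2(0)=0$, $Dg_2(0)=0$, so the inverse function theorem turns $g_1$ into a $C^1$-diffeomorphism of a neighbourhood of $0$ onto a ball $B^P_\rho(0)\subset P$; denoting its inverse by $\psi$, the reparametrised map $g\circ\psi$ equals $\xi\mapsto\big(\xi,\tilde u(\xi)\big)$, where $\tilde u:=g_2\circ\psi\in C^1\big(B^P_\rho(0),P^\perp\big)$, $\tilde u(0)=0$, $D\tilde u(0)=0$. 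Thus $f$ maps a neighbourhood of $x$ homeomorphically onto the $C^1$-graph of $\tilde u$. Since $D\tilde u$ is continuous with $D\tilde u(0)=0$, I then fix $\rho'\in(0,\rho]$ with $\|D\tilde u\|_{C^0(B^P_{\rho'}(0))}$ as small as I wish, multiply $\tilde u$ by a $C^1$ cut-off $\chi$ that is $1$ on $B^P_{\rho'/2}(0)$, supported in $B^P_{\rho'}(0)$ and with gradient $\lesssim1/\rho'$, and extend by zero; since the cut-off term is controlled by $|\tilde u\otimes\nabla\chi|\lesssim\|D\tilde u\|_{C^0(B^P_{\rho'}(0))}$, this yields $u_x\in C^1(T_xf,T_xf^\perp)$ with $u_x(0)=0$, $Du_x(0)=0$, $\|Du_x\|_{C^0}<\beta$, agreeing with $\tilde u$ on $B^P_{\rho'/2}(0)$.

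The only genuinely delicate step is to identify $\mathscr{U}_{x,r_x}$, since it is defined merely as the connected component of $x$ in $f^{-1}(B_{r_x}(f(x)))$. Because $f$ is an immersion it is injective on some neighbourhood $U'$ of $x$; I would choose a compact coordinate ball $\bar K$ with $x$ in its interior, contained in $U'$ and in the neighbourhood $N_0$ on which $f$ is a homeomorphism onto $\graph\big(u_x|_{B^P_{\rho'/2}(0)}\big)$. Then $f(x)\notin f(\partial K)$ by injectivity, $f(\partial K)$ is compact, so $3\delta_0:=\dist\big(f(x),f(\partial K)\big)>0$; for $r<\delta_0$ the connected set $\mathscr{U}_{x,r}$, which contains $x$ in the interior of $\bar K$ and lies in $f^{-1}(B_r(f(x)))$, cannot meet $\partial K$, so $\mathscr{U}_{x,r}\subset\bar K$. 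This gives the inclusion ``$\subseteq$'' in \eqref{imm_local_graph}. For the reverse inclusion I would show that $(f(x)+\graph u_x)\cap B_{r_x}(f(x))$ is connected for $r_x$ small: via $\Pi_P$ it corresponds to the sublevel set $\{\xi\in P:|\xi|^2+|u_x(\xi)|^2<r_x^2\}$, and $\xi\mapsto|\xi|^2+|u_x(\xi)|^2$ has $0$ as its unique critical point --- a critical point satisfies $\xi=-Du_x(\xi)^{\top}u_x(\xi)$, whence $|\xi|\le\|Du_x\|_{C^0}^2\,|\xi|<|\xi|$ unless $\xi=0$ --- so all its sublevel sets are connected; hence the $f$-preimage of this graph-ball inside $\bar K$ is a connected subset of $f^{-1}(B_{r_x}(f(x)))$ containing $x$, thus contained in $\mathscr{U}_{x,r_x}$, giving ``$\supseteq$''. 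Taking $r_x$ below $\delta_0$, $\rho'/2$, and any further threshold used above completes the proof.

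I expect this last paragraph to be the main obstacle: the straightening of an immersion is textbook, but one genuinely needs the \emph{local injectivity} of $f$ to confine the intrinsically defined patch $\mathscr{U}_{x,r_x}$ to a single chart, together with the connectedness of the graph-ball to upgrade one inclusion to an equality --- i.e., to rule out that other sheets of $f(\mathscr{M})$ entering $B_{r_x}(f(x))$ are joined to $x$ within $f^{-1}(B_{r_x}(f(x)))$.
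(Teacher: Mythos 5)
The paper does not prove this lemma itself --- it defers to \cite[Section 4.3]{kaefer_2020} --- so there is no in-text argument to compare against; your route (inverse-function-theorem straightening of the immersion over $T_xf$, a cut-off to get a globally defined $u_x$ with $\Vert Du_x\Vert_{C^0}<\beta$, and then a compactness/connectedness argument to identify $\mathscr{U}_{x,r_x}$) is the standard and expected one, and it does correctly isolate the only delicate point, namely that \eqref{imm_local_graph} is an \emph{equality} and that other sheets of $f(\RM)$ entering $B_{r_x}(f(x))$ must be excluded from the connected component of $x$. Your confinement argument ($\dist(f(x),f(\partial K))>0$ forces $\mathscr{U}_{x,r}\subset \operatorname{int}K$ for small $r$) and the observation that $r_x<\rho'/2$ makes the cut-off region invisible in $B_{r_x}(f(x))$ are both sound.

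One step is asserted rather than proved: for the reverse inclusion you claim that because $h(\xi):=|\xi|^2+|u_x(\xi)|^2$ has $0$ as its unique critical point, all sublevel sets of $h$ are connected. For a merely $C^1$ function this implication is not immediate (the usual gradient-flow argument needs a locally Lipschitz gradient, or a pseudo-gradient deformation lemma). It is easier and fully elementary to note that $h$ is strictly increasing along rays from the origin: for $\xi\neq 0$ and $t>0$,
\begin{equation*}
\tfrac{d}{dt}h(t\xi)=2t|\xi|^2+2\big\langle u_x(t\xi),Du_x(t\xi)[\xi]\big\rangle\geq 2t|\xi|^2\big(1-\Vert Du_x\Vert_{C^0}^2\big)>0,
\end{equation*}
using $|u_x(t\xi)|\leq \Vert Du_x\Vert_{C^0}\,t|\xi|$; hence every sublevel set is star-shaped about $0$, so the graph-ball is connected. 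A second, very minor imprecision: you should take the preimage of the graph-ball under $f$ restricted to $N_0$ (not to $\bar K$), or shrink $r_x$ so that the graph-ball lies in $f(\operatorname{int}K)$; otherwise the preimage inside $\bar K$ might map onto a proper subset of the graph-ball. Both points are routine fixes, so I regard the proposal as correct.
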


In order to define $H\colon \Sigma \to \mathscr{G}(n,m)$ when proving Proposition \ref{prop:imm_ex}
we use a covering 
with these local graph patches. Notice that $H$ depends on the choice of the covering and 
of the finite subcovering below, and also on the ordering of the finite index set. Any such choice will lead to an 
admissible set $\Sigma=f(\RM)$. Note that for compact $\RM$ the set $\Sigma$ is compact, hence closed.

\begin{proof}[Proof of Proposition \ref{prop:imm_ex}]
By virtue of the monotonicity \eqref{eq:monotonicity} 
we may assume $\alpha\leq 1$. 
Now, fix
\begin{align} \label{imm_beta}
\beta:=
M\alpha\cdot (M+1)^{-1}(\alpha+1)^{-1}/5
\end{align}
and consider the open covering 
$ \RM\subset \bigcup_{x\in\RM}\mathscr {U}_{x,r_x/4}$
for the sets $\mathscr{U}_{x,r_x/4}$ as defined in \eqref{imm_U} with positive radii $r_x=r_x(\beta,f,\RM)$. 
The manifold $\RM$ is  compact, so that there is a finite subcover
\begin{align}\label{imm_subcover}
\RM\subset \bigcup \nolimits_{i=1}^{N}\mathscr{U}_{x_i, {r_i}/4} \subset \RM
\end{align}
for distinct points $x_1,\dots,x_N\in \RM$ and radii $r_i:=r_{x_i}(\beta,f,\RM)>0$ for $i=1,\dots,N$. Set
\begin{align} \label{imm_R}
R:=R(\alpha,M,f,\RM):= \min \left\{r_1,\dots,r_N\right\}/4,
\end{align}
where we note that $R$ depends on $\alpha$ and $M$ via \eqref{imm_beta}. Observe that for any $y \in \RM$ there 
is at least one $k=k_y \in\{1,\dots,N\}$ such that $y\in \mathscr{U}_{x_k,r_k/4}$ implying $f(y) 
\in B_{{r_k}/4}\li f(x_k)\ri$; hence
$B_r(f(y))\subset B_{r_k}(f(x_k)) \Foa r \in (0,3R].$
Using \eqref{imm_local_graph} of Lemma \ref{lem:imm_local_graph} we therefore find
\begin{align}\label{imm_graph_uni}
f\li \mathscr{U}_{x_k,r_k}\ri \cap B_r(f(y)) = \li f(x_k) +\graph u_{x_k}\ri \cap B_r(f(y)) \Foa r \in (0,3R].
\end{align}
Define $H\colon \Sigma \to \mathscr{G}(n,m)$ as $H(p):=T_p(\,f(x_{i(p)})
+\graph u_{x_{i(p)}})$ for $p \in \Sigma$, where $i(p)$
 is the smallest index $i \in \{1,\dots,N\}$ such that $p\in f(\mathscr U_{x_i,r_i/4})$ which is well-defined
by virtue of  \eqref{imm_subcover}.
Denote $p_k:=f(x_k)$,  $F_k:=T_{x_k}f$, and $\Gamma_k:=p_k+\graph u_k$,
 and notice that 
$F_k=T_{p_k}(\Gamma_k)$ for $k=1,\dots,N$. Set $\Sigma^\ast:=\Sigma$ and fix any $p\in \Sigma$ and abbreviate 
$i:=i(p)\in \{1,\dots,N\}$, $u_i:=u_{x_{i(p)}}\colon F_i\to F_i^\perp$, so that by definition $H(p)=T_{p-p_i}\graph u_i$.
Now use \eqref{imm_graph_uni} for  $y \in \mathscr{U}_{x_i, {r_i}/4}$ with $f(y)=p$ and for the radius $r=3R$ to obtain
\begin{align}\label{imm_graph_p}
f\li\mathscr{U}_{x_i,r_i}\ri \cap B_{3R}(p)= \Gamma_i \cap B_{3R}(p).
\end{align}
In particular, $p=p_i+\xi+u_i(\xi)$ for some $\xi \in F_i$. Any other graph point $q=p_i+\mu+u_i(\mu)$ 
with $\mu\in F_i$, is contained in the cone
$C_p\li\beta,F_i\ri$
since \eqref{imm_C0_bound} implies
\begin{align}\label{eq:lip-cone-est}
\vert \Pi_{F_i^\perp}(q-p) \vert &=
\left \vert u_i(\mu)-u_i(\xi)\right \vert 
\leq \beta \vert \mu-\xi\vert = \beta \left \vert\Pi_{F_i}(q-p)\right\vert.
\end{align}
The Cone Lemma \ref{lem:cone} applied to $F:=F_i$, $G:=H(p)$, 
$\chi:=\beta$, $\sigma:=\beta$, and $\kappa:=\alpha$ 
implies that any such $q\in \Gamma_i$ is also contained in $C_p\li\alpha,H(p)\ri$ since 
Lemma \ref{lem:angle_lip} guarantees
\begin{align} \label{imm_angle}
\va\li H(p),F_i\ri =\va \li T_{p-p_i} \graph u_i ,T_{0}\graph u_i\ri \leq \left \Vert 
Du_i(\xi)-Du_i(0)\right \Vert_{C^0}\leq \beta
\end{align}
with $\beta < \frac \alpha 5 \leq \frac 1 5$ by \eqref{imm_beta} and, therefore, $\frac {\beta +
 (1+\beta)\beta}{1-(1+\beta)\beta}< \frac {11}{19}\alpha$. We deduce from \eqref{imm_graph_p}
\begin{align} \label{imm_graph_cone}
\Gamma_i \cap B_{3R}(p)=f\li \mathscr{U}_{x_i,r_i}\ri \cap B_{3R}(p)\subset \Sigma 
\cap C_p\li\alpha,H(p)\ri \cap B_{3R}(p).
\end{align}
Recall that $p=p_i+\xi+u_i(\xi)$ for $\xi\in F_i$, so that we can
use the Shifting Lemma \ref{lem:shift-graphs} to find the  shifted 
function $\tilde u \colon F_i\to F_i^\perp$  
 satisfying
 $\tilde u(0)=0$,  $\lip \tilde u=\lip u_i\leq \beta$, and
$ \Gamma_i = p+\graph \tilde u.$
Therefore, \eqref{imm_graph_cone} yields
\begin{align}\label{imm_graph_cone_tilde}
\Gamma_i \cap B_{3R}(p)=
( p+\graph \tilde u ) \cap B_{3R}(p)\subset \Sigma \cap C_p(\alpha,H(p)) \cap B_{3R}(p).
\end{align}
Applying the Tilting Lemma \ref{lem:tilting} to $u:=\tilde u$, $F:=F_i$, $G:=H(p)$ with $\va\li H(p),F_i\ri
 \leq \beta$ by \eqref{imm_angle}, so $\chi:=\beta$ and $\sigma=\chi (1+\lip u) \leq \beta (1+\beta ) <1$, 
we find
\begin{align} \label{imm_projection}
	B_{\frac{\li 1-\sigma\ri\rho}{\sqrt{1+\li\lip \tilde u\ri^2}}}(0)\cap H(p)\subset \Pi_{H(p)}\li 
\graph \tilde u \cap {B_\rho(0)}\ri \Foa \rho>0.
\end{align}
For the set $D_p:=B_{\frac {\li 1-\beta \li 1+\beta\ri \ri R}{\sqrt{1+\beta^2}}}(p) \cap \li p + H(p)\ri $ we obtain 
in
 particular, by \eqref{imm_graph_cone_tilde}
\begin{align}
D_p \subset \Pi_{p+H(p)}\big( \Gamma_i\cap {B_R(p)}       \big) \subset
\Pi_{p+H(p)}\big( \Sigma \cap C_p\li \alpha, H(p)\ri\big),
\end{align}
so that we can choose a uniform radius 
\begin{align*}
	R_K\equiv R_\Sigma \li \alpha,M,f,\RM\ri := \textstyle\min\Big\{\frac {1-\beta\li 1+\beta\ri}{\sqrt{1+\beta^2}}R,1\Big\} 
\quad\textnormal{for all compact $K\subset\Sigma$},
\end{align*}
where $R$ is defined as in \eqref{imm_R} and $\beta$ as in \eqref{imm_beta}.
In particular, for all $x \in D_p$ there exists a point 
\begin{equation}\label{imm_eta_x_inclusion}
\eta_x \in \Gamma_i\cap {B_R(p)} \subset \Sigma \cap C_p\li \alpha ,H(p)\ri \With
 	\Pi_{p+H(p)}(\eta_x)=x.
\end{equation}
Let $\mathscr{L}\subset \{1,\dots, N\}$ be the set of indices $l$ such that
$\va \li T_{0}\graph u_l, T_{0}\graph u_i\ri \geq  {M\alpha}/2.$
Notice, of course, $i\notin \mathscr L$. Then, by  Lemma \ref{lem:angle_lip} and \eqref{imm_beta}, one has
\begin{align}\label{imm_bad_angle}
\va \li T_{q-p_l} \graph u_l, T_{q-p_i}\graph u_i\ri &\geq \va\li T_{0}\graph u_l,T_{0}\graph u_i\ri \notag \\
&\hspace{-4cm}- \va \li T_{0}\graph u_l,T_{q-p_l}\graph u_l\ri - \va\li T_{0}\graph u_i,T_{q-p_i}\graph u_i\ri \notag\\
&\hspace{-6.0cm}\geq  {M\alpha}/2 - \left \Vert Du_l(0)-Du_l(\mu)\right \Vert_{C^0} - 
\left \Vert Du_i(0)-Du_i(\xi)\right \Vert_{C^0}
\geq  {M\alpha}/2-2\beta >0,
\end{align}
for all $l\in \mathscr L$ and all points $q=p_l+\mu+u_l(\mu)=p_i+\xi+u_i(\xi)$ contained in the
intersection $S_l:=\Gamma_l\cap\Gamma_i
$,
which means that the two $C^1$-graphs intersect transversally, so that 
$\dim_\HM(S_l)\leq m-1$ for all $l \in \mathscr L$ In particular, for any
 $x \in D_p$ with $\eta_x$ as in \eqref{imm_eta_x_inclusion}, one finds for all $r>0$
\begin{align}\label{imm_measure}
	\textstyle\HM^m\big(  \Gamma_i \cap B_r(\eta_x)  \setminus 
	\bigcup\nolimits_{l \in \mathscr L} 
\Gamma_l\big)=\HM^m\big( \Gamma_i \cap B_r(\eta_x)\big).
\end{align}
 Notice, finally, that all points $q\in \big(\Gamma_i \setminus
 \bigcup_{l\in\mathscr L}\Gamma_l\big) \cap B_{3R}$ are contained in the set 
$E_{\alpha,M}(p)$ introduced in Definition \ref{def:admissible_sets}. Indeed, by 
 \eqref{imm_graph_cone_tilde} any
such point $q$ is contained in $\Sigma$, and the
$m$-plane
$H(q)$ is contained in 
$\left\{ T_{q-p_k}\graph u_k \colon  
k\in\{1,\dots,N\}\setminus \mathscr L \right\}$, 
so that there is a $k=k(q)\in \{1,\dots,N\} \setminus \mathscr L$ with 
\begin{align*}
\va (H(q),H(p)) &\leq \va (H(q),T_{0}\graph u_k) + 
\va  (T_{0}\graph u_k,T_{0}\graph u_i) 
\\
&\hspace{1cm}
+ \va \li T_{0}\graph u_i,H(p)\ri\\
&\hspace{-2.5cm}<\left \Vert Du_k(\mu)-Du_k(0)\right \Vert_{C^0} + 
 {M\alpha}/2 + \left \Vert Du_i(0)-Du_i(\xi)\right 
\Vert_{C^0}
\leq 2\beta +  {M\alpha}/2 <M\alpha,
\end{align*}
where we wrote $q=p_k+\mu+u_k(\mu)$ for some $\mu\in F_k$, and as before, $p=p_i+\xi+u_i(\xi)$ for $\xi\in F_i$, and 
used Lemma \ref{lem:angle_lip} and \eqref{imm_beta}. Consequently,
\begin{align} \label{imm_inclusion_Eam}
	\textstyle\big( \Gamma_i \setminus \bigcup\nolimits_{l\in\mathscr L}
	\Gamma_l \big)
\cap B_{3R}(p)\subset E_{\alpha,M}(p)\cap B_{3R}(p).
\end{align}
Now, for $x \in D_p=\li p + H(p)\ri \cap B_{R_K}(p)$ with $\eta_x$ as in \eqref{imm_eta_x_inclusion} we estimate
\[\vert \eta_x-p\vert^2 = \vert \Pi_{H(p)}(\eta_x-p) \vert^2 + \vert
 \Pi_{H(p)^\perp}(\eta_x-p) \vert^2 \leq ( 1+\alpha^2) \vert x-p\vert^2, \]
so that 
$\vert \eta_x -p\vert <\sqrt{1+\alpha^2} R_K\le 2 R_K < 2R$, 
since we assumed $\alpha\le 1$. This implies,
by definition of $R_K$ in \eqref{imm_R},
\begin{align}\label{imm_inclusion}
	B_r(\eta_x)\subset B_{3R}(p) \Foa r \in \left(0,R_K\right].
\end{align}
Combining  \eqref{imm_inclusion_Eam}, \eqref{imm_measure}, and \eqref{imm_inclusion} we arrive at
\begin{align*}
&\HM^m\big( E_{\alpha,M}(p)\cap B_r(\eta_x)\big) \geq \HM^m\big( \big[ \Gamma_i \setminus \textstyle
\bigcup\nolimits_{l\in\mathscr L}\Gamma_l \big] \cap B_r(\eta_x)\big)
=\HM^m\big( \Gamma_i \cap B_r(\eta_x)\big)\\ 
&
\geq \HM^m\big( \Pi_{F_i}\li \Gamma_i \cap B_r(\eta_x)\ri \big)
 \geq \HM^m\big( 
B_{ r /{\sqrt{1+\beta^2}}}(0)\cap F_i\big)
=\omega_m\big( {r}/{\sqrt{1+\beta^2}}\big)^m
\end{align*}
for all $ r \in \left(0, R_K\right].$
Notice that we used \eqref{imm_eta_x_inclusion}, i.e.,  $\eta_x =p_i+x+u_i(x)$, so that any other point 
$q =p_i+\mu+u_i(\mu)\in \Gamma_i$ with
$\vert \mu - x\vert < r/ {\sqrt{1+\beta^2}}$ satisfies  
$q\in B_r(\eta_x)$, which implies
$B_{  {r}/{\sqrt{1+\beta^2}} }(x)\cap \li p_i+F_i\ri \subset \Pi_{F_i}\big( 
B_r(\eta_x)\cap \Gamma_i \big).$
\end{proof}

\subsection{Countable unions of Lipschitz graphs} \label{subsec:lipschitz}
The following considerations can also be localized to study countable unions of
pieces of Lipschitz graphs using similar arguments as in Section
\ref{subsec:immersion}. For simplicity we restrict here to collections of entire Lipschitz graphs.
\begin{prop} \label{lem:lip_admis}
	Suppose $
	\Sigma= \overline{ \bigcup_{i\in\N} \li p_i + \graph u_i\ri },$
	where $u_i \in C^{0,1}\li F_i,F_i^\perp\ri$, $F_i \in \mathscr{G}(n,m)$, $u_i(0)=0$, and $\lip u_i\leq \beta$ 
for all $i \in \N$. If
	\begin{align} \label{beta_lip_cond}
	0 \leq \beta \leq  M\alpha / (16(M+1))
	\end{align}
	for given $\alpha \in (0,1)$ and $M>0$, then one finds
$\Sigma \in \Aam.$
\end{prop}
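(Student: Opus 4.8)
The plan is to exhibit the good-point set $\Sigmaa$ and the mock tangent plane map $H$ explicitly and then check the two clauses of Definition~\ref{def:admissible_sets} by hand, using that each patch $\Gamma_i:=p_i+\graph u_i$ is an \emph{entire} Lipschitz graph over $F_i$ with slope at most $\beta$, and that $\beta<\alpha$ by~\eqref{beta_lip_cond}. Since $\Sigma$ is a closure it is automatically closed, so the first part of clause~(i) is free; I set $\Sigmaa:=\bigcup_{i\in\N}\Gamma_i$, which is dense in $\Sigma$ by construction, and for $p\in\Sigmaa$ I let $i(p):=\min\{j\in\N:p\in\Gamma_j\}$ and define $H(p):=F_{i(p)}$, extending $H$ arbitrarily (say by $F_1$) on $\Sigma\setminus\Sigmaa$. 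Taking $H(p)$ to be the \emph{flat} direction plane rather than a genuine tangent plane is what keeps the argument short: it removes the need for Rademacher's theorem and for the Cone/Tilting/Shifting Lemmas invoked in the $C^1$-immersion case (Proposition~\ref{prop:imm_ex}).

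For clause~(ii) I claim one may take the \emph{universal} constants $R_K:=1$ and $c_K:=\omega_m2^{-m/2}$ for every compact $K\subset\Sigma$. Fix $p\in\Sigmaa$, put $i:=i(p)$ and write $p=p_i+\xi+u_i(\xi)$ with $\xi\in F_i$. Since $\lip u_i\le\beta\le\alpha$, every point of $\Gamma_i$ lies in $C_p(\beta,F_i)\subset C_p(\alpha,H(p))$, while $\Pi_{p+H(p)}$ maps $\Gamma_i$ onto all of $p+F_i$. Hence I put $D_p:=(p+H(p))\cap B_1(p)$, which is dense in itself, and for $x=p+v\in D_p$ with $v\in F_i$ I take $\eta_x:=p_i+(\xi+v)+u_i(\xi+v)\in\Gamma_i\subset\Sigma\cap C_p(\alpha,H(p))$, which satisfies $\Pi_{p+H(p)}(\eta_x)=x$ and $|\eta_x-p|\le\sqrt{1+\beta^2}\,|v|<\sqrt2$.

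It remains to establish the lower mass bound on small scales. Projecting $\Gamma_i$ back onto $F_i$ with the $1$-Lipschitz map $\Pi_{F_i}$ yields $\HM^m(\Gamma_i\cap B_r(\eta_x))\ge\omega_m r^m(1+\beta^2)^{-m/2}\ge\omega_m2^{-m/2}r^m$ for every $r>0$. A point $q\in\Gamma_i$ fails to lie in $E_{\alpha,M}(p)$ precisely when $\ang(H(q),H(p))=\ang(F_{i(q)},F_i)\ge M\alpha$; since $i(q)\le i$, with equality forcing the angle $0$, this forces $i(q)=k<i$, that is, $q\in\Gamma_i\cap\Gamma_k$ with $\ang(F_i,F_k)\ge M\alpha$. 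Thus the bad part of $\Gamma_i$ is contained in $\bigcup_{k:\,\ang(F_i,F_k)\ge M\alpha}(\Gamma_i\cap\Gamma_k)$, and by~\eqref{beta_lip_cond} the angle $M\alpha$ is large enough relative to the common slope bound $\beta$ to apply Lemma~\ref{lem:intersect_lip_graphs}: each such intersection is contained in an $(m-1)$-dimensional Lipschitz graph, hence $\HM^m$-null, and a countable union of $\HM^m$-null sets is $\HM^m$-null. Therefore $\HM^m(E_{\alpha,M}(p)\cap B_r(\eta_x))\ge\HM^m(\Gamma_i\cap B_r(\eta_x))\ge\omega_m2^{-m/2}r^m$ for all $r\in(0,R_K/10^5]$, which finishes the verification of $\Sigma\in\Aam$.

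The main obstacle is this last null-set step: one has to (a) identify the bad subset of $\Gamma_i$ correctly via the minimal-index convention for $H$ --- observe that there is no triangle-inequality slack here, in contrast to Proposition~\ref{prop:imm_ex}, which actually simplifies matters --- and (b) verify that the quantitative hypothesis of Lemma~\ref{lem:intersect_lip_graphs} holds, which is exactly what the factor $16(M+1)$ in~\eqref{beta_lip_cond} is calibrated for, guaranteeing that two patches whose direction planes meet at angle at least $M\alpha$ and whose slopes are at most $\beta$ intersect in a genuinely lower-dimensional set. The remaining points --- density of $\Sigmaa$, the cone inclusion, surjectivity of the projection onto $p+H(p)$, and the elementary mass estimate --- are routine.
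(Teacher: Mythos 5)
Your proof is correct, but it takes a genuinely different route from the paper's, and the difference is worth spelling out. The paper defines the mock tangent plane as the \emph{actual} tangent plane of the minimal-index graph, $H(p):=T_{p-p_{i(p)}}\graph u_{i(p)}$ (falling back to a dummy plane where $Du_{i(p)}$ fails to exist), which forces it to (a) invoke Rademacher's theorem and a minimal-index stability argument to show $\Sigmaa$ is dense, (b) pass through the Cone Lemma~\ref{lem:cone} and the Tilting Lemma~\ref{lem:tilting} to transfer the cone inclusion and the surjectivity of the projection from $F_i$ to the tilted plane $H(p)$, and (c) use the threshold $\ang(F_l,F_i)\ge M\alpha/2$ with a $2\beta$ triangle-inequality cushion when identifying $E_{\alpha,M}(p)$. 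Your choice $H(p):=F_{i(p)}$ eliminates all three complications, and since Definition~\ref{def:admissible_sets} only asks for the \emph{existence} of some $H$, your argument does prove the stated membership $\Sigma\in\Aam$; your identification of the bad set and the application of Lemma~\ref{lem:intersect_lip_graphs} (the numerics $\beta\le M\alpha/(16(M+1))<M\alpha/16$ comfortably meet its hypothesis $8\sigma<\chi$) are sound, modulo the routine remark that one must still translate the two graphs to a common intersection point before invoking that lemma, so the Shifting Lemma~\ref{lem:shift-graphs} is not entirely dispensable. What your simplification costs is compatibility with the sequel: Corollary~\ref{cor:lip_collection} explicitly reuses \emph{this} proposition with $H(p)=T_p\Gamma_{i(p)}$, because the hypothesis ``locally finite energy $E^\tau$'' there refers to the energy $E^\tau(\Sigma,H)$ computed with genuine tangent planes, and the self-avoidance machinery (Theorem~\ref{thm:dist_sigma_to_plane} onward) is driven by that same $H$. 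So your witness $H$ proves the literal statement but is not the one the paper needs downstream; the paper's heavier proof is buying that extra compatibility.
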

Notice that the monotonicity property \eqref{eq:monotonicity} implies
that $\Sigma \in \Aam$ for \emph{all} $\alpha,M>0$ as 
long as $\beta$ satisfies \eqref{beta_lip_cond} with $\alpha$ replaced
by some constant $\tilde{\alpha}<\min\{\alpha,1\}$.

\begin{proof}
	Fix an arbitrary $m$-plane $F_0 \in \mathscr{G}(n,m)$ as a 
	"dummy plane" and
	set $\Gamma_i:=p_i+\graph u_i$ for $i\in\N.$ Notice that for 
every point $p$ contained in the union $\bigcup_{i=1}^\infty \Gamma_i$ there exists a unique 
smallest index $i(p)\in \N$, such that $
	p\in \Gamma_{i(p)},$
	that is, for every $1\leq j<i(p)$ with $i(p)>1$ one has
	$p\not \in \Gamma_j.$
	Now define the map $H\colon \Sigma \to \mathscr{G}(n,m)$ by setting $ H(p):=T_p \Gamma_{i(p)}$
	 if
$p=p_{i(p)}+\xi+u_{i(p)}(\xi)$ for some $\xi\in F_{i(p)}$ 
such that $Du_{i(p)}(\xi)$ exists. In all other cases set
$H(p):=F_0$, which happens either if $Du_{i(p)}(\xi)$ does \emph{not} exist or if $p$ is not contained in \emph{any} 
of the graphs $\Gamma_i$.
Let $\Sigmaa := \{ p \in \Sigma \ \colon \ 
p=p_{i(p)} + \xi + u_{i(p)}(\xi) 
\text{ and } Du_{i(p)}(\xi) 
\text{ exists} \}.$ 
Notice that $\Sigma$ is closed by definition, and that $\Sigmaa\subset \Sigma$ is dense, 
since for any $\eps>0$ and any $q\in\Sigma$, there is a point $q_\eps \in \bigcup_{i=1}^\infty \Gamma_i$, 
such that $\vert q-q_\eps\vert< \eps/ 2$. For $q_\eps$ there exists $i_\eps :=i(q_\eps)$ such that 
$q_\eps =p_{i_\eps}+x_\eps + u_{i_\eps}(x_\eps)$ for some $x_\eps \in F_{i_\eps}$. By Rademacher's Theorem applied to
the Lipschitz function $u_{i_{\eps}}$ there 
exists $\xi_\eps \in F_{i_\eps}$ such that $Du_{i_\eps}(\xi_\eps)$ exists and 
$\vert x_\eps - \xi_\eps\vert <  \eps/( {2\sqrt{1+\beta^2}})$ so that the corresponding graph point 
$\tilde q_\eps:=p_{i_\eps}+\xi_\eps+u_{i_\eps}(\xi_\eps)$ satisfies
	$ 
	\vert \tilde q_\eps-q\vert   <  \eps / 2 + 
\left \vert \xi_\eps-x_\eps + u_{i_\eps}(\xi_\eps)-u_{i_\eps}(x_\eps)\right\vert 
	<  \eps / 2 + \sqrt {1+\beta^2} \vert \xi_\eps - x_\eps \vert <\eps.
	$
	We may assume that $i_\eps = i(\tilde q_\eps)$, since if for every point 
$\xi \in F_{i_\eps}$ with $\vert x_\eps -\xi \vert < \eps/ (2\sqrt{1+\beta^2})$
such that 
$Du_{i_\eps}(\xi)$ exists, the corresponding graph point $\sigma:=p_{i_\eps}+\xi + u_{i_\eps}(\xi)$ 
had smallest index $i(\sigma)<i_\eps$, we could select a sequence $\xi_k \to x_\eps$ with graph points 
$\sigma_k:=p_{i_\eps}+\xi_k+u_{i_\eps}(\xi_k)$ satisfying $i(\sigma_k)=j$ for some fixed $1\leq j<i_\eps$. 
But this would imply $\Gamma_j \ni \sigma_k\to q_\eps$, 
so that $i(q_\eps)=j <i(q_\eps)$, 
which is a contradiction. Hence we have proved that $\Sigmaa\subset \Sigma$ is dense.
	
	To check the remaining conditions of Definition \ref{def:admissible_sets}, fix $p\in\Sigmaa$ and set 
$i:=i(p)$. Then $p=p_i+\xi+u_i(\xi)$ for some $\xi \in F_i$, and we can
use the Shifting Lemma \ref{lem:shift-graphs} to find  the   shifted function  
$\tilde u\in C^{0,1}(F_i,F_i^\perp)$ satisfying
	\begin{align} \label{lip_shifted}
	\Gamma_i = p + \graph \tilde u \quad\With \tilde u(0)=0 \AND \lip \tilde u = \lip u_i\leq \beta.
	\end{align}
	In particular, similarly as  in \eqref{eq:lip-cone-est}
	\begin{align} \label{lip_cone}
	\graph \tilde u\cap B_\varrho(0) \subset C_0\li\beta,F_i\ri  \quad\Foa\varrho>0.
	\end{align}
	Now, $\va \li H(p),F_i \ri = \va \li T_{p-p_i} \graph u_i,F_i\ri \leq \Vert Du_i(\xi)\Vert \leq \beta$, 
by Lemma \ref{lem:angle_lip}. The Cone Lemma 
	\ref{lem:cone} applied to $F:=F_i,$ $G:=H(p)$, $\chi=\sigma:= \beta$, and 
	$\kappa:=\alpha$, where condition \eqref{cone_lem_condition} is satisfied due to \eqref{beta_lip_cond}, 
implies $\graph \tilde u \cap B_\varrho(0) \subset C_0\li \alpha,H(p)\ri$, and therefore,
	\begin{align}\label{eq:graphs-equal}
	\Gamma_i \cap B_\varrho(p) \overset{\eqref{lip_shifted}}{=}
	 \li p+\graph \tilde u \ri \cap B_\varrho(p)\subset C_p\li \alpha,H(p)\ri.
	\end{align} 
	The Tilting Lemma \ref{lem:tilting} applied to $F:= F_i$, $G:=H(p)$, $\va(F,G)\leq \beta \equiv \chi$, 
$u:=\tilde u$ with $\lip u \leq \beta$ satisfying $\sigma:=\chi\li1+\lip u \ri \leq \beta\li1+\beta\ri <1$, implies 
	\[B_{\frac {1-(\beta+1)\beta}{\sqrt{1+\beta^2}}\varrho}(0) \cap H(p)\subset \Pi_{H(p)}\big( \graph \tilde u 
\cap {B_\varrho(0)}\big) \Foa \varrho>0.\]
	Define for arbitrary $\varrho >0$ the flat $m$-dimensional disks
	$D_p^\varrho := B_{\frac {1-(\beta+1)\beta}{\sqrt{1+\beta^2}}\varrho}(p) \cap \big(p+H(p)\big)$
	so that
	\begin{align} \label{eq:nice-proj}
	D_p^\varrho\,\subset\, \Pi_{p+H(p)} \big( (p + 
\graph \tilde u) \cap {B_\varrho(p)}\big)
\,\subset\, \Pi_{p+H(p)}\big( \Sigma \cap C_p(\alpha,H(p))\big).
	\end{align}
	Now let $\mathscr{L}\subset \N$ be the set of indices $l \in \N$, such that 
	\begin{align} \label{lip_bad_angle}
	\va \li F_l,F_i\ri \geq  {M\alpha}/2.
	\end{align}
	For any fixed $l\in \mathscr {L}$ consider the graph $\Gamma_l$.
If there exists a point $q \in \Gamma_l \cap \Gamma_i$ use the Shifting Lemma \ref{lem:shift-graphs} to find the shifted functions
$	\tilde u_l\in C^{0,1}(F_l,F_l^\perp)$ and $\tilde{u}_i
	\in C^{0,1}(F_i,F_i^\perp)$ with  $\tilde{u}_l(0)=0$, 
	$ \lip \tilde u_l
	\leq \beta,$ and $\tilde{u}_i(0)=0$, $\lip\tilde{u}_i\le\beta$,
	satisfying  as in \eqref{lip_shifted} 
	\begin{align}\label{eq:equal-graphs}
	\Gamma_i = q+\graph \tilde u_i \quad\AND\quad \Gamma_l 
	= q+\graph \tilde u_l.
	\end{align}
	We  can now use \eqref{beta_lip_cond} and \eqref{lip_bad_angle} to find
	$\va \li F_l,F_i\ri \geq  {M\alpha}/2 > 8\beta$, which allows us to
	 apply the Lemma of Intersecting Lipschitz Graphs 
	\ref{lem:intersect_lip_graphs}  for $\sigma:=\beta$ and $\chi:=M\alpha/2 $ to conclude by means of \eqref{eq:equal-graphs} and the translational invariance
	of the Hausdorff measure
	\begin{align}\label{lip_dimension}
	\dim_{\HM} \li \Gamma_i \cap \Gamma_l \ri& 
	= 
	\dim_{\HM}\li \graph \tilde u_i\cap \graph \tilde u_l\ri \leq m-1,
	\end{align}
	which is true, of course, also if $	\Gamma_l \cap \Gamma_i=
	\emptyset,$ so that \eqref{lip_dimension} holds for all $l\in\mathscr{L}$.
 If $k \in \N\setminus \mathscr{L}$, on the other hand, we have for any point
 $q \in \Gamma_k$, 
due to \eqref{beta_lip_cond} and Lemma \ref{lem:angle_lip}, 
	$\va \li T_{q-p_k}\graph u_k,H(p)\ri \leq \va \li F_k,F_i\ri + 2\beta < M\alpha,$
	which implies for all $\varrho>0$
	\begin{align}\label{lip_good_set}
\textstyle 	B_\varrho(p)\cap\big( \Gamma_i \setminus \bigcup\nolimits_{l\in \mathscr {L}}
	\Gamma_l \big)\subset 
B_\varrho(p)\cap E_{\alpha,M}(p).
	\end{align}
	For all $x \in D_{p}^{\varrho/ 2}= B_{ 
	\frac {1-(\beta+1)\beta}{2\sqrt{1+\beta^2}}\varrho}(p) \cap \li p + H(p)\ri$
 there is by virtue of
 \eqref{eq:nice-proj}
   and \eqref{eq:graphs-equal}  a point $\eta_x \in \Gamma_i \cap B_{\varrho/2}(p)\subset \Sigma \cap C_p(\alpha,H(p))$, such that $\Pi_{p+H(p)}(\eta_x)=x$,
 and $\vert \eta_x-p\vert <\sqrt{1+\alpha^2}\varrho /2 <\varrho$, since $\alpha <1$. This implies
	$
	B_r(\eta_x)\subset B_{2\varrho}(p) \Foa r \in (0,\varrho],
$
which combined with \eqref{lip_good_set} and \eqref{lip_dimension} yields 
	\begin{align*}
	\HM^m &\textstyle\li E_{\alpha,M}(p)\cap B_r(\eta_x)\ri 
	\geq \HM^m \big(\big[ \Gamma_i \setminus \bigcup\nolimits_{l\in\mathscr{L}}
	\Gamma_l \big] \cap B_r(\eta_x)\big)
	=\HM^m\big( \Gamma_i \cap B_r(\eta_x)\big)\\
	&\geq \HM^m\li \Pi_{F_i}\li \Gamma_i \cap B_r(\eta_x)\ri\ri
	\geq \HM^m\big( B_{ r/ {\sqrt{1+\beta^2}}}(0)\cap F_i\big) = \omega_m \big( r/ {\sqrt{1+\beta^2}} \big)^m
	\end{align*}
	for all $ r\in (0,\varrho],\,\varrho>0.$
		 Since there was no restriction on the radius $\rho>0$ throughout
	the proof so far, we can simply
	set $R_K=1$ for an arbitrary given compact subset $K\subset\Sigma$.
	\end{proof}


\section{Finite energy sets are manifolds} \label{sec:manifolds}
\subsection{Good approximating planes for admissible sets}\label{sec:flatness}
In addition to the $\beta$-numbers \eqref{eq:beta-number}
measuring the local flatness of a set we introduce here  the 
corresponding $\beta$-number with respect to a fixed plane,
as well as the corresponding \emph{bilateral} flatness
parameter  $\theta$.
\begin{defin} \label{def:reifenberg-flatness}
	For $p \in \Sigma \subset \R^m$, an $m$-plane $F\in \mathscr{G}(n,m)$, and a radius $r>0$, the \emph{$\beta$- and $\theta$-numbers of $\Sigma$ with respect
	to $F$} are 
given by
	\begin{align}
	\beta_\Sigma (p,F,r)&:= \sup_{y\in \Sigma\cap B_r(p)}\dist\big(
	y,(p+F)\cap B_r(p)\big)/r,\label{eq:beta-plane}\\
	\theta_\Sigma(p,F,r)&:= \dist_{\HM} \big( \Sigma \cap B_r(p), (p+F) \cap B_r(p) \big)/r.\label{eq:theta-plane}
	\end{align}
	We say $\Sigma$ is an \emph{$(m,\delta)$-Reifenberg-flat set} if for all compact subsets $K\subset \Sigma$ there is a radius $r_0=r_0(K)>0$, such that for all $p \in K$ and $r\in(0,r_0(K)]$, there exists a plane $F_p(r,\delta)\in \mathscr{G}(n,m)$ with
	$\theta_\Sigma(p,F_p(r,\delta),r)\leq \delta.$ Minimizing over all $m$-planes one obtains analogously to \eqref{eq:beta-number} the \emph{$\theta$-number}
		\begin{equation}\label{eq:theta-number}
	\theta_\Sigma(p,r):= \inf_{F\in\mathscr{G}(n,m)}\theta_\Sigma(p,F,r).
	\end{equation}
\end{defin}

Recall that  the Hausdorff-distance of two sets
$A,B\subset \R^n$ is given by
 $\dist_{\HM}(A,B):=\max \{ \sup_{a\in A}\dist (a,B), \sup_{b\in B} \dist(b,A) \}$ so that
\begin{align}\label{theta_def}
\theta_\Sigma(p,F,r)
&= \max \big\{\,\beta_\Sigma(p,F,r) , \sup_{\xi \in (p+F)\cap B_r(p)}\dist \big( \xi,\Sigma \cap B_r(p)\big)/r\big\}.
\end{align}
The second term of the right hand side of \eqref{theta_def} can be bounded
uniformly for \emph{ any} admissible set in $\mathscr{A}^m(\alpha,M)$.
\begin{lem} \label{cor:dist_plane_to_sigma}
	Let   $\Sigma \in \Aam$ for $1\leq m\leq n$, $M>0$, and $\alpha> 0$. For all compact subsets $K\subset \Sigma$, there exists a radius $\varrho_K \in (0,R_K]$, such that for all $p \in K\cap \Sigmaa$, we have
	\[   \sup_{\xi \in (p+H(p))\cap B_r(p)} \dist\big( \xi,\Sigma\cap B_r(p)\big)/r < {2\alpha}/{\sqrt{1+\alpha^2}} \quad\Foa r \in(0,\varrho_K]. \]
	Moreover, for $q \in K\setminus \Sigmaa$ and any
	 sequence $(p_i)_{i \in \N} \subset \Sigmaa$ with $\lim_{i\to\infty}p_i=q$, such that $\lim_{i\to\infty} H(p_i)=:
	 F \in \mathscr{G}(n,m)$ exists, one also finds
	\[   \sup_{\xi \in (q+F)\cap B_r(q)} \dist\big( \xi,\Sigma\cap B_r(q)\big)/r
	 \leq  {2\alpha}/{\sqrt{1+\alpha^2}} \quad\Foa r \in(0,\varrho_K]. \]
\end{lem}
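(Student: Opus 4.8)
The plan is to exploit the defining property (ii) of admissible sets, which says that the conical part of $\Sigma$ near a good point $p\in\Sigmaa$ projects \emph{densely} onto the affine plane $p+H(p)$ in a ball $B_{R_K}(p)$. Fix a compact $K\subset\Sigma$ and let $R_K$, $c_K$ be as in Definition \ref{def:admissible_sets}; I would set $\varrho_K:=R_K/10^5$ (or simply $\varrho_K:=R_K$, adjusting constants), and take $p\in K\cap\Sigmaa$ and $r\in(0,\varrho_K]$. Given $\xi\in(p+H(p))\cap B_r(p)$, the point $x:=\Pi_{p+H(p)}(\xi)=\xi$ itself need not lie in the dense subset $D_p\subset(p+H(p))\cap B_{R_K}(p)$, so first I would pick $x\in D_p$ with $|x-\xi|$ as small as we like; density of $D_p$ makes this possible. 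For such $x$ there is, by (ii), a point $\eta_x\in\Sigma\cap C_p(\alpha,H(p))$ with $\Pi_{p+H(p)}(\eta_x)=x$. The cone condition \eqref{eq:cone} gives $|\Pi_{H(p)^\perp}(\eta_x-p)|\le\alpha|\Pi_{H(p)}(\eta_x-p)|=\alpha|x-p|$, hence
\[
|\eta_x-p|^2=|x-p|^2+|\Pi_{H(p)^\perp}(\eta_x-p)|^2\le(1+\alpha^2)|x-p|^2 ,
\]
and also $|\eta_x-\xi|^2=|x-\xi|^2+|\Pi_{H(p)^\perp}(\eta_x-p)|^2\le|x-\xi|^2+\alpha^2|x-p|^2$. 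Since $|x-p|\le|x-\xi|+|\xi-p|<|x-\xi|+r$, letting $|x-\xi|\to0$ yields $|\eta_x-\xi|\le\alpha r$ in the limit, so one can find $\Sigma$-points within distance roughly $\alpha r$ of $\xi$.

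The only remaining subtlety is that we need a $\Sigma$-point close to $\xi$ that moreover lies in $B_r(p)$, because the $\sup$ in the statement is over $\dist(\xi,\Sigma\cap B_r(p))$, not $\dist(\xi,\Sigma)$. This is exactly why the constant in the estimate is $2\alpha/\sqrt{1+\alpha^2}$ rather than $\alpha$: choosing $x\in D_p$ with $|x-\xi|$ small enough and also $|x-p|$ slightly less than $r$ if necessary (again using density, shrinking $\xi$ towards $p$ along $p+H(p)$ by a negligible amount), one arranges $|\eta_x-p|\le\sqrt{1+\alpha^2}\,|x-p|< r$, so $\eta_x\in\Sigma\cap B_r(p)$, while still $|\eta_x-\xi|$ is bounded by something that tends to $\alpha r$ — the factor $2$ and the $\sqrt{1+\alpha^2}$ in the denominator give the room to absorb all these arbitrarily small perturbations and the rescaling by $r$. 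Taking the supremum over $\xi\in(p+H(p))\cap B_r(p)$ and dividing by $r$ then gives the strict inequality $<2\alpha/\sqrt{1+\alpha^2}$. I expect the bookkeeping of these three simultaneous small perturbations (approximating $\xi$ by a point of $D_p$, keeping $\eta_x$ inside $B_r(p)$, and the strictness of the inequality) to be the only real obstacle; it is routine but must be done carefully, and is presumably the reason the authors phrase it as two cases.

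For the second assertion, concerning $q\in K\setminus\Sigmaa$ with $p_i\to q$, $p_i\in\Sigmaa$, and $H(p_i)\to F$, the plan is a limiting argument. For each $i$ the first part gives, for every $r\in(0,\varrho_K]$ and every $\xi\in(p_i+H(p_i))\cap B_r(p_i)$, a point in $\Sigma\cap B_r(p_i)$ within distance $<\tfrac{2\alpha}{\sqrt{1+\alpha^2}}r$ of $\xi$. Now fix $r\in(0,\varrho_K]$ and $\xi\in(q+F)\cap B_r(q)$; I would approximate $\xi$ by $\xi_i\in(p_i+H(p_i))$ with $\xi_i\to\xi$ (possible since $p_i\to q$ and $H(p_i)\to F$ in the angle metric, so the affine planes converge locally in Hausdorff distance), noting $\xi_i\in B_{r}(p_i)$ for $i$ large after a harmless infinitesimal inward adjustment, apply the first part to get $\zeta_i\in\Sigma\cap B_r(p_i)$ with $|\zeta_i-\xi_i|<\tfrac{2\alpha}{\sqrt{1+\alpha^2}}r$, extract a convergent subsequence $\zeta_i\to\zeta$, and observe $\zeta\in\Sigma$ (as $\Sigma$ is closed) and $\zeta\in\overline{B_r(q)}$, with $|\zeta-\xi|\le\tfrac{2\alpha}{\sqrt{1+\alpha^2}}r$. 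A further infinitesimal dilation towards $q$ (or using that we may as well prove the bound with $B_r$ replaced by any slightly larger concentric ball and then shrink) upgrades $\overline{B_r(q)}$ to $\Sigma\cap B_r(q)$, at the cost of passing from strict to non-strict inequality — which matches the statement. Throughout I would only invoke Definition \ref{def:admissible_sets}, elementary orthogonal-projection identities, continuity of $F\mapsto\Pi_F$, and closedness of $\Sigma$; no earlier lemma of the paper is needed.
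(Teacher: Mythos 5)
Your proposal is correct and follows the paper's own route: use the density of $D_p$ and the cone condition to produce $\Sigma$-points $\eta_x$ with $|\eta_x-\xi|\lesssim\alpha|x-p|$, then handle the constraint $\eta_x\in B_r(p)$ by retracting $\xi$ radially toward $p$; the limiting argument for $q\in K\setminus\Sigmaa$ is likewise the paper's. One caveat: the inward retraction is \emph{not} an arbitrarily small perturbation --- to guarantee $|\eta_x-p|\le\sqrt{1+\alpha^2}\,|x-p|<r$ you must pull $\xi$ back to the disk of radius $r/\sqrt{1+\alpha^2}$, which costs up to $\bigl(1-1/\sqrt{1+\alpha^2}\bigr)r\le \alpha r/\sqrt{1+\alpha^2}$, and it is precisely this fixed term (added to the $\alpha r/\sqrt{1+\alpha^2}$ from the cone) that produces the factor $2$ in $2\alpha/\sqrt{1+\alpha^2}$, rather than the factor merely "absorbing" vanishing errors. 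Also note that in the second part the approximating points $p_i$ need not lie in $K$, so Definition \ref{def:admissible_sets} must be invoked for an enlarged compact set (the paper takes $\tilde K=\Sigma\cap\overline{B_{R_K/10}(K)}$ and sets $\varrho_K=R_{\tilde K}$); your argument needs the same adjustment.
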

Notice that for every $q\in K\setminus \Sigmaa$ there is a sequence $(p_i)_i$
as stated in the last part of Lemma \ref{cor:dist_plane_to_sigma} by density of $\Sigmaa$ in $\Sigma$ combined with the compactness of the Grassmannian $\mathscr{G}(n,m)$.

\begin{proof}
	First, assume $p \in \Sigmaa\cap B_{ {R_K}/{10}}(K)$ and define $\tilde K:=\Sigma \cap \overline{ B_{ {R_K}/{10}}(K)}$. Then $\tilde K$ is compact, since $\Sigma$ is closed, and the constants $R_{\tilde K}\in (0,R_K]$ and $c_{\tilde K}\in(0,c_K]$ of
	Definition \ref{def:admissible_sets} applied to $\tilde{K}$
	are solely determined by $K$ itself. So, there exists a dense subset $D_p\subset \li p + H(p)\ri \cap B_{R_{\tilde K}}(p)$ such that for all $x\in D_p$ one finds a point
	 $\eta_x \in \Sigma \cap C_p(\alpha,H(p))$ with
	\begin{align} \label{plane_cor_etax}
	x=\Pi_{p+H(p)}(\eta_x) = p + \Pi_{H(p)}(\eta_x-p),
	\end{align}
	so that $
		\eta_x-p =\Pi_{H(p)}(\eta_x-p)+\Pi_{H(p)^\perp}(\eta_x-p) = x-p + \Pi_{H(p)^\perp}(\eta_x-p),$
		or $\eta_x-x=\Pi_{H(p)^\perp}(\eta_x-p)$, which implies by the fact that $\eta_x\in C_p(\alpha,H(p))$ and by \eqref{plane_cor_etax}
	\begin{align} \label{plane_cor_etax_x}
	\vert \eta_x-x \vert \leq \alpha \left \vert \Pi_{H(p)}(\eta_x-p)\right \vert = \alpha \vert x-p\vert.
	\end{align}
	In particular, setting $
		\varrho_K:=R_{\tilde K}\leq R_K,$
	we obtain for $r \in (0,\varrho_K]$ and $x \in D_p\cap B_{ r/ {\sqrt{1+\alpha^2}}}(p)$
	\begin{align} \label{plane_cor_etax_x_small}
	\vert \eta_x-x\vert <  {\alpha r}/{\sqrt {1+\alpha^2}}
	\end{align}
	by means of \eqref{plane_cor_etax_x}, and also $\vert \eta_x-p\vert^2=\vert x-p\vert^2 + \vert \eta_x-x\vert^2 \leq (1+\alpha^2)\vert x-p\vert^2$, so that $
	\vert \eta_x-p\vert <
	r.$
	Combined with \eqref{plane_cor_etax_x_small} one finds
	\begin{align*}\label{plane_cor_sigmaa_dist_1}
	\dist\big( x,\Sigma \cap B_r(p)\big) \leq \vert \eta_x-x\vert &<  {\alpha r}/ {\sqrt{1+\alpha^2}} 
	\Foa x \in D_p \cap B_{\frac r {\sqrt{1+\alpha^2}}}(p), \,  r \in (0,\varrho_K].
	\end{align*}
	By density of $D_p$ in $\li p +H(p)\ri\cap B_{R_{\tilde K}}(p)$, we find for each $\xi \in \li p+H(p)\ri \cap \overline{B_{ r/ {\sqrt {1+\alpha^2}}}(p)}$ and each $l\in \N$ some point $x_l=x_l(\xi) \in D_p \cap 
	B_{ r/ {\sqrt{1+\alpha^2}}}(p)$, such that $\vert \xi-x_l\vert < 1/ l$, and such that there is a point $\eta_{x_l}\in \Sigma \cap C_p(\alpha,H(p))\cap B_r(p)$ with $\Pi_{p+H(p)}(\eta_{x_l})=x_l$. Consequently, by means of \eqref{plane_cor_etax_x_small} with $x$ replaced by $x_l$, one 
	obtains 
	\[\dist\big( \xi , \Sigma \cap B_r(p)\big) \leq \vert \xi-\eta_{x_l}\vert <  1/ l +
	 \vert x_l-\eta_{x_l}\vert <  1 / l + 
	 { \alpha r}/{\sqrt{1+\alpha^2}} \Foa l \in \N . \]
	Taking the limit $l\to \infty$, we find
	\begin{align}\label{plane_cor_sigma_dist_1}
	\dist \big( \xi , \Sigma \cap B_r(p)\big) \leq    {\alpha r} /{\sqrt{1+\alpha^2}} 
	\quad\Foa \xi \in \li p+H(p)\ri \cap \overline{B_{ r/ {\sqrt{1+\alpha^2}}}(p)}.
	\end{align}
	For $\mu \in p+H(p)$ with $ r/ {\sqrt{1+\alpha^2}}<\vert \mu-p\vert <r,$ we
	set $\xi_\mu:= \frac r{\sqrt{1+\alpha^2}} \cdot \frac {\mu-p}{\vert \mu-p\vert}\in \li p + H(p)\ri \cap \overline{B_{\frac r {\sqrt{1+\alpha^2}}}(p)},$ and estimate
	\begin{align}\label{plane_cor_sigma_dist_2}
	\dist\big( \mu,\Sigma\cap B_r(p)\big) &\leq \dist \big( \xi_\mu,\Sigma \cap B_r(p)
	\big) + \vert \mu-\xi_\mu\vert <\textstyle\frac  {\alpha r}{\sqrt{1+\alpha^2}} + \Big( 1-\frac 1{\sqrt{1+\alpha^2}}\Big) r\notag\\
	&=\textstyle\Big( \frac {\alpha + \sqrt{1+\alpha^2} -1}{\sqrt{1+\alpha^2}}\Big) r < \Big( \frac {\alpha + (1+\alpha) -1}{\sqrt{1+\alpha^2}}\Big) r 	
	=\frac {2\alpha r}{\sqrt{1+\alpha^2}},	
	\end{align}
	so that the combination of \eqref{plane_cor_sigma_dist_1} and \eqref{plane_cor_sigma_dist_2} gives
	\begin{align} \label{plane_cor_case_1}
	\hspace{-0.4cm}\dist\big( \mu,\Sigma \cap B_r(p)\big) &<  {2\alpha r}/{\sqrt{1+\alpha^2}}
	 \Foa \mu \in \li p+H(p)\ri \cap B_r(p),\, r \in (0,\varrho_K] 
	\end{align}
if $	p \in 
	\Sigmaa \cap B_{ {R_K}/{10}}(K). $
	Now assume $q \in K\setminus\Sigmaa$ and take a sequence
	$(p_i)_{i\in N}\subset \Sigmaa$ with $\lim_{i\to \infty} p_i=q$ and $\lim_{i\to\infty}H(p_i)=:F\in \mathscr{G}(n,m)$.  Thus,
	 $p_i\in \Sigmaa \cap B_{ {R_K}/{10}}(K)$ for all $i\gg 1$.
	We claim that
	\begin{align}\label{plane_cor_aux_assumption}
\textstyle\sup_{x \in \li q + F\ri \cap \overline{B_r(q)}} 
\dist \big( x , \Sigma \cap \overline{ B_r(q)}\big) 
\leq  {2\alpha r}/{\sqrt{1+\alpha^2}} \Foa r\in(0,\varrho_K].
	\end{align}
	Indeed, by virtue of the Hausdorff convergence of the $m$-planar disks
	$(p_i+H(p_i))\cap B_r(p_i)$ to the closed disk
	$(q+F)\cap\overline{B_r(q)}$ as $i\to\infty$ we can find
	 for any $x \in \li q + F\ri \cap \overline {B_r(q)}$ a sequence $x_l\to x$
	 as $l\to\infty$  with $x_l \in \li p_l+H(p_l)\ri \cap B_r(p_l)$, and by \eqref{plane_cor_case_1} applied to $p\equiv p_l$ and $\mu:=x_l$
	 one finds points $\eta_l
	  \in \Sigma \cap B_r(p_l)$, such that $\vert x_l-\eta_l\vert < {2\alpha r}/{\sqrt{1+\alpha^2}}$ for all $l \in \N$. Since $\Sigma$ is closed and by the Hausdorff-convergence  we may assume $\eta_l\to\eta \in \Sigma \cap \overline{B_r(q)}$ as $l\to\infty$. Hence, $\vert x-\eta\vert \leq  {2\alpha r}{\sqrt{1+\alpha^2}}$, which implies \eqref{plane_cor_aux_assumption}.
	
	Finally, let $r \in(0,\varrho_K]$. For arbitrary $\xi\in \li q+F\ri \cap B_r(q)$ use \eqref{plane_cor_aux_assumption} to estimate
	\begin{align*}
&	\dist \big( \xi,\Sigma \cap B_r(q)\big) \leq \textstyle\sup_{x \in \li q+F\ri\cap \overline{B_{\vert \xi-q\vert}(q)}} \dist\big( x , \Sigma \cap B_r(q)\big) \\
	&\leq \textstyle\sup_{x \in \li q+F\ri\cap \overline{B_{\vert \xi-q\vert}(q)}} \dist\big( x , \Sigma \cap \overline{B_{\vert \xi-q\vert}(q)}\big) 
	\leq \frac {2\alpha}{\sqrt{1+\alpha^2}}\vert \xi-q\vert < \frac {2\alpha r}{\sqrt{1+\alpha^2}}.
	\end{align*}
\end{proof}

\subsection{Finite energy yields Reifenberg flatness}\label{sec:reifenberg}
We have seen in Lemma \ref{cor:dist_plane_to_sigma} that the second term in bilateral $\theta$-number 
\eqref{theta_def} is automatically controlled for admissible
sets in $\Aam$, but in order to control the first term, the $\beta$-number, we need
locally finite energy. To begin with, notice that the
 numerator $L_\tau(x,y,H(x),H(y)):=\ang \big(H(x),\mathcal{R}_{xy}(H(y))\big)^{(1+\tau)m}$ of our Lagrangian
 in \eqref{eq:energies}
involving the angle metric defined in \eqref{eq:angle-metric} can be rewritten as
$L_\tau(x,y,H(x),H(y))=\sup_{e\in H(x)\cap\S^{n-1}}F_\tau(x,y,e)$ with
\begin{equation}\label{eq:new-Ftau}
F_\tau(x,y,e):= \big\vert \Pi_{H(y)^\perp}(e)- {2} \langle e,x-y\rangle \Pi_{H(y)^\perp}(x-y)/|x-y|^2 
\big\vert^{(1+\tau)m},
\end{equation}
by means of the explicit formula \eqref{eq:reflection} 
for the reflection $\mathcal{R}_{xy}$. 
\begin{thm}[$\beta$-number estimate] \label{thm:dist_sigma_to_plane}
Let $\delta \in (0,1)$, $\Sigma \in \Aam$ for $2\leq m\leq n$, $M>0$, and 
$\alpha> 0$ satisfy
	\begin{align}\label{alpha_M_connection}
	\alpha(M+1) <  \delta/ {50}
	\end{align}
	and assume that $\Sigma $ has locally finite energy $E^\tau$ as  in 
	\eqref{eq:locally-bdd-energy} for some $\tau >-1$.
	Then, for all compact subsets $K\subset \Sigma$ there exists a radius $r_K=r_K(\delta,\tau,m) \in (0,R_K]$, such that for all $p \in K$ there is an $m$-plane $G_p\in \mathscr{G}(n,m)$ with
	\begin{align}
	\Sigma \cap B_r(p) \setminus \overline{B_{\delta r}(p+G_p)} = \emptyset \Foa r \in (0,r_K].	
	\end{align}
	In particular, $
	\sup_{p \in K} \beta_\Sigma(p,G_p,r) \leq \delta \Foa r \in(0,r_K].
	$
	Moreover, $
	 G_p=H(p)$ for all $ p \in \Sigmaa \cap K$,
	and for $q \in K\setminus \Sigmaa$, 
	the $m$-plane $G_q$
 can be taken to be any accumulation point of a
	sequence 
	 $(H(p_k))_{k\in\N}\subset\mathscr{G}(n,m)$ where
 $\Sigma^*\ni p_k\to q$ as $k\to\infty$.
\end{thm}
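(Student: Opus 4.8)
\textbf{Overall strategy.} The plan is to argue by contradiction. Suppose the $\beta$-number bound fails: for a compact $K\subset\Sigma$ and every candidate radius $r_K$ there is a point $p\in K$ and a radius $r\le r_K$ such that no $m$-plane $G$ makes $\Sigma\cap B_r(p)$ lie in the $\delta r$-neighborhood of $p+G$. For points $p\in\Sigmaa$ one would like to take $G_p=H(p)$, so the real content is: if there existed $q\in\Sigma\cap B_r(p)$ with $\dist(q,p+H(p))>\delta r$, then the energy $E^\tau(\Sigma\cap B_N(0))$ would have to be infinite — contradicting local finiteness. This dichotomy is exactly the split already advertised in Section~\ref{sec:1.3}: either the mock tangent planes $H(p)$ and $H(q)$ are \emph{close} (angle $<$ some threshold depending on $M\alpha$), in which case Lemma~\ref{lem:dist_sigma_to_plane_1} produces a large lower bound on the energy coming from nearly parallel sheets of $\Sigma$ interacting through the cone $C_p(\alpha,H(p))$; or they are \emph{far}, in which case Lemma~\ref{lem:dist_sigma_to_plane_2} extracts a basis vector of $H(q)$ far from $H(p)$ and uses a macroscopic orthogonal shift to build many tangent-point sphere pairs with conformal angle bounded below, again forcing infinite energy.

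\textbf{Key steps, in order.} First I would reduce to a single compact $K\subset\Sigma$ and, enlarging slightly to $\tilde K:=\Sigma\cap\overline{B_{R_K/10}(K)}$ as in the proof of Lemma~\ref{cor:dist_plane_to_sigma}, fix the associated uniform constants $R_{\tilde K}$, $c_{\tilde K}$ from Definition~\ref{def:admissible_sets}; also fix $N$ with $K\subset B_N(0)$ so that $E^\tau(\Sigma\cap B_N(0))=:\mathcal E<\infty$. Second, I would set up the contradiction hypothesis: $p\in\Sigmaa\cap K$, $r\in(0,r_K]$, $q\in\Sigma\cap B_r(p)$ with $\dist(q,p+H(p))>\delta r$, and — using \eqref{alpha_M_connection}, i.e. $\alpha(M+1)<\delta/50$ — note the cone $C_p(\alpha,H(p))$ is much thinner than the $\delta$-slab, so that $q$ genuinely sticks out. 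Third, I would case-split on $\ang(H(p),H(q))$ against a threshold of the form $c\,M\alpha$ (the natural scale of Definition~\ref{def:admissible_sets}(ii), since $E_{\alpha,M}(p)$ collects points with $\ang(H(\mu),H(p))<M\alpha$). In the ``small angle'' case, invoke Lemma~\ref{lem:dist_sigma_to_plane_1}; in the ``large angle'' case, Lemma~\ref{lem:dist_sigma_to_plane_2}. Each lemma, applied with the uniform mass lower bound $c_{\tilde K}$ and the geometry just described, yields $E^\tau(\Sigma\cap B_N(0))\ge\phi(\delta,\tau,m)/r_K^{\,(\text{some positive power})}$ or, more cleanly, a bound that blows up as the contradiction radius shrinks; choosing $r_K=r_K(\delta,\tau,m)$ small enough that this exceeds $\mathcal E$ gives the contradiction, and hence $\Sigma\cap B_r(p)\subset\overline{B_{\delta r}(p+H(p))}$ for $p\in\Sigmaa\cap K$, i.e. $G_p=H(p)$ works. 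Fourth, for $q\in K\setminus\Sigmaa$: pick $\Sigmaa\ni p_k\to q$ with $H(p_k)\to G_q$ (possible by density of $\Sigmaa$ and compactness of $\mathscr G(n,m)$); for fixed $r$, any $y\in\Sigma\cap B_r(q)$ lies, for $k$ large, in $\Sigma\cap B_{r+|q-p_k|}(p_k)$, so the already-proven bound at $p_k$ gives $\dist(y,p_k+H(p_k))\le\delta(r+|q-p_k|)$; letting $k\to\infty$ and using convergence of the affine disks $p_k+H(p_k)\to q+G_q$ in Hausdorff distance, one gets $\dist(y,q+G_q)\le\delta r$. A minor bookkeeping adjustment of the radius (shrinking $r_K$ by a harmless factor, or noting the inequality is non-strict at the limit) closes this. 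The final sentence of the theorem — ``$\Sigma\cap B_r(p)\setminus\overline{B_{\delta r}(p+G_p)}=\emptyset$'' and ``$\sup_{p\in K}\beta_\Sigma(p,G_p,r)\le\delta$'' — is then just a restatement.

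\textbf{Main obstacle.} The genuine difficulty is entirely packed into Lemmas~\ref{lem:dist_sigma_to_plane_1} and~\ref{lem:dist_sigma_to_plane_2}: quantifying how much energy a protruding point $q$ forces, uniformly in the (non-smooth, possibly wild) structure of $\Sigma$, using only the weak admissibility axioms — in particular the fact that one controls mass of $\Sigma$-points near a single fibre representative $\eta_x$ over each $x\in D_p$, not mass of $\Sigma$ everywhere. Getting the threshold between the two cases to match the hypothesis $\alpha(M+1)<\delta/50$ with the explicit constant $\delta=\delta(m)$ is the delicate bookkeeping; one must track how the cone opening $\alpha$, the angle-spread parameter $M$, and the protrusion $\delta$ trade off in the integrand \eqref{eq:new-Ftau}, and verify that in \emph{both} cases the lower bound on $E^\tau$ survives the $(1+\tau)m$-th power in the numerator for every $\tau>-1$ (so the exponent is positive but can be arbitrarily small, which is why the blow-up must come from a genuinely fixed-size conformal angle on a set of controlled positive mass, not from smallness of $|x-y|$ alone). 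Assuming those two lemmas, the assembly above is routine: a contradiction setup, a clean dichotomy, a limiting argument for non-regular points, and a radius-chasing epilogue.
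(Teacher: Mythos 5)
Your skeleton — contradiction, dichotomy on $\ang(H(p),H(q))$, Lemmas~\ref{lem:dist_sigma_to_plane_1} and \ref{lem:dist_sigma_to_plane_2} as the engine, and a limiting argument for $q\in K\setminus\Sigmaa$ — is exactly the paper's route. But the step where you extract the contradiction from finite energy is mis-stated, and as stated it would fail. You claim the lemmas yield a lower bound of the form $\phi(\delta,\tau,m)/r_K^{\lambda}$, ``a bound that blows up as the contradiction radius shrinks,'' so that one can choose $r_K$ small enough to exceed $\mathcal E=E^\tau(\Sigma\cap B_N(0))$. That mechanism is the one used for energies \emph{above} scale invariance; here $E^\tau$ is M\"obius-invariant, hence scale-invariant, and the lower bounds $c_1,c_2$ produced by the two lemmas are constants depending on $K,\eps,\delta,\tau,m$ but \emph{not} on the scale $R$ (the factors of $R$ in the mass bounds $c_{\tilde K}(\eps^2R)^m$ cancel exactly against the $|\mu-\eta|^{-2m}$ in the integrand). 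A single protruding point therefore contributes a fixed quantum of energy, which need not exceed $\mathcal E$, so no choice of $r_K$ closes your argument. The paper's actual mechanism is \emph{absolute continuity} of the double integral: since $E^\tau(\Sigma\cap B_2(K))<\infty$, for every $c>0$ there is $R^{\textnormal{ac}}_K(c)$ such that the integral over $\bigl(\Sigma\cap B_r(x)\bigr)\times\bigl(\Sigma\cap B_r(y)\bigr)$ is $<c$ for all $r\le R^{\textnormal{ac}}_K(c)$; one then sets $r_K:=\eps_0\min\{R^{\textnormal{ac}}_K(c_2),R_{\tilde K}\}$ and the quantum $c_1>c_2$ (resp.\ $c_2$) delivered by the lemmas at scale $r\le r_K$ contradicts this upper bound. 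Your own ``main obstacle'' paragraph shows you sensed this (the blow-up cannot come from smallness of $|x-y|$), but the assembly you wrote does not implement it.

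A smaller point: your dichotomy threshold ``of the form $cM\alpha$'' is not quite right either. The paper uses $\omega(\delta)+2M\alpha$ with $\omega(\delta)=153\delta/50^3$; the additive $\omega(\delta)$ is needed because in the transversal case one loses $M\alpha$ twice when passing from $H(q)$ and $H(p)$ to the mock tangent planes of nearby points in $E_{\alpha,M}(q)$ and $E_{\alpha,M}(p)$, and the surviving angle must still be bounded below by a fixed multiple of $\delta$ (not of $M\alpha$, which may be arbitrarily small) for the lower bound $c_2\sim\delta^{(1+\tau)m}$ to hold. Your treatment of the non-regular points $q\in K\setminus\Sigmaa$ by Hausdorff convergence of the affine disks is fine and matches Step~1 of the paper, modulo the remark that the interior estimate must first be proved on the slightly larger set $\Sigmaa\cap B_{1/10}(K)$ so that the approximating points are covered.
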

\begin{proof}
{\it Step 1.}\,
	By density it suffices to find for a given compact set $K\subset \Sigma$ a radius $r_K\in(0,R_K]$ such that
	\begin{align} \label{tubu_neighb_sigmaa}
	\Sigma \cap B_r(p) \setminus \overline{B_{\delta r}(p+H(p))} = \emptyset \Foa p \in 
\Sigmaa \cap B_{1/{10}}(K),\, r \in (0,r_K],
	\end{align}
	since an arbitrary point $p\in K$ may be approximated by points $p_l\in\Sigmaa 
\cap B_{1/{10}}(K)$, for which we may assume w.l.o.g. that there exist  
$H(p_l)\in \mathscr{G}(n,m)$, such that $H(p_l)\to G_p \in \mathscr{G}(n,m)$ 
	as $l\to\infty$ by compactness of the Grassmannian. The Hausdorff-convergence
	of the balls  $B_r(p_l)$ to $ B_r(p)$ and of the $m$-planar disks
	$B_{1}(p_l) \cap (p_l+H(p_l))$ to $ B_{1}(p)\cap (p+G_p)$ as $l\to\infty$ implies that any 
point $q \in \Sigma \cap B_r(p)\setminus \overline{B_{\delta r}(p+G_p)}$ satisfies $q \in \Sigma 
\cap B_r(p_l)$ for $l\gg1$, as well as
	$
	\dist\li q,p_l+H(p_l)\ri \geq \dist \li q,p+G_p\ri - \dist_{\HM}\li (p+G_p)\cap B_{1}(p), 
(p_l + H(p_l))\cap B_{1}(p_l)\ri
	> \delta r \Foa l \gg 1,$
		thus contradicting \eqref{tubu_neighb_sigmaa} for $p\equiv p_l$ and $l\gg1.$
		Before moving on to the the second step notice that all $p_l$ are contained
		in the compact set $\tilde{K}:=\Sigma\cap\overline{B_1(K)}$, so that 
		the constants $R_{\tilde{K}}$ and $c_{\tilde{K}}$ of Definition
		\ref{def:admissible_sets} applied to the compact set $\tilde{K}$ satisfy
		$0<R_{\tilde{K}}\le R_K$ and $0<c_{\tilde{K}}\le c_K$. However, since
		the original compact set $K$ completely determines $\tilde{K}$ we think
		of $R_{\tilde{K}}$ and $c_{\tilde{K}}$ as depending on $K$ only.
		
{\it Step 2.}\,
Assuming for contradiction that there is a point $p\in\Sigma^*\cap B_{1/10}(K)$ such that 
\eqref{tubu_neighb_sigmaa} does \emph{not} hold for some
$r\in (0,r_K]$ for a constant $r_K\in (0,R_K]$ to be determined later, then there is some point 
$\tilde{q}\in\Sigma\cap B_r(p)$
with $\dist\big(\tilde{q},p+H(p)\big) > \delta r. $ By density of $\Sigma^*$ in 
$\Sigma$ we find a point $q\in\Sigma^*\cap B_r(p)$ such that
\begin{equation}\label{q-dist-contra}
\dist\big( q, p+H(p)\big) > \delta r.
\end{equation}
We quantify the energy contribution of a geometric situation like in \eqref{q-dist-contra} in the 
following 
two lemmas, depending on the size of the angle
$\ang (H(p),H(q)).$ The proofs of these auxiliary lemmas are postponed to the
end of this subsection.
\begin{lem}[Almost parallel strands] \label{lem:dist_sigma_to_plane_1}
	Let $\tau>-1$, $\delta \in (0,1)$, $\eps\in
	(0,\delta/500]$, and
	$\Sigma \in \Aam$ for $1\leq m\leq n$, where the constants
	$M, \alpha > 0$ satisfy
 \eqref{alpha_M_connection}. Assume that there is 
 a compact subset $K\subset \Sigma$ and points $p \in \Sigmaa \cap B_{ {1}/{10}}(K)$ 
and $q \in \Sigmaa \cap B_{\eps R}(p)$ for some radius $R \in (0,R_{\tilde K}]$
 for $\tilde K=\Sigma \cap \overline{B_1(K)}$, such that
	\begin{align} 
	\dist \li q,p+H(p) \ri &> \delta \eps R, \label{tubu_neighb_sigmaa_contra}\\
	\va \li H(p),H(q) \ri &< \omega(\delta) + 2 M\alpha\quad\Fo
 \omega(\delta):=  {153\cdot \delta}/{50^3}. \label{parallel_angle}
	\end{align}
		Then,
	\begin{align} \label{parallel_energy}
	\int \nolimits_{\Sigma \cap B_{\eps^2 R}(q)} \int \nolimits_{\Sigma \cap B_{2\eps R}(p)} 
\frac{L_\tau\big(\mu,\eta,H(\mu),H(\eta)\big)}{|\mu-\eta|^{2m}}
 \ d\HM^m(\eta) \ d\HM^m(\mu) > c_1,
	\end{align}
	where $
c_1=	c_1(K,\eps,\delta,\tau,m):= c_{\tilde K}^2 \cdot \eps^{2m} 
\cdot \frac 1 {255^{2m}}\cdot \frac {\delta^{(1+\tau)m}}{10^{2(\tau-1)m}}>0.$ 
\end{lem}

\begin{lem}[Transversal strands] \label{lem:dist_sigma_to_plane_2}
Suppose in addition to $m\ge 2$ that all assumptions of Lemma 
\ref{lem:dist_sigma_to_plane_1} hold true 
except \eqref{parallel_angle}, then
	\begin{align} \label{transversal_energy}
	\int \nolimits_{\Sigma \cap B_{\eps^2 R}(q)} \int \nolimits_{\Sigma \cap B_{ R}(p)}
\frac{L_\tau\big(\mu,\eta,H(\mu),H(\eta)\big)}{|\mu-\eta|^{2m}} 
 \ d\HM^m(\eta) \ d\HM^m(\mu) > c_2,
	\end{align}
	where $
c_2=	c_2(K,\eps,\delta,\tau,m):=c_{\tilde K}^2 \cdot \eps^{4m} \cdot 
\frac {(1.9)^{(3+\tau)m}}{10^{5(1+\tau)m}} \cdot \delta^{(1+\tau)m}>0.$
\end{lem}
	To apply these lemmas, we fix $\eps_0=\eps_0(\delta):=\frac \delta{500}$ and 
	obtain 
	\begin{equation}\label{eq:c_1c_2}
c_2<c_1\quad\Foa\tau > -1.
	\end{equation}

{\it Step 3.}\,
In order to deduce from \eqref{q-dist-contra} a contradiction notice first that there is
some $N=N(K)\in\N$ such that $B_2(K)\cap\Sigma\subset B_N(0)$, so that
$E^\tau(\Sigma\cap B_2(K))\le E^\tau(\Sigma\cap B_N(0)) <\infty$ since
$\Sigma$ has locally finite energy; see \eqref{eq:locally-bdd-energy}. Therefore,
we can use the absolute continuity of the double integral
 defining $E^\tau$ to find: For all $c>0$, there exists a radius 
$R^\textnormal{ac}_K(c)\in (0,1] $, such that
	\begin{align} \label{abs_continuity}
	\int \nolimits_{\Sigma \cap B_r(x)} \int \nolimits_{\Sigma \cap B_r(y) }
   \frac{L_\tau\big(\mu,\eta,H(\mu),H(\eta)\big)}{|\mu-\eta|^{2m}}  
 \ d\HM^m(\mu)\ d\HM^m(\eta) <c 
	\end{align}
	for all $ x,y \in \tilde{K}$ and all $r\in(0,R^\textnormal{ac}_K(c)]$.
	In particular, $R^\textnormal{ac}_K(c_2)\le R^\textnormal{ac}_K(c_1)$ 
	by means of \eqref{eq:c_1c_2}.
	 Choose $r_K:=\varepsilon_0\cdot
	\min\{ R^\textnormal{ac}_K 
	(c_2) ,R_{\tilde{K}}\} < R_{\tilde{K}} /500 \le R_K /500$
	 and distinguish two cases.

{\it {Case I: $\va \li H(p),H(q) \ri < \omega(\delta)+2M\alpha$}.}\,
	Then apply Lemma \ref{lem:dist_sigma_to_plane_1} for $R:=\frac r {\eps_0} \leq 
R_{\tilde K}$ and $\eps:=\eps_0$ to find by \eqref{abs_continuity}, 
\eqref{parallel_energy}, and \eqref{eq:c_1c_2}
for $x=q$, $y=p$
	\begin{align*}
	{c_2}&> \int \nolimits_{\Sigma \cap B_{R^\textnormal{ac}_K(c_2)}(q)} \int 
\nolimits_{\Sigma \cap B_{R^\textnormal{ac}_K(c_2)}(p)}
\frac{L_\tau\big(\mu,\eta,H(\mu),H(\eta)\big)}{|\mu-\eta|^{2m}}   
 \ d\HM^m(\eta) \ d\HM(\mu)\\
	&\geq \int \nolimits_{\Sigma \cap B_{\eps_0^2R}(q)} \int \nolimits_{\Sigma 
\cap B_{2\eps_0R}(p)}\frac{L_\tau\big(\mu,\eta,H(\mu),H(\eta)\big)}{|\mu-\eta|^{2m}}  
\ d\HM^m(\eta) \ d\HM(\mu)
	>c_1>{c_2},
	\end{align*}
	which provides a contradiction.\\
	
{\it {Case II: $\va \li H(p),H(q) \ri \geq \omega(\delta)+2M\alpha$}.}\,
	Then apply Lemma \ref{lem:dist_sigma_to_plane_2} for $R:= r/ {\eps_0}$ and $\eps:=\eps_0$ to find by \eqref{transversal_energy} and \eqref{abs_continuity}
	\begin{align*}
{c_2} & > \int \nolimits_{\Sigma \cap B_{R^\textnormal{ac}_K(c_2)}(q)} 
\int \nolimits_{\Sigma \cap B_{R^\textnormal{ac}_K({c_2})}(p)}
 \frac{L_\tau\big(\mu,\eta,H(\mu),H(\eta)\big)}{|\mu-\eta|^{2m}}   \ d\HM^m(\eta) \ d\HM(\mu)\\
	&\geq \int \nolimits_{\Sigma \cap B_{\eps_0^2R}(q)} \int \nolimits_{\Sigma \cap B_{R}(p)}
 \frac{L_\tau\big(\mu,\eta,H(\mu),H(\eta)\big)}{|\mu-\eta|^{2m}}   \ d\HM^m(\eta) \ d\HM(\mu)
	 > c_2,
	\end{align*}
	which is a contradiction as well.
\end{proof}
It remains to prove the
 two auxiliary lemmas. 
\begin{proof}[Proof of Lemma \ref{lem:dist_sigma_to_plane_1}]
Fix $\delta\in (0,1)$ and $\varepsilon\in (0,\delta/500].$
	Notice that both points $p,q$ are contained in $\tilde K$. Let $r\in (0,R/10^5]$.
Definition \ref{def:admissible_sets} applied to $\tilde{K}$ guarantees the existence of 
a dense subset 
$D_q\subset ( q + H(q)) \cap B_R(q)$, so that for any given $\rho <  r/ {\sqrt {1+\alpha^2}}$
we can find a point
$y(\rho) \in D_q \cap B_{ \rho}(q)$  and a corresponding point
 $\eta_{y}(\rho) \in \Sigma \cap C_q(\alpha,H(q))$ 
with $\Pi_{q +H(q)}(\eta_{y}(\rho))=y(\rho)$, which implies $\vert \eta_y(\rho) -q \vert 
\leq \sqrt{1+\alpha^2} \rho <r.$ 
	Consequently, for any such $\rho \in (0,r/ {\sqrt {1+\alpha^2}})$,
	\[ \HM^m \li E_{\alpha,M}(q) \cap B_{r}(q)\ri \geq \HM^m \big( E_{\alpha,M}(q) 
\cap B_{r-\sqrt{1+\alpha^2}\rho}(\eta_y(\rho)) \big) \geq c_{\tilde K} 
\big( r-\sqrt{1+\alpha^2}\rho\big) ^{m}.  \]
	Taking the limit $\rho \to 0$ guarantees $\HM^m\li E_{\alpha,M}(q) \cap B_r(q)\ri 
\geq c_{\tilde K}r^m$. In particular, for the set $M_q(\eps):=E_{\alpha,M}(q) \cap B_{\eps^2R}(q)$ we obtain
	\begin{align}\label{Mq_measure_1}
	\HM^m \li M_q(\eps) \ri \geq c_{\tilde K} \li \eps^2R\ri^m. 
	\end{align}
	For any point $\mu \in M_q(\eps)$ and an arbitrary vector $e\in H(\mu)\cap \S^{n-1}$ set
	\begin{align} \label{vector_def_1}
	w\equiv w(\mu,e):=  {\Pi_{H(p)}(e)}/{\vert \Pi_{H(p)}(e)  \vert}
 \in H(p)\cap \S^{n-1},			
	\end{align}
	which is well-defined since by \eqref{parallel_angle}
	\begin{align}\label{angle_mu_p_1}
	\va\li H(\mu),H(p)\ri \leq \va \li H(\mu),H(q)\ri + \va \li H(q),H(p) \ri < 
\omega(\delta) + 3M\alpha ,
	\end{align}
	so that by Lemma \ref{lem:angle_formula}, we obtain by definition of $\omega(\delta)$ in
\eqref{parallel_angle} and assumption \eqref{alpha_M_connection}
	\begin{align} \label{parallel_proj_e_1}
	 \vert \Pi_{H(p)}(e) \vert & =  \vert e-\Pi_{H(p)^\perp}(e)
 \vert \geq 1 - \va\li H(p),H(\mu)\ri >1- \li \omega(\delta) + 3M\alpha\ri >0.
	\end{align}
	 Furthermore, the definition of the set $M_q(\eps)$ implies
	\begin{align}\label{eta_exists_condition_1}
	 \vert \Pi_{p+H(p)}( \mu +  \eps  R w/2) - p \vert &= 
 \vert\Pi_{H(p)}( \mu + \eps R w/2 -p) \vert 
 \leq \vert \mu-q\vert +\vert
 q-p\vert +  \eps  R/2 \notag\\
	&< ( 3/ 2 + \eps ) \eps R <R.
	\end{align}
	By means of Definition \ref{def:admissible_sets} we can find a point $x\equiv x(\mu,e)
 \in D_p$ such that we obtain $ \vert x -\Pi_{p+H(p)}\li\mu-\eps  R w/2\ri
\vert<\eps^2R$, and  there exists a corresponding point $\eta_x=\eta_x(\mu,e)\in \Sigma \cap C_p(\alpha,H(p))$ 
with $x=\Pi_{p+H(p)}(\eta_x)=p+\Pi_{H(p)}(\eta_x-p)$. Consequently,
	\begin{align} \label{proj_etax_1}
	\Pi_{H(p)}(\eta_x-p)&=x-p=f_x + \Pi_{H(p)}\li \mu+ \eps  Rw/2-p\ri	
	\end{align}
	for $\vert f_x\vert = \vert x-\Pi_{p+H(p)}\li \mu +  \eps  Rw/2\ri
 \vert <\eps^2R$, which implies, on the one hand, the estimate
	\begin{align}\label{proj_etax_norm_1}
	\left \vert \Pi_{H(p)}(\eta_x-p)\right \vert \leq \vert f_x\vert + \left \vert 
\Pi_{H(p)}\li \mu +  \eps  Rw/2-p\ri\right \vert < \li  3/ 2 + 2 \eps\ri \eps R 
	\end{align}
	due to \eqref{eta_exists_condition_1}, and, on the other hand, the identity
	\begin{align}\label{etax_mu_projection_1}
	\Pi_{H(p)}(\eta_x-\mu)= \Pi_{H(p)} \li \eps  Rw/2\ri + f_x.
	\end{align}
	Since $\eta_x \in C_p(\alpha,H(p))$ we find by \eqref{proj_etax_norm_1}
	\begin{align}\label{proj_etax_orhto_norm_1}
	 \vert \Pi_{H(p)^\perp}(\eta_x-p) \vert \leq \alpha  \vert
 \Pi_{H(p)}(\eta_x-p) \vert < \alpha \li  3/ 2 +2\eps\ri \eps R;
	\end{align}
	hence $\vert \eta_x-p\vert ^2 = \vert \Pi_{H(p)}(\eta_x-p)
\vert^2+ \vert \Pi_{H(p)^\perp}(\eta_x-p) \vert^2< (1+\alpha^2)\li 
 3/ 2 +2\eps\ri^2\eps^2 R^2,$ i.e.,
	\begin{align}\label{etax_norm_1}
	\vert \eta_x-p\vert < \sqrt{1+\alpha^2} \li  3/ 2 +2\eps\ri \eps R <R,
	\end{align}
	since $\delta<1$ and by virtue of \eqref{alpha_M_connection}.
	The set $M_p(\mu,e,\eps):=E_{\alpha,M}(p)\cap B_{\eps^2R}\li\eta_x(\mu,e)\ri$ 
satisfies
	\begin{align}\label{Mp_inclusion_1}
	M_p(\mu,e,\eps)\subset \Sigma \cap B_{2\eps R}(p),
	\end{align}
	since for all $\eta \in M_p(\mu,e,\eps)$ one has
	\begin{align} \label{eta_p_dist}
	\vert \eta-p\vert \leq \vert \eta-\eta_x\vert &+ \vert \eta_x-p \vert < \eps^2 R +
 \sqrt{1+\alpha^2}\big(  3/ 2 +2\eps\big)\eps R <2\eps R,
		\end{align}
	where we also used \eqref{etax_norm_1} and $\sqrt{1+\alpha^2} \leq 1+\alpha <\ {51}/{50}$ by
  \eqref{alpha_M_connection}. Moreover,
	\begin{align}\label{Mp_measure_1}
	\HM^m\li M_p(\mu,e,\eps)\ri \geq c_{\tilde K} (\eps^2R)^m
	\end{align}
	by virtue of Definition \ref{def:admissible_sets} applied to the compact
 set $\tilde K \subset \Sigma$.
		For the fixed point $\eta_x\in \Sigma \cap C_p(\alpha,H(p))\cap B_R(p)$, 
we estimate by means of the identity $\Id=\Pi_{H(p)}+\Pi_{H(p)^\perp}$
	\begin{align*}
	 \vert \langle e , \mu-\eta_x\rangle  \vert   
	&\geq  \vert  \langle \Pi_{H(p)}(e), \Pi_{H(p)}(\mu-\eta_x) \rangle 
\vert-  \vert  \langle \Pi_{H(p)^\perp}(e),  \Pi_{H(p)^\perp}(\mu-\eta_x) \rangle 
 \vert\\
	&\geq  \vert  \langle \Pi_{H(p)}(e),  \eps  R \cdot \Pi_{H(p)}(w)/2
 \rangle  \vert- \vert f_x\vert- \vert  \Pi_{H(p)^\perp}(e) \vert \cdot 
 \vert   \Pi_{H(p)^\perp}(\mu-\eta_x)  \vert\\
	&> \eps  R  \vert \Pi_{H(p)}(e) \vert /2 - \eps^2R -\li \omega(\delta)+ 
3M\alpha\ri \cdot \vert   \Pi_{H(p)^\perp}(\mu-\eta_x)  \vert,
	\end{align*}
	due to \eqref{etax_mu_projection_1}, \eqref{angle_mu_p_1}, $\vert e \vert =1$, and 
$\vert f_x\vert < \eps^2R$. Consequently, with \eqref{parallel_proj_e_1}, we obtain
	\[ \left \vert \langle e,\mu-\eta_x\rangle \right \vert > \li 1 - \li \omega(\delta) + 
3M\alpha \ri \ri  \eps  R/2 - \eps^2 R - \li \omega(\delta) + 3 M\alpha\ri  \vert 
\Pi_{H(p)^\perp}(\mu-\eta_x) \vert. \]
The estimate \eqref{proj_etax_orhto_norm_1} implies
	\begin{align*}
	\vert \Pi_{H(p)^\perp}(\mu-\eta_x) \vert &\leq \vert \mu-p \vert +
 \vert \Pi_{H(p)^\perp}(p-\eta_x) \vert <\vert \mu - q \vert + \vert q - p 
\vert + \alpha \li  3/ 2 +2\eps\ri \eps R \\
	&< \li \eps + 1 + \alpha\li  3/ 2 +2\eps\ri\ri \eps R;
	\end{align*}
	hence $
	\left \vert \langle e, \mu-\eta_x\rangle \right \vert > \li  1/ 2-
 \li \omega(\delta) + 3M\alpha\ri \li  3/ 2 +2\eps\ri\li1+\alpha\ri-\eps\ri\eps R. $
	For arbitrary $\eta \in M_p(\mu,e,\eps)$ one therefore has
	\begin{align} \label{scalar_prod_1}
	 \vert \langle e,\mu-\eta\rangle  \vert &\geq  \vert \langle e, 
\mu-\eta_x\rangle \vert - \vert \eta_x-\eta\vert \notag\\
	&>\li  1/ 2 - \li \omega(\delta) + 3M\alpha\ri \li  3/ 2 +2\eps\ri\li1+
\alpha\ri-2\eps\ri\eps R.
	\end{align}
	Moreover, for all $\mu \in M_q(\eps)$, $e \in H(\mu)\cap \S^{n-1}$ and
	 $\eta \in 
M_p(\mu,e,\eps)$ we estimate by means of \eqref{eta_p_dist}
	\begin{align}\label{norm_bound_1}
	\vert \mu-\eta\vert &\leq \vert \mu-q\vert + \vert q-p \vert + \vert p-\eta\vert 
	<2 \eps^2R+\eps R + \sqrt{1+\alpha^2} \li  3/ 2 +2\eps\ri\eps R \notag\\ &< \li
 2 \eps+1\ri ( 1+  3 \sqrt{1+\alpha^2}/2) \eps R.
	\end{align}
		For the remaining term in the energy density we simply write $
		 \vert \Pi_{H(\eta)^\perp}(\mu-\eta)\vert =  \vert \Pi_{H(p)^\perp}(\mu-p) - 
\Pi_{H(p)^\perp}(\eta_x-p) + \Pi_{H(p)^\perp}(\eta_x-\eta) 
	 -\Pi_{H(p)^\perp}(\mu-\eta) + \Pi_{H(\eta)^\perp}(\mu-\eta)\vert $, which -- using 
\eqref{proj_etax_orhto_norm_1} and \eqref{norm_bound_1} -- can be bounded from below
by 
\begin{align*}
	& \vert \Pi_{H(p)^\perp}(\mu-p) \vert  - \alpha (  3/ 2 +2\eps )
\eps R - \eps^2R
 - \va ( H(p),H(\eta)) ( 2 \eps +1 ) ( 1 +  3 
\sqrt {1+ \alpha^2}/2) \eps R\\
	&\hspace{-0.2cm}\geq \vert \Pi_{H(p)^\perp}(q-p)  \vert -\vert q-\mu\vert - \alpha 
( 3/ 2 +2\eps) \eps R - \eps^2R
	 - M\alpha ( 2 \eps +1 ) ( 1 +  3 \sqrt {1+ \alpha^2}/2) \eps R.
	\end{align*}
	Since $\vert q - \mu\vert < \eps^2 R$ and $ \vert \Pi_{H(P)^\perp}(q-p)
 \vert >\delta\eps R$ by assumption, we obtain
	\begin{align} \label{proj_bound_1}
	 \vert \Pi_{H(\eta)^\perp}(\mu-\eta)\vert &>( \delta - 2 \eps -
 \alpha ( 3/ 2 + 2\eps)- M\alpha ( 2\eps + 1)( 1 + 3 \sqrt{1+\alpha^2}/2))
\eps R	
\end{align}
for all $\mu \in M_q(\eps), $  $ e \in H(\mu)\cap \S^{n-1},$ and $
 \eta \in M_p(\mu,e,\eps).$
	Finally,
	\begin{align} \label{angle_bound_1}
	\va \li H(\eta),H(\mu) \ri &\leq \va \li H(\eta),H(p) \ri + \va\li H(p),H(q)\ri + 
\va \li H(q),H(\mu)\ri\notag\\
	&< M\alpha + \omega(\delta) + 2 M\alpha + M\alpha = \omega(\delta) + 4 M \alpha,
	\end{align}
	since we have \eqref{parallel_angle} and $\mu \in M_q(\eps)$ as well as
 $\eta \in M_p(\mu,e,\eps)$. 
	
	For $\mu \in M_q(\eps)$ and $e \in H(\mu)\cap \S^{n-1}$ 
the numerator $L_\tau$ of the energy density of $E^\tau$  satisfies $L_\tau
	(\mu,\eta,H(\mu),H(\eta))\ge  F^\tau ( \mu,\eta,e)$ for all $\eta\in B_R(p),$
where $F_\tau$ is given by \eqref{eq:new-Ftau}.
	In particular, for $\eta \in M_p(\mu,e,\eps)$, we may use \eqref{scalar_prod_1},
 \eqref{norm_bound_1}, \eqref{proj_bound_1}, and \eqref{angle_bound_1} to conclude
that the energy density $L_\tau(\mu,\eta,H(\mu),H(\eta))/|\mu-\eta|^{2m}$ is bounded
from below by 
	\begin{align*}
	 & {\vert \mu-\eta\vert^{-2m}} 
\big( 2 {\vert \langle e , \mu-\eta\rangle \vert }{\vert \mu-\eta\vert^{-2}} \cdot 
 \vert \Pi_{H(\eta)^\perp}(\mu-\eta) \vert -  \vert \Pi_{H(\eta)^\perp}(e) 
\vert \big) ^{(1+\tau)m} \notag\\
	&\hspace{-0.15cm}\geq  { \big(( 2 \eps +1) ( 1 +  3 
\sqrt {1+\alpha^2}/2) \eps R\big)^{-2m} }  
	\cdot \Big[  \frac { \li  1  - \li \omega(\delta) + 3M\alpha\ri \li 
 3  +4\eps\ri \li 1+\alpha\ri-4\eps\ri \eps^2 R^2 } {( \li 2 \eps +1 \ri 
( 1+ 3  \sqrt{1+\alpha^2}/2) \eps R ) ^{2}} \notag \\
	&\hspace{-0.15cm}\cdot\big\{ \delta - 2 \eps - \alpha \li  3/2  +2\eps \ri -
 M\alpha \li 2\eps+1\ri ( 1 +  3  \sqrt{1+\alpha^2}/2)\big\}- \li \omega(\delta)+ 
4M\alpha \ri \Big]^{(1+\tau)m}.
	\end{align*}
	We define $ {f(\eps, \delta)}/{\li\eps R\ri^{2m}}$ to be the right hand side of 
the last estimate. Since $\eps \leq  \delta /{500}$ one obtains by means of
\eqref{alpha_M_connection} $
	f(\eps,\delta) > \li \frac {100}{255}\ri^{2m} \cdot \li \frac
 \delta {100}\ri^{(1+\tau)m}.$  
	Integrating the energy density  over the Cartesian product of
$M_q(\eps)\subset\Sigma \cap B_{\eps^2R}(q)$ and 
$M_p(\mu,e,\eps)\subset \Sigma \cap B_{2\eps R}(p)$ (see \eqref{Mp_inclusion_1}),
 we find by virtue of 
\eqref{Mq_measure_1} and \eqref{Mp_measure_1} the strict inequality 
\eqref{parallel_energy}
with the lower bound
	$c_1(K,\eps,\delta,\tau,m)$ as stated in  
Lemma \ref{lem:dist_sigma_to_plane_1}.
 	\end{proof}

\begin{proof}[Proof of Lemma \ref{lem:dist_sigma_to_plane_2}]
Again fix $\delta\in (0,1)$ and $\varepsilon \in (0,\delta/500].$
Analogously to the setting of Lemma \ref{lem:dist_sigma_to_plane_1}, 
we obtain $p,q \in \tilde K$, and for $M_q(\eps):=E_{\alpha,M}(q) 
\cap B_{\eps^2R}(q)$ we have
	\begin{align} \label{Mq_measure_2}
	\HM^m\li M_q(\eps)\ri \geq c_{\tilde K}\li \eps^2 R\ri^m.
	\end{align}
	Since $\va \li H(p),H(q)\ri \geq \omega(\delta) + 2 M\alpha$, we have
for arbitrary $\mu \in M_q(\eps)$
	\[ \va \li H(\mu),H(p)\ri \geq \va\li H(p),H(q)\ri - \va \li H(q),H(\mu) \ri >\omega(\delta)+ M\alpha, \]
	so that there exists by Lemma \ref{lem:angle_formula} a vector
 $e^\ast = e^\ast(\mu) \in H(\mu)\cap \S^{n-1}$, such that
	\begin{align} \label{e_proj_2}
	 \vert \Pi_{H(p)^\perp}(e^\ast) \vert > \omega(\delta)+M\alpha.
	\end{align}
	We claim that there is a vector $v=v(\mu,e^\ast)\in H(p)\cap \S^{n-1}$, with
	\begin{align} \label{v_def_2}
	\left \langle \Pi_{H(p)}(e^\ast),v \right \rangle =0.
	\end{align}
	Indeed, if $\Pi_{H(p)}(e^\ast)=0$ then any $v \in H(p)\cap\S^{n-1}$ satisfies
identity \eqref{v_def_2}, 
and if $\Pi_{H(p)}(e^\ast)\neq 0$ then 
we have $\li \R e^\ast\ri^\perp \cap H(p)\neq \{0\}$ 
because $\dim(\R e^\ast)^\perp=n-1$ and $\dim H(p)=m\geq2$, so that one can 
choose $v \in \li \R e^\ast\ri^\perp \cap H(p)\cap S^{n-1}$ to satisfy 
\eqref{v_def_2}.
	
	Now compute
	\begin{align} \label{eta_exists_condition_2}
	 \vert \Pi_{p+H(p)} ( \mu +  R  v/2) - p  \vert &=  \vert 
\Pi_{H(p)}( \mu +  R  v/2 - p)  \vert 
	\leq \vert \mu - q \vert + \vert q -p \vert +  R  
 \vert \Pi_{H(p)}(v) \vert/2\notag\\
	& <\li  1/ 2 + \eps + \eps^2\ri R <R .
	\end{align}
	By Definition \ref{def:admissible_sets} there is a point
$y \in D_p$, such that $\left \vert y - \Pi_{p+H(p)}\li \mu +  R  v/2\ri \right 
\vert < \eps^2 R$, together with a corresponding point
 $\eta_y=\eta_y(\mu,e^\ast) \in \Sigma \cap C_p(\alpha,H(p))$ 
with $\Pi_{p+H(p)}(\eta_y)=y=p+\Pi_{H(p)}(\eta_y-p)$. Therefore,
	\begin{align}\label{projection_etay_p}
	\Pi_{H(p)}\li \eta_y - p \ri = y-p = f_y +\Pi_{H(p)}\li \mu+ R  v/2 -p\ri,
	\end{align}
	with $f_y:= y-\Pi_{H(p)}\li \mu+ R  v/2\ri$. In other words,
	\begin{align}\label{projection_etay_mu}
	\Pi_{H(p)}\li \eta_y-\mu\ri = \Pi_{H(p)}\li  R  v/2 \ri + f_y = 
 R  v/2 + f_y.
	\end{align}
	In particular, by \eqref{eta_exists_condition_2} and 
\eqref{projection_etay_p} one finds
	\begin{align} \label{proj_etay_norm}
	 \vert \Pi_{H(p)}\li \eta_y-p\ri  \vert 
\leq \vert f_y\vert +  \vert \Pi_{H(p)} \li \mu + 
 R  v/2 -p\ri  \vert  < \li  1/ 2 + \eps + 2\eps^2\ri R, 
	\end{align}
	and
	\begin{align}\label{proj_etay_p_norm_unten}
	 \vert \Pi_{H(p)}\li \eta_y-p\ri  \vert &\geq  \vert \Pi_{H(p)}\li \mu +
 R  v/2 - p \ri  \vert - \vert f_y\vert \\
	&\geq  R  \vert \Pi_{H(p)}(v)  \vert /2- \vert \mu-q\vert -
 \vert q-p\vert -\vert f_y\vert
	 >\li  1/ 2 - \eps -2\eps^2\ri R.\notag
	\end{align}
Inequality \eqref{proj_etay_norm} together with the fact that 
$\eta_y \in C_p(\alpha,H(p))$ implies
	\begin{align}\label{proj_etay_ortho_norm}
	 \vert \Pi_{H(p)^\perp}\li \eta_y-p\ri \vert \leq \alpha 
 \vert \Pi_{H(p)}( \eta_y-p)  \vert < \alpha (  1/ 2 + 
\eps + 2 \eps^2) R.
	\end{align}
With $\eta_y-p=\Pi_{H(p)}(\eta_y-p)+\Pi_{H(p)^\perp}(\eta_y-p)$ one arrives at
	\begin{align}\label{etay_p_dist}
	(  1/ 2 - \eps - 2\eps^2) R < \vert \eta_y-p\vert < \sqrt {1+\alpha^2} 
(  1/ 2 + \eps+2\eps^2) R.
	\end{align}
	Define $M_p(\mu,e^\ast,\eps):=E_{\alpha,M}(p)\cap 
B_{\eps^2R}(\eta_y(\mu,e^\ast))$ which satisfies
	\begin{align} \label{Mp_inclusion_2}
	M_p(\mu,e^\ast,\eps) \subset \Sigma \cap B_R(p),
	\end{align}
	because $\vert \eta-p\vert \leq \vert \eta-\eta_y\vert +
\vert \eta_y-p\vert < \sqrt{1+\alpha^2}\li 1/ 2 + \eps + 3\eps^2\ri R <R$ 
for $\eta\in M_p(\mu,e^*,\varepsilon)$
since $\sqrt{1+\alpha^2}\leq 1+\alpha < {51}/{50}$
by \eqref{alpha_M_connection}.
 By virtue of Definition \ref{def:admissible_sets} one has also
	\begin{align}\label{Mp_measure_2}
	\HM^m\li M_p \li \mu,e^\ast,\eps \ri \ri \geq c_{\tilde K} 
\li \eps^2 R\ri^m.
	\end{align}
	Now,  by $\vert e^\ast\vert=1$, \eqref{v_def_2}, and 
\eqref{projection_etay_mu} one uses $\Id=\Pi_{H(p)}+\Pi_{H(p)^\perp}$ to estimate
	\begin{align*}
	 \vert \langle e^\ast, \mu-\eta_y\rangle  \vert 
	&\leq  \vert  \langle \Pi_{H(p)}(e^\ast), \Pi_{H(p)}(\mu-\eta_y)
 \rangle  \vert + \vert  \langle  \Pi_{H(p)^\perp}(e^\ast), 
\Pi_{H(p)^\perp}(\mu-\eta_y) \rangle  \vert\\
	&\leq \vert f_y\vert + \vert \mu-q\vert + \vert q-p\vert + 
\vert \Pi_{H(p)^\perp}(p-\eta_y) \vert,
	\end{align*}
	so that we obtain by means of \eqref{proj_etay_ortho_norm}
	$ \left \vert \langle e^\ast,\mu-\eta_y\rangle \right \vert < 
\li \li 2\eps^2+\eps\ri \li 1+\alpha\ri +  \alpha/ 2 \ri R.$
	Consequently, for an arbitrary $\eta \in M_p(\mu,e^\ast,\eps)$ one finds
	\begin{align}\label{scalar_prod_bound_2}
	\left \vert \langle e^\ast,\mu-\eta\rangle\right\vert \leq \left 
\vert \langle e^\ast,\mu-\eta_y\rangle \right \vert + \vert \eta_y-\eta\vert 
&< \li \li 3\eps^2+\eps\ri\li1+\alpha\ri+ \alpha / 2 \ri R.
	\end{align}
	On the one hand, we obtain
	\begin{align} \label{norm_bound_oben_2}
	\vert \mu-\eta\vert &\leq \vert \mu-q\vert + \vert q - p \vert +\vert p 
-\eta\vert
	< \eps^2 R + \eps R + \sqrt{1+\alpha^2}\li  1/ 2 + \eps + 3\eps^2\ri R 
\notag\\
	&< \sqrt{1+\alpha^2}\li  1/ 2 + 2 \eps +4\eps^2\ri R,	
	\end{align}
	and, on the other hand,
	\begin{align} \label{norm_bound_unten_2}
	\vert \mu-\eta\vert &\geq \vert \eta_y - p \vert - \vert \eta_y-\eta\vert - \vert p-q\vert - \vert \mu-q\vert\notag
	\\ &>\li  1/ 2 -\eps-2\eps^2\ri R - \eps^2R-\eps R-\eps^2 R
	=\li  1/ 2 - 2\eps-4\eps^2\ri R,
	\end{align}
	due to \eqref{etay_p_dist}. Moreover, according to 
\eqref{proj_etay_ortho_norm} and \eqref{norm_bound_oben_2},
	\begin{align}\label{proj_bound_2}
	 \vert \Pi_{H(\eta)^\perp}&(\mu -\eta) \vert \leq 
 \vert ( \Pi_{H(\eta)^\perp}-\Pi_{H(p)^\perp})(\mu-\eta) \vert + 
 \vert \Pi_{H(p)^\perp}(\mu-p) \vert +  \vert \Pi_{H(p)^\perp}(p-\eta_y)
 \vert \notag\\
	&\hspace{2cm}+  \vert \Pi_{H(p)^\perp}(\eta_y-\eta) \vert\notag\\
	&\leq \va \li H(\eta),H(p)\ri \vert \mu-\eta\vert + \vert \mu-q\vert + 
\vert q-p\vert +  \vert \Pi_{H(p)^\perp}(p-\eta_y)
 \vert + \vert \eta_y-\eta \vert\notag\\
	&<M\alpha \sqrt{1+\alpha^2}(  1/ 2 +2\eps+4\eps^2) R + 
\eps^2 R+ \eps R + \alpha (  1/ 2 +\eps + 2\eps^2 ) R+ \eps^2 R\notag\\
	&<\alpha ( M\sqrt{1+\alpha^2}+1)( 1/ 2 + 2\eps+ 4\eps^2) R +
 ( 2\eps^2+\eps) R.
	\end{align}
	Finally, since $\eta \in M_p(\mu,e^\ast,\eps)$ and by \eqref{e_proj_2} we obtain
	\begin{align}\label{angle_bound_2}
	 \vert \Pi_{H(\eta)^\perp}(e^\ast) \vert \geq 
 \vert \Pi_{H(p)^\perp}(e^\ast) \vert - \va \li H(\eta),H(p)\ri >\omega(\delta)+M\alpha-M\alpha=\omega(\delta).
	\end{align}
	For fixed $\mu \in M_q(\eps)\subset B_{\eps^2R}(q)$ 
the numerator $L_\tau$ of the energy density satisfies 
	$ L_\tau\li \mu,\eta,H(\mu),H(\eta)\ri\ge  F^\tau\li \mu,\eta,e\ri$ for 
all $e\in H(\mu)\cap\S^{n-1}$ and all $\eta \in B_R(p).$
	In particular, for 
$\eta \in M_p(\mu,e^\ast,\eps)\subset B_R(p)$, 
we can use \eqref{scalar_prod_bound_2}, \eqref{norm_bound_oben_2}, 
\eqref{norm_bound_unten_2}, \eqref{proj_bound_2}, and \eqref{angle_bound_2}
 in order to conclude that the energy density $L_\tau(\mu,\eta,H(\mu),H(\eta))/|\mu-\eta|^{2m}$
is bounded from below by
	\begin{align*}
	 &{\vert \mu-\eta\vert^{-2m}}\li  \vert \Pi_{H(\eta)^\perp}(e^\ast)
\vert - 2  {\vert \langle e^\ast,\mu-\eta\rangle\vert}{\vert \mu-\eta\vert^{-2}} 
\cdot\vert\Pi_{H(\eta)^\perp}(\mu-\eta)\vert \ri^{(1+\tau)m}\\
	&\hspace{-0.17cm}\geq  {( \sqrt{1+\alpha^2} ( 1/ 2 + 2\eps+4\eps^2 )
 R)^{-2m}}
 \cdot \Big[ \omega(\delta) -{\li \li 1/ 2 -2\eps-4\eps^2\ri R\ri^{-2}}\cdot\\
	&\hspace{-0.15cm} \Big\{{2((3\eps^2+\eps)( 1+\alpha ) +  \alpha / 2 )
R^2
( \alpha ( M\sqrt{1+\alpha^2}+1)( 1/ 2 + 2\eps+4\eps^2) + 
 2 \eps^2+\eps )}  \Big\}
\Big]^{(1+\tau)m}.
	\end{align*}
	We define $ {g(\eps,\delta)}/{R^{2m}}$ to be the right hand side of 
	this 
inequality. Then, as $\eps \leq  {\delta}/{500}$ one obtains by 
 means of \eqref{alpha_M_connection} the estimate $
	g(\eps,\delta)>   \frac {(1.9)^{(3+\tau)m}}{10^{5(1+\tau)m}} 
\cdot \delta^{(1+\tau)m}$. 
	Now we can integrate the energy density over 
the Cartesian product of $M_q(\eps) \subset \Sigma \cap B_{\eps^2R}(q)$
 and $M_p(\mu,e^\ast,\eps)\subset \Sigma \cap B_R(p)$ (see \eqref{Mp_inclusion_2}), 
in order to establish with the help of
\eqref{Mq_measure_2} and \eqref{Mp_measure_2} the desired inequality 
\ref{transversal_energy} with the constant $c_2
	(K,\eps,\delta,\tau,m) $ as stated in
 Lemma \ref{lem:dist_sigma_to_plane_2}.
\end{proof}

\begin{cor} \label{cor:theta_bound}
	Let $\delta \in (0,1)$, $\alpha,M>0$ satisfy 
\eqref{alpha_M_connection}, and suppose $\Sigma \in \Aam$, $2\leq m\leq n$, has 
locally finite energy $E^\tau$ for some $\tau >-1$.
Then, for all compact sets $K\subset \Sigma$ and all $p\in K$, we have
\begin{align}\label{eq:theta_bound}
\theta_\Sigma\li p,G_p,r\ri \leq \delta 
\Foa r \in \li 0,\min\{r_K,\rho_K\}\right],
	\end{align}
	where $\rho_K$ denotes the radius of Corollary 
\ref{cor:dist_plane_to_sigma} and $r_K$ as well as $G_p$ are as stated in 
Theorem \ref{thm:dist_sigma_to_plane}. 
In particular, 
$\Sigma$ is $(m,\delta)$-Reifenberg-flat.
\end{cor}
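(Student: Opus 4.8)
The plan is to read the bilateral flatness parameter off its decomposition \eqref{theta_def},
\[
\theta_\Sigma(p,G_p,r)=\max\Big\{\beta_\Sigma(p,G_p,r),\ \sup_{\xi\in(p+G_p)\cap B_r(p)}\dist\big(\xi,\Sigma\cap B_r(p)\big)/r\Big\},
\]
and to bound each of the two entries separately. The first entry is precisely the quantity controlled by Theorem \ref{thm:dist_sigma_to_plane}: under \eqref{alpha_M_connection} and local finiteness of $E^\tau$, that theorem produces, for every compact $K\subset\Sigma$, a radius $r_K\in(0,R_K]$ and planes $G_p$ with $\beta_\Sigma(p,G_p,r)\le\delta$ for all $p\in K$ and all $r\in(0,r_K]$, where moreover $G_p=H(p)$ when $p\in\Sigma^*\cap K$ and $G_q$ may be taken to be a limit of $H(p_k)$ along a suitable sequence $\Sigma^*\ni p_k\to q$ when $q\in K\setminus\Sigma^*$. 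Thus the first entry of the maximum is $\le\delta$ on $(0,r_K]$.

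For the second entry I would invoke Corollary \ref{cor:dist_plane_to_sigma}, which bounds exactly this quantity by $2\alpha/\sqrt{1+\alpha^2}$ on $(0,\varrho_K]$ --- with the plane $H(p)$ if $p\in\Sigma^*\cap K$, and with any limit plane $F=\lim_k H(p_k)$ along a sequence $\Sigma^*\ni p_k\to q$ if $q\in K\setminus\Sigma^*$. The remaining observation is elementary: since $M>0$, condition \eqref{alpha_M_connection} forces $\alpha<\delta/50$, whence $2\alpha/\sqrt{1+\alpha^2}\le 2\alpha<\delta/25<\delta$. So the second entry is $<\delta$ on $(0,\varrho_K]$.

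To make the two estimates refer to the same plane at a point $q\in K\setminus\Sigma^*$, I would fix one sequence $\Sigma^*\ni p_k\to q$ along which $H(p_k)$ converges (possible by compactness of $\mathscr{G}(n,m)$), set $G_q$ equal to that limit --- an admissible choice in Theorem \ref{thm:dist_sigma_to_plane} --- and apply the second part of Corollary \ref{cor:dist_plane_to_sigma} to this very sequence, so that its reverse-distance bound holds with $F=G_q$. Taking the maximum of the two bounds then yields $\theta_\Sigma(p,G_p,r)\le\delta$ for all $p\in K$ and all $r\in(0,\min\{r_K,\varrho_K\}]$, which is \eqref{eq:theta_bound}. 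Finally, $(m,\delta)$-Reifenberg-flatness is immediate from Definition \ref{def:reifenberg-flatness} with $r_0(K):=\min\{r_K,\varrho_K\}$ and $F_p(r,\delta):=G_p$, a plane independent of $r$.

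I do not expect any serious obstacle here; the corollary is essentially a bookkeeping synthesis of Theorem \ref{thm:dist_sigma_to_plane} and Corollary \ref{cor:dist_plane_to_sigma}. The two points that require a little attention are the trivial inequality $2\alpha/\sqrt{1+\alpha^2}<\delta$, which uses that $M>0$ in \eqref{alpha_M_connection}, and the choice of a single common approximating sequence at points of $K\setminus\Sigma^*$ so that both cited results speak about the same plane $G_q$.
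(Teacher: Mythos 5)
Your proposal is correct and follows essentially the same route as the paper: bound the $\beta$-number via Theorem \ref{thm:dist_sigma_to_plane}, bound the reverse distance via Lemma \ref{cor:dist_plane_to_sigma} using the same plane $G_p$ (respectively the same approximating sequence at points of $K\setminus\Sigmaa$), observe that \eqref{alpha_M_connection} gives $2\alpha/\sqrt{1+\alpha^2}<\delta$, and combine through \eqref{theta_def}. No gaps.
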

\begin{proof}
	According to Theorem \ref{thm:dist_sigma_to_plane}, one finds
	$ \sup_{p \in K} \beta_\Sigma(p,G_p,r) \leq \delta \Foa r \in (0,r_K].$
	Moreover, we have $G_p=H(p)$ for all $p\in\Sigmaa$ and 
$G_q=\lim_{i\to\infty}H(p_i)$ for a sequence $(p_i)_{i\in\N}\subset \Sigmaa$ 
approximating $q\in K\setminus \Sigmaa$. Consequently, Lemma 
\ref{cor:dist_plane_to_sigma} guarantees
	\[  \sup_{\xi \in (p+G_p)\cap B_r(p)}
 \dist\li \xi,\Sigma\cap B_r(p)\ri \leq  {2\alpha r}/{\sqrt{1+\alpha^2}}<\delta r\quad
 \Foa r \in(0, \rho_{K}], \]
 where we used  \eqref{alpha_M_connection} for the last inequality. 
In view of \eqref{theta_def} this finishes 
the proof.
\end{proof}
Now Reifenberg's famous topological disk lemma 
\cite{reifenberg_1960,simon_1996a,hong-wang_2010} implies that finite energy sets
are topological manifolds, a result that we do not rely on in the following
sections.
\begin{cor} \label{cor:top_mfd} 
	Let $m,n\in\N$ with $2\leq m\leq n$. For
any 
$\kappa \in(0,1)$ there is a constant $\delta_0(m,\kappa)\in (0,1)$ 
such that any set $\Sigma\in \Aam$ with locally finite energy $E^\tau$
for some $\tau\in (-1,\infty)$,
where $\alpha,M>0$ satisfy \eqref{alpha_M_connection} 
for $\delta=\delta_0$, is an $m$-dimensional
 topological manifold which is locally bi-H\"older homeomorphic to the 
$m$-dimensional flat unit disk $B_1(0)\subset \R^m$ with H\"older constant $\kappa$. 
\end{cor}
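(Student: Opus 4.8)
The plan is to combine the Reifenberg flatness supplied by Corollary \ref{cor:theta_bound} with Reifenberg's topological disk theorem in its bilateral formulation, as available in \cite{reifenberg_1960,simon_1996a,hong-wang_2010}. That theorem provides, for each $m\ge 2$ and each prescribed H\"older constant $\kappa\in(0,1)$, a threshold $\delta_0=\delta_0(m,\kappa)\in(0,1)$ such that the following holds: whenever a closed set $S\subset\R^n$ and a point $x_0\in S$ admit a radius $\rho>0$ with $\theta_S(x,r)\le\delta_0$ for all $x\in S\cap B_\rho(x_0)$ and all $r\in(0,\rho]$, then $S\cap B_{\rho/2}(x_0)$ is homeomorphic to $B_1(0)\subset\R^m$ via a map which, together with its inverse, is H\"older continuous with constant $\kappa$.

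First I would fix $\kappa$, take the corresponding $\delta_0(m,\kappa)$, and impose $\alpha(M+1)<\delta_0/50$ so that \eqref{alpha_M_connection} holds with $\delta=\delta_0$; Corollary \ref{cor:theta_bound} then yields that $\Sigma$ is $(m,\delta_0)$-Reifenberg flat, i.e.\ for every compact $K\subset\Sigma$ there is a radius $r_0(K):=\min\{r_K,\rho_K\}>0$ with $\theta_\Sigma(p,r)\le\theta_\Sigma(p,G_p,r)\le\delta_0$ for all $p\in K$ and $r\in(0,r_0(K)]$. To pass from this to charts I would localize: given an arbitrary $p_0\in\Sigma$, set $K:=\Sigma\cap\overline{B_1(p_0)}$, which is compact because $\Sigma$ is closed, and put $\rho:=\min\{r_0(K),2\}/2$. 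Then $\theta_\Sigma(x,r)\le\delta_0$ holds for every $x\in\Sigma\cap B_\rho(p_0)$ and every $r\in(0,2\rho]$, which is exactly the hypothesis of the bilateral Reifenberg theorem centered at $p_0$ with radius $2\rho$; applying it gives a bi-H\"older homeomorphism of $\Sigma\cap B_\rho(p_0)$ onto $B_1(0)\subset\R^m$ with H\"older constant $\kappa$. Since $p_0\in\Sigma$ was arbitrary, these homeomorphisms serve as charts making $\Sigma$ an $m$-dimensional topological manifold with the claimed local bi-H\"older structure.

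The main obstacle here is bookkeeping rather than a new estimate: one has to check that the precise input demanded by the cited version of Reifenberg's theorem --- control of the two-sided quantity $\theta_\Sigma$ (not merely of the one-sided $\beta_\Sigma$) uniformly over a whole ball of base points and down to arbitrarily small scales --- is exactly what Corollary \ref{cor:theta_bound} delivers, which is why it is convenient to take $K$ slightly larger than the ball $B_\rho(p_0)$ actually used. One also has to track the dependence of $\delta_0$ on $\kappa$: in the quantitative statements of \cite{simon_1996a,hong-wang_2010} the H\"older exponent of the charts is given explicitly as $1$ minus a quantity that tends to $0$ as $\delta_0\to 0$, so choosing $\delta_0(m,\kappa)$ small enough yields any prescribed H\"older constant $\kappa<1$.
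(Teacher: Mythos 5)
Your proposal is correct and follows exactly the route the paper intends: the paper states Corollary \ref{cor:top_mfd} without a written proof, remarking only that the Reifenberg flatness from Corollary \ref{cor:theta_bound} combined with Reifenberg's topological disk theorem \cite{reifenberg_1960,simon_1996a,hong-wang_2010} yields the claim. Your localization via the compact set $K:=\Sigma\cap\overline{B_1(p_0)}$ and the tracking of $\delta_0(m,\kappa)$ simply make explicit the bookkeeping the authors left implicit.
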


\subsection{Lipschitz and $C^k$-submanifolds}\label{sec:lip-c1-mfds}
The  local structure of the admissibility class $\Aam$ allows us to directly derive
 the stronger Lipschitz manifold statement of  Theorem \ref{thm:self-avoidance} \emph{without}
 using Reifenberg's topological disk theorem at all. In order to do so, we take a closer look at the projection of $\Sigma \cap B_r(p)$ onto the affine plane $p+G_p$. The following arguments are based solely on  the Reifenberg-flatness of $\Sigma$.

First, we notice injectivity of the orthogonal projection onto approximating planes for $(m,\delta)$-Reifenberg-flat sets.

\begin{lem}[Injectivity of  projection] \label{lem:injective_projection}
	Suppose $\delta\in ( 0 ,  1/ 2),$ and $\Sigma\subset \R^n$ is an $(m,\delta)$-Reifenberg-flat set. Let $p \in K\subset \Sigma$, where $K$ is compact, and suppose that $\eta \in K \cap B_r(p)$ for some $r \in (0,r_0(K)]$ satisfies 
	\begin{align} \label{inj_angle_condition}
	\va \li F_\eta(\rho,\delta), F_p(r,\delta)\ri+\delta <1 \Foa \rho \in (0,r),
	\end{align}
	for the radius $r_0(K)$ and the $m$-planes $F_\eta(\rho,\delta) $ and
	$F_p(r,\delta)$ as in Definition \ref{def:reifenberg-flatness}.
	Then for every $\xi \in \Sigma \cap B_r(p)\setminus \{\eta\}$ one has
	$
		\Pi_{p+F_p(r,\delta)}(\xi) \neq \Pi_{p+F_p(r,\delta)}(\eta).$
	\end{lem}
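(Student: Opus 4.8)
The plan is to argue by contradiction: suppose $\xi\in\Sigma\cap B_r(p)\setminus\{\eta\}$ satisfies $\Pi_{p+F_p(r,\delta)}(\xi)=\Pi_{p+F_p(r,\delta)}(\eta)$. Write $F:=F_p(r,\delta)$ for brevity. Then $\xi-\eta\in F^\perp$, so the segment from $\eta$ to $\xi$ is orthogonal to $F$. The idea is to play the Reifenberg flatness at $\eta$ on a scale $\rho$ comparable to $|\xi-\eta|$ against the Reifenberg flatness at $p$ on scale $r$. Set $\rho:=|\xi-\eta|$ (or a slightly larger radius $\rho\in(0,r)$ so that $\xi\in B_\rho(\eta)$, noting $\rho<r$ since both points lie in $B_r(p)$, after a harmless shrinking if $\xi$ sits on the boundary sphere). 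Since $\xi\in\Sigma\cap B_\rho(\eta)$, the $\theta$-flatness at $\eta$ gives $\dist(\xi,(\eta+F_\eta(\rho,\delta))\cap B_\rho(\eta))\le\delta\rho$, hence in particular $\dist(\xi-\eta,F_\eta(\rho,\delta))\le\delta\rho=\delta|\xi-\eta|$.

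Next I would use the $\theta$-flatness at $p$ on scale $r$: since $\eta,\xi\in\Sigma\cap B_r(p)$, both $\dist(\eta,p+F)\le\delta r$ and $\dist(\xi,p+F)\le\delta r$. But this alone is too weak because it only controls distance to $p+F$, not the \emph{direction} of $\xi-\eta$. The cleaner route is: the vector $u:=\xi-\eta$ lies in $F^\perp$ by assumption, so $|\Pi_F(u)|=0$ and $|\Pi_{F^\perp}(u)|=|u|$. On the other hand, from the flatness at $\eta$ we have a point $w\in F_\eta(\rho,\delta)$ with $|u-w|\le\delta|u|$, so $|w|\ge(1-\delta)|u|>0$ and $w\ne 0$. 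Decompose $w$ using $\Pi_F$: then $|\Pi_F(w)|=|\Pi_F(w-u)|\le|w-u|\le\delta|u|$, while $|w|\ge(1-\delta)|u|$, so $|\Pi_{F^\perp}(w)|\ge\sqrt{(1-\delta)^2-\delta^2}\,|u|$ — actually more simply $|\Pi_{F^\perp}(w)|\ge|w|-|\Pi_F(w)|\ge(1-2\delta)|u|$. Comparing $|\Pi_{F^\perp}(w)|$ with $|\Pi_F(w)|$ gives a lower bound on how much $w$ (a unit-ish vector in $F_\eta(\rho,\delta)$, after normalizing) is tilted away from $F$, and this translates into a lower bound on $\va(F_\eta(\rho,\delta),F)=\|\Pi_{F_\eta(\rho,\delta)}-\Pi_F\|$. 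Concretely, applying $\Pi_{F_\eta(\rho,\delta)}-\Pi_F$ to the unit vector $w/|w|$: since $w\in F_\eta(\rho,\delta)$ this equals $w/|w|-\Pi_F(w)/|w|$, whose norm is $|\Pi_{F^\perp}(w)|/|w|\ge(1-2\delta)|u|/|w|$. Using $|w|\le|u|+|u-w|\le(1+\delta)|u|$ one gets $\va(F_\eta(\rho,\delta),F)\ge(1-2\delta)/(1+\delta)$.

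Now the contradiction: since $\delta<1/2$, the bound $(1-2\delta)/(1+\delta)$ is positive, but it needs to contradict hypothesis \eqref{inj_angle_condition}, which says $\va(F_\eta(\rho,\delta),F)+\delta<1$, i.e.\ $\va(F_\eta(\rho,\delta),F)<1-\delta$. Since $(1-2\delta)/(1+\delta)<1-\delta$ always for $\delta\in(0,1/2)$, the angle lower bound I derived is unfortunately compatible with \eqref{inj_angle_condition}, so the bare estimate above is not yet sharp enough; I expect the main obstacle to be extracting a strict enough contradiction. The fix is to be more careful: rather than just "$\xi\ne\eta$", use that $\xi-\eta$ is \emph{exactly} in $F^\perp$ (zero $F$-component), which is much stronger than "small $F$-component". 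Then $\Pi_F(u)=0$ exactly, and applying $\Pi_{F_\eta(\rho,\delta)}-\Pi_F$ to $u/|u|$: $\Pi_F(u/|u|)=0$, and $\Pi_{F_\eta(\rho,\delta)}(u/|u|)$ has norm $\ge 1-\delta$ (since $u$ is within angle-distance $\delta$ of $F_\eta(\rho,\delta)$, so its projection onto that plane loses at most a factor controlled by $\delta$; precisely $|\Pi_{F_\eta(\rho,\delta)}(u)|\ge|u|-\dist(u,F_\eta(\rho,\delta))\ge(1-\delta)|u|$). Hence $\|(\Pi_{F_\eta(\rho,\delta)}-\Pi_F)(u/|u|)\|\ge 1-\delta$, so $\va(F_\eta(\rho,\delta),F)\ge 1-\delta$, contradicting \eqref{inj_angle_condition}. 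This closes the argument. The remaining technical point is the edge case where $|\xi-\eta|$ equals the ambient radius or $\xi$ lies on $\partial B_r(p)$; one handles it by choosing $\rho\in(0,r)$ with $\rho>|\xi-\eta|$ arbitrarily close, which is legitimate since \eqref{inj_angle_condition} is assumed for all $\rho\in(0,r)$ and the flatness estimate at $\eta$ then still yields $\dist(\xi-\eta,F_\eta(\rho,\delta))\le\delta\rho$, and taking $\rho\downarrow|\xi-\eta|$ recovers the needed inequality by continuity.
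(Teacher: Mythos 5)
Your overall strategy is the same as the paper's: argue by contradiction, use that $\xi-\eta$ lies in $F^\perp$ for $F:=F_p(r,\delta)$, invoke the flatness of $\Sigma$ at $\eta$ on a scale $\rho$ slightly larger than $|\xi-\eta|$, and convert the smallness of $\dist\big(\xi-\eta,F_\eta(\rho,\delta)\big)$ into the lower bound $\va\big(F_\eta(\rho,\delta),F\big)\ge 1-\delta\rho/|\xi-\eta|$, which is played off against \eqref{inj_angle_condition} as $\rho\downarrow|\xi-\eta|$. Your final estimate (applying $\Pi_{F_\eta(\rho,\delta)}-\Pi_F$ to the unit vector $(\xi-\eta)/|\xi-\eta|$) is simply a rearrangement of the paper's inequality $|\xi-\eta|\le\va(F,Q)\,|\xi-\eta|+\dist(\xi,\eta+Q)$ with $Q=F_\eta(\rho,\delta)$, so the core of the argument is sound; it also shares with the paper's proof the same mild limiting subtlety, namely that the plane $F_\eta(\rho,\delta)$ changes with $\rho$, so the strict inequality in \eqref{inj_angle_condition} must be read as holding with a margin that survives the limit.

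There is, however, one concrete gap: you must justify that a radius $\rho$ with $|\xi-\eta|<\rho<r$ exists at all, i.e.\ that $|\xi-\eta|<r$. You assert this holds ``since both points lie in $B_r(p)$,'' but that only yields $|\xi-\eta|<2r$. This matters, because \eqref{inj_angle_condition} is assumed only for $\rho\in(0,r)$ and the approximating planes $F_\eta(\rho,\delta)$ are guaranteed only for $\rho\le r_0(K)$, which may equal $r$; if $|\xi-\eta|\ge r$ your argument cannot be launched. The correct justification is precisely the combination of facts you list and then discard as ``too weak'': since $\xi-\eta\in F^\perp$, one has
\begin{equation*}
|\xi-\eta|=\big|\Pi_{F^\perp}(\xi-\eta)\big|\le\big|\Pi_{F^\perp}(\xi-p)\big|+\big|\Pi_{F^\perp}(\eta-p)\big|=\dist(\xi,p+F)+\dist(\eta,p+F)\le 2\delta r<r,
\end{equation*}
where the last step is the only place the hypothesis $\delta<1/2$ enters (note that your final argument never uses $\delta<1/2$, which signals that this step is missing). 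With this inserted, your proof is complete and coincides in substance with the paper's.
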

Notice that if for each $x\in (p+F_p(r,\delta))$ there is a point
$\eta_x\in K\cap B_r(p)$ with $\Pi_{p+F_p(r,\delta)}(\eta_x)=x$ satisfying
\eqref{inj_angle_condition}, then the projection $\Pi_{p+F_p(r,\delta)}$
restricted to $\Sigma\cap B_r(p)$ is injective.
\begin{proof}
	Assuming the contrary for some $\xi \in \Sigma \cap B_r(p)\setminus \{\eta\}$, we can estimate for $F:=F_p(r,\delta)\in \mathscr{G}(n,m)$
	\begin{align*}
	\vert \eta-\xi\vert &= \left \vert \Pi_F(\eta-\xi) + \Pi_{F^\perp}(\eta-\xi)\right \vert = \left \vert \Pi_{F^\perp}(\eta-\xi)\right \vert 
	\leq \left \vert \Pi_{F^\perp}(\eta-p) \right\vert + \left\vert \Pi_{F^\perp}(\xi-p)\right \vert \\
	&=\dist\li \eta,p+F\ri + \dist \li \xi,p+F\ri \leq 2\delta r <r,
	\end{align*}
	since $\delta < 1/ 2$. Therefore, there exists an integer $N\in \N$, such that
	\begin{align}\label{inj_lem_radius}
	r_k:=\li 1 +  1 /k\ri \cdot \vert \eta-\xi \vert <r \leq r_0(K) \quad\Foa k \geq N.
	\end{align}
	Obviously, $\xi \in \Sigma \cap B_{r_k}(\eta)$, and therefore, by the Reifenberg-flatness of $\Sigma$, used in the point $\eta$ for the radius $r_k$ with the approximating plane $Q:=F_\eta(r_k,\delta)$ 
	\begin{align} \label{inj_lem_dist}
	\dist\li \xi,(\eta+Q)\cap B_{r_k}(\eta)\ri \leq \delta r_k,
	\end{align}
	so that we can estimate using \eqref{inj_lem_radius} and \eqref{inj_lem_dist}
	\begin{align*}
	\vert \eta-\xi\vert &=\left \vert \Pi_{F^\perp}(\eta-\xi)\right \vert = \left \vert \Pi_{F^\perp}(\eta-\xi) - \Pi_{Q^\perp}(\eta-\xi) + \Pi_{Q^\perp}(\eta-\xi)\right \vert\\
	&\leq \va\li F,Q\ri \cdot \vert \eta-\xi \vert + \dist \li \xi,\li \eta+Q\ri \cap B_{r_k}(\eta)\ri\\
	& \le \big[ \va \li F,Q\ri + \li 1 + 1/ k\ri \delta \,\big] \cdot \vert \eta-\xi\vert,
	\end{align*}
	and the right-hand side is by assumption \eqref{inj_angle_condition} 
	(for $\rho:= r_k$) strictly less than $\vert \eta-\xi\vert$ for $k$ sufficiently large, 
	which yields a contradiction.
\end{proof}

For sufficiently small $\delta>0$ and radius $r>0$, the orthogonal projection of $\Sigma \cap B_r(p)$ onto good approximating affine planes contains a whole $m$-dimensional disk.
\begin{prop}[Surjectivity of projection] \label{lem:surjective_projection}
	There exists a $\delta_1=\delta_1(m)\in (0,1/2)$, such that for all closed $(m,\delta)$-Reifenberg-flat sets $\Sigma\subset \R^n$ with $\delta\leq \delta_1$ and all compact
	subsets $K\subset\Sigma$ there exists a radius $\rho_1(K)\in (0,r_0(K)]$ such that
	\[ \li p + F_p(r,\delta)\ri \cap B_{ r /4}(p) 
\subset \Pi_{p+F_p(r,\delta)} \li \Sigma \cap 
B_{ r/ 2}(p)\ri \Foa p \in K \AND r\in(0,\rho_1(K) ]. \]
\end{prop}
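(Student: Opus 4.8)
The plan is to reduce the statement to a mapping–degree computation for the orthogonal projection $\pi:=\Pi_{p+F}$, where $F:=F_p(r,\delta)$. First I would replace $K$ by the compact set $\widetilde K:=\Sigma\cap\overline{B_1(K)}$ (compact since $\Sigma$ is closed) and set $\rho_1(K):=r_0(\widetilde K)\le r_0(K)$, so that the bilateral approximation of Definition~\ref{def:reifenberg-flatness} is available at \emph{every} point of $\Sigma$ met below and at \emph{every} scale $\le\rho_1(K)$; the value of $\delta_1=\delta_1(m)$ will be fixed (small) in the course of the argument. Fix $p\in K$, $r\in(0,\rho_1(K)]$, write $D:=(p+F)\cap\overline{B_{r/2}(p)}$ and $D':=(p+F)\cap B_{r/4}(p)$, and let $x_0\in D'$; we must find $\xi\in\Sigma\cap B_{r/2}(p)$ with $\pi(\xi)=x_0$.

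Two elementary observations set up the degree argument. Since $\beta_\Sigma(p,F,r)\le\theta_\Sigma(p,F,r)\le\delta$, every $\xi\in\Sigma\cap\overline{B_{r/2}(p)}$ satisfies $\dist(\xi,p+F)\le\delta r$, hence $|\pi(\xi)-p|=\sqrt{|\xi-p|^2-|\Pi_{F^\perp}(\xi-p)|^2}$; in particular $|\pi(\xi)-p|>r/4$ whenever $|\xi-p|=r/2$ (for $\delta\le\delta_1$ small), so $\pi(\Sigma\cap\partial B_{r/2}(p))$ is disjoint from $\overline{D'}$, and conversely any $\Sigma$-point in $\overline{B_{r/2}(p)}$ whose $\pi$-image lies in $\overline{D'}$ already lies in $B_{r/2}(p)$. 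Moreover $p\in\Sigma$ with $\pi(p)=p\in D'$.

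The core of the proof is the construction, using \emph{only} the bilateral flatness and not Reifenberg's topological disk theorem, of a continuous \emph{approximate section}
\[
\psi:D\longrightarrow\Sigma\cap B_{3r/4}(p),\qquad |\pi(\psi(x))-x|\le r/8\quad\text{for all }x\in D .
\]
I would build $\psi$ as the uniform limit of maps $\psi_j$ constructed scale by scale: starting from a piecewise–affine map whose vertex values are $\Sigma$-points furnished by the bilateral condition at $p$ at scale $r$, each $\psi_{j+1}$ corrects $\psi_j$ on a refined mesh of size $\sim\lambda^{j}r$ by invoking the bilateral condition at the relevant point and scale, with $\|\psi_{j+1}-\psi_j\|_\infty\lesssim\delta\lambda^{j}r$; the geometric series converges, each correction keeps points within $C\delta\lambda^{j}r$ of $\Sigma$, so the limit $\psi$ takes values in the closed set $\Sigma$, while the same estimates keep $\pi\circ\psi$ within $r/8$ of the identity provided $\delta\le\delta_1(m)$ is small enough. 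Granting $\psi$, the map $g:=\pi\circ\psi:D\to p+F$ is continuous with $|g(x)-x|\le r/8$, hence on $\partial D$ it takes values in $(p+F)\setminus\overline{D'}$ and the straight-line homotopy $(1-t)g+t\,\mathrm{id}$ keeps $|\,\cdot\,-p|\ge r/2-r/8>r/4\ge|x_0-p|$ there; therefore $\deg(g,D,x_0)=\deg(\mathrm{id},D,x_0)=\pm1\neq0$ and there is $x\in D$ with $\pi(\psi(x))=x_0$. Then $\xi:=\psi(x)\in\Sigma$ satisfies $\pi(\xi)=x_0\in D'$, and by the second observation $\xi\in\Sigma\cap B_{r/2}(p)$, which proves the inclusion.

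I expect the construction of the continuous section $\psi$ with values in $\Sigma$ to be the main obstacle: a single scale of the bilateral condition only yields density of the projection in $D'$, never an honest continuous section, so one must iterate over all scales while keeping the piecewise–affine interpolants — and the angles between successive approximating planes — quantitatively under control, so that the limit is continuous, lands in the closed set $\Sigma$, and does not move $\pi$-images by more than $r/8$. This is precisely the step partly inspired by \cite[Proposition~9.1]{david-etal_2001}, and carrying it out carefully is what pins down the admissible $\delta_1(m)$ and the radius $\rho_1(K)$.
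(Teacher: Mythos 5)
Your argument is essentially the paper's own: the paper composes the orthogonal projection with exactly such a continuous approximate section $\tau\colon (p+F_p(r,\delta))\cap\overline{B_{15r/16}(p)}\to\Sigma\cap\overline{B_r(p)}$ satisfying $|\tau(y)-y|\le C_*r\delta$ (imported as Lemma \ref{lem:reifenbergfunction} from \cite{kaefer_2019}, whose proof is the multi-scale Reifenberg-type iteration you sketch as the construction of $\psi$), and then concludes surjectivity onto the shrunk disk from \cite[Prop.~2.5]{kolasinski-etal_2018}, which is precisely the Brouwer-degree statement you establish via the straight-line homotopy. The ingredients and their assembly coincide; the only difference is that you re-derive in outline what the paper delegates to these two citations.
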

\begin{proof}
Up to isometry the composition $
\Psi:=\Pi_{p+F_p(r,\delta)}\circ\tau:
p+F_p(r,\delta)\to p+F_p(r,\delta)$ satisfies the assumptions of
\cite[Prop. 2.5]{kolasinski-etal_2018} to yield the claim. Here, $\tau$ is
the mapping constructed in the following technical lemma  proven 
 in \cite{kaefer_2019}. 
\end{proof}
\begin{lem}[{\cite[Lemma 3.7]{kaefer_2019}}] \label{lem:reifenbergfunction}
	There exists a $\delta_*=\delta_*(m)>0$ such that for every closed
	 $(\delta,m)$-Reifenberg-flat set $\Sigma \subset \R^n$ with 
	 $\delta\leq \delta_*$ and $x \in \Sigma$ there 
	is a radius $R_0=R_0(x,\delta, \Sigma)>0$ and a constant $C_*=C_*(m)$
		such that for all $F \in \mathscr{G}(n,m)$ with
	$\theta_\Sigma(x,F,r)\le \delta$ for $r\le R_0$ there exists
	 a continuous function  $
	 \tau \colon (x+F) \cap \overline{B_{ {15r}/{16}}(x)} \to \Sigma \cap \overline{B_r(x)}  $
	with $
	\vert \tau(y)-y \vert \leq C_*r\delta \leq  5 r/{144}  \Foa y \in (x+F) \cap \overline{B_{15r/16}(x)}.$
\end{lem}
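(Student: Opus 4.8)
The plan is to construct $\tau$ as the locally uniform limit of a sequence of continuous maps $\tau_k\colon(x+F)\cap\overline{B_{15r/16}(x)}\to\R^n$, obtained by correcting the identity map scale by scale, and then to verify that the limit takes values in the closed set $\Sigma$ and satisfies the claimed quantitative bound. Fix a geometric sequence $r_k:=2^{-k}r$ (the ratio is irrelevant as long as it lies in $(0,1)$) and put $\tau_0:=\mathrm{id}$. Throughout we exploit the hypothesis $\theta_\Sigma(x,F,\rho)\le\delta$ together with $(\delta,m)$-Reifenberg flatness \emph{at every point of $\Sigma\cap B_r(x)$ and every scale $\le r$}, which holds once $R_0$ is taken below the Reifenberg radius of a fixed compact neighbourhood of $x$. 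Two consequences are used repeatedly: the \textbf{bilateral estimate} --- for $z\in\Sigma$ and a $\delta$-approximating plane $G$ of $\Sigma$ in $B_\rho(z)$, the sets $(z+G)\cap B_\rho(z)$ and $\Sigma\cap B_\rho(z)$ are mutually $\delta\rho$-close --- and the \textbf{plane-oscillation estimate} --- if $G$ approximates $\Sigma$ in $B_\rho(z)$ and $G'$ approximates $\Sigma$ in $B_{\rho'}(z')$ with $z,z'\in\Sigma$, $|z-z'|\lesssim\rho$, and $\rho'\in[\rho/2,\rho]$, then $\ang(G,G')\lesssim\delta$; both are standard and follow from Definition~\ref{def:reifenberg-flatness}.

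For the inductive step, assume $\tau_k$ is continuous and $\dist\bigl(\tau_k(y),\Sigma\bigr)\lesssim\delta r_{k-1}$ for all $y$ (with $r_{-1}:=r$). Pick a maximal $r_k$-separated net $\{y_i\}$ in $(x+F)\cap\overline{B_{15r/16}(x)}$ together with a partition of unity $\{\varphi_i\}$ subordinate to the uniformly locally finite cover $\{B_{2r_k}(y_i)\}$. For each $i$ choose $p_i\in\Sigma$ with $|p_i-\tau_k(y_i)|\lesssim\delta r_{k-1}$, let $G_i:=F_{p_i}(r_k,\delta)$ be the associated approximating $m$-plane, and --- using the bilateral estimate at $p_i$ and scale $r_k$ --- select $z_i\in\Sigma\cap B_{2r_k}(p_i)$ with $\bigl|z_i-\Pi_{p_i+G_i}(y_i)\bigr|\le\delta r_k$. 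Set $\tau_{k+1}(y):=\sum_i\varphi_i(y)\,z_i$, which is manifestly continuous.

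The two estimates that drive the argument are, first, that the per-step displacement is summable, $\|\tau_{k+1}-\tau_k\|_\infty\lesssim\delta r_k$, and second, that $\dist(\tau_{k+1}(y),\Sigma)\lesssim\delta r_k$. For the second, note that for a given $y$ only indices with $|y_i-y|\lesssim r_k$ contribute, so the corresponding points $p_i$ lie within $O(r_k)$ of one another; the plane-oscillation estimate then makes the affine planes $p_i+G_i$ pairwise $O(\delta)$-close over a common ball of radius $\sim r_k$, so all contributing $z_i$ lie within $O(\delta r_k)$ of a single affine $m$-plane $\Pi^\star$, and hence so does their convex combination $\tau_{k+1}(y)$; since $\Pi^\star$ is $O(\delta r_k)$-close to $\Sigma$ there, the induction closes. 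The displacement bound follows from the same comparison together with the inductive control on $\dist(\tau_k(\cdot),\Sigma)$. Summing, $\tau_k$ converges uniformly to a continuous $\tau$ with $|\tau(y)-y|\le C_*\delta r$ for a dimensional constant $C_*=C_*(m)$; since $\dist(\tau_k(y),\Sigma)\to0$ and $\Sigma$ is closed, $\tau(y)\in\Sigma$. Finally one sets $\delta_*=\delta_*(m):=\tfrac{5}{144\,C_*}$, so that $|\tau(y)-y|\le\tfrac{5r}{144}$ whenever $\delta\le\delta_*$; combined with $|y-x|\le\tfrac{15r}{16}$ this gives $|\tau(y)-x|\le\tfrac{35r}{36}<r$, hence $\tau$ indeed maps into $\Sigma\cap\overline{B_r(x)}$.

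The main obstacle is the second estimate above: guaranteeing that the partition-of-unity smoothing does not push $\tau_{k+1}(y)$ away from $\Sigma$ by more than $O(\delta r_k)$. This is exactly where the coherence of the local approximating planes across the net --- and, implicitly, across consecutive scales --- is needed, i.e. the plane-oscillation estimate; arranging the constants so that the gain at scale $r_k$ is simultaneously summable over $k$ and small enough to feed back into the next induction step is the delicate bookkeeping. A secondary point worth stressing is that the smoothing is not optional: a crude selection $y\mapsto$ (a nearest point of $\Sigma$) already yields a point within $\delta r$ of $y$, but need not be continuous, and it is precisely continuity of $\tau$ that Proposition~\ref{lem:surjective_projection} requires in order to run the Brouwer-degree argument of \cite[Prop.~2.5]{kolasinski-etal_2018}.
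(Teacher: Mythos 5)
The paper never proves this lemma — it is imported verbatim from \cite{kaefer_2019} — so there is no in-paper argument to compare against; I can only assess your construction on its own terms. Your architecture (the standard Reifenberg iteration: correct the identity scale by scale via nets, partitions of unity and the local approximating planes, pass to the uniform limit, and use closedness of $\Sigma$) is the right one, but the central quantitative claim $\|\tau_{k+1}-\tau_k\|_\infty\lesssim\delta r_k$ is false for the correction you actually define. With $\tau_{k+1}(y)=\sum_i\varphi_i(y)z_i$ and $z_i$ depending only on the net point $y_i$, one has $\tau_{k+1}(y)-\tau_k(y)=\sum_i\varphi_i(y)\bigl(z_i-\tau_k(y_i)\bigr)+\sum_i\varphi_i(y)\bigl(\tau_k(y_i)-\tau_k(y)\bigr)$; the first sum is indeed $O(\delta r_{k-1})$, but the second is a mollification error of size $O(r_k)$ carrying no factor of $\delta$, because $\tau_k$ is close to the identity and the contributing $y_i$ are spread over a ball of radius $2r_k$ about $y$. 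Already in the trivial case $\Sigma=x+F$ your recipe gives $\tau_1(y)=\sum_i\varphi_i(y)y_i$, which moves generic points by a definite fraction of $r_1$ even though $\delta$ may be taken arbitrarily small. The iteration still converges (the errors $r_k$ are summable) and the limit still lands in $\Sigma$, but you only obtain $|\tau(y)-y|\lesssim r$ rather than $\le C_*\delta r$ — and the $\delta$-smallness of the displacement is exactly what the lemma asserts and what Proposition \ref{lem:surjective_projection} needs to run the degree argument of \cite[Prop. 2.5]{kolasinski-etal_2018}.

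The repair is standard and local: make the correction depend on the point being moved rather than only on the net, e.g. $\tau_{k+1}(y):=\tau_k(y)+\sum_i\varphi_i(y)\bigl(\Pi_{p_i+G_i}(\tau_k(y))-\tau_k(y)\bigr)$. Each displacement vector then has length at most $\dist\bigl(\tau_k(y),p_i+G_i\bigr)\lesssim\delta r_{k-1}$ by the inductive hypothesis $\dist(\tau_k(y),\Sigma)\lesssim\delta r_{k-1}$ combined with the bilateral estimate at $p_i$ and scale $r_k$, and your plane-oscillation argument then carries over unchanged to give $\dist(\tau_{k+1}(y),\Sigma)\lesssim\delta r_k$ and close the induction. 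A second, more minor slip of the same nature: you select $z_i$ near $\Pi_{p_i+G_i}(y_i)$, but $|y_i-p_i|$ is only $O(\delta r)$, not $O(r_k)$, so for large $k$ this projection need not lie in the ball $B_{r_k}(p_i)$ where the bilateral estimate is available; it should be $\Pi_{p_i+G_i}(\tau_k(y_i))$, whose distance to $p_i$ is $O(\delta r_{k-1})$.
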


We can now use the local bijectivity of the projection established in
Lemma \ref{lem:injective_projection}  
and Proposition \ref{lem:surjective_projection} to prove that every
admissible set with locally finite energy possesses a local graph 
representation\footnote{Independent of the admissibility class, such a statement holds true for
every $(m,\delta)$-Reifenberg-flat set $\Sigma \subset \R^n$ with
$\delta\leq \delta_1$ such that for all $x \in (p+F_p(r,\delta))\cap B_{r/4}(p)$ there is an $\eta \in \Sigma \cap B_{r/2}(p)$ with $\Pi_{p+F_p(r,\delta)}(\eta)=x$ and the approximating planes $F_\eta(\rho,\delta)$ allow an estimate like \eqref{inj_angle_condition} with a uniform upper bound $C<1$ for all such $\eta$ and $\rho\in(0,r)$; see \cite[Ch. 3.1]{kaefer_2020}.
},
which in particular implies Theorem \ref{thm:self-avoidance}. 
\begin{thm} \label{thm:lipschitz_graph} 
	Let $2\le m\le n$. There exist  constants $C=C(m)$ and $\delta_2=\delta_2(m)\in(0,\min\{\delta_1,1/C\} )$ 
	such that for every  $\Sigma \in \Aam$ with
	$\alpha,M>0$ satisfying \eqref{alpha_M_connection} for some  $\delta \in \li 0,\delta_2\right]$, with locally finite energy $E^\tau$ for some $\tau>-1$ the following
	holds.
	For  all compact sets $K\subset \Sigma$  there is a radius 
	$\rho_2=\rho_2(K)\in \li 0, \min\{r_K,\rho_K,\rho_1(K)\}\right]$ 
	such that for all $p \in K$ there is a function $u_p\in C^{0,1}(G_p,G_p^\perp)$ with
$u_p(0)=0$ and $ \lip u_p \leq  {C \delta}/{(1- C \delta)},$  such that
	\begin{align}\label{lip_thm_graph}
	\Sigma \cap B_{ {\rho_2}/{4}}(p)= \li p + \graph u_p\ri 
	\cap B_{ {\rho_2}/{4}}(p),
	\end{align}
		where $\rho_K$ denotes the radius of Corollary 
\ref{cor:dist_plane_to_sigma}, and $r_K$ as well as $G_p$ are as stated in 
Theorem \ref{thm:dist_sigma_to_plane}. 
	In particular, $\Sigma$ is a $C^{0,1}$-submanifold of $\R^n$.
\end{thm}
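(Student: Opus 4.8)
The plan is to follow the route sketched in Section~\ref{sec:1.3}: convert the Reifenberg flatness of $\Sigma$ into a local Lipschitz graph representation by showing that the nearest‑plane projection is a bijection on small balls. By Corollary~\ref{cor:theta_bound}, $\Sigma$ is $(m,\delta)$‑Reifenberg flat and satisfies $\theta_\Sigma(z,G_z,s)\le\delta$ for every $z$ in a prescribed compact set and every small $s$, where $G_z=H(z)$ when $z\in\Sigmaa$ and $G_z$ is a Grassmannian accumulation point of mock tangent planes otherwise. Given a compact $K\subset\Sigma$, I would first pass to the slightly larger compact set $\tilde K:=\Sigma\cap\overline{B_1(K)}$ and fix a radius $\rho_2=\rho_2(K)$ below $\min\{r_K,\rho_K,\rho_1(K)\}$ and below the analogous radii attached to $\tilde K$ by Theorem~\ref{thm:dist_sigma_to_plane}, Corollary~\ref{cor:dist_plane_to_sigma}, and Proposition~\ref{lem:surjective_projection}, so that $\theta_\Sigma(z,G_z,s)\le\delta$ holds for all $z\in\tilde K$ and $s\in(0,\rho_2]$.

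The geometric heart of the argument -- and the point where the admissibility hypotheses really enter, upgrading the scenario of the footnote after the theorem from an assumption to a fact -- is a quantitative comparison of approximating planes: there is a dimensional constant $C_0=C_0(m)$ such that for any $z_1,z_2\in\tilde K$ with $|z_1-z_2|\le\rho_2/2$ and any $m$‑planes $F_i$ with $\theta_\Sigma(z_i,F_i,s_i)\le\delta$ for some $s_i\in(0,\rho_2]$, one has $\ang(F_1,F_2)\le C_0\delta$. I would prove this by chaining through $G_{z_1}$ and $G_{z_2}$. Since $F_i$ and $G_{z_i}$ are both $\delta$‑good for $\Sigma$ at scale $s_i$ and centre $z_i$, the elementary fact that two $m$‑planes whose $s_i$‑disks around $z_i$ are both within $\delta s_i$ (Hausdorff) of $\Sigma\cap B_{s_i}(z_i)$ enclose an angle $\le C(m)\delta$ gives $\ang(F_i,G_{z_i})\le C(m)\delta$. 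For $\ang(G_{z_1},G_{z_2})$ I would use $\theta_\Sigma(z_1,G_{z_1},\rho_2/4)\le\delta$ to select $m+1$ points of $\Sigma\cap B_{\rho_2/4}(z_1)$ forming a simplex of inradius at least $c(m)\rho_2$: these lie within $\delta\rho_2$ of $z_1+G_{z_1}$, and since $B_{\rho_2/4}(z_1)\subset B_{3\rho_2/4}(z_2)$ and $\theta_\Sigma(z_2,G_{z_2},3\rho_2/4)\le\delta$ they also lie within $\delta\rho_2$ of $z_2+G_{z_2}$, so the affine span of the simplex makes an angle $O(\delta)$ with each of $G_{z_1}$, $G_{z_2}$; the triangle inequality for the angle metric then closes the chain. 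Note that the detour through the $G_{z_i}$ is essential precisely because at scales $s_i\ll\rho_2$ one cannot compare $F_i$ with $G_{z_2}$ directly, the error $\delta\rho_2$ of $G_{z_2}$ on $B_{\rho_2}(z_2)$ being uncontrolled relative to $s_i$.

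With this comparison available, set $C:=C_0+1$ and shrink $\delta_2$ further so that $\delta_2\le\delta_1$ and $C\delta_2<1/2$; in particular $\delta_2<\min\{\delta_1,1/C\}$. For $p\in K$ put $r:=\rho_2$ and choose $F_p(r,\delta):=G_p$. Then for every $\eta\in\Sigma\cap B_{r/2}(p)$ the comparison yields $\ang(F_\eta(\rho,\delta),F_p(r,\delta))+\delta\le C_0\delta+\delta<1$ for all $\rho\in(0,r)$, so condition~\eqref{inj_angle_condition} holds and Lemma~\ref{lem:injective_projection} makes $\Pi:=\Pi_{p+G_p}$ injective on $\Sigma\cap B_{r/2}(p)$, while Proposition~\ref{lem:surjective_projection} gives $(p+G_p)\cap B_{r/4}(p)\subset\Pi(\Sigma\cap B_{r/2}(p))$. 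Applying the comparison to $\eta_2,\eta_1\in\Sigma\cap B_{r/2}(p)$ at a scale slightly larger than $|\eta_1-\eta_2|$, together with $\theta_\Sigma(\eta_2,G_{\eta_2},\cdot)\le\delta$ and the transfer from $G_{\eta_2}$ to $G_p$, gives $|\Pi_{G_p^\perp}(\eta_1-\eta_2)|\le C_1\delta\,|\eta_1-\eta_2|$ with $C_1\le C_0$; hence $\Pi$ is bi‑Lipschitz on $\Sigma\cap B_{r/2}(p)$ and the $\Pi$‑preimage of each $x\in(p+G_p)\cap B_{r/4}(p)$ lies in $B_{r/3}(p)$. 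Letting $g$ be the inverse of $\Pi|_{\Sigma\cap\overline{B_{r/3}(p)}}$ on $(p+G_p)\cap B_{r/4}(p)$ and setting $u_p(x):=\Pi_{G_p^\perp}(g(p+x)-p)$ on $G_p\cap B_{r/4}(0)$, then extending $u_p$ to all of $G_p$ with unchanged Lipschitz constant (Kirszbraun), a direct check from $\Pi(g(p+x))=p+x$ gives $u_p(0)=0$, $\lip u_p\le C\delta/(1-C\delta)$, and $\Sigma\cap B_{\rho_2/4}(p)=(p+\graph u_p)\cap B_{\rho_2/4}(p)$. Since $G_p=H(p)$ on $\Sigmaa\cap K$ this is exactly the assertion, and the existence of such local Lipschitz graphs around every point of $\Sigma$ shows, via \cite{naumann-simader_2007}, that $\Sigma$ is a $C^{0,1}$‑submanifold of $\R^n$.

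The step I expect to be the main obstacle is the plane‑comparison estimate, and in particular making it hold uniformly for all scales down to $0$: the naive comparison of $F_\eta(\rho,\delta)$ with $G_p$ breaks down at small scales, so one is forced to route everything through the accumulation planes $G_\eta$, which by Corollary~\ref{cor:theta_bound} are $\delta$‑good at every scale around $\eta$. Beyond that, the remaining work is the book‑keeping needed to make the radii $r_K,\rho_K,\rho_1(K)$ and their $\tilde K$‑counterparts mutually consistent, and the routine linear‑algebra estimates for fat simplices and the angle metric.
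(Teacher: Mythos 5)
Your proposal is correct and follows essentially the same route as the paper's proof: enlarge $K$ to $\tilde K$, invoke Corollary \ref{cor:theta_bound} to get $\theta_\Sigma(z,G_z,s)\le\delta$ at all small scales, compare approximating planes to verify \eqref{inj_angle_condition}, and combine Lemma \ref{lem:injective_projection} with Proposition \ref{lem:surjective_projection} to invert the projection and read off $u_p$ with the same absorption argument for the Lipschitz bound. The plane-comparison estimate you single out as the main obstacle and re-derive via a fat-simplex/chaining argument is exactly the paper's Lemma \ref{lem:reifenberg_angles} (applied with $F_1:=G_p$, $F_2:=G_q$, $r_1=r_2=r$), proved there by the same selection of near-basis points, so no new difficulty remains.
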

\begin{proof}
	We define $\tilde K := \Sigma \cap \overline{B_{ {R_K}/2}(K)}$ and\footnote{Notice
	as before 
	that the compact set $\tilde{K}$ is solely determined by $K$ itself, so that all the
	constants for $\tilde{K}$ can be considered to actually depend on $K$.}  $\rho_2=
	\rho_2(K) := \min\{r_{\tilde K},\rho_{\tilde K},\rho_1(\tilde K)
	\}\leq \min\{r_K,\rho_K,\rho_1(K) \}$. 
	Applying Corollary \ref{cor:theta_bound}, one finds
	\begin{align}\label{lip_thm_theta}
	 \theta_{\Sigma}\li {p},G_{{p}},r\ri 
	 \leq \delta \Foa r \in \li 0,\rho_2\right ],\,{p}\in\tilde{K}.
	\end{align}
	For $\delta\leq \delta_1$, $p\in \tilde K$ and $r\leq \rho_2 \leq 
	\rho_1(\tilde K)$, by Proposition \ref{lem:surjective_projection} one has
	\begin{align} \label{lip_thm_surj}
	\li p+G_p \ri \cap B_{ r /4}(p) \subset \Pi_{p+G_p}\li \Sigma \cap B_{ r /2}(p)\ri,
	\end{align}
	which guarantees for each $x \in \li p+G_p\ri \cap B_{ r/ 4}(p)$
	 the existence of a point $q_x\in\Sigma \cap B_{ r/ 2}(p)$ with $\Pi_{p+G_p}(q_x)=x$. In particular, for $p \in K$, one finds $\Sigma \cap B_{ r/ 2}(p)\subset \tilde K$, since $r\leq \rho_2\leq r_K\leq R_K$, so that by \eqref{lip_thm_theta} and Lemma \ref{lem:reifenberg_angles} for $r_1=r_2:=r$, $d_1=d_2:=\delta$,
	  $F_1:=G_p$, $F_2:=G_q$, $p_1:=p$, and $p_2:=q$, we can estimate
	  $
	\va \li G_p,G_q\ri \leq   {6\delta\tilde  C}/{(1-2\delta)}
	$ for all $ q\in\Sigma \cap B_{ r/ 2}(p),$
	with $\tilde C=\tilde C(m)>\sqrt 2$. Choosing $\delta$ sufficiently small, one finds a constant $ C= C(m)>0$ such that
	\begin{align} \label{lip_thm_angle_2}
	 \va \li G_p,G_q\ri + \delta \leq {6\delta \tilde C}/{(1-2\delta)} +\delta \leq  C \delta <1
	\quad\Foa q\in\Sigma\cap B_{r/2}(p).
	\end{align}
	 Lemma \ref{lem:injective_projection} for  $\eta:= q_x$, $F_\eta(\rho,\delta):= G_{q_x}$ for all $\rho\leq r$, and $F_p(r,\delta):= G_p$ implies together with \eqref{lip_thm_surj} that $\Pi_{p+G_p \mid_{\Sigma \cap B_{ r/ 2}(p)}}$ is injective and $\Pi_{p+G_p\mid_{\Sigma_p(r)}}:\Sigma_p(r)\to (p+G_p)\cap B_{r/4}(p)$
	  is surjective,
	 where 
	 \begin{equation}\label{eq:Sigma_p}
	 \Sigma_p(r):=  
	 \{ \xi \in \Sigma \cap B_{ r /2}(p) \colon  \Pi_{p+G_p}(\xi) \in\li p+G_p\ri\cap B_{ r/ 4}(p) \}.
	 \end{equation}
	  Therefore $\Pi_{p+G_p\mid_{\Sigma_p(r)}}\colon \Sigma_p(r)
	  \to \li p+G_p\ri \cap 
	  B_{ r/ 4}(p)$ is bijective and
	\begin{align}\label{lip_thm_bijective}
	\big( \Pi_{p+G_p \mid_{\Sigma_p(r)}}\big)^{-1} \colon \li p + 
	G_p\ri \cap B_{ r/ 4}(p) \to \Sigma_p(r)
	\end{align}
	is well-defined. Obviously,
		\begin{align} \label{lip_thm_Sigma_p}
	\Sigma \cap B_{ {r}/{4}}(p)\subset \Sigma_p(r) \Foa r \leq \rho_2.
	\end{align} 
	With \eqref{lip_thm_bijective}, we can define the function $
		u_p \colon G_p\cap B_{ r/ 4}(0) \to G_p^\perp $ by means of
		$u_p(x):= \Pi_{G_p^\perp} 
	\big( \big( \Pi_{p+G_p \mid_{\Sigma_p(r)}}
	\big)^{-1} \li x+p\ri-p\big)$
	satisfying $u_p(0)=0$.
		Then for all $q \in \Sigma_p(r)$ and 
		$x_q:=\Pi_{G_p}(q-p)\in G_p\cap B_{ r/ 4}(0)$, one finds
	$
	q= p + \Pi_{G_p}(q-p) + \Pi_{G_p^\perp}(q-p) = p + x_q +  \Pi_{G_p^\perp}
	\big(\big( \Pi_{p+G_p \mid_{\Sigma_p(r)}}\big)^{-1} 
	\li x_q+p\ri-p\big)= p+x_q+u_p(x_q).
	$
	Consequently,
	\begin{align}
	\Sigma_p(r)\subset (p+\graph u_p)\cap B_{r/2}(p),\label{eq:new-eq}\\
	\Sigma_p(r)\cap B_{r/4}(p) =(p+\graph u_p)\cap B_{r/4}(p),
	\label{lip_thm_Sigma_p_graph}
	\end{align}
	which by \eqref{lip_thm_Sigma_p} for $r=\rho_2$ implies \eqref{lip_thm_graph}.
	 Moreover, for $\eta,\mu\in\Sigma_p(\rho_2)$ and corresponding points
	 $x_\eta,x_\mu\in G_p \cap B_{ r/ 4}(0)$, one has $\eta \in B_{\li 1 +  1/k
	  \ri \vert \eta-\mu\vert}(\mu)$, so that \eqref{lip_thm_theta} implies
	\begin{align*}
	 \vert u_p(x_\eta)\!-\!u_p(x_\mu) \vert &=  \vert\Pi_{G_p^\perp}
	 (\eta\!-\!p)\!-\!\Pi_{G_p^\perp}(\mu\!-\!p) \vert =  
	 \vert \Pi_{G_p^\perp}(\eta\!-\!\mu)\!+\!\Pi_{G_\mu^\perp}
	 (\eta\!-\!\mu)\! -\! \Pi_{G_\mu^\perp}(\eta\!-\!\mu) \vert\\
	&\leq \va \li G_p,G_\mu\ri \vert \eta-\mu \vert + \dist \li \eta,\li \mu+G_\mu\ri \cap B_{\li 1 + 1/ k\ri \vert \eta-\mu\vert}(\mu)\ri\\
	&\leq \va \li G_p,G_\mu\ri \vert \eta-\mu\vert + \delta \li 1+ 1/ k \ri \vert \eta-\mu\vert
	\end{align*}
	for $k$ sufficiently large.
	By taking the limit $k \to \infty$ and \eqref{lip_thm_angle_2}
	for $r=\rho_2$, we obtain
	\begin{align*}
	\vert u_p(x_\eta)-u_p(x_\mu) \vert &\leq  C \delta \cdot \vert \eta-\mu\vert \leq  C\delta \cdot \big(  \vert \Pi_{G_p}(\eta-\mu) \vert +  \vert \Pi_{G_p^\perp}(\eta-\mu) \vert \big)\\
	&=  C \delta \cdot \li \vert x_\eta-x_\mu\vert + 	 \vert u_p(x_\eta)-u_p(x_\mu) \vert\ri.
	\end{align*}
	Absorbing the last term yields for sufficiently small $\delta$ 
	the Lipschitz continuity of $u_p$ on the disk $G_p\cap 
	B_{\rho_2/4}(0)$ with Lipschitz constant 
	 $\lip u_p \leq  { C \delta}/{(1- C\delta)} $. Finally, extending $u_p$ to the whole plane $G_p$ by Kirszbraun's theorem \cite[2.10.43]{federer_1969} concludes the proof. 
\end{proof}
Applying Theorem \ref{thm:lipschitz_graph} to the examples mentioned in the introduction and discussed in Sections \ref{sec:admissible_sets} one finds that all such sets with locally finite energy are embedded Lipschitz submanifolds of $\R^n$ as long as the constants $\alpha,M
>0$ defining their admissibility class $\Aam$ are sufficiently small. In particular, countable collections of sufficiently flat Lipschitz graphs, and all $C^1$-immersions of compact $m$-dimensional $C^1$-manifolds with locally finite 
M\"obius 
energy, are $C^{0,1}$-submanifolds. For both classes, however, we already have a  local graph structure to improve this statement. 
\begin{cor}[Lipschitz graphs with small Lipschitz constant] 
\label{cor:lip_collection}
	Suppose  that the set
	$
	 \Sigma = \overline{ \bigcup_{i \in \N} \li p_i + \graph u_i\ri } $
	 has locally finite energy $E^\tau$ for some $\tau > -1$,
where $u_i \in C^{0,1}(F_i,F_i^\perp)$, $F_i \in \mathscr{G}(n,m)$ 
for $2\leq m\leq n$, $u_i(0)=0$, and $\lip u_i \leq\beta$ for all 
$i \in \N$. If $\beta < 1 /{6400}$     then for all compact subsets 
$K\subset \Sigma$ there exists a radius $\rho_3=\rho_3(K)$ such that 
for every $p \in K$, there exists an index $i=i(p)\in \N$, such that 
	\begin{equation}\label{eq:local-rep}
	\Sigma \cap B_{ {\rho_3}/{5}}(p) = 
	\li p_i+\graph u_i \ri \cap B_{ {\rho_3}/{5}}(p).
	\end{equation}
	\end{cor}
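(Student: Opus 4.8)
\noindent\textbf{Proof strategy for Corollary~\ref{cor:lip_collection}.}
The plan is to first place $\Sigma$ into a suitable admissibility class, then read off its local Lipschitz graph representation from Theorem~\ref{thm:lipschitz_graph}, and finally -- locally near points of the dense set $\Sigmaa$ -- \emph{upgrade} that representation to one of the original graphs $\Gamma_i:=p_i+\graph u_i$ by a single application of the Tilting Lemma~\ref{lem:tilting}. First I would equip $\Sigma$ with the map $H\colon\Sigma\to\mathscr{G}(n,m)$ and the dense subset $\Sigmaa$ constructed in the proof of Proposition~\ref{lem:lip_admis}. Since $\beta<1/6400$, one can choose $\alpha<1$, $M>0$ and $\delta\in(0,\delta_2(m)]$ with $\alpha(M+1)<\delta/50$ and $\beta\le M\alpha/(16(M+1))$ (the extremal choice being $M=1$ and $\alpha$ slightly above $32\beta$, which forces $\delta>3200\beta$); then Proposition~\ref{lem:lip_admis} together with the monotonicity remark following it yields $\Sigma\in\mathscr{A}^m(\alpha,M)$, so Theorem~\ref{thm:lipschitz_graph} applies. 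Hence, for $\tilde K:=\Sigma\cap\overline{B_1(K)}$ there is $\rho_2=\rho_2(\tilde K)>0$ such that for every $p\in\tilde K$ one has $\Sigma\cap B_{\rho_2/4}(p)=(p+\graph u_p)\cap B_{\rho_2/4}(p)$ with $u_p\in C^{0,1}(G_p,G_p^\perp)$, $u_p(0)=0$, and $\lip u_p$ small, where $G_p$ is the approximating plane from Theorem~\ref{thm:dist_sigma_to_plane}; in particular $G_p=H(p)$ for $p\in\Sigmaa\cap\tilde K$.

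Next, for fixed $p\in\Sigmaa\cap\tilde K$ and $i:=i(p)$, I would recall from the proof of Proposition~\ref{lem:lip_admis} (via the Shifting Lemma~\ref{lem:shift-graphs}) that $\Gamma_i=p+\graph\tilde u$ for a \emph{globally} defined $\tilde u\in C^{0,1}(F_i,F_i^\perp)$ with $\tilde u(0)=0$ and $\lip\tilde u=\lip u_i\le\beta$, and that $\va(H(p),F_i)\le\beta$ by Lemma~\ref{lem:angle_lip}. Since $\Gamma_i\subset\Sigma$, we get $\Gamma_i\cap B_{\rho_2/4}(p)\subset\Sigma\cap B_{\rho_2/4}(p)=(p+\graph u_p)\cap B_{\rho_2/4}(p)$. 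Applying the Tilting Lemma~\ref{lem:tilting} to $u:=\tilde u$, $F:=F_i$, $G:=H(p)=G_p$ (with $\va(F,G)\le\beta$, $\sigma:=\beta(1+\beta)<1$) gives $(p+G_p)\cap B_{c\rho_2/4}(p)\subset\Pi_{p+G_p}(\Gamma_i\cap B_{\rho_2/4}(p))$ with $c:=(1-\beta(1+\beta))/\sqrt{1+\beta^2}\in(0,1)$. As $\Pi_{p+G_p}$ is injective on $p+\graph u_p$, every base point $x$ in that disk has its unique graph point $x+u_p(x-p)$ lying in $\Gamma_i$; and since $(p+\graph u_p)\cap B_r(p)$ projects into $(p+G_p)\cap B_r(p)$, this forces $(p+\graph u_p)\cap B_r(p)\subset\Gamma_i$ for $r\le c\rho_2/4$. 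Combined with \eqref{lip_thm_graph} and the trivial inclusion $\Gamma_i\cap B_r(p)\subset\Sigma\cap B_r(p)$, this produces the equality $\Sigma\cap B_r(p)=\Gamma_{i(p)}\cap B_r(p)$ for all $r\le c\rho_2/4$ and all $p\in\Sigmaa\cap\tilde K$.

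Finally, I would pass to all of $K$ by density. Set $\rho_3=\rho_3(K):=c\,\rho_2(\tilde K)/4$. Given $q\in K$, density of $\Sigmaa$ provides $p\in\Sigmaa\cap B_{\rho_3/10}(q)\subset\tilde K$; then $B_{\rho_3/5}(q)\subset B_{\rho_3}(p)$, so the previous step together with $\Gamma_{i(p)}\subset\Sigma$ gives both inclusions $\Sigma\cap B_{\rho_3/5}(q)\subset\Gamma_{i(p)}\cap B_{\rho_3/5}(q)\subset\Sigma\cap B_{\rho_3/5}(q)$, that is, \eqref{eq:local-rep} with index $i(p)$. The geometric content is essentially a single comparison of two small-Lipschitz graphs through $p$ (the Tilting-Lemma step); the main obstacle I expect is purely arithmetic -- verifying that $\beta<1/6400$ really does permit a choice of $\alpha,M,\delta$ satisfying \eqref{alpha_M_connection} with $\delta\le\delta_2(m)$ -- together with the careful bookkeeping of the nested radii $\rho_2,\rho_3$ and the enlarged compact set $\tilde K$. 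Note that the finite-energy hypothesis enters \emph{only} through Theorem~\ref{thm:lipschitz_graph}: it is what rules out transversal sheets of $\bigcup_i\Gamma_i$ crossing near $p$, and once that topological/graph structure is in hand the reduction to one flat graph $\Gamma_{i(p)}$ is elementary.
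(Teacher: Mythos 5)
Your identification step --- applying the Tilting Lemma~\ref{lem:tilting} to the shifted function $\tilde u$ and then using the injectivity of $\Pi_{p+G_p}$ on $p+\graph u_p$ to conclude $(p+\graph u_p)\cap B_r(p)\subset\Gamma_{i(p)}$ --- is correct, and so is the final density argument. The gap sits in your very first step: you invoke Theorem~\ref{thm:lipschitz_graph}, which requires $\alpha(M+1)<\delta/50$ for some $\delta\le\delta_2(m)$. As your own arithmetic shows, the admissibility condition $\beta\le M\alpha/(16(M+1))$ forces $\delta>3200\beta$, so your route only works when $3200\beta<\delta_2(m)$, i.e.\ $\beta<\delta_2(m)/3200$. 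Since $\delta_2(m)$ is a small, dimension-dependent constant (it must lie below $\delta_1(m)$, below $1/C(m)$, and below the Reifenberg threshold), this is a strictly stronger and dimension-dependent hypothesis than the universal bound $\beta<1/6400$ claimed in the corollary. This is exactly the obstruction the paper points out in the remark immediately following the statement: a direct application of Theorem~\ref{thm:lipschitz_graph} ``requires their Lipschitz bound $\beta$ to be much smaller than $\delta_2(m)$.''

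The paper's proof therefore avoids Theorem~\ref{thm:lipschitz_graph} altogether. It keeps $\delta$ close to $1/2$ (so that $\alpha(M+1)<\delta/50$ is compatible with $\beta$ up to $1/6400$), uses Corollary~\ref{cor:theta_bound} only to get Reifenberg flatness at that coarse level, and then re-runs the bijectivity-of-projection argument directly, taking as approximating planes in Lemma~\ref{lem:injective_projection} the tangent planes of the single given graph $\Gamma_l$: condition \eqref{inj_angle_condition} then reads $2\beta+\delta<1$, which holds for $\delta$ near $1/2$ because the relevant angles are controlled by $\beta$ rather than by the dimensional constant of Lemma~\ref{lem:reifenberg_angles}. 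The price is the extra bookkeeping of Lemmas~\ref{lem:2} and~\ref{lem:3} (a connectedness and induction argument over the indices) to guarantee that near the chosen point the map $H$ is genuinely given by tangent planes of one fixed graph, i.e.\ that no lower-indexed graph overlaps it --- an issue your shortcut bypasses only because Theorem~\ref{thm:lipschitz_graph} has already done the work. To salvage your shorter route under the stated hypothesis you would have to replace the appeal to Theorem~\ref{thm:lipschitz_graph} by a direct application of Lemmas~\ref{lem:injective_projection} and~\ref{lem:tilting} in the same spirit as the paper does.
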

Notice that the Lipschitz constants $\lip u_i$ inherited from the initially
given Lipschitz functions $u_i$ are, in general,
much smaller than the Lipschitz constants of the functions
$u_p$ produced by Theorem \ref{thm:lipschitz_graph}. Let us also note that
a direct application of  that theorem
  would produce local graph representations via  $u_p$, but in order to
  show that these graphs  locally coincide with the graphs of the given functions
  $u_i$ requires their Lipschitz bound $\beta$ to be much smaller than $\delta_2(m)$
  of Theorem \ref{thm:lipschitz_graph} which itself is already very small. In Corollary
 \ref{cor:lip_collection}, however, $\beta$ needs only to be smaller than 
 a universal constant independent of the dimension $m$. 
\begin{proof}[Proof of Corollary \ref{cor:lip_collection}]
Abbreviate $\Gamma_i:=p_i+\graph u_i$ for $i\in\N$.
Since $\beta<1/6400$, we can find\footnote{Choosing $\delta$ smaller than
but close to $1/2$
and then $M=1$ and $\alpha<\delta/100$ is a possible option.}
$\delta\in (0,1/2)$ and then
$\alpha,M>0$ such that
\begin{align} \label{lip_cor_beta}
\beta\leq M\alpha/(16(M+1))\quad\AND\quad  (M+1)\alpha  <  {\delta}/{50}.
\end{align}
Therefore, $\Sigma \in \Aam$, by Proposition \ref{lem:lip_admis}, with
$m$-planes $H(p)=T_p(\Gamma_{i(p)})=T_{p-p_{i(p)}}\graph u_{i(p)}$ for all 
$p\in \Gamma_{i(p)}\cap\Sigma^*$, where $i(p)$ is the 
unique smallest index $j$ such that $p$ is contained in $\Gamma_j$. 
For the proof of the corollary we need three auxiliary lemmas.
\begin{lem}\label{lem:1}
For $l\in\N$ and a compact subset $K'\subset\Sigma$, where $\Sigma$ satisfies
all assumptions of Corollary \ref{cor:lip_collection}, one finds for all $p\in\Gamma_l
\cap K'$ with 
\begin{equation}\label{eq:plus-empty}
B_1(p)\cap\Gamma_l\cap\Gamma_k=\emptyset\quad\Foa k=1,\ldots,l-1
\end{equation}
the local graph representation
\begin{equation}\label{G}
\Sigma\cap B_{\rho'/4}(p)=\Gamma_l\cap B_{\rho'/4}(p)\quad\Fo
\rho':=\min\{r_{K''},\rho_{K''}\}\le 1,
\end{equation}
where $r_{K''},\rho_{K''}$ are the constants of Theorem \ref{thm:dist_sigma_to_plane} 
and Lemma \ref{cor:dist_plane_to_sigma}, respectively, for the larger compact subset $K'':=\overline{B_1(K')}
\cap\Sigma.$
\end{lem}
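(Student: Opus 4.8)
My plan is to pin down a good approximating $m$-plane $G_p$ at $p$ that is almost parallel to $F_l$, and then to deduce \eqref{G} from the injectivity of the orthogonal projection $\Pi_{p+G_p}$ on small balls, where the \emph{surjectivity} of this projection is supplied for free by $\Gamma_l$ itself (a full Lipschitz graph over $F_l$). This way the argument needs only $\delta<1/2$ and $\beta<1/6400$, exactly what \eqref{lip_cor_beta} already grants.

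First I would pass to the compact set $K'':=\overline{B_1(K')}\cap\Sigma$; since $\rho'\le 1$, the ball $B_{\rho'/2}(p)$ and all points considered below lie in $K''$, so the radii $r_{K''},\rho_{K''}$ of Theorem \ref{thm:dist_sigma_to_plane} and Lemma \ref{cor:dist_plane_to_sigma}, and hence $\rho'=\min\{r_{K''},\rho_{K''}\}$, are available at $p$ and at its neighbours. The separation hypothesis \eqref{eq:plus-empty} forces $i(p')=l$ for \emph{every} $p'\in\Gamma_l\cap B_1(p)$; in particular, whenever such a $p'$ lies in $\Sigma^*$ one has $H(p')=T_{p'-p_l}\graph u_l$ and hence $\va(H(p'),F_l)\le\lip u_l\le\beta$ by Lemma \ref{lem:angle_lip}. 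If $p\in\Sigma^*$ I would take $G_p:=H(p)$; otherwise I would choose a sequence $\Sigma^*\cap\Gamma_l\ni p_k\to p$ (possible because $\Sigma^*\cap\Gamma_l$ is dense in $\Gamma_l$ by Rademacher's theorem applied to $u_l$), pass to a subsequence along which $H(p_k)$ converges, and let $G_p$ be the limit. In either case $\va(G_p,F_l)\le\beta$, and by the ``any accumulation point'' freedom in the statements of Theorem \ref{thm:dist_sigma_to_plane} and Lemma \ref{cor:dist_plane_to_sigma}, this $G_p$ is a legitimate approximating plane, so Corollary \ref{cor:theta_bound} gives $\theta_\Sigma(p,G_p,r)\le\delta$ for all $r\in(0,\rho']$.

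One inclusion in \eqref{G} is trivial since $\Gamma_l\subset\Sigma$. For the converse I would write $\Gamma_l=p+\graph\tilde u_l$ over $F_l$ by the Shifting Lemma \ref{lem:shift-graphs}, with $\tilde u_l(0)=0$ and $\lip\tilde u_l\le\beta$, and then apply the Tilting Lemma \ref{lem:tilting} with $F:=F_l$, $G:=G_p$, $u:=\tilde u_l$, $\chi:=\va(F_l,G_p)\le\beta$, $\sigma:=\chi(1+\lip\tilde u_l)\le\beta(1+\beta)$ to obtain, for every $\rho>0$,
\[
(p+G_p)\cap B_{c\rho}(p)\ \subset\ \Pi_{p+G_p}\bigl(\Gamma_l\cap B_\rho(p)\bigr),\qquad c:=\tfrac{1-\beta(1+\beta)}{\sqrt{1+\beta^2}}>\tfrac12 ,
\]
the last inequality because $\beta<1/6400$. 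Hence, for $r\le\rho'$, every $x\in(p+G_p)\cap B_{r/4}(p)$ has a preimage $\eta_x\in\Gamma_l\cap B_{r/2}(p)\subset\Sigma\cap B_1(p)\subset K''$ under $\Pi_{p+G_p}$. By \eqref{eq:plus-empty} each such $\eta_x$ again satisfies $i(\eta_x)=l$, so the approximating plane $F_{\eta_x}(\rho,\delta)$ of Definition \ref{def:reifenberg-flatness} may be taken within angle $\beta$ of $F_l$ (via $H(\eta_x)$ if $\eta_x\in\Sigma^*$, via an accumulation point of $H$ along $\Gamma_l$ otherwise), whence $\va\bigl(F_{\eta_x}(\rho,\delta),G_p\bigr)+\delta\le 2\beta+\delta<1$ for all $\rho\in(0,r)$. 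This is precisely hypothesis \eqref{inj_angle_condition} of Lemma \ref{lem:injective_projection} with $\eta:=\eta_x$ and $F_p(r,\delta):=G_p$, so $\eta_x$ is the \emph{only} point of $\Sigma\cap B_r(p)$ projecting onto $x$. Taking $r:=\rho'$ and, for arbitrary $q\in\Sigma\cap B_{\rho'/4}(p)$, $x:=\Pi_{p+G_p}(q)\in(p+G_p)\cap B_{\rho'/4}(p)$, we get $q=\eta_x\in\Gamma_l$, proving $\Sigma\cap B_{\rho'/4}(p)\subset\Gamma_l\cap B_{\rho'/4}(p)$ and hence \eqref{G}.

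The main obstacle is the choice of $G_p$ and of the planes $F_{\eta_x}(\rho,\delta)$ when $p\notin\Sigma^*$: one must approximate \emph{along $\Gamma_l$} so that \eqref{eq:plus-empty} keeps every mock tangent plane in sight within angle $\beta$ of $F_l$, and one must exploit the flexibility in Theorem \ref{thm:dist_sigma_to_plane} and Lemma \ref{cor:dist_plane_to_sigma} allowing $G_p$ to be \emph{any} such accumulation point. Everything else is bookkeeping with radii and the cone, shifting and tilting estimates already in place; notably, no bound on the angle between approximating planes at arbitrary pairs of nearby $\Sigma$-points is required, which is what keeps the argument valid for the whole range $\delta<1/2$ forced by \eqref{lip_cor_beta}.
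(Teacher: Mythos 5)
Your proposal is correct and follows essentially the same route as the paper: you define the approximating planes along $\Gamma_l$ (via $H$ on $\Sigma^*\cap\Gamma_l$ and accumulation points elsewhere, which \eqref{eq:plus-empty} makes legitimate), bound all relevant angles by $\beta$ and $2\beta$ via Lemma \ref{lem:angle_lip}, get surjectivity onto a half-radius disk from the Shifting and Tilting Lemmas, get the $\theta$-bound from Corollary \ref{cor:theta_bound} on $K''$, and then verify \eqref{inj_angle_condition} to invoke Lemma \ref{lem:injective_projection}. The paper packages the final step as bijectivity of $\Pi_{p+G_p}$ restricted to the set $\Sigma_p(\rho')$, but this is only a cosmetic difference from your "unique preimage lies in $\Gamma_l$" argument.
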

\begin{proof}
For fixed $p\in\Gamma_l\cap K'$ set $G_\xi:=H(\xi)=T_\xi\Gamma_l$ for all 
$\xi\in \Gamma_l\cap B_1(p)\cap\Sigma^*$ in accordance with  
Proposition \ref{lem:lip_admis}. For $\xi\in\Gamma_l\cap B_1(p)\setminus\Sigma^*$
we choose a sequence $\Sigma^*\cap\Gamma_l\ni\xi_j\to\xi$ as $j\to\infty$,
which is possible because of \eqref{eq:plus-empty}, to define 
 $G_\xi:=\lim_{j\to\infty}H(\xi_j)=\lim_{j\to\infty}T_{\xi_j}\Gamma_l\in\mathscr{G}(n,m)$.
Lemma \ref{lem:angle_lip} implies
\begin{equation}\label{W}
\ang(G_\xi,F_l)\le\beta\AND\ang (G_\xi,G_\mu)\le 2\beta
\Foa \xi,\mu\in\Gamma_l\cap B_1(p).
\end{equation}
The first inequality for $\xi:=p$ in combination with
the
Tilting Lemma \ref{lem:tilting} for $F:=F_l$,
$G:=G_p$, $u:=u_l$, and $\chi:=\beta $ with $\sigma=\chi(1+
\lip u)\le\beta(1+\beta)<1-\sqrt{1+\beta^2}/2$ (since $\beta<1/6400$)
yields
\begin{equation}\label{eq:bigprojetions}
B_{\frac{\rho}2}(p)\cap(p+G_p)\subset
\Pi_{p+G_p}\big(B_\rho(p)\cap \Gamma_l\big)\Foa\rho >0.
\end{equation}
By virtue of the second inequality in
\eqref{lip_cor_beta} we may apply
Corollary \ref{cor:theta_bound}
to conclude that
$\Sigma$ is an $(m,\delta)$-Reifenberg flat set. In particular,
 exploit \eqref{eq:theta_bound} for the compact set
  $K''\subset\Sigma$ to find
 \begin{equation}\label{eq:good-points-theta}
     \theta_\Sigma(\eta,G_\eta,r)\le\delta\quad\Foa\eta\in K'',\, 
     r\in (0,\rho'],
     \end{equation}  
 for $m$-planes $G_\eta$ as stated in Theorem  \ref{thm:dist_sigma_to_plane}.
 Since $B_{\rho'}(p)\cap\Gamma_l\subset K''$ we can use the smallness of
 $\beta$ and \eqref{eq:good-points-theta} 
for 
$\eta:=p$ and any other graph point $\xi\in\Gamma_l\cap B_1(p)$ 
 to 
define approximating planes $F_p(\rho',\delta):=G_p$ and
$F_q(\rho,\delta):=G_q$ for all $\rho\in (0,\rho')$ to verify
assumption \eqref{inj_angle_condition} of Lemma \ref{lem:injective_projection}
for $K''\subset\Sigma$ by means of
\eqref{W}.
Thus, this lemma together with \eqref{eq:bigprojetions} for $\rho:=\rho'/2$
yields the bijectivity of the projection
$\Pi_{p+G_p}|_{\Sigma_p(\rho')}:\Sigma_p(\rho')\to B_{\rho'/4}(p)\cap
(p+G_p)$,
where we set as in the proof of Theorem \ref{thm:lipschitz_graph}
$\Sigma_p(r):=\{\xi\in B_{r/2}(p):\Pi_{p+G_p}(\xi)\in 
B_{r/4}(p)\}$. The bijectivity of $\Pi_{p+G_p}|_{\Sigma_p(\rho')}$
combined with \eqref{eq:bigprojetions} for $\rho:=\rho'/2$
implies in addition that
$
\Sigma_p(\rho')\subset \Gamma_l\cap B_{\rho'/2}(p),
$
which combined with the obvious inclusions $\Sigma\cap B_{\rho'/4}(p)
\subset \Sigma_p(\rho')$ and $\Gamma_{l}\subset
\Sigma$ yields the local graph representation \eqref{G} as desired.
 \end{proof}
Now fix the compact set $K\subset\Sigma$ and $N \in \N$ with $N\ge 3$ 
such that 
$\overline{B_1(K)}\subset B_N(0)$. Consider an index 
$i\in\N$ such that $\Gamma_i\cap \overline{B_1(K)}\not=\emptyset,$
and define  the nested
compact neighbourhoods $K_j\equiv K^i_j:=\Sigma\cap
\overline{B_{N^{i+1-j}}(0)}$ for $j=0,1,\ldots,i$.  
Notice that $K_{j+1}
\subset K_j$ for $j=0,\ldots i-1$. This leads to the second auxiliary lemma.
\begin{lem}\label{lem:2}
Let $j\in\{1,\ldots,i\}.$ If $\Gamma_j\cap\Gamma_l\cap \overline{B_1(K_{j})}
\not=\emptyset$
for some $l\in\{1,\ldots,j-1\}$ such that
$$
B_1(q)\cap\Gamma_l\cap\Gamma_k=\emptyset\Foa q\in\Gamma_l\cap K_{l},
\,k=1,\ldots,l-1,
$$
then $\Gamma_j\cap \overline{B_1(K_{j})}\subset 
\Gamma_l\cap \overline{B_1(K_{j})}.$
\end{lem}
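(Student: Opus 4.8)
The plan is to establish the \emph{stronger} inclusion $\Gamma_j\cap\overline{B_1(K_j)}\subset\Gamma_l$ and then intersect with $\overline{B_1(K_j)}$; the idea is to propagate a local-coincidence property along the Lipschitz graph $\Gamma_j$, the engine being Lemma \ref{lem:1}. Indeed, at any point $q\in\Gamma_l\cap K_l$ the hypothesis $B_1(q)\cap\Gamma_l\cap\Gamma_k=\emptyset$ for $k<l$ is precisely what Lemma \ref{lem:1} requires with $K':=K_l$, and it then forces $\Sigma$, hence $\Gamma_j\subset\Sigma$, to coincide with $\Gamma_l$ on a \emph{fixed} scale $\rho'/4$ (depending only on $K_l$) around $q$.

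First I would pin down two elementary inclusions. Since $K_a=\Sigma\cap\overline{B_{N^{i+1-a}}(0)}$ the family is nested, $K_0\supset K_1\supset\dots\supset K_i$, and for $N\ge3$ one has $N^{i+1-j}+1\le N^{i+2-j}$, so $\overline{B_1(K_j)}\cap\Sigma\subset\Sigma\cap\overline{B_{N^{i+2-j}}(0)}=K_{j-1}$. Because $l\le j-1$ this gives $\overline{B_1(K_j)}\cap\Sigma\subset K_{j-1}\subset K_l$; in particular the given point $q_0\in\Gamma_j\cap\Gamma_l\cap\overline{B_1(K_j)}$ satisfies $q_0\in\Gamma_l\cap K_l$, and every point of $\Gamma_j\cap\overline{B_1(K_j)}$ lies in $K_l$.

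Next, fixing an arbitrary $x\in\Gamma_j\cap\overline{B_1(K_j)}$, I would write $q_0=p_j+t_0+u_j(t_0)$, $x=p_j+t_1+u_j(t_1)$ with $t_0,t_1\in F_j$, set $t_\lambda:=(1-\lambda)t_0+\lambda t_1$, and consider the continuous curve $\gamma(\lambda):=p_j+t_\lambda+u_j(t_\lambda)$ in $\Gamma_j$ from $q_0$ to $x$. The crucial — and, I expect, the only genuinely delicate — step is to check that $\gamma$ stays inside $K_l$. From $\lip u_j\le\beta$ one gets $|\gamma(\lambda)-((1-\lambda)q_0+\lambda x)|=|(1-\lambda)(u_j(t_\lambda)-u_j(t_0))+\lambda(u_j(t_\lambda)-u_j(t_1))|\le 2\lambda(1-\lambda)\beta|t_1-t_0|\le\beta|x-q_0|\le\beta(N^{i+1-j}+1)$, while $|(1-\lambda)q_0+\lambda x|\le N^{i+1-j}+1$, so $|\gamma(\lambda)|\le(1+\beta)(N^{i+1-j}+1)$. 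Since $\beta<1/6400$ and $N^{i+1-j}\ge N\ge3$, a short check gives $(1+\beta)(N^{i+1-j}+1)\le N^{i+2-j}\le N^{i+1-l}$, hence $\gamma([0,1])\subset\Sigma\cap\overline{B_{N^{i+1-l}}(0)}=K_l$. This is exactly the point where the factor-$N$ spacing of the $K_j$ (needing $N\ge3$) and the flatness bound on $\beta$ have to be used in tandem; everything else is soft.

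Finally I would run a connectedness argument on $A:=\{\lambda\in[0,1]:\Gamma_j\cap B_\rho(\gamma(\lambda))\subset\Gamma_l\text{ for some }\rho>0\}$. Continuity of $\gamma$ makes $A$ open in $[0,1]$. It is nonempty: $q_0=\gamma(0)\in\Gamma_l\cap K_l$, so Lemma \ref{lem:1} with $K':=K_l$ yields $\Sigma\cap B_{\rho'/4}(q_0)=\Gamma_l\cap B_{\rho'/4}(q_0)$, hence $\Gamma_j\cap B_{\rho'/4}(q_0)\subset\Gamma_l$ and $0\in A$. It is closed: if $\lambda_n\to\lambda$ with $\lambda_n\in A$, then $\gamma(\lambda_n)\in\Gamma_l$, so $\gamma(\lambda)\in\Gamma_l$ by closedness of $\Gamma_l$; together with $\gamma(\lambda)\in K_l$ this puts $\gamma(\lambda)$ in $\Gamma_l\cap K_l$, Lemma \ref{lem:1} applies once more, and $\Gamma_j\cap B_{\rho'/4}(\gamma(\lambda))\subset\Gamma_l$, i.e. $\lambda\in A$. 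Thus $A=[0,1]$, so $x=\gamma(1)\in\Gamma_l$; as $x\in\Gamma_j\cap\overline{B_1(K_j)}$ was arbitrary, $\Gamma_j\cap\overline{B_1(K_j)}\subset\Gamma_l\cap\overline{B_1(K_j)}$, which is the assertion.
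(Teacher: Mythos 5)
Your proof is correct and follows essentially the same route as the paper's: apply Lemma \ref{lem:1} at points of $\Gamma_l\cap K_l$, join the given intersection point to an arbitrary point of $\Gamma_j\cap\overline{B_1(K_j)}$ by the lifted segment in $\Gamma_j$, verify via the Lipschitz bound and the factor-$N$ spacing that this curve stays in $K_l$, and conclude by an open--closed connectedness argument. The only blemish is the step $\beta|x-q_0|\le\beta(N^{i+1-j}+1)$, where the triangle inequality only gives $|x-q_0|\le 2(N^{i+1-j}+1)$; this is harmless since the preceding factor $2\lambda(1-\lambda)\le 1/2$ (or simply the generous room in $(1+2\beta)(N^{i+1-j}+1)\le N^{i+2-j}$) absorbs the extra factor of $2$.
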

\begin{proof}
Apply Lemma \ref{lem:1} to $\Gamma_l$ for the compact set $K':=K_l$ so that $
K''=\Sigma \cap \overline{B_1(K_{l})}$ and 
$\rho'=\rho'(l)=\min\{r_{K''},\rho_{K''}\}\le 1$, 
to obtain the local graph representation
\begin{equation}\label{G1}
\Sigma\cap B_{\rho'/4}(q)=\Gamma_l\cap B_{\rho'/4}(q)\quad\Foa
q\in\Gamma_l\cap K_{l}.
\end{equation}
Now connect an arbitrary point $ \eta=p_j+y+u_j(y)\in \Gamma_j\cap\Gamma_l\cap
\overline{B_1(K_{j})}$ with any other graph point
$\mu=p_j+x+u_j(x)\in\Gamma_j\cap \overline{B_1(K_{j})}$, 
$x,y\in F_j$, by the Lipschitz
continuous curve $\gamma:[0,1]\to\Gamma_j$ defined as
$\gamma(t):=p_j+tx+(1-t)y+u_j(tx+(1-t)y)$ with $\lip \gamma
\le |x-y|(1+\lip u_j).$
Since $\eta,\mu \in \Gamma_j\cap \overline{B_1(K_{j})}\subset K_l$, one 
finds $\vert \gamma(t)\vert \leq  (\lip\gamma)/2 +1+N^{i+1-j}\leq N^{i+1-l}$ and, consequently, $\gamma(t)\in K_l$ for all $t\in[0,1]$.
Then the set $J:=\{t\in [0,1]:\gamma(t)\in\Gamma_l\}$ contains $t=0$,
is closed since $\gamma$ is bounded and continuous, and relatively open in
$[0,1]$ by virtue of \eqref{G1} for $q:=\gamma(t),$ $t\in J$. Hence $J=[0,1]$ 
and therefore $\mu=\gamma(1)\in\Gamma_l$.
\end{proof}
To take care of possibly overlapping graphs we provide  two mutually excluding
alternatives in the third auxiliary result.
\begin{lem}\label{lem:3}
For any $j\in\{1,\ldots,i\}$ {\bf either} 
$B_1(p)\cap\Gamma_j\cap \Gamma_k=\emptyset$
for all $p\in\Gamma_{j}\cap K_j$, $k=1,\ldots,j-1$, {\bf or} 
there exists $l\in\{1,\ldots,j-1\}$
such that $\Gamma_j\cap \overline{B_1(K_{j})}\subset\Gamma_l\cap 
\overline{B_1(K_{j})}$ and
$B_1(q)\cap\Gamma_l\cap\Gamma_k=\emptyset$ for all $q\in\Gamma_l\cap
K_l$, $k=1\ldots,l-1.$ 
\end{lem}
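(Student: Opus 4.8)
\textit{Proof plan for Lemma \ref{lem:3}.}
The plan is to argue by induction on $j\in\{1,\ldots,i\}$, with Lemma \ref{lem:2} as the main engine and the nesting $K_{j}\subset K_{m}$ for $m<j$ (valid because $N\ge 3>1$ forces $N^{i+1-m}>N^{i+1-j}$, hence $\overline{B_{N^{i+1-j}}(0)}\subset\overline{B_{N^{i+1-m}}(0)}$) as the device that transfers cleanness information from an earlier graph down to $\Gamma_{j}$. The base case $j=1$ is immediate: there is no index $k\in\{1,\ldots,j-1\}$, so the first alternative holds vacuously.

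For the inductive step I would assume the claim for all indices below $j$. If $B_{1}(p)\cap\Gamma_{j}\cap\Gamma_{k}=\emptyset$ for every $p\in\Gamma_{j}\cap K_{j}$ and every $k\in\{1,\ldots,j-1\}$, the first alternative is satisfied and we are done. Otherwise I would fix $p\in\Gamma_{j}\cap K_{j}$ and an index $m\in\{1,\ldots,j-1\}$ with $B_{1}(p)\cap\Gamma_{j}\cap\Gamma_{m}\neq\emptyset$; since $p\in K_{j}$ gives $B_{1}(p)\subset\overline{B_{1}(K_{j})}$, this upgrades to $\Gamma_{j}\cap\Gamma_{m}\cap\overline{B_{1}(K_{j})}\neq\emptyset$. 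Now apply the induction hypothesis to $m$. If $m$ falls into its first alternative, then $\Gamma_{m}$ is clean in the sense required by Lemma \ref{lem:2} (relative to $K_{m}$), and since $\Gamma_{j}\cap\Gamma_{m}\cap\overline{B_{1}(K_{j})}\neq\emptyset$, that lemma applied with $l:=m$ yields $\Gamma_{j}\cap\overline{B_{1}(K_{j})}\subset\Gamma_{m}\cap\overline{B_{1}(K_{j})}$, which together with the cleanness of $\Gamma_{m}$ is exactly the second alternative for $j$ with $l=m$.

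In the remaining case the induction hypothesis supplies an index $l\in\{1,\ldots,m-1\}$ with $\Gamma_{m}\cap\overline{B_{1}(K_{m})}\subset\Gamma_{l}\cap\overline{B_{1}(K_{m})}$ and with $\Gamma_{l}$ clean, i.e.\ $B_{1}(q)\cap\Gamma_{l}\cap\Gamma_{k}=\emptyset$ for all $q\in\Gamma_{l}\cap K_{l}$ and $k<l$. Here I would invoke $K_{j}\subset K_{m}$, hence $\overline{B_{1}(K_{j})}\subset\overline{B_{1}(K_{m})}$: a point $x\in\Gamma_{j}\cap\Gamma_{m}\cap\overline{B_{1}(K_{j})}$ then lies in $\Gamma_{m}\cap\overline{B_{1}(K_{m})}\subset\Gamma_{l}$, so $\Gamma_{j}\cap\Gamma_{l}\cap\overline{B_{1}(K_{j})}\neq\emptyset$; since $l<j$ and $\Gamma_{l}$ is clean, Lemma \ref{lem:2} applied with this $l$ gives $\Gamma_{j}\cap\overline{B_{1}(K_{j})}\subset\Gamma_{l}\cap\overline{B_{1}(K_{j})}$, and the second alternative holds with $l$. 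I expect the only genuinely delicate point to be the bookkeeping: making sure each invocation of Lemma \ref{lem:2} is fed the correct compact set, and that the chain $\overline{B_{1}(K_{j})}\subset\overline{B_{1}(K_{m})}\subset\overline{B_{1}(K_{l})}$ lines up so that it is the cleanness of $\Gamma_{l}$ near $K_{l}$ (not merely near $K_{j}$) that appears in the final conclusion. Everything else is formal.
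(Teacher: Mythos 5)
Your proposal is correct and follows essentially the same route as the paper: induction on $j$ with a vacuous base case, then in the inductive step locating an index with nonempty intersection, splitting on which alternative the induction hypothesis gives for that index, using the nesting $K_j\subset K_m\subset K_l$ to transfer the inclusion, and invoking Lemma \ref{lem:2} in each branch. The only differences are notational (your $m$ is the paper's $\lambda$, and the paper phrases the step as $j\to j+1$).
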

\begin{proof}
We prove this by induction. For $j=1$ the first 
option is obviously true, because
there simply is no smaller index.
Now assume that the statement is true up to index $j$. If $B_1(p)\cap
\Gamma_{j+1}\cap\Gamma_k=\emptyset$ for all $k=1,\ldots,j$ then we are done.
If $B_1(p)\cap\Gamma_{j+1}\cap\Gamma_\lambda\not=\emptyset$ for some
$p\in\Gamma_{j+1}\cap K_{j+1}$ and some $\lambda\in\{1,\ldots,j\}$ 
then 
\begin{equation}\label{plusplus}
\Gamma_{j+1}\cap\Gamma_\lambda\cap \overline{B_1(K_{j+1})}\not=\emptyset.
\end{equation}
The induction hypothesis  yields either $B_1(q)\cap\Gamma_\lambda\cap\Gamma_k
=\emptyset$ for all $q\in\Gamma_\lambda\cap K_\lambda$, $k=1,\ldots,\lambda-1$,
or there is some index $l\in\{1,\ldots,\lambda-1\}$ such that 
\begin{equation}\label{ppp}
\Gamma_\lambda
\cap \overline{B_1(K_{\lambda})}
\subset\Gamma_l\cap \overline{B_1(K_{\lambda})}
\end{equation}
and $
B_1(q)\cap\Gamma_l\cap\Gamma_k=\emptyset $ for all $ q\in\Gamma_l\cap K_l,$
$k=1,\ldots,l-1.$
In the first case we can apply Lemma \ref{lem:2} to 
$\Gamma_{j+1}$ and $\Gamma_\lambda$ by means of \eqref{plusplus} to obtain
$\Gamma_{j+1}\cap \overline{B_1(K_{j+1})}
\subset \Gamma_\lambda\cap \overline{B_1(K_{j+1})}$. 
In the second case
we may combine \eqref{plusplus} with \eqref{ppp} to arrive at
$\emptyset\not=\Gamma_{j+1}\cap\Gamma_\lambda\cap \overline{B_1(K_{j+1})}
\subset\Gamma_{j+1}\cap\Gamma_l\cap \overline{B_1(K_{j+1})}$, 
so that Lemma \ref{lem:2}
applied to $\Gamma_{j+1}$ and $\Gamma_l$ yields 
$\Gamma_{j+1}\cap \overline{B_1(K_{j+1})}
\subset \Gamma_l\cap \overline{B_1(K_{j+1})}$.
\end{proof}
Now we can finish the proof of Corollary \ref{cor:lip_collection}. 
If $p\in \overline{B_1(K)}\cap
\Gamma_i\subset K_i\cap \Gamma_i$ for some 
$i\in\N$ exploit Lemma \ref{lem:3} for $j=i$ to obtain
two possible cases.

{\it Case 1.}\, $B_1(q)\cap\Gamma_i\cap\Gamma_k=\emptyset$ for all 
$q\in \Gamma_i\cap K_i$, $k=1\ldots,
i-1$.  Then use Lemma \ref{lem:1} for $l:=i$, 
$K':=\overline{B_1(K)}\cap
\Sigma$, $K'':=\overline{B_2(K)}\cap \Sigma$ 
and $\rho':=\min\{r_{K''},\rho_{K''}\}$ 
to obtain the  local graph representation
\begin{equation}\label{G11}
\Sigma\cap B_{\rho'/4}(q)=\Gamma_i\cap B_{\rho'/4}(q)\quad\Foa
q\in\Gamma_i\cap \overline{B_1(K)},
\end{equation}
in particular for the point $p$.

{\it Case 2.}\, $\Gamma_i\cap \overline{B_1(K_i)}
\subset\Gamma_l\cap  \overline{B_1(K_i)}$
and therefore also $\Gamma_i\cap \overline{B_1(K)}
\subset\Gamma_l\cap \overline{B_1(K)}$
for some $l\in\{1,\ldots,i-1\}$ with $B_1(q)\cap \Gamma_l\cap\Gamma_k
=\emptyset$ for all $q\in\Gamma_l\cap K_l$, $k=1,\ldots,l-1$.
Then apply Lemma \ref{lem:1} to $\Gamma_l$ and again to
the compact set
$K':=\overline{B_1(K)}\cap
\Sigma$, $K'':=\overline{B_2(K)}\cap \Sigma$ 
and $\rho':=\min\{r_{K''},\rho_{K''}\}$ to obtain \eqref{G11} with index
$i$ replaced by $l$.

If, finally $p\in K\setminus\bigcup_{\lambda\in\N} \Gamma_\lambda$ then we find $i\in\N$
and $p'\in\Gamma_i\cap\overline{B_1(K)}\cap\Sigma$ such that
$B_{\rho'/5}(p)\subset B_{\rho'/4}(p')$, and we can
apply  \eqref{G11} to the respective balls centered at $p'$ to find
$
\Sigma\cap B_{\rho'/5}(p)=\Gamma_\lambda\cap B_{\rho'/5}(p)\quad
\textnormal{for $\lambda=i$ or $\lambda=l$.}
$
Thus, we have established \eqref{eq:local-rep}  if we set
$\rho_3:=\rho'.$
Notice that  $\rho'$ merely depends on $K$, not on the index $i$ or $l$. 
\end{proof}

\begin{proof}[Proof of Corollary \ref{cor:c1mfd}.]
	As in Section \ref{subsec:immersion} we can find for any given
	$\beta>0$
	the representation
	$ \Sigma = \bigcup \nolimits_{i=1}^N \li p_i + \graph u_i\ri \cap B_{r_i}(p_i), $
	where $u_i \in C^k\li F_i,F_i^\perp\ri$, 
	$F_i\in \mathscr{G}(n,m)$, $u_i(0)=0$, $Du_i(0)=0$, and $\Vert Du_i \Vert_{C^0}\leq \beta$. In particular, $\lip u_i \leq \beta$ for all $i \in \{1, \dots, N\}$. Since $\beta$ can be 
	chosen arbitrarily, we can locally argue analogously to Corollary 
	\ref{cor:lip_collection} and find for all $p \in \Sigma$ a radius 
	$r=r(\Sigma)>0$ and an $i \in \{1,\dots,N\}$ such that
	$\Sigma \cap B_r(p) = \li p_i + \graph u_i\ri \cap B_r(p).$
	As the $u_i$ are of class $ C^k$, the set
	$\Sigma$ is a $C^k$-submanifold of $\R^n$.
	\end{proof}

\begin{rem}\label{rem:modified-admissible}
If we require in  condition (ii) of Definition \ref{def:admissible_sets}
only that the set $D_p$ is a dense subset of an affine $m$-dimensional
\emph{half space}
$(p+H_*(p))\cap B_{R_K}(p)$, where $H_*(p):=\{x\in H(p)\colon
\langle x,\nu_p\rangle\ge 0\}$ for some vector $\nu_p\in
H(p)\cap\S^{n-1}$ then 
all results of this section also hold true if we add
the following extra condition\footnote{Such a condition was also used
in \cite{kolasinski_2011},\cite[Definition 1.1]{kolasinski-etal_2013a} 
to define the so-called \emph{$m$-fine sets.}}
on the $\theta$- and $\beta$-numbers
defined in \eqref{eq:theta-number} and \eqref{eq:beta-number}: There is
a constant $M_\Sigma$ such that  for all compact subsets $K\subset\Sigma$ 
one has
 $\theta_\Sigma(p,r)\le M_\Sigma\beta_\Sigma(p,r)$
for all $p\in K$ and $r\in (0,R_K]$. We denote this modified class by
$\mathscr{A}^m_*(\alpha,M)$, and briefly
indicate the necessary modifications in the proofs of the
$\beta$-number estimate in Theorem \ref{thm:dist_sigma_to_plane}
and of the Reifenberg flatness in Corollary \ref{cor:theta_bound},
so that then Theorem \ref{thm:lipschitz_graph}  is still applicable.
In order to establish Theorem \ref{thm:dist_sigma_to_plane} the main problem 
is to guarantee the existence of $\eta_x$ and $\eta_y$, respectively, 
satisfying (\ref{proj_etax_1}) and (\ref{projection_etay_p}). For almost 
parallel strands one can choose $e\in H(\mu)$ such that 
$w=\Pi_{H(p)}(e)/\vert \Pi_{H(p)}(e)\vert=\nu_p$ and replace $\mu+\eps R w/2$ 
by $\mu+2\eps R\nu_p$ in (\ref{eta_exists_condition_1}) guaranteeing 
$\Pi_{p+H(p)}(\mu+2\eps R\nu_p) \in (p+H_*(p))\cap B_r(p)$ and hence the 
existence of the desired $\eta_x$. For transversal strands $e^* \in H(\mu)$ 
is fixed. However, we can choose the sign of $v$ such that 
$\langle v,\nu_p\rangle \geq 0$ and consider $\mu+Rv/2+2\eps R \nu_p$ 
instead of $\mu + R v/2$ in (\ref{eta_exists_condition_2}) so that one finds 
$\eta_y$ analogously to the first case. Then the remaining parts of Lemma 
\ref{lem:dist_sigma_to_plane_1}, Lemma \ref{lem:dist_sigma_to_plane_2}, and 
therefore Theorem \ref{thm:dist_sigma_to_plane} can be adopted if we set 
$\eps_0$ sufficiently small. For Corollary \ref{cor:theta_bound}, we first 
find $F_p(r,\delta)\in \mathscr{G}(n,m)$ satisfying 
$\theta_\Sigma(p,F_p(r,\delta),r)\leq M_\Sigma \delta$ by virtue of the 
additional condition defining $\mathscr{A}^m_*(\alpha,M)$ and the already 
proven $\beta$-number estimate. Bounding the angle $\va(F_p(r,\delta),G_p)$ 
by means of Lemma \ref{lem:reifenberg_angles}, one finds a 
constant $C=C(m,M_\Sigma)$ such that $\theta_\Sigma(p,G_p,r)\leq C\delta$ 
for 
all $p \in K $ and all $r\leq r_K$.
To establish the condition of 
$(m,\tilde \delta)$-Reifenberg-flatness on a fixed $K \subset \Sigma$ one 
finally has to choose $\delta=\tilde\delta/C$. Then we obtain the local graph 
representation of Theorem \ref{thm:lipschitz_graph} for all 
$K \subset \Sigma \in \mathscr{A}^m_*(\alpha,M)$ with sufficiently small 
$\alpha$ and $M$ and locally finite energy $E^\tau$. 
\end{rem}

 
\section{Sufficient regularity for finite energy} \label{sec:sufficient_regularity}
First we assume $C^2$-regularity and prove that this implies  finite energy.
\begin{lem}[$C^2$-regularity] \label{lem:C2_sufficiency}
Let $1 \le m\le n$, $F \in \mathscr{G}(n,m)$, $u \in C^2(F,F^\perp)$, $\Sigma:=\graph u$, and $p,q \in \Sigma \cap B_N(0)$ for some $N \in \N$. Then for each $\tau\ge (1/m)-1$ there is a constant $C=C(\tau,m)>0$ such that
\begin{equation}\label{c2-estimate}
	L_\tau \li p,q,T_p\Sigma,T_q\Sigma\ri |p-q|^{-2m} 
	\leq C \Vert D^2 u \Vert_{C^0(F\cap B_N(0))}^{(1+\tau)m} \cdot \vert p-q\vert^{(\tau-1)m}.
	\end{equation}
	In particular, $E^\tau(\Sigma\cap B_N(0))< \infty$ for $\tau >0$.
\end{lem}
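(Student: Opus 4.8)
The plan is to reduce the whole estimate to a single pointwise bound on the conformal angle, namely $\ang\big(\mathcal{R}_{pq}(T_p\Sigma),T_q\Sigma\big)\le 2D\,|p-q|$ with $D:=\|D^2u\|_{C^0(F\cap B_N(0))}$, and then to play this off against the trivial bound coming from $\Pi$ being norm-non-increasing. Write $p=x_p+u(x_p)$ and $q=x_q+u(x_q)$ with $x_p,x_q\in F$; since $|x_p|\le|p|<N$ and $|x_q|\le|q|<N$ both footpoints lie in $F\cap B_N(0)$, and orthogonality of the splitting $\R^n=F\oplus F^\perp$ gives $|x_p-x_q|\le|p-q|$. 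Recall $T_p\Sigma=\graph Du(x_p)$ and $T_q\Sigma=\graph Du(x_q)$.

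The first step is to collect two elementary consequences of $C^2$-regularity. By Lemma \ref{lem:angle_lip} (applied after translating $\graph u$ so that $q$ becomes the origin, cf.\ Lemma \ref{lem:shift-graphs}) the Gauss map of the graph is Lipschitz:
\[
\ang\big(T_p\Sigma,T_q\Sigma\big)\le\|Du(x_p)-Du(x_q)\|\le D\,|x_p-x_q|\le D\,|p-q|.
\]
Next, Taylor's theorem with integral remainder yields $\big|u(x_p)-u(x_q)-Du(x_q)(x_p-x_q)\big|\le\tfrac12 D\,|x_p-x_q|^2\le\tfrac12 D\,|p-q|^2$; since $(x_p-x_q)+Du(x_q)(x_p-x_q)\in T_q\Sigma$, this expresses that $p-q$ is tangent to $\Sigma$ at $q$ up to second order:
\[
\big|\Pi_{(T_q\Sigma)^\perp}(p-q)\big|=\dist\big(p-q,\,T_q\Sigma\big)\le\tfrac12 D\,|p-q|^2.
\]

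The second step is to feed these into the numerator. Using the reformulation $L_\tau=\sup_{e\in T_p\Sigma\cap\S^{n-1}}F_\tau$ recorded just before \eqref{eq:new-Ftau}, I would fix a unit $e\in T_p\Sigma$ and set $X(e):=\Pi_{(T_q\Sigma)^\perp}(e)-2\langle e,p-q\rangle\,\Pi_{(T_q\Sigma)^\perp}(p-q)/|p-q|^2$, which by \eqref{eq:reflection} equals $\Pi_{(T_q\Sigma)^\perp}\big(\mathcal{R}_{pq}(e)\big)$. Since $\Pi_{(T_p\Sigma)^\perp}(e)=0$, the first displayed estimate gives $\big|\Pi_{(T_q\Sigma)^\perp}(e)\big|=\big|(\Pi_{(T_q\Sigma)^\perp}-\Pi_{(T_p\Sigma)^\perp})(e)\big|\le\ang(T_p\Sigma,T_q\Sigma)\le D|p-q|$, and since $|\langle e,p-q\rangle|\le|p-q|$ the second displayed estimate bounds the remaining term of $X(e)$ by $D|p-q|$ as well, so $|X(e)|\le 2D|p-q|$. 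On the other hand $\mathcal{R}_{pq}$ is a linear isometry and $\Pi_{(T_q\Sigma)^\perp}$ is norm-non-increasing, hence $|X(e)|\le|\mathcal{R}_{pq}(e)|=1$. Taking the supremum over $e$ gives $L_\tau\big(p,q,T_p\Sigma,T_q\Sigma\big)\le\big(\min\{1,2D|p-q|\}\big)^{(1+\tau)m}$; dividing by $|p-q|^{2m}$ and splitting into the regimes $2D|p-q|\le1$ and $2D|p-q|>1$ (in the latter using $(1+\tau)m>0$, which holds because $\tau\ge 1/m-1$) produces \eqref{c2-estimate} with $C(\tau,m)=2^{(1+\tau)m}$. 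Finally, for $\tau>0$ one has $(\tau-1)m>-m$; $\Sigma\cap B_N(0)$ is a bounded piece of the Lipschitz graph of $u$ (with finite Lipschitz constant on $F\cap B_N(0)$ because $D<\infty$), hence $\HM^m$-upper Ahlfors regular, so a standard layer-cake argument gives $\int_{\Sigma\cap B_N(0)}\int_{\Sigma\cap B_N(0)}|p-q|^{(\tau-1)m}\,d\HM^m(p)\,d\HM^m(q)<\infty$, and \eqref{c2-estimate} then yields $E^\tau(\Sigma\cap B_N(0))<\infty$. The only genuinely non-formal point is this last division: the curvature bound alone yields $L_\tau\lesssim(D|p-q|)^{(1+\tau)m}$, which degrades once $|p-q|$ is no longer small, so one must remember to fold in the trivial bound $|X(e)|\le1$ before dividing by $|p-q|^{2m}$; everything else is routine differential-geometric bookkeeping.
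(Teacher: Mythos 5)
Your proof is correct and follows essentially the same route as the paper: the same decomposition of the numerator $F_\tau$ into the Gauss-map term $|\Pi_{(T_q\Sigma)^\perp}(e)|\le\ang(T_p\Sigma,T_q\Sigma)\le\|D^2u\|_{C^0}|p-q|$ (via Lemmas \ref{lem:angle_lip} and \ref{lem:angle_formula}) and the second-order tangency term $|\Pi_{(T_q\Sigma)^\perp}(p-q)|\le\|D^2u\|_{C^0}|p-q|^2$, followed by integration using $(\tau-1)m>-m$ for $\tau>0$. The only cosmetic difference is that you add the two contributions before raising to the power $(1+\tau)m$ (so you never need the convexity inequality that the paper's $2^{(1+\tau)m-1}$-split relies on, and the extra $\min\{1,\cdot\}$ refinement is not actually needed for \eqref{c2-estimate}), which changes nothing of substance.
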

\begin{proof}
Recall that $L_\tau(p,q,T_p\Sigma,T_q\Sigma)=\sup_{e\in T_p\Sigma\cap\S^{n-1}}
F_\tau (p,q,e)$, which according to the explicit formula \eqref{eq:new-Ftau} 
and by means of $\tau \ge (1/m)-1$ can
be bounded from above by
	\begin{align} \label{c2_zerlegung}
	& {2^{(1+\tau)m-1}}  \sup_{e \in T_p\Sigma\cap\S^{n-1}}\big[ \vert \Pi_{T_q\Sigma^\perp}(e)\vert^{(1+\tau)m}
	+ \big( {2}{\vert p-q \vert^{-2}}  \vert \Pi_{T_q\Sigma^\perp}(p-q)  \vert   \vert\langle e,p-q\rangle \vert\big)^{(1+\tau)m} \big]\notag\\
	&\leq {2^{(1+\tau)m-1}} \big( \va\li T_p\Sigma,T_q\Sigma\ri^{(1+\tau)m} +\big(   {2}{\vert p-q \vert}^{-1}  \vert \Pi_{T_q\Sigma^\perp}(p-q)  \vert\big)^{(1+\tau)m} \big),
	\end{align}
	where we also used Lemma \ref{lem:angle_formula}.
	By  Lemma \ref{lem:angle_lip} the angle can be estimated as
	\begin{align}\label{c2_angle_bound}
	\va\li T_p\Sigma,T_q\Sigma\ri &\leq \Vert Du(x)-Du(y) \Vert  
	\leq \Vert D^2 u\Vert_{C^0(F\cap B_N(0))} \cdot \vert x-y \vert \notag\\
	&\leq  \Vert D^2 u\Vert_{C^0(F\cap B_N(0))} \cdot \vert p-q \vert,
	\end{align}
	for $x,y \in F\cap B_N(0)$ with $p=x+u(x)$ and $q=y+u(y)$.
	For the projection in the second term in \eqref{c2_zerlegung} we use the notation  $g(\xi):=\xi+u(\xi)$ for $\xi\in F$ to write $p-q=g(x)-g(y)=
	\int_0^1Dg(tx+(1-t)y)(x-y)dt,
	$ so that by virtue of $Dg(y)(x-y)\in T_q\Sigma$ and $Dg(\xi)-Dg(\eta)=
	Du(\xi)-Du(\eta)$ for all $\xi,\eta\in F$ we derive the identity
	\begin{align*}
	  \Pi_{T_q\Sigma^\perp}(p-q) 
	&= 
	\textstyle\int \nolimits_0^1 \Pi_{T_q\Sigma^\perp} \li \li Du(tx+(1-t)y) - Du(y) \ri \li x-y\ri \ri \ dt \notag\\
	&= \textstyle\int \nolimits_0^1 \int \nolimits_0^1 \Pi_{T_q\Sigma^\perp} \big( \frac d{d\sigma} Du\li \sigma \li tx+(1-t)y\ri + \li 1-\sigma\ri y\ri_{\mid \sigma=s}(x-y)\big) \ ds\ dt,
	\end{align*}
	which yields the estimate $
	|\Pi_{T_q\Sigma^\perp}(p-q)|\leq \Vert D^2 u\Vert_{C^0(F\cap B_N(0))} \cdot \vert q-p\vert^2.$
	 Combining this with \eqref{c2_zerlegung} and \eqref{c2_angle_bound} leads to the proof of \eqref{c2-estimate}.
	Finally, integrating \eqref{c2-estimate} for $\tau >0$ over $\Sigma\cap B_N(0)
	\times \Sigma\cap B_N(0)$ one obtains $E^\tau(\Sigma\cap B_N(0))<\infty$.
	\end{proof}
With a  covering argument (as in the proof of Theorem \ref{thm:sufficient-sobolev} below) one can show the following corollary.
\begin{cor}\label{cor:embedded-c2}
Embedded locally compact $C^2$-submanifolds have locally finite energy $E^\tau$ for all 
$\tau >0$. 
\end{cor}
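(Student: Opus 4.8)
The plan is to reduce the claim to the local integrand estimate of Lemma \ref{lem:C2_sufficiency} by a finite covering, in the same spirit as the proof of Theorem \ref{thm:sufficient-sobolev}. Let $\mathscr{M}=\mathscr{M}^m\subset\R^n$ be an embedded locally compact $C^2$-submanifold, equipped with $H(p):=T_p\mathscr{M}$, so that the energy \eqref{eq:energies} is defined. Fix $N\in\N$; by local compactness the set $K:=\mathscr{M}\cap\overline{B_{N+1}(0)}$ is compact. By the standard local graph representation of $C^2$-submanifolds (cf.\ Lemma \ref{lem:imm_local_graph}) every $p\in K$ admits a radius $r_p\in(0,1]$ and a function $u_p\in C^2(T_p\mathscr{M},T_p\mathscr{M}^\perp)$ with $u_p(0)=0$, $Du_p(0)=0$ and finite $\|D^2u_p\|_{C^0}$ such that $\mathscr{M}\cap B_{r_p}(p)=(p+\graph u_p)\cap B_{r_p}(p)$. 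I would pass to a finite subcover $K\subset\bigcup_{i=1}^J B_{r_{p_i}/2}(p_i)$ and choose $\rho\in(0,1]$ to be simultaneously a Lebesgue number of this cover and small enough that the uniform (on $K$) Ahlfors-type upper bound $\HM^m(\mathscr{M}\cap B_r(x))\le A\,r^m$ holds for all $x\in K$ and $r\le\rho$; in particular $\HM^m(K)<\infty$. The decisive consequence is that any two points $x,y\in K$ with $|x-y|<\rho$ lie on a common graph patch $p_i+\graph u_i$.

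Next I would split the double integral
\[
E^\tau(\mathscr{M}\cap B_N(0))=\int_{\mathscr{M}\cap B_N(0)}\int_{\mathscr{M}\cap B_N(0)}\frac{L_\tau\big(x,y,H(x),H(y)\big)}{|x-y|^{2m}}\,d\HM^m(y)\,d\HM^m(x)
\]
into the region $\{|x-y|\ge\rho\}$ and the region $\{|x-y|<\rho\}$. On the far region the angle metric is bounded, $\ang\big(\mathcal{R}_{xy}(H(x)),H(y)\big)\le 2$, hence $L_\tau\le 2^{(1+\tau)m}$, and $|x-y|^{-2m}\le\rho^{-2m}$, so this contribution is at most $2^{(1+\tau)m}\rho^{-2m}\HM^m(K)^2<\infty$. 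On the near region $x$ and $y$ lie on one common patch $p_i+\graph u_i$; after replacing $u_i$ by a globally defined $C^2$ map obtained through multiplication with a smooth cut-off (which keeps $\|D^2u_i\|_{C^0}$ finite and does not alter $\mathscr{M}$ near $p_i$, so that the tangent planes of the extended graph still agree with $H$ along the patch), Lemma \ref{lem:C2_sufficiency}, whose hypothesis $\tau\ge\frac{1}{m}-1$ is implied by $\tau>0$, yields
\[
\frac{L_\tau\big(x,y,H(x),H(y)\big)}{|x-y|^{2m}}\le C(\tau,m)\,A_1^{(1+\tau)m}\,|x-y|^{(\tau-1)m}
\]
with $A_1$ a uniform bound for the $\|D^2u_i\|_{C^0}$ of the finitely many extended functions. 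Because $\tau>0$ the exponent satisfies $(\tau-1)m>-m$, so the Ahlfors-type bound gives $\sup_{x\in K}\int_{\{y\in\mathscr{M}:|x-y|<\rho\}}|x-y|^{(\tau-1)m}\,d\HM^m(y)=:c_0<\infty$; integrating the remaining variable over $\mathscr{M}\cap B_N(0)\subset K$ bounds the near contribution by $C(\tau,m)\,A_1^{(1+\tau)m}c_0\,\HM^m(K)<\infty$. Adding both estimates yields $E^\tau(\mathscr{M}\cap B_N(0))<\infty$ for every $N\in\N$, which is the assertion.

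I expect the only genuinely nontrivial steps to be two bookkeeping points: converting the finitely many overlapping local graphs into a single pointwise bound on $\{|x-y|<\rho\}$ (taken care of by the Lebesgue-number choice of $\rho$), and matching the local situation to the global formulation of Lemma \ref{lem:C2_sufficiency} (taken care of by the cut-off extension). Everything else -- the far/near splitting, the boundedness of the angle metric, and the elementary fact that $|x-y|^{(\tau-1)m}$ is locally $\HM^m$-integrable on an $m$-dimensional $C^2$-set precisely when $\tau>0$ -- is routine and runs parallel to the proof of Theorem \ref{thm:sufficient-sobolev}.
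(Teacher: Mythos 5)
Your argument is correct and is essentially the proof the paper intends: the paper only remarks that the corollary follows ``with a covering argument as in the proof of Theorem~\ref{thm:sufficient-sobolev}'', i.e.\ a finite cover by $C^2$ graph patches, a near/far splitting of the double integral, the pointwise bound \eqref{c2-estimate} of Lemma~\ref{lem:C2_sufficiency} together with the local integrability of $|x-y|^{(\tau-1)m}$ on the near part, and the crude bound $L_\tau|x-y|^{-2m}\le 2^{(1+\tau)m}\rho^{-2m}$ times $\HM^m(K)^2$ on the far part. Your extra care (Lebesgue number in place of the half-radius trick, the explicit Ahlfors upper bound, and the cut-off extension matching the local patches to the globally defined $u$ in Lemma~\ref{lem:C2_sufficiency}) fills in exactly the bookkeeping the paper leaves implicit.
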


A closer look on the proof of Lemma \ref{lem:C2_sufficiency} reveals
that a substantially lower regularity already guarantees finite 
M\"obius energy. 
\begin{lem}[Fractional Sobolev regularity] \label{lem:sobolev_sufficiency}
	Let  $F\in \mathscr{G}(n,m)$, $1 \le m\le n$,
	$\tau>0$, $u\in C^{0,1} 
	\cap W^{1+\frac 1{1+\tau},(1+\tau)m}(F,F^\perp)$, $\Sigma_u:=
	\graph u$, and fix a point $z=\zeta+u(\zeta)\in\Sigma_u$.
	Then for all $r>0$ there is a constant $C=C(\tau,m,\lip u)>0$ 
	such that
	$E^\tau(\Sigma_u\cap B_r(z))\le
	C  [Du ]^{(1+\tau)m}_{{(1+\tau)^{-1}},\,(1+\tau)m}.
	$ 
\end{lem}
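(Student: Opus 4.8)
The plan is to pull the energy integral back to the graph chart $g(x):=x+u(x)$ and bound the resulting integrand by two Gagliardo-type double integrals, following the proof of Lemma \ref{lem:C2_sufficiency} but without using a second derivative. Since $\tau>0\ge (1/m)-1$, the pointwise bound obtained there applies verbatim: by \eqref{eq:new-Ftau} and Lemma \ref{lem:angle_formula},
\[
L_\tau\big(p,q,T_p\Sigma_u,T_q\Sigma_u\big)\le 2^{(1+\tau)m-1}\Big(\ang\big(T_p\Sigma_u,T_q\Sigma_u\big)^{(1+\tau)m}+\big(2\,|\Pi_{T_q\Sigma_u^\perp}(p-q)|/|p-q|\big)^{(1+\tau)m}\Big).
\]
First I would parametrize $\Sigma_u\cap B_r(z)$ over $B':=\Pi_F(\Sigma_u\cap B_r(z))\subset B_r(\zeta)\cap F=:B$ (using that $\Pi_F$ is $1$-Lipschitz), note that the area factor $\sqrt{\det(\Id+Du(x)^{*}Du(x))}$ lies in $[1,(1+(\lip u)^2)^{m/2}]$ and that $|x-y|\le|g(x)-g(y)|\le(1+\lip u)|x-y|$, so that up to a constant depending only on $m$ and $\lip u$ it suffices to estimate $\int_{B'}\int_{B'}L_\tau(g(x),g(y),T_{g(x)}\Sigma_u,T_{g(y)}\Sigma_u)\,|x-y|^{-2m}\,dx\,dy$.

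For the first term this is immediate: Lemma \ref{lem:angle_lip} gives $\ang(T_{g(x)}\Sigma_u,T_{g(y)}\Sigma_u)\le\|Du(x)-Du(y)\|$, and since $m+(1+\tau)^{-1}\cdot(1+\tau)m=2m$, integrating $\|Du(x)-Du(y)\|^{(1+\tau)m}|x-y|^{-2m}$ over $B'\times B'\subset F\times F$ is — up to a dimensional constant comparing the operator norm of a linear map $F\to F^{\perp}$ with the Euclidean norms of its partial derivatives — exactly $[Du]^{(1+\tau)m}_{(1+\tau)^{-1},(1+\tau)m}$.

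The hard part will be the second term, the off-plane displacement. Writing $p-q=g(x)-g(y)=\int_0^1 Dg\big(y+t(x-y)\big)(x-y)\,dt$ and subtracting the tangential vector $Dg(y)(x-y)\in T_{g(y)}\Sigma_u$, together with $Dg(\xi)-Dg(\eta)=Du(\xi)-Du(\eta)$, yields
\[
\big|\Pi_{T_{g(y)}\Sigma_u^\perp}(p-q)\big|\le|x-y|\int_0^1\big\|Du\big(y+t(x-y)\big)-Du(y)\big\|\,dt,
\]
hence $|\Pi_{T_{g(y)}\Sigma_u^\perp}(p-q)|/|g(x)-g(y)|\le\int_0^1\|Du(y+t(x-y))-Du(y)\|\,dt$. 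By Jensen's inequality its $(1+\tau)m$-th power is at most the corresponding average, so the second-term contribution to the integral is bounded by $\int_0^1\!\int_{B'}\!\int_{B'}\|Du(y+t(x-y))-Du(y)\|^{(1+\tau)m}|x-y|^{-2m}\,dx\,dy\,dt$. For fixed $y\in B'$ and $t\in(0,1)$ the substitution $w=y+t(x-y)$ — for which $|x-y|=|w-y|/t$, $dx=t^{-m}\,dw$, and $w$ stays in $(1-t)y+tB'\subset B\subset F$ by convexity of $B$ — converts the inner $x$-integral into $t^{m}\int_F\|Du(w)-Du(y)\|^{(1+\tau)m}|w-y|^{-2m}\,dw$. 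Since $\int_0^1 t^m\,dt=\tfrac1{m+1}$ and $B'\subset F$, this is again controlled by $[Du]^{(1+\tau)m}_{(1+\tau)^{-1},(1+\tau)m}$. Collecting the powers of $2$, the area and distance comparison constants, the factor $1/(m+1)$, and the norm-comparison constant gives the claim with $C=C(\tau,m,\lip u)$. The only points needing care are the bookkeeping of the integration domains under the change of variables (handled by convexity of $B$) and observing that the Sobolev exponent in the hypothesis is exactly the one making $m+s\rho=2m$; beyond that there is no real obstruction.
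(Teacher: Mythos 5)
Your proposal is correct and follows essentially the same route as the paper's proof: pull back via the area formula, reuse the two-term splitting of $L_\tau$ from the $C^2$ lemma, control the angle term by Lemma \ref{lem:angle_lip}, and handle the off-plane term by the fundamental theorem of calculus, Jensen's inequality in $t$, and the substitution $\tilde x=tx+(1-t)y$ producing the factor $\int_0^1 t^m\,dt=1/(m+1)$. The only difference is that you make explicit some bookkeeping (Jensen, the image of the integration domain under the substitution) that the paper leaves implicit.
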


\begin{proof}
	Applying the Area Formula 
	\cite[Theorem 3.9 \& Section 3.3.4B]{evans-gariepy_2015} we find 
	a constant $C>0$ depending on $m$ and on $\lip u$
	such that 
	\begin{align*} E^\tau(\Sigma_u\!\cap\! B_r(z))
&\leq \textstyle C\!  \int \nolimits_{F \cap B_r(\zeta)} \int \nolimits_{F\cap B_r(\zeta)} 
\!\!\!\frac{L^\tau\li g(x),g(y),T_{g(x)} \Sigma_u, T_{g(y)}\Sigma_u \ri}{ 
|g(x)-g(y)|^{2m}}
 d\HM^m(x)  d\HM^m(y),
\end{align*}
where we used the notation $g(\xi):=\xi+u(\xi)$ for $\xi\in F$ as
in the proof of Lemma \ref{lem:C2_sufficiency}. As in that proof
we can 
	adjust the constant $C$ (now depending also on $\tau$)
	to bound the numerator of the integrand 
	from above by
	\begin{align*}
 C \cdot\big[ \va \li T_{g(x)}\Sigma_u, T_{g(y)}\Sigma_u\ri^{(1+\tau)m}
	+ \big(  2 {\vert g(x)-g(y)\vert}^{-1}  
	\vert \Pi_{T_{g(y)}\Sigma_u^\perp}(g(x)-g(y))\vert 
	\big)^{(1+\tau)m}\big].
	\end{align*}
For the second term one estimates $
 \vert \Pi_{T_{g(y)}\Sigma_u^\perp}(g(x)-g(y))\vert 
	\leq \int \nolimits_0^1  \vert Du(tx+(1-t)y)-Du(y) \vert 
	\cdot \vert x-y\vert \ dt$,
	which together with Lemma \ref{lem:angle_lip} and 
	$\vert x-y\vert\leq \vert g(x)-g(y)\vert$ implies
	\begin{align*}
E^\tau(\Sigma_u\cap B_r(z))  &\leq 
\textstyle C \int \nolimits_{F \cap B_r(\zeta)} \int \nolimits_{F\cap B_r(\zeta)} \frac { \left \Vert Du(x)-Du(y)\right \Vert^{(1+\tau)m} }{\vert x-y\vert^{2m}} \ d\HM^m(x) \ d\HM^m(y) \\
	& \hspace{-2.5cm}+ \textstyle 
	C \int \nolimits_0^1 \int \nolimits_{F \cap B_r(\zeta)} \int \nolimits_{F\cap B_r(\zeta)} \frac { \left \Vert Du(tx+(1-t)y)-Du(y)\right \Vert^{(1+\tau)m} }{\vert x-y\vert^{2m}} \ d\HM^m(x) \ d\HM^m(y)\ dt.
	\end{align*}
	Changing variables to $\tilde x=tx+(1-t)y$ yields $\vert x-y\vert
	= {\vert \tilde x -y\vert}/{t}$, so that
	\begin{align} \label{sobolev_single_graph}
	E^\tau(\Sigma_u\cap B_r(z))
\leq C\li 1+ 1/{(m+1)}\ri  
[Du ]^{(1+\tau)m}_{{{(1+\tau)^{-1}},(1+\tau)m}}.
	\end{align}
	\end{proof}

	\begin{proof}[Proof of Theorem \ref{thm:sufficient-sobolev}.]
Consider an embedded
	compact submanifold of
	        $\R^n$ with local graph representations of class
		$C^{0,1}\cap  W^{1+{(1+\tau)^{-1}},(1+\tau)m}$.
 Then, we can find $z_1,\dots,z_N\in\Sigma$ and radii 
	$r_i>0$, such that
$\Sigma \subset \bigcup_{i=1}^N \Sigma \cap B_{ {r_i}/2}(z_i)$ and 
$\Sigma \cap B_{r_i}(z_i)=\li z_i+\graph u_i \ri \cap B_{r_i}(z_i)$
	with $u_i \in C^{0,1}\cap  
	W^{1+{(1+\tau)^{-1}},(1+\tau)m}(F_i,F_i^\perp)$ where 
	$F_i\in \mathscr{G}(n,m)$ for $i=1,\dots,  N.$ 
	Set $r:=\min\{ {r_1}/2,\dots ,  {r_N}/2 \}$ to obtain 
	$\Sigma \cap B_r(q)\subset \Sigma \cap B_{r_i}(z_i)$ for every
	$q\in\Sigma \cap B_{ {r_i}/2}(z_i)$, which 
	together with \eqref{sobolev_single_graph}  implies
	\begin{align*}
\textstyle\int \nolimits_\Sigma& \textstyle\int \nolimits_{\Sigma \cap B_r(q)} 
	L_\tau\li p,q,T_p \Sigma,T_q\Sigma\ri |p-q|^{-2m} 
	\ d\HM^m(p)\ d\HM^m(q) \\
	&\textstyle\leq \sum \nolimits_{i=1}^N 	\int \nolimits_{\Sigma 
	\cap B_{ {r_i}/2}(z_i)} \int 
	\nolimits_{\Sigma \cap B_{r_i}(z_i)} 
 L_\tau\li p,q,T_p \Sigma,T_q\Sigma\ri |p-q|^{-2m} 
	 \ d\HM^m(p)\ d\HM^m(q) \\
&\leq \textstyle\sum\nolimits_{i=1}^N E^\tau\big(\Sigma_{u_i}\cap B_{r_i}(z_i)\big)
{\le}  \sum \nolimits_{i=1}^N C(\tau,m,\lip u_i)
	[Du_i ]^{(1+\tau)m}_{{{(1+\tau)^{-1}},(1+\tau)m}}.
	\end{align*}
	On the other hand, the integrand is bounded
	from above  by $r^{-2m}$ for all $q\in\Sigma$
	and $p\in\Sigma\setminus B_r(q)$, so that we finally arrive at
\[\textstyle E^\tau(\Sigma) \le \sum \nolimits_{i=1}^N  C(\tau,m,\lip u_i)
 [Du_i ]^{(1+\tau)m}_{{{(1+\tau)^{-1}},(1+\tau)m}} +  
 \HM^m(\Sigma)^2 {r^{-2m}} 
	 < \infty. \]
\end{proof}


\appendix

\section{Angles} \label{app:angles}
We first recall in Lemma \ref{lem:angle_formula} some general identities for the angle metric \eqref{eq:angle-metric} on the 
Grassmannian $\mathscr{G}(n,m)$, 
demonstrate a simple inclusion for two cones around different $m$-planes and with different opening angles (Lemma \ref{lem:cone}), and
estimate  in Lemma \ref{lem:reifenberg_angles} the angle between two  planes approximating a set in two different ways  on  different scales. 
Then we introduce principal angles and relate these to the angle metric (see Definition \ref{def:princ_angles}--Lemma \ref{lem:angle_rel}), which
finally allows us to compare in Corollary \ref{cor:energy_comparison} our M\"obius invariant energies 
$E^\tau$ to the Kusner-Sullivan energy $E_\textnormal{KS}$. 

	\begin{lem} [8.9 (3) in \cite{allard_1972}] \label{lem:angle_formula}
		Let $F,G \in \mathscr{G}(n,m)$, then
		\[ \Vert \Pi_{F}-\Pi_{G} \Vert = \Vert \Pi_{F^\perp} - \Pi_{G^\perp} \Vert
		=  \Vert \Pi_{F^\perp} \circ \Pi_{G} \Vert = \Vert \Pi_{F} \circ \Pi_{G^\perp} \Vert= \Vert \Pi_{G^\perp}\circ \Pi_{F}\Vert 
		=  \Vert \Pi_{G} \circ \Pi_{F^\perp} \Vert.\]
	\end{lem}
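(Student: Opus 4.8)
The plan is to show that all six operator norms coincide with a single spectral quantity attached to the pair $(F,G)$, as in \cite[8.9]{allard_1972}. First, writing $\Pi_F=\Id-\Pi_{F^\perp}$ and $\Pi_G=\Id-\Pi_{G^\perp}$ gives $\Pi_F-\Pi_G=\Pi_{G^\perp}-\Pi_{F^\perp}$, which settles $\|\Pi_F-\Pi_G\|=\|\Pi_{F^\perp}-\Pi_{G^\perp}\|$ immediately. Since orthogonal projections are self-adjoint and $\|T\|=\|T^{*}\|$, we also get the two pairings $\|\Pi_{F^\perp}\Pi_G\|=\|\Pi_G\Pi_{F^\perp}\|$ and $\|\Pi_F\Pi_{G^\perp}\|=\|\Pi_{G^\perp}\Pi_F\|$. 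Hence it remains to prove $\|\Pi_F-\Pi_G\|=\|\Pi_{F^\perp}\Pi_G\|=\|\Pi_F\Pi_{G^\perp}\|$.

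For the middle link I would decompose $A:=\Pi_F-\Pi_G=\Pi_F\Pi_{G^\perp}-\Pi_{F^\perp}\Pi_G=:X-Y$ and observe, using $\Pi_F\Pi_{F^\perp}=0=\Pi_G\Pi_{G^\perp}$, that $X^{*}Y=\Pi_{G^\perp}\Pi_F\Pi_{F^\perp}\Pi_G=0$ and likewise $Y^{*}X=0$. As $A$ is self-adjoint, $\|A\|^{2}=\|A^{*}A\|=\|X^{*}X+Y^{*}Y\|$, and here $X^{*}X=\Pi_{G^\perp}\Pi_F\Pi_{G^\perp}$ annihilates $G$ (with range in $G^\perp$) while $Y^{*}Y=\Pi_G\Pi_{F^\perp}\Pi_G$ annihilates $G^\perp$ (with range in $G$); so $A^{*}A$ is block-diagonal with respect to $\R^{n}=G\oplus G^\perp$ and
\[ \|\Pi_F-\Pi_G\|^{2}=\max\{\|X^{*}X\|,\|Y^{*}Y\|\}=\max\{\|\Pi_F\Pi_{G^\perp}\|^{2},\|\Pi_{F^\perp}\Pi_G\|^{2}\}. \]
Thus $\|\Pi_F-\Pi_G\|=\max\{\|\Pi_F\Pi_{G^\perp}\|,\|\Pi_{F^\perp}\Pi_G\|\}$, and the proof will be complete once these last two norms are shown to agree.

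The remaining step is the only one that really uses $\dim F=\dim G=m$. I would square again: from $\|T^{*}T\|=\|T\|^{2}$ and $\Pi_{F^\perp}=\Id-\Pi_F$ one gets $\|\Pi_{F^\perp}\Pi_G\|^{2}=\|\Pi_G\Pi_{F^\perp}\Pi_G\|=\|(\Pi_G-\Pi_G\Pi_F\Pi_G)\big|_{G}\|=1-\lambda_{\min}\bigl(\Pi_G\Pi_F\Pi_G\big|_{G}\bigr)$, since $\Pi_G\Pi_F\Pi_G\big|_{G}$ is self-adjoint on $G$ with spectrum in $[0,1]$; symmetrically $\|\Pi_{G^\perp}\Pi_F\|^{2}=1-\lambda_{\min}\bigl(\Pi_F\Pi_G\Pi_F\big|_{F}\bigr)$. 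Now $\Pi_G\Pi_F\Pi_G\big|_{G}=T^{*}T$ and $\Pi_F\Pi_G\Pi_F\big|_{F}=TT^{*}$ for the linear map $T:=\Pi_F\big|_{G}\colon G\to F$ between $m$-dimensional spaces, so these two operators share the same characteristic polynomial, hence the same smallest eigenvalue; therefore $\|\Pi_{F^\perp}\Pi_G\|=\|\Pi_{G^\perp}\Pi_F\|$, and combined with the self-adjoint pairings this forces all six quantities to be equal.

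The main obstacle is exactly this equidimensionality input: for projections of unequal rank the norms $\|\Pi_{F^\perp}\Pi_G\|$ and $\|\Pi_{G^\perp}\Pi_F\|$ genuinely differ, so the $T^{*}T$-versus-$TT^{*}$ argument — which needs both the source and the target of $T=\Pi_F\big|_{G}$ to have dimension $m$ — is the load-bearing part, everything else being formal manipulation of projections. Alternatively, once principal angles have been introduced (Definition \ref{def:princ_angles} and the following lemmas) each of the six norms can be read off as $\sin$ of the largest principal angle between $F$ and $G$, but the self-contained route sketched above has the advantage of not invoking that machinery.
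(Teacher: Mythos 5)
Your argument is correct and complete. Note, however, that the paper does not prove this lemma at all: it is quoted verbatim as 8.9\,(3) from Allard's paper, so there is no in-text proof to compare against. Your self-contained route is sound: the identity $\Pi_F-\Pi_G=\Pi_{G^\perp}-\Pi_{F^\perp}$ and the adjoint pairings are immediate; the decomposition $\Pi_F-\Pi_G=\Pi_F\Pi_{G^\perp}-\Pi_{F^\perp}\Pi_G$ with $X^{*}Y=Y^{*}X=0$ does give $A^{*}A=X^{*}X+Y^{*}Y$ block-diagonal along $G\oplus G^{\perp}$, hence $\|\Pi_F-\Pi_G\|=\max\{\|\Pi_F\Pi_{G^\perp}\|,\|\Pi_{F^\perp}\Pi_G\|\}$; and the final identification $\|\Pi_{F^\perp}\Pi_G\|=\|\Pi_{G^\perp}\Pi_F\|$ via $1-\lambda_{\min}(T^{*}T)=1-\lambda_{\min}(TT^{*})$ for $T=\Pi_F\big|_{G}\colon G\to F$ is exactly where $\dim F=\dim G=m$ is needed (and is genuinely needed: for unequal ranks the two mixed norms can differ, e.g.\ a line versus a plane containing it). The one small point worth making explicit is that $\|\Pi_G\Pi_{F^\perp}\Pi_G\|$ as an operator on $\R^{n}$ equals its norm restricted to $G$, since the operator annihilates $G^{\perp}$ and has range in $G$; with that remark the chain of equalities closes and all six quantities coincide.
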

Recall from \eqref{eq:cone} our notation $C_x(\beta,F)$ for a cone around $F\in\mathscr{G}(n,m)$, centered at $x$ with opening angle
$2\arctan\beta$.
\begin{lem}[Cone Lemma] \label{lem:cone}
		Let $p \in \R^n$, $F,G \in \mathscr{G}(n,m)$ with $\va(F,G) \leq \chi$ for some $\chi \geq 0$, and assume that $\sigma,\kappa\geq 0$ satisfy
		\begin{align} \label{cone_lem_condition}
		0 \leq \frac {\sigma + (1+\sigma)\chi}{1-(1+\sigma)\chi} \leq \kappa.
		\end{align}
		Then we have $
		 C_p(\sigma,F)\subset C_p(\kappa,G).$
	\end{lem}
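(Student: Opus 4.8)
The plan is to unwind the definition \eqref{eq:cone} of the two cones and reduce everything to an elementary estimate for a single vector. Writing $v := z - p$ for a point $z \in C_p(\sigma,F)$, the hypothesis reads $|\Pi_{F^\perp}(v)| \le \sigma\,|\Pi_F(v)|$, and the claim $z \in C_p(\kappa,G)$ amounts to $|\Pi_{G^\perp}(v)| \le \kappa\,|\Pi_G(v)|$. First I would dispose of the degenerate case $\Pi_F(v) = 0$: then $|\Pi_{F^\perp}(v)| \le 0$ forces $v = 0$, hence $z = p \in C_p(\kappa,G)$ trivially. From now on $\Pi_F(v) \neq 0$, and the basic observation is the comparison of $|v|$ with $|\Pi_F(v)|$: from the orthogonal decomposition $v = \Pi_F(v) + \Pi_{F^\perp}(v)$ one has $|\Pi_F(v)| \le |v| \le (1+\sigma)\,|\Pi_F(v)|$.

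Next I would produce an upper bound for $|\Pi_{G^\perp}(v)|$ and a lower bound for $|\Pi_G(v)|$, both expressed as multiples of $|\Pi_F(v)|$, using only $\va(F,G) \le \chi$ together with the identities in Lemma \ref{lem:angle_formula}, which in particular give $\|\Pi_{F^\perp} - \Pi_{G^\perp}\| = \|\Pi_F - \Pi_G\| = \va(F,G) \le \chi$. For the upper bound, the triangle inequality yields $|\Pi_{G^\perp}(v)| \le |(\Pi_{G^\perp} - \Pi_{F^\perp})(v)| + |\Pi_{F^\perp}(v)| \le \chi|v| + \sigma|\Pi_F(v)| \le \big(\sigma + (1+\sigma)\chi\big)|\Pi_F(v)|$. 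For the lower bound, since $\Pi_F(v) = \Pi_G(v) + (\Pi_F - \Pi_G)(v)$, one gets $|\Pi_G(v)| \ge |\Pi_F(v)| - |(\Pi_F - \Pi_G)(v)| \ge |\Pi_F(v)| - \chi|v| \ge \big(1 - (1+\sigma)\chi\big)|\Pi_F(v)|$. I would note here that the hypothesis \eqref{cone_lem_condition} forces $1 - (1+\sigma)\chi > 0$: if the numerator $\sigma + (1+\sigma)\chi$ is positive, a nonnegative finite value of the fraction rules out a nonpositive denominator, and if that numerator vanishes then $\sigma = \chi = 0$ and $1 - (1+\sigma)\chi = 1$.

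Finally I would combine the two estimates. Dividing them, which is legitimate since $|\Pi_F(v)| > 0$ and hence $|\Pi_G(v)| > 0$, one obtains $|\Pi_{G^\perp}(v)| / |\Pi_G(v)| \le \big(\sigma + (1+\sigma)\chi\big)/\big(1 - (1+\sigma)\chi\big) \le \kappa$ by \eqref{cone_lem_condition}, i.e.\ $z \in C_p(\kappa,G)$, as desired. I do not expect any genuine obstacle: the argument is a short direct computation, and the only point requiring a little care is the bookkeeping of the degenerate cases together with the observation that the stated inequality for $\kappa$ already guarantees positivity of the denominator $1 - (1+\sigma)\chi$ entering the lower bound for $|\Pi_G(v)|$.
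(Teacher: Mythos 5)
Your proof is correct and follows essentially the same route as the paper: both arguments rest on the triangle inequality together with $\|\Pi_{F^\perp}-\Pi_{G^\perp}\|=\va(F,G)\le\chi$ and the cone condition, and arrive at the same final inequality $(1-(1+\sigma)\chi)\,|\Pi_{G^\perp}(v)|\le(\sigma+(1+\sigma)\chi)\,|\Pi_G(v)|$. The only cosmetic difference is that you normalize both sides by $|\Pi_F(v)|$ and divide, whereas the paper absorbs the $|\Pi_{G^\perp}(v)|$ term; your explicit treatment of the degenerate case $\Pi_F(v)=0$ and of the positivity of $1-(1+\sigma)\chi$ is a welcome (if routine) addition.
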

	\begin{proof}
		Let $z \in C_p(\sigma,F)$ and estimate
		\begin{align*}
		 \vert \Pi_{G^\perp} (z-p)  \vert 
	&
	\leq  \vert \Pi_{F^\perp}(z-p) \vert + \chi \vert z-p\vert
	\\
		&
	\leq \sigma  \vert \Pi_F(z-p) \vert + 
	\chi  \vert \Pi_G(z-p) \vert + 
	\chi  \vert \Pi_{G^\perp}(z-p) \vert
	\\
		&
	\leq \sigma  \vert \Pi_G(z-p) \vert + (\sigma+1)\chi 
	\big( \vert \Pi_G(z-p) \vert + 
	\vert \Pi_{G^\perp}(z-p) \vert\big),
		\end{align*}
		which implies after absorbing the last summand 
	$ \big(1- \chi(1+\sigma)\big)  \vert \Pi_{G^\perp}(z-p)  \vert 
	\leq \big(\sigma + (1+\sigma)\chi\big)  \vert \Pi_G(z-p) \vert,$
		proving the claim by means of assumption \eqref{cone_lem_condition}.
	\end{proof}
For the following estimate between two approximating planes recall 
Definition \eqref{eq:theta-plane} of the theta-number with
respect to a fixed plane.
\begin{lem} \label{lem:reifenberg_angles}
		Let $p_1,p_2 \in \Sigma \subset \R^n$ and $0< r_1 \leq r_2$,
		\ $d_1,d_2 \in (0,1/2)$, and $F_1,F_2 \in \mathscr{G}(n,m)$ 
		such that $\vert p_1- p_2\vert < r_1/2$,
		and
	\begin{align}
	 \dist\big(\xi,\Sigma\cap B_{r_1}(p_1)\big)&\le d_1r_1\Foa
	  \xi\in (p_1+F_1)\cap B_{r_1}(p_1) \label{eq:theta_1}, \\
	 \beta_\Sigma(p_2,F_2,r_2)&\leq d_2.\label{eq:dist_2}
		\end{align}
				Then there is a constant $\tilde C=\tilde C(m)>\sqrt 2$
		with $
		\va(F_1,F_2) \leq 2\tilde C     ( d_1 + 
		2 d_2 {r_2}/{r_1}  )/(1-2d_1). $
	\end{lem}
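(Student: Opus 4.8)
The plan is to bound $\va(F_1,F_2) = \|\Pi_{F_1} - \Pi_{F_2}\|$ by showing that each of $F_1$ and $F_2$ carries enough $\Sigma$-points close to it that we can find an orthonormal-like frame of one plane whose images under the other projection are nearly norm-preserving. The key mechanism is: hypothesis \eqref{eq:theta_1} fills $(p_1+F_1)\cap B_{r_1}(p_1)$ densely (up to error $d_1 r_1$) with points of $\Sigma$, and hypothesis \eqref{eq:dist_2} forces every point of $\Sigma\cap B_{r_2}(p_2)$ — in particular those $\Sigma$-points — to lie within $d_2 r_2$ of $p_2 + F_2$. Combining these, every $\xi \in (p_1+F_1)\cap B_{r_1}(p_1)$ is within $d_1 r_1 + d_2 r_2$ of the affine plane $p_2 + F_2$, after first checking via $|p_1-p_2| < r_1/2$ that the relevant approximating $\Sigma$-points indeed lie in $B_{r_2}(p_2)$ (since $r_1\le r_2$, a point in $B_{r_1}(p_1)$ lies in $B_{3r_1/2}(p_2)\subset B_{2r_2}(p_2)$, and one may need to shrink to a sub-ball or use $\beta_\Sigma$ on the appropriate scale to keep constants clean).

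First I would fix the base point, without loss of generality translating so that $p_1 = 0$, and pick $m$ points $\xi_1,\dots,\xi_m \in (p_1+F_1)\cap B_{r_1/2}(p_1)$ that, after subtracting $p_1$, form an orthogonal system in $F_1$ with $|\xi_j| = c r_1$ for a fixed dimensional constant $c$ (e.g. $c = 1/8$). For each $\xi_j$, hypothesis \eqref{eq:theta_1} gives a point $y_j \in \Sigma\cap B_{r_1}(p_1)$ with $|y_j - \xi_j| \le d_1 r_1$; by the distance bound $|p_1-p_2|<r_1/2$ one checks $y_j \in \Sigma\cap B_{r_2}(p_2)$ (adjusting scale/constants as needed), so \eqref{eq:dist_2} gives $\dist(y_j, p_2+F_2) \le d_2 r_2$. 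Writing $z_j := \Pi_{p_2+F_2}(y_j) - \Pi_{p_2+F_2}(p_1) \in F_2$, we then have, for the vectors $v_j := \xi_j - p_1 \in F_1$,
\[
|v_j - z_j| \;\le\; |\,v_j - (y_j-p_1)\,| + |(y_j-p_1) - z_j| \;\le\; d_1 r_1 + \big(\dist(y_j,p_2+F_2) + \dist(p_1,p_2+F_2)\big),
\]
and $\dist(p_1,p_2+F_2) \le |p_1-p_2| + \dist(p_2,p_2+F_2)$ — here one uses that $p_1\in\Sigma\cap B_{r_2}(p_2)$ so \eqref{eq:dist_2} applies to $p_1$ as well, giving $\dist(p_1,p_2+F_2)\le d_2 r_2$ too. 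Collecting, $|v_j - z_j| \le d_1 r_1 + 2 d_2 r_2$.

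Now $\{v_j\}$ is an orthogonal system in $F_1$ of norm $\sim r_1$, and $\{z_j\}\subset F_2$ approximates it with error $\le d_1 r_1 + 2 d_2 r_2$; standard perturbation linear algebra (the reason the stated constant $\tilde C = \tilde C(m)$ and the factor $1/(1-2d_1)$ appear) shows $F_2$ contains an $m$-dimensional subspace within operator-norm distance $\tilde C(m)\cdot (d_1 + 2 d_2 r_2/r_1)/(1-2d_1)$ of $F_1$, hence $\va(F_1,F_2) = \|\Pi_{F_1}-\Pi_{F_2}\|$ is bounded by $2\tilde C(m)(d_1 + 2 d_2 r_2/r_1)/(1-2d_1)$ — the extra factor $2$ absorbing the passage from "one plane is close to a subspace of the other" to symmetric distance, using that both are $m$-planes of the same dimension. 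The main obstacle is the bookkeeping in this last linear-algebra step: one must control how near-orthogonality of $\{v_j\}$ degrades under the perturbation and turn the vectorwise bound into an operator-norm bound on the projection difference, which is exactly where the dimensional constant $\tilde C(m)$ and the denominator $1-2d_1$ (needed so the perturbed Gram matrix stays invertible) enter. Everything else is elementary triangle inequalities plus careful tracking of which ball each point lies in given only $|p_1-p_2|<r_1/2$ and $r_1\le r_2$.
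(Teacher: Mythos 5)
Your proposal follows essentially the same route as the paper: approximate an orthogonal frame of $F_1$ by points of $\Sigma\cap B_{r_1}(p_1)$ via \eqref{eq:theta_1}, push those into $(p_2+F_2)$ via \eqref{eq:dist_2}, and convert the resulting vectorwise error into a bound on $\va(F_1,F_2)$ by a dimensional-constant perturbation lemma (the paper cites \cite[Prop.~2.5]{kolasinski-etal_2013a} for exactly this step, which requires the normalized error to be below $1/\sqrt 2$; the complementary case is disposed of trivially because $\tilde C>\sqrt2$ and the angle metric never exceeds $1$). Two points of bookkeeping where your version goes astray: first, with the frame points at fixed radius $cr_1$, $c=1/8$, the containment check fails --- a point $y_j$ with $|y_j-\xi_j|\le d_1r_1$ and $d_1$ close to $1/2$ satisfies only $|y_j-p_2|<(1/8+d_1+1/2)r_1$, which can exceed $r_2$ when $r_2=r_1$, so \eqref{eq:dist_2} need not apply to $y_j$. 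The paper avoids this by placing the frame points at radius $\tfrac{1-2d_1-2\varepsilon}{2}r_1$, which forces the approximating $\Sigma$-points into $B_{r_1/2}(p_1)\subset B_{r_2}(p_2)$. Second, this choice of radius is also the true source of the factor $1/(1-2d_1)$: dividing the error $d_1r_1+2d_2r_2$ by the frame length $\tfrac{1-2d_1}{2}r_1$ yields $\tfrac{2}{1-2d_1}(d_1+2d_2r_2/r_1)$, whereas your attribution of that factor to invertibility of a perturbed Gram matrix is not where it comes from (and a fixed $c=1/8$ would instead produce a factor $8$ and not the stated constant). With the radius corrected, your argument is the paper's argument.
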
 
	\begin{proof}
	For an orthonormal basis $\{e_1,\dots,e_m\}$ of $F_1$ and an arbitrary 
	$\varepsilon \in (0, (1-2d_1)/2)$ we set $x_0:=p_1$ and
		\begin{align} \label{angle_x_i}
		x_i := p_1 +  {(1-2d_1-2\varepsilon)}  r_1 \cdot e_i/2 \in \li p_1+F_1 \ri \cap B_{r_1}(p_1) \Fo i=1, \dots,m.\end{align}
		By 
		means of \eqref{eq:theta_1}
		we can find $q_i \in \Sigma \cap B_{r_1}(p_1)$ such that 
		\begin{align}\label{angle_lemma_dist_1} \vert q_i-x_i \vert \leq (d_1+\varepsilon)r_1 \Foa i=1,\dots,m. \end{align}
		Additionally, we define $q_0:=p_1$ so that
		$ \vert q_i-p_1 \vert \leq \vert q_i-x_i \vert + 
		\vert x_i - p_1 \vert \leq  (d_1 + \varepsilon) r_1 + 
		 {(1-2d_1-2\varepsilon)}  r_1/2 =  {r_1}/ 2 \Foa i=1,\dots ,m.$ 
		Consequently, $\vert q_i - p_2 \vert \leq \vert q_i - p_1 \vert + \vert p_1 - p_2 \vert < r_1 \leq r_2$ for all $i \in \{1, \dots , m\}$ and $\vert q_0 - p_2 \vert = \vert p_1 - p_2 \vert < \frac {r_1} 2 < r_2$.
		Hence, $q_i \in \Sigma \cap B_{r_2}(p_2)$ for all $i \in \{0,\dots,m\}$.
		Now we can use \eqref{eq:dist_2} to obtain 
		points $y_i \in (p_2+F_2) \cap B_{r_2}(p_2)$ with
		\begin{align}\label{angle_lemma_dist_2} \vert y_i - q_i\vert \leq d_2 r_2 \Foa i=0,\dots, m. \end{align}
		For $i \in \{1,\dots,m\}$ we can define
		$\tilde x_i := \frac{x_i-x_0}{\vert x_i -x_0\vert } = e_i \in F_1 \AND \tilde y_i:= \frac {y_i-y_0}{\vert x_i-x_0\vert } \in F_2.$
		Then \eqref{angle_x_i}--\eqref{angle_lemma_dist_2} imply
		for all $i=1,\ldots,m$
		\begin{align*}
		\vert \tilde x_i - \tilde y_i\vert & = \textstyle
		\frac{ 2}{(1- 2 d_1-2\varepsilon)r_1} 
 \left\vert x_i-x_0-y_i+y_0 \right \vert 
		\leq 
		 \frac 2 {1- 2 d_1-2\varepsilon} \big( 
		 d_1 + \varepsilon + 2 {r_2}d_2/{r_1}  \big). 
		\end{align*}
		Consequently, if
		\begin{align} \label{angle_lemma_smallnes}
		 2  \left( d_1 + 2  {r_2}d_2 /r_1\right)/(1-2d_1) < 1/ {\sqrt2}, \end{align}
		then we can choose $0<\eps\ll1$ such that $\dist\li e_i,F_2\ri \leq \vert \tilde x_i - \tilde y_i\vert <1/\sqrt 2$ for all $i=1,\dots,m$. 
                Applying \cite[Prop. 2.5]{kolasinski-etal_2013a} then yields a constant $\tilde C=\tilde C(m)>\sqrt 2$ with
	$ \va(F_1,F_2) \leq 2\tilde C
	\cdot  \left( d_1 + \varepsilon + 2 {r_2}d_2/{r_1} \right)/(1-2d_1-2\varepsilon).$
		We conclude this case by taking the limit $\eps \to 0$.
		If \eqref{angle_lemma_smallnes} does not hold, then 
		the desired inequality for $
		  \va(F_1,F_2)$ trivially holds true for the same constant $\tilde C$.
	\end{proof}
	Now we recall the definition of principal angles.  
\begin{defin} [Ch. 12.4.3 in \cite{golub-loan_1996}]\label{def:princ_angles}
		For two $m$-planes $F,G \in \mathscr{G}(n,m)$, the \emph{principal angles} $\vartheta_1,\dots, \vartheta_m\in [0,\pi/2] $ are given by $
			\cos\vartheta_1:= \sup_{ x\in F \cap \S^{n-1}} \sup_{ y \in G \cap \S^{n-1} }\vert \langle x,y \rangle \vert $
			and
\[  \cos\vartheta_k:= \sup_{x\in F \cap \S^{n-1}\atop 
x \perp \operatorname{span}(x_1,\dots,x_{k-1})}
 \sup_{ y \in G \cap \S^{n-1}\atop y \perp 
 \operatorname{span}(y_1,\dots,y_{k-1})}
			\vert\langle x,y \rangle\vert \quad\Fo k \in \{2,\dots,m\}.\]
			Here, for each $k\in\{1,\ldots,m\}$ the \emph{principal vectors }
			$x_k\in \li F \setminus \operatorname{span}(x_1,\dots,x_{k-1})\ri  \cap \S^{n-1}$ and $y_k \in \li G \setminus \operatorname{span}(y_1,\dots,y_{k-1})\ri \cap \S^{n-1}$ are chosen to satisfy $\langle x_k,y_k\rangle = \cos\vartheta_k.$
				\end{defin}
It is mentioned in \cite[Ch. 12.4.3]{golub-loan_1996} that one can find principal vectors such that
		\begin{align} \label{princ_vectors}
			\langle x_i,y_j\rangle = \begin{cases}
				\cos\vartheta_i &\If i=j,\\
				0 &\If i\neq j,
			\end{cases}
		\end{align}
		by virtue of the singular value decomposition of a matrix. Moreover, each set of principal vectors forms an orthonormal basis of the corresponding plane.
		\begin{lem}[Angle metric vs. principal angle] \label{lem:princ_angles}
		For $F,G \in \mathscr{G}(n,m)$ one has $
			 \sin\vartheta_m = \va (F,G),  $
		where $\vartheta_m \in [0, \pi/ 2]$ denotes the largest 
		principal angle for $F$ and $G$.
	\end{lem}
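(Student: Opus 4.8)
The plan is to translate the operator‑norm quantity $\ang(F,G)=\|\Pi_F-\Pi_G\|$ into the language of principal vectors. By Lemma~\ref{lem:angle_formula} we have $\ang(F,G)=\|\Pi_{G^\perp}\circ\Pi_F\|$, so it suffices to compute this last operator norm and to verify that it equals $\sin\vartheta_m$.

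First I would fix principal vectors $x_1,\dots,x_m\in F\cap\S^{n-1}$ and $y_1,\dots,y_m\in G\cap\S^{n-1}$ obeying the full orthogonality relations \eqref{princ_vectors}, so that $\{x_i\}$ and $\{y_i\}$ are orthonormal bases of $F$ and $G$, respectively. From $\langle x_i,y_j\rangle=\cos\vartheta_i\,\delta_{ij}$ one reads off $\Pi_G(x_i)=\cos\vartheta_i\,y_i$, hence $\Pi_{G^\perp}(x_i)=x_i-\cos\vartheta_i\,y_i$; an elementary computation then gives $|\Pi_{G^\perp}(x_i)|=\sin\vartheta_i$ and, again by \eqref{princ_vectors}, $\langle\Pi_{G^\perp}(x_i),\Pi_{G^\perp}(x_j)\rangle=0$ for $i\ne j$. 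For an arbitrary unit vector $z\in\R^n$ I would write $\Pi_F(z)=\sum_i a_i x_i$ with $a_i=\langle z,x_i\rangle$ and $\sum_i a_i^2=|\Pi_F(z)|^2\le 1$; then the orthogonality just noted yields
\[
|\Pi_{G^\perp}(\Pi_F(z))|^2=\sum_{i=1}^m a_i^2\,|\Pi_{G^\perp}(x_i)|^2=\sum_{i=1}^m a_i^2\sin^2\vartheta_i\le\sin^2\vartheta_m .
\]
Here one uses that $0\le\vartheta_1\le\dots\le\vartheta_m\le\pi/2$, which follows from Definition~\ref{def:princ_angles} because the suprema defining $\cos\vartheta_k$ are taken over nested, shrinking sets, together with the monotonicity of $\sin$ on $[0,\pi/2]$. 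This proves $\|\Pi_{G^\perp}\circ\Pi_F\|\le\sin\vartheta_m$, and testing against $z:=x_m$, for which $\Pi_F(x_m)=x_m$ and $|\Pi_{G^\perp}(x_m)|=\sin\vartheta_m$, gives the reverse inequality. Combining this with Lemma~\ref{lem:angle_formula} finishes the proof.

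I do not anticipate a serious obstacle; the computation is short. The only points that need genuine care are the mutual orthogonality of the vectors $\Pi_{G^\perp}(x_i)$, which really relies on the off‑diagonal relations $\langle x_i,y_j\rangle=0$ from \eqref{princ_vectors} and not merely on the defining property $\langle x_i,y_i\rangle=\cos\vartheta_i$, and the monotonicity $\vartheta_1\le\dots\le\vartheta_m$ of the principal angles, which must be extracted carefully from the nested suprema in Definition~\ref{def:princ_angles}.
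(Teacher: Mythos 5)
Your proof is correct. It rests on the same essential input as the paper's — the biorthogonality relations \eqref{princ_vectors} together with the fact that each set of principal vectors is an orthonormal basis — but the execution is a genuinely different presentation. The paper passes to orthogonal matrices $X=(X_1|X_2)$, $Y=(Y_1|Y_2)$ built from the principal vectors and bases of $F^\perp$, $G^\perp$, uses the Pythagorean identity $1=|X_1^T Y_1\zeta|^2+|X_2^T Y_1\zeta|^2$ to convert $\|X_2^TY_1\|$ into $\sqrt{1-\inf_\zeta|X_1^TY_1\zeta|^2}$, and then reads off the answer from $X_1^TY_1=\operatorname{diag}(\cos\vartheta_1,\dots,\cos\vartheta_m)$. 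You instead work directly with the projections: the mutual orthogonality of the vectors $\Pi_{G^\perp}(x_i)$ (which, as you rightly flag, needs the off-diagonal relations $\langle x_i,y_j\rangle=0$ and the orthonormality of $\{y_i\}$, not just $\langle x_i,y_i\rangle=\cos\vartheta_i$) gives the upper bound $\|\Pi_{G^\perp}\circ\Pi_F\|\le\sin\vartheta_m$ by a one-line Pythagorean computation, and the explicit test vector $x_m$ gives equality. Your route avoids the matrix formalism and the inf/sup duality entirely and exhibits an explicit maximizer, which is arguably more transparent; the paper's version is closer in spirit to the singular-value-decomposition picture that underlies the definition of principal angles. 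Both are complete once combined with Lemma~\ref{lem:angle_formula} and the monotonicity $\vartheta_1\le\dots\le\vartheta_m$, which indeed follows from the nested suprema in Definition~\ref{def:princ_angles}.
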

	\begin{proof}
	For a collection of principal vectors of $F$ and $G$ satisfying \eqref{princ_vectors}, we define
	$ X_1:=\begin{pmatrix} x_1 \vert \dots \vert x_m \end{pmatrix}\in \R^{n\times m}
	$ and $ Y_1:=\begin{pmatrix} y_1 \vert \dots \vert y_m \end{pmatrix}\in \R^{n\times m}.$
	With an orthonormal basis $(x_{m+1},\dots,x_n)$ of $F^\perp$ and $(y_{m+1},\dots,y_n)$ of $G^\perp$, we set $
	X_2:=\begin{pmatrix} x_{m+1} \vert \dots \vert x_n \end{pmatrix}\in \R^{n\times (n-m)}$ and $ Y_2:=\begin{pmatrix} y_{m+1} \vert \dots \vert y_n \end{pmatrix}\in \R^{n\times (n-m)}.  $
	Then, $X:=\begin{pmatrix} X_1\vert X_2 \end{pmatrix}$ and 
	$Y:=\begin{pmatrix} Y_1\vert Y_2\end{pmatrix}$ are orthogonal matrices,
	and so is
	 $X^T\cdot Y$. In particular, for $z=(\zeta, 0) \in (\R^m \times \{0\}^{n-m})\cap \S^{n-1}$, one finds
	$1= | X^T \cdot Y \cdot z|^2 = | X_1^T\cdot Y_1\cdot \zeta |^2 + 
	| X_2^T\cdot Y_1\cdot \zeta |^2$.
	Therefore,
	\begin{align}\label{princ_lem_identity}
	\textstyle\left \Vert X_2^T\cdot Y_1\right \Vert =	\sup_{\zeta 
	\in \R^m\cap \S^{m-1}}\left \vert X_2^T\cdot Y_1\cdot \zeta 
	\right \vert = \sqrt{1-\inf_{\zeta\in\R^m\cap \S^{m-1}}
	\left \vert X_1^T\cdot Y_1\cdot \zeta \right \vert^2}.
	\end{align}
	By choice of $x_1,\dots,x_m$ and $y_1,\dots,y_m$ one finds $X_1^T\cdot Y_1=\operatorname{diag}(\cos\vartheta_1,\dots,\cos\vartheta_m)$. Consequently, the right hand side of \eqref{princ_lem_identity} is equal to $\sqrt{1-\cos^2\vartheta_m}=\sin\vartheta_m$.
	On the other hand, we have $\Pi_{F^\perp}(z)= X_2\cdot X_2^T\cdot z$ and $\Pi_G(z)=Y_1\cdot Y_1^T\cdot z$ for all $z\in \R^n$. Together with Lemma \ref{lem:angle_formula} and the orthogonality of $X$ and $Y$, one finds

$		\va(F,G)
= \Vert \Pi_{F^\perp} \circ \Pi_G \Vert = \Vert X_2\cdot X_2^T 
\cdot Y_1\cdot Y_1^T \Vert
	= \Vert X^T \cdot X_2\cdot X_2^T \cdot Y_1\cdot Y_1^T \cdot Y\Vert
=\Vert \begin{pmatrix} 0 \\ X_2^T\end{pmatrix} 
\cdot \begin{pmatrix} Y_1 \ \vert\ 0 \end{pmatrix}\Vert =  
\Vert X_2^T\cdot Y_1  \Vert,
	$
	which coincides with the left hand side of \eqref{princ_lem_identity}.
	\end{proof}
		\begin{lem}[Combined angle] \label{lem:angle_rel}
		For $F,G \in \mathscr{G}(n,m)$ and $\cos\vartheta:=\Pi_{i=1}^m \cos\vartheta_i$, where $\vartheta_1,\dots,\vartheta_m$ are the principal angles for $F$ and $G$, one has 
	$\sin\vartheta_m\leq \sin\vartheta \leq \sqrt m \sin\vartheta_m $,
		and
		\begin{align}
 0&\leq \left(1-\cos\vartheta\right)^m \leq (\sin\vartheta)^{(1+\tau)m}
			 \quad\forall \tau \in[0,1),\label{1}\\
 0 &\leq 2^{-m}(\sin\vartheta)^{2m}  \leq \left(1-\cos\vartheta\right)^m \leq (\sin\vartheta)^{2m},\label{2}\\
0 &\leq c\cdot
 (\sin\vartheta)^{(1+\tau)m}\leq \left(1-\cos\vartheta\right)^m \quad\forall \tau \in(1,\infty),\label{3}
		\end{align}
		where
$c=c(\tau,m):=\min\big\{1, \big[\left( 1-\frac 1 \tau\right) 
\left(1-\frac 1 {\tau^2}\right)^{-(1+\tau)/2}\big]^m\big\}>0.$ 
	\end{lem}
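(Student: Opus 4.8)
## Proof Strategy for Lemma \ref{lem:angle_rel}

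The plan is to reduce everything to elementary inequalities relating the quantity $1-\cos\vartheta$, the quantity $\sin\vartheta = \sqrt{1-\cos^2\vartheta}$, and the largest principal angle $\vartheta_m$. First I would establish the two-sided bound $\sin\vartheta_m \le \sin\vartheta \le \sqrt{m}\,\sin\vartheta_m$. The left inequality follows because $\cos\vartheta = \prod_{i=1}^m \cos\vartheta_i \le \cos\vartheta_m$ (each factor lies in $[0,1]$), hence $\sin\vartheta \ge \sin\vartheta_m$. For the right inequality I would write $1-\cos^2\vartheta = 1 - \prod_{i=1}^m\cos^2\vartheta_i$ and use the telescoping/union-bound-type estimate $1-\prod_{i=1}^m a_i \le \sum_{i=1}^m(1-a_i)$ valid for $a_i\in[0,1]$, applied with $a_i=\cos^2\vartheta_i$; since each $1-\cos^2\vartheta_i = \sin^2\vartheta_i \le \sin^2\vartheta_m$ by the ordering of principal angles, this gives $\sin^2\vartheta \le m\sin^2\vartheta_m$.

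Next I would turn to the three chains of inequalities. The key observation is the elementary bracket
\[
(1-\cos\vartheta)\cdot\frac{1}{1-\cos\vartheta} \quad\text{is controlled via}\quad \sin^2\vartheta = (1-\cos\vartheta)(1+\cos\vartheta),
\]
so that $1-\cos\vartheta = \sin^2\vartheta/(1+\cos\vartheta)$ with $1+\cos\vartheta\in[1,2]$. This immediately yields $\tfrac12\sin^2\vartheta \le 1-\cos\vartheta \le \sin^2\vartheta$, and raising to the $m$-th power gives \eqref{2}. For \eqref{1}, since $\tau\in[0,1)$ we have $(1+\tau)m \le 2m$ and $\sin\vartheta\le 1$, so $(\sin\vartheta)^{2m}\le(\sin\vartheta)^{(1+\tau)m}$; combined with the upper bound in \eqref{2} this gives $(1-\cos\vartheta)^m \le (\sin\vartheta)^{2m}\le(\sin\vartheta)^{(1+\tau)m}$.

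The genuinely delicate case is \eqref{3}, where $\tau>1$ so $(1+\tau)m > 2m$ and one needs a lower bound $c(\sin\vartheta)^{(1+\tau)m}\le(1-\cos\vartheta)^m$; this cannot hold near $\vartheta=\pi/2$ with constant $1$, which is precisely why the explicit constant $c(\tau,m)<1$ appears. I expect this to be the main obstacle. The plan is to use $1-\cos\vartheta\ge\tfrac12\sin^2\vartheta$ from above, reducing to showing $c^{1/m}(\sin\vartheta)^{1+\tau}\le\tfrac12(\sin\vartheta)^2$, i.e. $2c^{1/m}\le(\sin\vartheta)^{1-\tau}$. Since $\tau>1$ the exponent $1-\tau$ is negative, so $(\sin\vartheta)^{1-\tau}$ is smallest when $\sin\vartheta$ is largest, namely at $\sin\vartheta=1$ where it equals $1$ — but we need the bound to hold for ALL $\vartheta$, including where $\sin\vartheta$ is small, so this naive route fails. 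Instead I would work directly with the function $\varphi(t):=(1-\cos\vartheta)/(\sin\vartheta)^{1+\tau}$ written in terms of $t=\cos\vartheta\in[0,1]$: one has $\varphi = (1-t)/(1-t^2)^{(1+\tau)/2} = (1-t)^{(1-\tau)/2}(1+t)^{-(1+\tau)/2}$. On $[0,1)$ this tends to $+\infty$ as $t\to 1$ (since $(1-\tau)/2<0$) and takes a finite minimum; differentiating $\log\varphi = \tfrac{1-\tau}{2}\log(1-t) - \tfrac{1+\tau}{2}\log(1+t)$ gives a critical point at $t_* = 1/\tau$, and substituting back yields $\varphi(t_*) = \big[(1-1/\tau)(1-1/\tau^2)^{-(1+\tau)/2}\cdot 2^{?}\big]$ — a short computation pins down exactly the constant $\left(1-\tfrac1\tau\right)\left(1-\tfrac1{\tau^2}\right)^{-(1+\tau)/2}$ claimed in the statement (up to checking the boundary value at $t=0$ gives $\varphi(0)=1$, explaining the $\min\{1,\cdot\}$). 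Raising the resulting bound $\varphi(t)\ge c(\tau,m)^{1/m}$ to the $m$-th power finishes \eqref{3}. The only real care needed is the calculus of locating and evaluating at the critical point $t_*=1/\tau$ and verifying it is indeed a minimum on $[0,1)$.
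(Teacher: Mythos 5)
Your proposal is correct, and for the delicate case \eqref{3} it follows essentially the same route as the paper: both analyze the quotient $h_\tau(\vartheta)=(1-\cos\vartheta)\,(\sin\vartheta)^{-(1+\tau)}$, locate the interior critical point at $\cos\vartheta=1/\tau$, verify it is a minimum, and evaluate there to obtain the constant $c(\tau,m)^{1/m}$; your calculus is right and the boundary values $\varphi(0)=1$, $\varphi(t)\to\infty$ as $t\to1$ confirm the claim. Where you diverge is in \eqref{1} and \eqref{2}: the paper derives these from the monotonicity of the same function $h_\tau$ on $(0,\pi/2)$ together with its limits at the endpoints, whereas you use the purely algebraic identity $1-\cos\vartheta=\sin^2\vartheta/(1+\cos\vartheta)$ with $1+\cos\vartheta\in[1,2]$, plus $(\sin\vartheta)^{2m}\le(\sin\vartheta)^{(1+\tau)m}$ for $\tau<1$. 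Your route for these two cases is more elementary (no differentiation needed) and equally rigorous; the paper's has the virtue of treating all three regimes of $\tau$ through one function. Your argument for $\sin\vartheta_m\le\sin\vartheta\le\sqrt m\,\sin\vartheta_m$ via $1-\prod a_i\le\sum(1-a_i)$ is also fine and is essentially equivalent to the paper's geometric-series bound $1-(\cos\vartheta_m)^{2m}\le m(1-\cos^2\vartheta_m)$.
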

	\begin{proof}
The ordering $0\leq \vartheta_1\leq \dots  \leq\vartheta_m\leq  \pi/ 2$ implies
$
			(\cos\vartheta_m)^m\leq \cos\vartheta\leq \cos\vartheta_m,$
			so that
		\begin{align}\label{angle_rel_sinm_1}
			\sin\vartheta_m=\sqrt{1-\cos^2\vartheta_m}\leq 
			\sqrt{1-\cos^2\vartheta} =\sin\vartheta.
		\end{align}
		On the other hand, by the formula for the
		geometric series, one has
	$
	{1-(\cos\vartheta_m)^{2m}}={(1-\cos^2\vartheta_m)}\cdot
	\sum_{i=0}^{m-1} (\cos\vartheta_m)^{2i}\leq  {(1-\cos^2\vartheta_m)}\cdot
	m.
	$
		Hence, we obtain
		\begin{align}\label{angle_rel_sinm_2}
\sin\vartheta=\sqrt{1-\cos^2\vartheta}\leq \sqrt{1-(\cos\vartheta_m)^{2m}}
\leq \sqrt m \cdot \sqrt{1-\cos^2\vartheta_m}=\sqrt m \sin\vartheta_m.
		\end{align}
		Combining \eqref{angle_rel_sinm_1} and \eqref{angle_rel_sinm_2} yields
	$\sin\vartheta_m\leq \sin\vartheta\leq \sqrt m \sin\vartheta_m.$
For the remaining inequalities, we define $h_\tau(\vartheta)
:={(1-\cos\vartheta)}\cdot
{(\sin\vartheta)^{-(1+\tau)}}$ and compute
		\begin{align} \label{angle_rel_limit}
			\lim_{\vartheta \to  \pi/ 2} h_\tau(\vartheta)=1 
			\quad\AND\quad 
			\lim_{\vartheta \to 0} h_\tau(\vartheta)=\begin{cases}
				0 &\Fo \tau \in [0,1),\\
				 1/ 2 &\Fo \tau=1,\\
				\infty &\Fo \tau \in(1,\infty).
			\end{cases}
		\end{align}
For $\vartheta \in (0, \pi/ 2)$ we can differentiate $h_\tau$ to obtain
$		
h_\tau'(\vartheta)=
 (\sin\vartheta)^{-\tau}\cdot[1-(1+\tau) {\cos\vartheta}\cdot{(1+\cos\vartheta)^{-1}}
].$
In the case $\tau \leq 1$, one finds $(1+\tau){\cos\vartheta}/{(1+\cos\vartheta)}
\leq  {(1+\tau)}/{2}\leq 1$ guaranteeing that $h_\tau$ is non-decreasing on 
$(0, \pi/ 2)$. Together with \eqref{angle_rel_limit} this implies
\eqref{1} and \eqref{2}.
		In the case $\tau>1$, one finds that $
h_\tau'(\vartheta) = 0$ if and only if $ \cos\vartheta= 1/\tau,$
		and therefore \eqref{3} holds true. 
	\end{proof}
Since the numerator \eqref{eq:kusner-sullivan-integrand}   
in the energy density of the Kusner-Sullivan 
energy $E_\textnormal{KS}$ uses the combined angle defined in Lemma
\ref{lem:angle_rel} it is easy to conclude from the inequalities
in that lemma the following
comparison result for $E^\tau$ and $E_\textnormal{KS}$.
\begin{cor} \label{cor:energy_comparison}
	For $\Sigma \in \Aam$ one finds 
	\begin{align}
0&\leq E_\textnormal{KS}(\Sigma) \leq \sqrt{m}^{{(1+\tau)m} } 
E^{\tau}(\Sigma)\Foa
\tau \in [0,1),\label{tauless1}\\
0 &\leq 2^{-m} E^\tau(\Sigma)\leq 
		E_\textnormal{KS}(\Sigma) \leq m^m E^{\tau}(\Sigma) \Fo \tau =1,
		\label{tauequal1}\\
0& \leq  c
E^{\tau}(\Sigma)\leq E_\textnormal{KS}(\Sigma) \Foa \tau \in (1,\infty),
\label{taugreater1}
\end{align}
where $c=c(\tau,m)>0$ is the constant defined in Lemma \ref{lem:angle_rel}.
\end{cor}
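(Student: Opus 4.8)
The plan is to reduce the three comparisons to the pointwise Lagrangian estimates collected in Lemma \ref{lem:angle_rel} and then integrate over $\Sigma\times\Sigma$. First I would fix a pair of distinct points $x,y\in\Sigma$, put $F:=H(x)$ and $G:=\mathcal{R}_{xy}(H(y))$, and let $0\le\vartheta_1\le\dots\le\vartheta_m\le\pi/2$ be the principal angles of $F,G$ from Definition \ref{def:princ_angles}, with combined angle $\vartheta$ defined by $\cos\vartheta=\prod_{i=1}^m\cos\vartheta_i$. Since $\mathcal{R}_{xy}$ is an involutive isometry, this $\vartheta$ is exactly the conformal angle $\vartheta_\Sigma(x,y)$ entering the Kusner--Sullivan energy, so the numerator of its Lagrangian at $(x,y)$ is $L_\textnormal{KS}(\vartheta)=(1-\cos\vartheta)^m$, while by Lemma \ref{lem:princ_angles} the numerator of the Lagrangian of $E^\tau$ at $(x,y)$ is $\ang\big(\mathcal{R}_{xy}(H(x)),H(y)\big)^{(1+\tau)m}=\ang(F,G)^{(1+\tau)m}=(\sin\vartheta_m)^{(1+\tau)m}$.

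With these two identifications, each case is a direct consequence of Lemma \ref{lem:angle_rel} together with the elementary chain $\sin\vartheta_m\le\sin\vartheta\le\sqrt m\,\sin\vartheta_m$ proved there. For $\tau\in[0,1)$, inequality \eqref{1} and the upper bound on $\sin\vartheta$ give $(1-\cos\vartheta)^m\le(\sin\vartheta)^{(1+\tau)m}\le(\sqrt m)^{(1+\tau)m}(\sin\vartheta_m)^{(1+\tau)m}$; integrating over $\Sigma\times\Sigma$ yields \eqref{tauless1}. For $\tau=1$, the two-sided estimate \eqref{2} combined with $\sin\vartheta_m\le\sin\vartheta\le\sqrt m\,\sin\vartheta_m$ gives $2^{-m}(\sin\vartheta_m)^{2m}\le(1-\cos\vartheta)^m\le m^m(\sin\vartheta_m)^{2m}$, and integration produces \eqref{tauequal1}. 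For $\tau\in(1,\infty)$, inequality \eqref{3} and the lower bound $\sin\vartheta\ge\sin\vartheta_m$ give $c\,(\sin\vartheta_m)^{(1+\tau)m}\le(1-\cos\vartheta)^m$ with $c=c(\tau,m)>0$ as in Lemma \ref{lem:angle_rel}, whence \eqref{taugreater1}. The nonnegativity assertions are trivial since all integrands are nonnegative.

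I expect the only nonroutine point to be the identification of the Kusner--Sullivan conformal angle $\vartheta_\Sigma(x,y)$ with the combined angle $\vartheta$ used in Lemma \ref{lem:angle_rel}: one must recall from \cite[Sections 2 and 11]{kusner-sullivan_1997} that the angle between the tangential spheres $\S^m(x,x,y)$ and $\S^m(y,y,x)$ is obtained from the principal angles of their tangent planes at a common intersection point through $\cos\vartheta=\prod_i\cos\vartheta_i$, and that these tangent planes are $H(x)$ and $\mathcal{R}_{xy}(H(y))$ by the reflection formula \eqref{eq:reflection} -- i.e.\ precisely the reason the numerator \eqref{eq:kusner-sullivan-integrand} is written in terms of this combined angle. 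Everything after this is a pointwise comparison of Lagrangians followed by monotone integration, with no additional measurability or convergence issues beyond those already built into the definition \eqref{eq:energies} of $E^\tau$ on $\Aam$.
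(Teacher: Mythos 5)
Your proposal is correct and follows exactly the route the paper intends: identify the Kusner--Sullivan numerator pointwise with $(1-\cos\vartheta)^m$ for the combined angle of $H(x)$ and $\mathcal{R}_{xy}(H(y))$, identify the $E^\tau$-numerator with $(\sin\vartheta_m)^{(1+\tau)m}$ via Lemma \ref{lem:princ_angles}, apply the three inequalities of Lemma \ref{lem:angle_rel} together with $\sin\vartheta_m\le\sin\vartheta\le\sqrt m\,\sin\vartheta_m$, and integrate. The paper leaves these details implicit ("it is easy to conclude"), and you have filled them in faithfully, including the isometry/involution argument showing $\ang(\mathcal{R}_{xy}(H(x)),H(y))=\ang(H(x),\mathcal{R}_{xy}(H(y)))$.
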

\begin{rem}\label{rem:wedge}
We observe that a wedge-shaped singularity leads to infinite
$E^\tau$-energy for any $\tau>-1$. 
Indeed, consider for simplicity the set
$\Sigma_\beta=\{x e_1+\beta \vert x\vert e_2 
+ y e_3\colon x,y\in\R \} \subset \R^3$ for $\beta\in (0,\infty)$ 
and an orthonormal 
basis $\{e_1,e_2,e_3\}$, and set $H(p)$ to be tangential to the set for all 
$p=p(x,y)\in \Sigma_\beta$ with $x\neq 0$. For $i\in \N$ let 
$p_i:=(e_1+\beta e_2)/2^i$ and $q_i=(-e_1+\beta e_2+e_3)/2^i$. 
Then we can compute $\vert p_i-q_i\vert = \sqrt 5/2^i$, 
$\vert \langle e_3,p_i-q_i\rangle \vert =2^{-i}$ and 
$\vert \Pi_{H(p_i)^\perp}(p_i-q_i)\vert =2\beta/(2^i\sqrt{1+\beta^2})$. 
Therefore, we find for $\kappa(\beta)
:=\beta/(4\sqrt{1+\beta^2})$ and 
$\eps_i:=\kappa/2^i$ a constant $C=C(\beta,m,\tau)>0$ such that 
$F_\tau(\mu,\eta,e_3)/\vert \mu-\eta\vert^{2m}\geq 2^{2mi} C $ 
for all $\eta \in B_{\eps_i}(p_i)\cap \Sigma_\beta$ and 
$\mu \in B_{\eps_i}(q_i)\cap \Sigma_\beta$, where we used 
that $e_3\in H(\eta)=H(p_i)$ for 
all such $\eta$. Moreover, for $\eta_1 \in B_{\eps_{i_1}}(p_{i_1})$ and 
$\eta_2 \in B_{\eps_{i_2}}(p_{i_2})$ with $i_1<i_2$, one finds 
$\vert \eta_1-\eta_2\vert >(1-\kappa)/2^{i_1}-(1+\kappa)/2^{i_2}\geq 
(1-3\kappa)/2^{i_1+1}>0$ since $\kappa < 1/4$. Hence, the $\eps_i$ 
neighbourhoods of the $p_i$ are disjoint. Analogously, the same holds true 
for $q_i$. Finally, for any $N\in\N$ there is an index $i_0\in\N$ such
that
\begin{align*}
 E^\tau(\Sigma_\beta\cap B_N(0))&\geq 
 \textstyle
 \sum \nolimits_{i=i_0}^\infty \int_{\Sigma_\beta\cap B_{\eps_i}(q_i)}  \int_{\Sigma_\beta\cap B_{\eps_i}(p_i)} \frac {F_\tau(\mu,\eta,e_3)}{\vert \mu-\eta\vert^{2m}} d\HM^m(\eta) \ d\HM^m(\mu)\\
 &\geq \textstyle\sum \nolimits_{i=i_0}^\infty \omega_m^2 \cdot (\kappa/2^i)^{2m} 
 \cdot 2^{2mi} C =\infty.
\end{align*}
\end{rem}

\section{Lipschitz graphs}\label{app:graphs}
First, in Lemma \ref{lem:angle_lip},  we estimate the deviation of a 
Lipschitz graph's tangent plane from its domain plane. Then 
in Lemma \ref{lem:shift-graphs} we shift Lipschitz
functions without changing the trace of the graph, and
in the Tilting Lemma \ref{lem:tilting}, we provide a lower bound on  the  size of the projection of a Lipschitz graph onto
a plane slightly tilted from its domain plane. 
Finally, we prove that the intersection
of two Lipschitz graphs that are sufficiently flat in comparison to the angle between their domain planes is contained
in a lower-dimensional graph; see Lemma \ref{lem:intersect_lip_graphs}. This quantitative result  generalizes the well-known fact that the transversal intersection
of two $C^1$-submanifolds constitutes a lower-dimensional $C^1$-submanifold as, e.g., proven in 
\cite[p. 30]{guillemin-pollack_1974}.
	\begin{lem}  \label{lem:angle_lip}
		Let $\beta\in [0,1) $, $F\in \mathscr{G}(n,m)$, and assume $u \in C^{0,1}(F,F^\perp)$ satisfies $\lip u \leq \beta$. For $x,y \in F$, with $p=x+u(x)$ and $q =y+u(y)$, such that $Du(x)$ and $Du(y)$ exist, one finds
$ \va \li T_p\li\graph u\ri,F \ri \leq \Vert Du(x)\Vert \leq \beta, $ 
		and 
		\[ \va \li T_p\li \graph u\ri,T_q\li\graph u\ri \ri \leq \Vert Du(x)-Du(y)\Vert \leq \textstyle\sqrt \frac {1+\beta^2}{1-\beta^2}  \va \li T_p \graph u,T_q\graph u\ri .  \]
	\end{lem}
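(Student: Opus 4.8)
The plan is to identify the two tangent planes with graphs of linear maps and then push everything through the identities of Lemma \ref{lem:angle_formula}. First I would recall that at a point $x\in F$ where $Du(x)$ exists one has $T_p(\graph u)=\{v+Du(x)v:v\in F\}$, i.e. the graph of the linear map $A:=Du(x)\colon F\to F^\perp$, and likewise $T_q(\graph u)$ is the graph of $B:=Du(y)\colon F\to F^\perp$. Since $u$ is $\beta$-Lipschitz, $|Av|=\lim_{t\to0}|u(x+tv)-u(x)|/t\le\beta|v|$, so $\|A\|\le\beta<1$, and similarly $\|B\|\le\beta$; this already gives the last inequalities $\|Du(x)\|\le\beta$ in both assertions.

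For the first bound of each assertion, write $G$ and $H$ for the two planes (graphs of $A$ and $B$; for the first assertion take $B=0$, so $H=F$). By Lemma \ref{lem:angle_formula}, $\va(G,H)=\|\Pi_{H^\perp}\circ\Pi_G\|$. Given $z$ with $|z|\le1$, the vector $\Pi_G(z)\in G$ has the form $v+Av$ with $v=\Pi_F(\Pi_G(z))\in F$ and $|v|\le|v+Av|=|\Pi_G(z)|\le1$. Since $v+Bv\in H$, we get $\Pi_{H^\perp}(\Pi_G(z))=\Pi_{H^\perp}\big((v+Av)-(v+Bv)\big)=\Pi_{H^\perp}\big((A-B)v\big)$, whence $|\Pi_{H^\perp}(\Pi_G(z))|\le|(A-B)v|\le\|A-B\|$. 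Taking the supremum over $z$ yields $\va(G,H)\le\|A-B\|=\|Du(x)-Du(y)\|$, and with $B=0$ the bound $\va(T_p\graph u,F)\le\|Du(x)\|$.

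The remaining — and genuinely nontrivial — inequality is $\|A-B\|\le\sqrt{(1+\beta^2)/(1-\beta^2)}\,\va(G,H)$. Its key ingredient is the sublemma: for every $\xi\in F^\perp$ one has $|\Pi_{H^\perp}(\xi)|\ge\sqrt{1-\beta^2}\,|\xi|$. To prove it, write $\Pi_H(\xi)=w_0+Bw_0$ with $w_0\in F$ the minimizer of $|\xi-w-Bw|^2=|w|^2+|\xi-Bw|^2$, so that $(I+B^*B)w_0=B^*\xi$; a direct expansion using $\langle\xi,Bw_0\rangle=\langle(I+B^*B)w_0,w_0\rangle=|w_0|^2+|Bw_0|^2$ gives $|\Pi_{H^\perp}(\xi)|^2=|\xi|^2-(|w_0|^2+|Bw_0|^2)$, and diagonalizing $B^*B$ with eigenvalues $\sigma_i^2\le\beta^2$ (and writing $B^*\xi=\sum_i c_if_i$ in the eigenbasis) gives $|w_0|^2+|Bw_0|^2=\sum_i c_i^2/(1+\sigma_i^2)\le|B^*\xi|^2\le\beta^2|\xi|^2$. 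Now pick a unit vector $v\in F$ with $|(A-B)v|$ arbitrarily close to $\|A-B\|$ and test $\va(G,H)=\|\Pi_{H^\perp}\circ\Pi_G\|$ on the unit vector $u_1:=(v+Av)/\sqrt{1+|Av|^2}\in G$: as before $\Pi_{H^\perp}(u_1)=(1+|Av|^2)^{-1/2}\Pi_{H^\perp}((A-B)v)$, so by the sublemma and $|Av|\le\beta$,
\[
\va(G,H)\ge|\Pi_{H^\perp}(u_1)|
=\frac{|\Pi_{H^\perp}((A-B)v)|}{\sqrt{1+|Av|^2}}
\ge\frac{\sqrt{1-\beta^2}\,|(A-B)v|}{\sqrt{1+|Av|^2}}
\ge\sqrt{\tfrac{1-\beta^2}{1+\beta^2}}\,|(A-B)v|,
\]
and letting $|(A-B)v|\to\|A-B\|$ concludes. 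The main obstacle is precisely this sublemma $|\Pi_{H^\perp}(\xi)|\ge\sqrt{1-\beta^2}\,|\xi|$ for $\xi\perp F$, which is where one genuinely uses that $H$ is the graph of a linear map of norm $\le\beta$; the rest is bookkeeping with the identities of Lemma \ref{lem:angle_formula} and the orthogonal splitting $\R^n=F\oplus F^\perp$.
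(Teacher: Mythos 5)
Your proof is correct. Note, however, that the paper does not prove this lemma at all: it simply cites \cite[8.9(5)]{allard_1972}, so your argument is a genuinely self-contained replacement for that reference. The easy half (identifying $T_p\graph u$ and $T_q\graph u$ with the graphs of $A=Du(x)$ and $B=Du(y)$, and bounding $\va(G,H)=\|\Pi_{H^\perp}\circ\Pi_G\|\le\|A-B\|$ by writing $\Pi_G(z)=v+Av$ and subtracting $v+Bv\in H$) is exactly the standard computation. The substance is your sublemma $|\Pi_{H^\perp}(\xi)|\ge\sqrt{1-\beta^2}\,|\xi|$ for $\xi\in F^\perp$, and your derivation is sound: the normal equation $(I+B^*B)w_0=B^*\xi$ for the minimizer, the identity $|\Pi_{H^\perp}(\xi)|^2=|\xi|^2-(|w_0|^2+|Bw_0|^2)$ via $\langle\xi,Bw_0\rangle=|w_0|^2+|Bw_0|^2$, and the spectral bound $\sum_i c_i^2/(1+\sigma_i^2)\le|B^*\xi|^2\le\beta^2|\xi|^2$ all check out, and combining it with the test vector $(v+Av)/\sqrt{1+|Av|^2}$ gives precisely the constant $\sqrt{(1+\beta^2)/(1-\beta^2)}$ claimed in the lemma. (Since $F$ is finite-dimensional you could even take $v$ realizing $\|A-B\|$ and skip the limiting step, but that is cosmetic.) What your route buys is transparency and independence from Allard's formula; what the citation buys the authors is brevity. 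Either is acceptable here.
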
	
	\begin{proof} 
          This is an immediate implication of 8.9 (5) in \cite{allard_1972}.
        \end{proof}

\begin{lem}[Shifting Lemma]\label{lem:shift-graphs}
For any $x\in p+\graph u$, where $u\in C^{0,1}(F,F^\perp), $
$F\in\mathscr{G}(n,m)$, there exists a function $\tilde{u}\in
C^{0,1}(F,F^\perp)$ with $\lip\tilde{u}=\lip u$ and $\tilde{u}(0)=0$, such that
$
x+\graph\tilde{u}=p+\graph u.
$
\end{lem}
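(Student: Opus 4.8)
The plan is to produce $\tilde u$ by an explicit translation of the domain plane. Since $x\in p+\graph u$, setting $\xi:=\Pi_F(x-p)\in F$ one has $x=p+\xi+u(\xi)$, and $\xi$ is the unique element of $F$ with this property because $\Pi_F$ restricted to $\graph u=\{\zeta+u(\zeta):\zeta\in F\}$ is the identity on $F$ (recall $u(\zeta)\in F^\perp$). I would then define
\[
\tilde u(\eta):=u(\eta+\xi)-u(\xi)\qquad\text{for }\eta\in F,
\]
which takes values in $F^\perp$ since $u$ does, and satisfies $\tilde u(0)=u(\xi)-u(\xi)=0$.

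The Lipschitz bound is immediate: for $\eta_1,\eta_2\in F$ one has
\[
|\tilde u(\eta_1)-\tilde u(\eta_2)|=|u(\eta_1+\xi)-u(\eta_2+\xi)|\le\lip u\cdot|\eta_1-\eta_2|,
\]
so $\lip\tilde u\le\lip u$; applying the same estimate to the identity $u(\cdot)=\tilde u(\,\cdot-\xi)+u(\xi)$ gives the reverse inequality, hence $\lip\tilde u=\lip u$. For the graph identity I would parametrise $p+\graph u$ by $\zeta\in F$ and substitute $\zeta=\eta+\xi$, which is a bijection of $F$: then
\[
x+\eta+\tilde u(\eta)=\bigl(p+\xi+u(\xi)\bigr)+\eta+u(\eta+\xi)-u(\xi)=p+(\eta+\xi)+u(\eta+\xi),
\]
so that $x+\graph\tilde u=\{\,p+\zeta+u(\zeta):\zeta\in F\,\}=p+\graph u$, as desired.

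Every step here is an elementary identity, so there is no genuine obstacle; the only point that deserves an explicit line is that the Lipschitz constants are \emph{equal} and not merely comparable, which follows from the map $\eta\mapsto\eta+\xi$ being a bijective isometry of $F$.
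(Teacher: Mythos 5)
Your proposal is correct and follows essentially the same route as the paper: define $\tilde u(\eta):=u(\eta+\xi)-u(\xi)$ for the unique $\xi\in F$ with $x=p+\xi+u(\xi)$, and verify the graph identity via the substitution $\zeta=\eta+\xi$. The only difference is that you spell out the equality (not just the inequality) of the Lipschitz constants, which the paper leaves implicit.
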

\begin{proof}
For $x=p+\xi +u(\xi)$, $\xi\in F$, set $\tilde{u}(y):=u(y+\xi)-u(\xi)$. Then
the claim follows from the identity 
$q=p+\eta+u(\eta)=x+\eta-\xi+u(\eta)-u(\xi)=x+\eta-\xi+\tilde{u}(\eta-\xi)$
for an arbitrary point
$q\in p+\graph u$.
\end{proof}

	\begin{lem}[Tilting Lemma] \label{lem:tilting}
		Let $F,G\in \mathscr{G}(n,m)$ satisfy $\va(F,G)\leq \chi$ for some $\chi \in [0,1)$, and suppose $u\in C^{0,1}(F,F^\perp)$ with $u(0)=0$ and whose Lipschitz constant $\lip u $ satisfies
$		
		\sigma:= \chi(1+\lip u) <1.
$	
		Then we have
		\[ B_{\frac{(1-\sigma)\rho}{\sqrt{1+ (\lip u)^2}}}(0) \cap G \subset \Pi_G \li \graph u \cap {B_\rho(0)} \ri \Foa \rho>0.\]
	\end{lem}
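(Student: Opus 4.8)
The goal is a lower bound on the image of the orthogonal projection $\Pi_G$ restricted to the truncated graph $\graph u \cap B_\rho(0)$, i.e.\ that this projection covers a full $G$-ball of radius $(1-\sigma)\rho/\sqrt{1+(\lip u)^2}$. The natural strategy is a continuity/degree argument: $\Pi_G$ restricted to the affine-parametrised surface $x\mapsto x+u(x)$ is a continuous map from an $m$-disk in $F$ into $G$, and one shows it is "close to a linear isomorphism" so that its image contains a controlled ball around the image of $0$ (which is $0$, since $u(0)=0$).

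First I would fix the parametrisation $g\colon F\to\R^n$, $g(x):=x+u(x)$, and observe that $\lip g\le\sqrt{1+(\lip u)^2}$, so $g$ maps the $F$-ball $B_r(0)\cap F$ with $r:=\rho/\sqrt{1+(\lip u)^2}$ into $\graph u\cap B_\rho(0)$. It therefore suffices to show that the continuous map $\Phi:=\Pi_G\circ g\colon B_r(0)\cap F\to G$ has image containing $B_{(1-\sigma)r}(0)\cap G$. To estimate how far $\Phi$ deviates from the "reference" map, I would compare $\Phi(x)$ with $\Pi_G(x)$: for $x\in F$ one has $|\Phi(x)-\Pi_G(x)| = |\Pi_G(u(x))|\le \va(\,\cdot\,)\cdot|u(x)|$. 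Here the point is that $u(x)\in F^\perp$ and $\Pi_G$ applied to an $F^\perp$-vector has norm bounded by $\|\Pi_G\circ\Pi_{F^\perp}\|=\va(F,G)\le\chi$ (Lemma \ref{lem:angle_formula}); together with $|u(x)|=|u(x)-u(0)|\le(\lip u)|x|$ this gives $|\Phi(x)-\Pi_G(x)|\le\chi(\lip u)|x|$. Combining with the obvious $\va(F,G)\le\chi$ bound for the "diagonal" part, a short computation should yield $|\Phi(x)-x|\le\chi(1+\lip u)|x|=\sigma|x|$ once one identifies $\Pi_G(x)$ with $x$ up to the angle error — more precisely, $|\Pi_G(x)-x|\le\va(F,G)|x|\le\chi|x|$, and then $|\Phi(x)-x|\le(\chi+\chi\lip u)|x|=\sigma|x|$, valid for $x\in F$, but one must be slightly careful since $\Phi(x)\in G$ while $x\in F$; the cleanest route is to land everything in $G$ and compare $\Phi(x)$ with $\Pi_G(x)$, getting $|\Phi(x)-\Pi_G(x)|\le\sigma|x|$ and separately controlling $|x - \Pi_G(x)|$.

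Once I have the estimate $|\Phi(x)-\Pi_G(x)|\le\sigma|x|$ on $B_r(0)\cap F$ (with $\sigma<1$), the surjectivity onto $B_{(1-\sigma)r}(0)\cap G$ follows from a standard perturbed-identity fixed-point argument: given a target $z\in G$ with $|z|<(1-\sigma)r$, I would look for $x$ with $\Pi_G(x)=z-(\Phi(x)-\Pi_G(x))$; since $\Pi_G|_F$ is a linear isomorphism onto $G$ (its inverse has norm $\le(1-\chi^2)^{-1/2}$ when $\va(F,G)\le\chi<1$, again via Lemma \ref{lem:angle_formula}) one rewrites this as a contraction-type fixed point problem on the closed ball $\overline{B_r(0)}\cap F$, or alternatively invokes a topological-degree / Brouwer argument that a continuous self-map of a ball which moves no boundary point "across the origin" is onto a smaller ball. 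I expect the degree argument to be the more robust bookkeeping; one checks that on $\partial(B_r(0)\cap F)$ the homotopy $H_t(x):=(1-t)\Pi_G(x)+t\Phi(x)$ never equals $z$ (since $|H_t(x)-\Pi_G(x)|\le\sigma|x|=\sigma r$ while $|\Pi_G(x)|\ge$ something like $(1-\chi)r$ on the boundary — here one needs $(1-\sigma)r$ to be the admissible target radius, consistent with the claim), so $\deg(\Phi,B_r(0)\cap F,z)=\deg(\Pi_G|_F,\cdot,z)=\pm1$ and hence $z\in\Phi(B_r(0)\cap F)$.

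The main obstacle is the careful bookkeeping of which ambient subspace each quantity lives in: $x$ and $u(x)$ live in $F$ and $F^\perp$, $g(x)$ in $\R^n$, $\Phi(x)$ in $G$, and the "reference isomorphism" $\Pi_G|_F\colon F\to G$ must be shown to have operator-norm distance $<1$ from being an isometry and a lower bound $\ge\sqrt{1-\chi^2}$ on its smallest singular value — all of which come out of Lemma \ref{lem:angle_formula} but need to be stated precisely so that the radius $(1-\sigma)\rho/\sqrt{1+(\lip u)^2}$ comes out exactly. A secondary subtlety is ensuring the boundary non-vanishing estimate in the degree argument is compatible with the claimed radius; tracking constants there is where I would be most careful, but it is routine rather than deep.
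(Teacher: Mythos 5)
Your proposal is correct and follows essentially the same route as the paper: both derive the perturbed-identity estimate $|\Pi_G(x+u(x))-x|\le \chi(1+\lip u)|x|=\sigma|x|$ and then conclude surjectivity onto the smaller ball by a Brouwer-type argument, with the same radius bookkeeping via $\lip(x\mapsto x+u(x))\le\sqrt{1+(\lip u)^2}$. The only difference is organizational: the paper transfers the map to a self-map of $\R^m$ by composing with $\Pi_{F}|_G$ and an isometry and then cites an external proposition for the degree step, whereas you keep the map as $F\to G$ with $\Pi_G|_F$ as the reference isomorphism and sketch the homotopy argument directly.
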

	\begin{proof}
		For $p \in \graph u$ set $x:=\Pi_F(p)$, $z:=\Pi_G(p)$ and estimate
		\begin{align} \label{tilting_dist}
		\vert x - z\vert&= \left \vert \li \Pi_F -\Pi_G\ri (p)\right \vert \leq \chi \vert p \vert =\chi \vert x+ u(x) \vert \leq \chi \li 1 + \lip u\ri \vert x \vert = \sigma \vert x  \vert,
		\end{align}
		since $u(0)=0$. Define $\phi_1 \colon F \to \graph u, \ x \mapsto x + u(x)$ and $ \phi_2\colon \graph u \to G$ by $p \mapsto \Pi_G(p)$, and look at the composition $\phi:=\phi_2 \circ \phi_1\colon F \to G$, then \eqref{tilting_dist} implies
		\begin{align}\label{tilting_phi}
		\vert x -\phi(x)\vert \leq \sigma \vert x \vert \quad\Foa x \in F.
		\end{align} 
	Taking the linear isometry $I_F \colon F \to \R^m$ and defining
		$\Psi \in C^0(\R^m,\R^m)$ by $\Psi:=I_F \circ \Pi_{F\mid_G} \circ \phi \circ I_F^{-1}\colon \R^m\to \R^m$ we infer from \eqref{tilting_phi} the inequality
		\begin{align}\label{tilting_psi}
		\vert \xi - \Psi(\xi)\vert &= \vert I_F(x) - \Psi(I_F(x))\vert 
		=\vert I_F(x) - I_F \circ \Pi_{F \mid_G} \circ \phi (x)\vert 
		\notag \\
		&= \vert x - \Pi_{F \mid_G} \circ \phi (x)\vert
		=  \vert \Pi_F(x-\phi(x)) \vert \leq \vert x - \phi(x)\vert 
		\notag\\
		& \leq \sigma \vert x \vert = \sigma  \vert I_F^{-1}(\xi)\vert
		 = \sigma \vert \xi\vert \quad\Foa \xi=I_F(x) \in \R^m. 
		\end{align}
		Applying \cite[Prop. 2.5]{kolasinski-etal_2018} to $F:=\Psi$, we find for any given $\rho>0$ that for all $\eta \in B_{(1-\sigma)\rho}(0)\subset \R^m$, there exists a $\xi \in \overline{B_\rho(0)}$, such that $\Psi(\xi)=\eta$. Moreover, for each $y \in F\cap B_{(1-\sigma)\rho}(0)$, there is a unique $\eta \in B_{(1-\sigma)\rho}(0)\subset \R^m$, such that $y =I_F^{-1}(\eta)$, so that with $x:=I_F^{-1}(\xi) \in F \cap \overline{B_\rho(0)}$ for $\xi$ as above one finds
		$ \Pi_{F\mid_G} \circ \phi(x)= I_F^{-1}\circ \Psi(\xi)= I_F^{-1}(\eta)=y, $
		which implies that $\phi\colon F\to G$ is surjective since $\Pi_{F\mid_G}$ is bijective, 
                due to $\va(F,G)\leq \chi <1$, see \cite[Lem. 2.2]{kolasinski-etal_2018}. Notice that $\phi(0)=\phi_2\circ \phi_1(0)=\phi_2(0+u(0))=\Pi_G(0)=0$, and \eqref{tilting_phi} implies that $
		 (1+\sigma)\vert x \vert \geq \vert \phi(x)\vert \geq (1-\sigma)\vert x \vert \Foa x \in F,$
		so that
		\begin{align*}
		G\! \cap\! B_r(0) &\subset \phi\big( 
		B_{\frac r {1-\sigma}}(0)\cap F \big) 
		=\Pi_G\! \circ\! \phi_1 \big( 
		B_{\frac r {1-\sigma}}(0)\cap F \big) 
		\subset \Pi_G\big( 
		\graph u \cap 
		B_{\frac {\sqrt {1 + (\lip u)^2}r}{1-\sigma}}(0)\big),
		\end{align*}
		because $
		 \vert x +u(x)\vert ^2 = \vert x \vert^2+\vert u(x)\vert^2 \leq \li 1 + (\lip u)^2\ri \vert x \vert^2 \Foa x \in F. $
			\end{proof}
\begin{lem}[Intersecting Lipschitz Graphs] \label{lem:intersect_lip_graphs}
	Assume
	\begin{align} \label{lip_intersection_cond}
	0\leq \sigma < \chi/ 8 <  1/ 8.
	\end{align}
	Then for any two $m$-planes $F,G \in \mathscr{G}(n,m)$ with $\va\li F,G\ri\geq \chi$ and for functions $f \in C^{0,1}(F,F^\perp)$, $g\in C^{0,1}(G,G^\perp)$ satisfying $f(0)=0=g(0)$, $\lip f\leq \sigma $, $\lip g\leq \sigma$, the intersection of their graphs is contained in the graph of a 
	Lipschitz function with an at most $(m-1)$-dimensional domain. More precisely, there exists a $j$-plane $ X \in \mathscr{G}(n,j)$ for some $0\le
	j\leq m-1$, and a Lipschitz function $S\in C^{0,1}( X, X^\perp)$, such that the intersection $
	\graph f \cap \graph g$ is contained in $ \graph S.$
	In particular, 
	$\dim_{\HM}\li \graph f \cap \graph g \ri \leq j \leq m-1.$
\end{lem}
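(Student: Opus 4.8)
The plan is to exhibit a fixed $(m-1)$-plane $X$ such that the whole intersection $\graph f\cap\graph g$ lies in a single cone $C_0(L,X)$ of uniform opening $L=L(\sigma,\chi)<1$. Once this is done, the projection $\Pi_X$ restricted to $\graph f\cap\graph g$ is injective (a nonzero vector in $C_0(L,X)$ has nonzero $X$-component) with $(1+L)$-Lipschitz inverse, so $S:=\Pi_{X^\perp}\circ\big(\Pi_X|_{\graph f\cap\graph g}\big)^{-1}$ is a Lipschitz function on $\Pi_X(\graph f\cap\graph g)\subset X$ with $\lip S\le L$; extending it to all of $X$ by Kirszbraun's theorem \cite[2.10.43]{federer_1969} yields the claimed $S\in C^{0,1}(X,X^\perp)$, and $\graph f\cap\graph g\subset\graph S$ because every $p$ in the intersection satisfies $p=\Pi_X(p)+S(\Pi_X(p))$. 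The dimension bound $\dim_{\HM}(\graph f\cap\graph g)\le\dim_{\HM}(\graph S)=\dim X\le m-1$ then follows at once. Note that $f(0)=g(0)=0$ guarantees $0\in\graph f\cap\graph g$, so the intersection is nonempty and the cone can be centered at the origin.

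To pin down $X$ I would use principal vectors. Let $\vartheta_1\le\cdots\le\vartheta_m$ be the principal angles of $F$ and $G$ and pick principal vectors $x_1,\dots,x_m\in F$, $y_1,\dots,y_m\in G$ as in Definition~\ref{def:princ_angles} satisfying \eqref{princ_vectors}. Then $\{x_i\}$ is an orthonormal basis of $F$, $\Pi_G(x_i)=\cos\vartheta_i\,y_i$, and a one-line computation from \eqref{princ_vectors} shows that $\{\Pi_{G^\perp}(x_i)\}_{i=1}^m$ is an orthogonal family with $|\Pi_{G^\perp}(x_i)|=\sin\vartheta_i$; moreover $\sin\vartheta_m=\va(F,G)\ge\chi$ by Lemma~\ref{lem:princ_angles}. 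I then set $X:=\lin(x_1,\dots,x_{m-1})\subset F$, an $(m-1)$-plane (to be read as $\{0\}$ when $m=1$), so $0\le j:=\dim X\le m-1$.

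For the cone inclusion, fix $p,p'\in\graph f\cap\graph g$ and set $v:=p-p'$, $u:=\Pi_F(v)$. Writing a point of $\graph f$ as $\zeta+f(\zeta)$ with $\zeta\in F$, we get $|\Pi_{F^\perp}(v)|\le\sigma|\Pi_F(v)|$, and similarly $|\Pi_{G^\perp}(v)|\le\sigma|\Pi_G(v)|$, so in particular $|v-u|\le\sigma|u|$ and $|v|\le\sqrt{1+\sigma^2}\,|u|$. Since $\Pi_{G^\perp}$ is $1$-Lipschitz and $v-u=-\Pi_{F^\perp}(v)$, one finds $|\Pi_{G^\perp}(u)|\le|\Pi_{G^\perp}(v)|+|\Pi_{F^\perp}(v)|\le\sigma|v|+\sigma|u|\le\sigma(1+\sqrt{1+\sigma^2})|u|$. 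Expanding $u=\sum_{i=1}^m a_i x_i$ and using the orthogonality of the $\Pi_{G^\perp}(x_i)$ gives $a_m^2\chi^2\le a_m^2\sin^2\vartheta_m\le|\Pi_{G^\perp}(u)|^2$, hence $|a_m|\le\gamma|u|$ with $\gamma:=\sigma(1+\sqrt{1+\sigma^2})/\chi<1/3$ by the hypothesis \eqref{lip_intersection_cond}. Because $X\subset F$ one has $\Pi_X(v)=\Pi_X(u)=\sum_{i<m}a_ix_i$ and $\Pi_{X^\perp}(v)=\Pi_{F^\perp}(v)+a_mx_m$, so that $|\Pi_X(v)|=\sqrt{|u|^2-a_m^2}\ge\sqrt{1-\gamma^2}\,|u|$ and $|\Pi_{X^\perp}(v)|\le(\sigma+\gamma)|u|$. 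Consequently $|\Pi_{X^\perp}(v)|\le L\,|\Pi_X(v)|$ with $L:=(\sigma+\gamma)/\sqrt{1-\gamma^2}<1$; for $m=1$ this already forces $u=0$ and thus $v=0$. Taking $p'=0$ shows $\graph f\cap\graph g\subset C_0(L,X)$, which is exactly what the first paragraph requires.

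The conceptual obstacle is choosing $X$ correctly: one is tempted to project onto $F\cap G$, but the hypothesis only controls the \emph{largest} principal angle, so $F$ and $G$ may be separated along a single direction only, and $F\cap G$ need not be the domain of a graph containing the intersection. Discarding exactly one principal direction of $F$ (the one responsible for $\va(F,G)\ge\chi$) is what makes the cone inclusion work, and the remainder is the routine bookkeeping of the constants sketched above.
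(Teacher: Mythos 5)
Your proof is correct, and it follows the same overall template as the paper's — principal vectors, projection onto a subspace $X\subset F$ spanned by some of them, a Lipschitz estimate for differences of intersection points, and a Kirszbraun extension — but the execution differs in two worthwhile ways. First, you always discard exactly one principal direction ($x_m$), so $\dim X=m-1$, whereas the paper discards all $x_k$ with $\sin\vartheta_k\ge\chi$ and thereby obtains a possibly smaller $j$ (hence a sharper bound on $\dim_{\HM}(\graph f\cap\graph g)$, though the lemma only claims $\le m-1$); your choice also lets you avoid the paper's separate treatment of the case $j=0$. Second, your key estimate is direct: the observation that $\{\Pi_{G^\perp}(x_i)\}_{i=1}^m$ is an orthogonal family with norms $\sin\vartheta_i$ isolates the coefficient $a_m$ of $u=\Pi_F(p-p')$ and yields a uniform cone constant $L<1$, whereas the paper argues by contradiction and its Lipschitz constant $C=5/(\chi-8\sigma)$ degenerates as $\sigma\to\chi/8$. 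One presentational remark: membership of the intersection in the cone $C_0(L,X)$ (obtained by taking $p'=0$) does not by itself give injectivity of $\Pi_X$ on the intersection; what is needed — and what your second paragraph in fact proves — is the inequality $\vert\Pi_{X^\perp}(p-p')\vert\le L\,\vert\Pi_X(p-p')\vert$ for \emph{arbitrary} pairs $p,p'$ in $\graph f\cap\graph g$, so the summary in your first paragraph should be phrased in terms of differences rather than the cone containment alone.
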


\begin{proof}
We may assume that $\sigma>0$, otherwise both graphs coincide with 
their $m$-planes of definition whose intersection is  lower-dimensional.
	Let $\{x_1,\dots, x_m\}\subset F$ and 
	$\{y_1,\dots,y_m\}\subset G$ be two sets
	of  
	orthonormal 
 principal vectors satisfying \eqref{princ_vectors}. Then
	by Lemma \ref{lem:princ_angles} we find a 
	minimal $j \in \{0,\dots,m-1\}$ such that $\sin\vartheta_k\ge\chi$ for all 
	$k \ge j+1$.
	Here, $\vartheta_k$ denotes the $k$-th principal angle as defined 
	in 
	Definition \ref{def:princ_angles}. Then, for any 
	$v \in \operatorname{span}\{x_{j+1},\dots,x_m\}$, i.e.,
	$v=\sum\nolimits_{i=j+1}^m a_ix_i$, we obtain
	\begin{align*} 
	\left \vert \Pi_G(v)\right \vert ^2 &=\textstyle\big \vert \sum\nolimits_{k=1}^m \langle v,y_k
	\rangle y_k\big \vert^2 = \sum\nolimits_{k=j+1}^m\vert a_k\vert^2 \cdot \left \vert \langle x_k,y_k\rangle \right \vert^2 =\sum\nolimits_{k=j+1}^m \vert a_k\vert^2\cos^2\vartheta_k \\
	&\leq \textstyle\sum\nolimits_{k=j+1}^m \vert a_k\vert^2\cos^2\vartheta_{j+1}
	= \vert v \vert^2 \cos^2\vartheta_{j+1},
	\end{align*}
where we used the monotonicity of $\vartheta_k$ and \eqref{princ_vectors}.
	Consequently,
	\begin{align} \label{intersect_big_proj}
	\left \vert \Pi_{G^\perp}(v)\right \vert^2 =
	\vert v\vert^2 - \left \vert \Pi_G(v)\right \vert ^2 \geq \vert v\vert^{2} 
	\li 1 - \cos^2\vartheta_{j+1}\ri = \vert v \vert^{2} 
	\sin^2\vartheta_{j+1}\geq \vert v \vert^2 \cdot \chi^2.
	\end{align}
	 First, we investigate the case $j=0$ which can only occur if $F\cap G=\{0\}$ and, therefore, $n\geq 2m$. 
	Since $f(0)=0=g(0)$ the origin is contained in $\graph f\cap
	\graph g$, and we claim that this intersection contains no other point,
	thus proving the lemma in this simple situation. Assume contrariwise that
	there is a point $q\in\graph f\cap\graph g\setminus\{0\}$. Then the 
	the Lipschitz continuity of $f$ implies 
	$ \vert\Pi_{F^\perp}(q)\vert\leq \sigma \vert \Pi_F(q)\vert$,
	so that $\vert q\vert \leq  (1+\sigma)\vert \Pi_F(q)\vert$, from which
	we  infer by means of  \eqref{intersect_big_proj} applied to $v:=\Pi_F(q)$
	\begin{align}\label{intersect_0_case1}
	\vert \Pi_{G^\perp}(q)  \vert &\geq  \vert \Pi_{G^\perp}\li \Pi_F(q)\ri  
	\vert -  \vert \Pi_{G^\perp}\li \Pi_{F^\perp}(q)\ri\vert
	\geq \chi  \vert \Pi_F(q) \vert - \vert \Pi_{F^\perp}(q) \vert\notag\\
	& \geq \li \chi-\sigma \ri  \vert \Pi_F(q) \vert\geq  
	{(\chi-\sigma)}{(1+\sigma)^{-1}}\vert q\vert.
	\end{align}
	The Lipschitz continuity of $g$, on the other hand,  yields
	$ \vert \Pi_{G^\perp}(q) \vert \leq \sigma  \vert \Pi_G(q) \vert 
	\leq \sigma \vert q\vert$, which can be combined with 
	\eqref{intersect_0_case1} to find
	$ {(\chi-\sigma)}{(1+\sigma)^{-1}} \leq \sigma$ contradicting 
	\eqref{lip_intersection_cond}. 
	
	If $j>0$, then we define  $
	Z:=F\cap G=\operatorname{span}\{x_1,\dots,x_i\},$  where we allow $i=0$ if
	$F\cap G=\{0\},$ $ Y:= \operatorname{span}\{x_{j+1},\dots,x_m\}, $  $
	W:=F \cap \operatorname{span}\left\{ Z,Y \right\}^\perp = \operatorname\{x_{i+1},\dots,x_{j}\},$ and $  X:=F\cap Y^\perp =\operatorname{span}\{x_1,\dots,x_j\}
	$.
	We claim that for all $q_1,q_2\in \graph f \cap \graph g$, and 
	$C:= 5 {(\chi-8\sigma)^{-1}}$, we have
	\begin{align} \label{intersect_inequ}
	\vert \Pi_Y(q_2-q_1) \vert \leq C \vert \Pi_{ X}(q_2-q_1)\vert.
	\end{align}
	Assuming the contrary, one finds $q_1,q_2\in \graph f\cap \graph g$ with
	\begin{align}\label{intersect_inequ_contrary}
	\vert \Pi_{ X}(q_2-q_1) \vert < \vert \Pi_Y(q_2-q_1) \vert /C.
	\end{align}
	As in the first case, the Lipschitz continuity of $f$ and $g$ yields
	\begin{align} 
	\left \vert \Pi_{F^\perp}(q_2-q_1)\right \vert &\leq \sigma \left \vert \Pi_F(q_2-q_1)\right \vert \leq \sigma\vert q_2-q_1\vert ,\label{intersect_lip_f}\\
	\left \vert \Pi_{G^\perp}(q_2-q_1)\right \vert &\leq \sigma \left \vert \Pi_G(q_2-q_1)\right \vert \leq \sigma\vert q_2-q_1\vert.\label{intersect_lip_g}
	\end{align} 
	Since $\Id_{\R^n}=\Pi_{ X}+\Pi_Y+\Pi_{F^\perp}$ and 
	$\Pi_F=\Pi_{ X}+\Pi_Y$, \eqref{intersect_lip_g} and 
	\eqref{intersect_inequ_contrary} guarantee
	\begin{align} \label{intersect_upper_bound}
	\vert q_2-q_1\vert &=  \vert \li \Pi_{ X}+\Pi_Y + \Pi_{F^\perp}\ri (q_2-q_1)
	\vert  \leq \li 1+\sigma\ri \li  \vert \Pi_{ X}(q_2-q_1) \vert+ \vert \Pi_{Y}(q_2-q_1) \vert\ri\notag\\
	&<\li 1+\sigma\ri \li 1 + 1/ C\ri  \vert \Pi_{Y}(q_2-q_1) \vert.
	\end{align}
	Setting $p_i:= \Pi_{ X^\perp}(q_i)$ for $i=1,2$, we compute $
	\Pi_{G^\perp}(q_2-q_1)= 
	\Pi_{G^\perp} \li \Pi_{ X}(q_2-q_1)\ri + \Pi_{G^\perp}(p_2-p_1).$
	Consequently, \eqref{intersect_inequ_contrary}  and \eqref{intersect_lip_g} imply 
	\begin{align}  \label{intersect_proj_p_1}
	\vert \Pi_{G^\perp}(p_2-p_1) \vert \leq  \vert \Pi_{G^\perp}(q_2-q_1) \vert +  \vert \Pi_{G^\perp} ( \Pi_{ X}(q_2-q_1) )  \vert <  ( \sigma + 1/ C ) \vert q_2-q_1\vert.
	\end{align}
	On the other hand,  since $\Pi_{ X^\perp}=\Pi_Y+\Pi_{F^\perp}$, \eqref{intersect_big_proj}, \eqref{intersect_lip_f}, and \eqref{intersect_upper_bound} guarantee
	\begin{align} \label{intersect_proj_p_2}
	\left \vert \Pi_{G^\perp} (p_2-p_1)\right \vert &\geq \left \vert \Pi_{G^\perp}\li\Pi_Y(q_2-q_1)\ri \right \vert - \left \vert \Pi_{G^\perp}\li \Pi_{F^\perp}(q_2-q_1)\ri\right \vert  \\
	&\hspace{-3.5cm}\geq \chi \left \vert \Pi_Y(q_2-q_1)\right\vert - \left \vert \Pi_{F^\perp}(q_2-q_1)\right \vert >\li  \chi { \li 1+\sigma \ri^{-1} \li 1+ 1/ C\ri^{-1}}-\sigma\ri \vert q_2-q_1\vert.\notag
	\end{align} 
	Combining \eqref{intersect_proj_p_1} and \eqref{intersect_proj_p_2} yields $
	\chi { \li 1+\sigma \ri^{-1}\li 1+ 1/ C\ri^{-1}}-\sigma < \sigma + 1/ C,$
	contradicting \eqref{lip_intersection_cond}. Hence, \eqref{intersect_inequ} holds true. Therefore, for $q_1,q_2\in\graph f \cap \graph g$ with $\Pi_{ X}(q_1)=\Pi_{ X}(q_2)$, one finds $ \vert \Pi_Y(q_2-q_1) \vert \leq  C\vert \Pi_{ X}(q_2-q_1) 
	\vert =0$. Due to \eqref{intersect_lip_f} and $\Pi_F=\Pi_{ X}+\Pi_Y$, 
	we obtain $ \vert \Pi_{F^\perp}(q_2-q_1) \vert \leq \sigma  \vert 
	\Pi_F(q_2-q_1) \vert =0$. 
	Consequently, $q_1=q_2$, i.e., for all $ x\in  X$, there exists
	at most one $q_{ x}\in\graph f \cap \graph g$ such that 
	$\Pi_{ X}(q_{ x})= x.$
	To define the map $S$ set $M:=\{ x \in X \ \colon \ \graph f \cap \graph g \cap (  x + X^\perp ) \neq \emptyset\}$. Then, 
	\begin{align} \label{intersect_set_M}
	\graph f \cap \graph g \cap (  x+ X^\perp ) = \left \{q_{ x}\right\} \Foa  x \in M,
	\end{align}
	and we obtain the well-defined map $
	S_M\colon M\to X^\perp, \  x \mapsto q_{ x}- x.$
	By \eqref{intersect_set_M} one finds
	\begin{align}\label{intersect_graph_inclusion}
	\graph f \cap \graph g &=\textstyle \bigcup\nolimits_{ x \in  X}\big( \graph f \cap \graph g 
	\cap (  x + X^\perp ) \big) \\
	&\hspace{-2cm}= \textstyle
	 \bigcup\nolimits_{ x\in M}\big( \graph f \cap \graph g \cap (  x +
	X^\perp ) \big)= \bigcup\nolimits_{ x\in M} q_{ x} =\graph S_M.\notag
	\end{align}
	Moreover, for $ \xi_1, \xi_2\in M$, we can use the fact that
	$q_{ \xi_i}- \xi_i\in  X^\perp$ and $\xi_i\in  X$ for 
	$i=1,2,$ to write
	$
	\vert S_M(\xi_1)-S_M(\xi_2) \vert = \vert q_{\xi_1}-\xi_1 - 
	(q_{\xi_2}-\xi_2) \vert , $ which equals $
	\vert \Pi_{ X}(q_{\xi_1}-\xi_1) + \Pi_{ X^\perp}
	(q_{\xi_1}-\xi_1) -\Pi_{ X}(q_{\xi_2}-\xi_2) - 
	\Pi_{ X^\perp}(q_{\xi_2}-\xi_2)  \vert	= 
	\vert \Pi_{X^\perp}(q_{\xi_1}-q_{\xi_2}) \vert.
	$
	This expression can be bounded from above by $
	\vert \Pi_{Y}(q_{\xi_1}-q_{\xi_2}) \vert +  
	\vert \Pi_{F^\perp}(q_{\xi_1}-q_{\xi_2} )\vert$ since 
	$\Pi_{ X^\perp }=
	\Pi_Y+\Pi_{F^\perp}$.
	With \eqref{intersect_lip_f} and $\Pi_{F}=\Pi_{ X}+\Pi_{Y}$ we can 
	compute $\vert \Pi_{F^\perp}(q_{\xi_1}-q_{\xi_2}) \vert \leq 
	\sigma ( \vert \Pi_{ X}(q_{\xi_1}-q_{\xi_2}) \vert + 
	\vert \Pi_Y(q_{\xi_1}-q_{\xi_2}) \vert )$. Consequently, 
	$	 
	\vert S_M(\xi_1)-S_M(\xi_2) \vert $ is bounded from above by $ \li \li 1 + \sigma \ri C+\sigma \ri \vert \Pi_{ X}(q_{\xi_1}-q_{\xi_2}) \vert,
	$
	where we additionally applied \eqref{intersect_inequ}. Finally, using 
	$q_{\xi_i}-\xi_i\in  X^\perp$  to derive the
	identity $
	\vert \Pi_{ X}(q_{\xi_1}-q_{\xi_2}) \vert = 
	\vert \xi_1-\xi_2 + \Pi_{ X}(q_{\xi_1}-\xi_1) -\Pi_{X}(q_{\xi_2}-\xi_2) 
	\vert =\vert \xi_1-\xi_2\vert,   $
	we obtain
	\begin{align}\label{intersect_S_M_lip}
	\left \vert S_M(\xi_1)-S_M(\xi_2)\right \vert \leq \li \li 1 + \sigma \ri C+\sigma \ri \vert \xi_1-\xi_2\vert.
	\end{align}
	Hence, $S_M$ is Lipschitz continuous with 
	$\lip S_M \leq \li 1 + \sigma \ri C+\sigma$. 
	Due to Kirszbraun's theorem \cite[2.10.43]{federer_1969} 
	we can find a Lipschitz continuous extension $S \colon 
	X \to  X^\perp$ with $\lip S = \lip S_M$. Hence, we have
	\begin{align}
	\graph f \cap \graph g =\graph S_M\subset \graph S.
	\end{align}
	In particular, by virtue of \cite[2.4.2 Thm. 2 (ii)]{evans-gariepy_2015} 
	and $\dim  X=j$, we finally conclude $
	\dim_{\HM}\li \graph f \cap \graph g \ri \leq \dim_{\HM}(\graph S)=j.$
\end{proof}

  \section*{Acknowledgments}

 The 
 first author was partially supported by NCN Grant no.\
 2013/10/M/ST1/00416 \emph{Geometric curvature energies for subsets of the
Euclidean space.} The second author's work is partially funded by DFG Grant no.
Mo 966/7-1 \emph{Geometric curvature functionals: energy landscape and discrete
methods} and by the
Excellence Initiative of the German federal and state governments.

\bibliography{../bib-files/refs-bookproj}{}

\begin{thebibliography}{10}

\bibitem{abrams-etal_2003}
{\sc Abrams, A., Cantarella, J., Fu, J. H.~G., Ghomi, M., and Howard, R.}
\newblock Circles minimize most knot energies.
\newblock {\em Topology 42}, 2 (2003), 381--394.

\bibitem{agol-etal_2016}
{\sc Agol, I., Marques, F.~C., and Neves, A.}
\newblock Min-max theory and the energy of links.
\newblock {\em J. Amer. Math. Soc. 29}, 2 (2016), 561--578.

\bibitem{allard_1972}
{\sc Allard, W.~K.}
\newblock On the first variation of a varifold.
\newblock {\em Ann. of Math. (2) 95\/} (1972), 417--491.

\bibitem{auckly-sadun_1997}
{\sc Auckly, D., and Sadun, L.}
\newblock A family of {M}\"obius invariant {$2$}-knot energies.
\newblock In {\em Geometric topology ({A}thens, {GA}, 1993)}, vol.~2 of {\em
  AMS/IP Stud. Adv. Math.} Amer. Math. Soc., Providence, RI, 1997,
  pp.~235--258.

\bibitem{blatt_2012a}
{\sc Blatt, S.}
\newblock Boundedness and regularizing effects of {O}'{H}ara's knot energies.
\newblock {\em J. Knot Theory Ramifications 21}, 1 (2012), 1250010, 9.

\bibitem{blatt_2012b}
{\sc Blatt, S.}
\newblock The gradient flow of the {M}\"obius energy near local minimizers.
\newblock {\em Calc. Var. Partial Differential Equations 43}, 3-4 (2012),
  403--439.

\bibitem{blatt_2013b}
{\sc Blatt, S.}
\newblock The energy spaces of the tangent-point energies.
\newblock {\em J. Topol. Anal. 5}, 261 (2013), 261--270.

\bibitem{blatt_2020a}
{\sc Blatt, S.}
\newblock The gradient flow of the {M}\"{o}bius energy:
  {$\varepsilon$}-regularity and consequences.
\newblock {\em Anal. PDE 13}, 3 (2020), 901--941.

\bibitem{blatt-etal_2019a}
{\sc Blatt, S., Ishizeki, A., and Nagasawa, T.}
\newblock A {M}öbius invariant discretization of {O}'{H}ara's {M}öbius
  energy.
\newblock {\em arXiv e-prints\/} (Sep 2018),
  \href{https://arxiv.org/abs/1809.07984}{arXiv:1809.07984 }.

\bibitem{blatt-etal_2019b}
{\sc Blatt, S., Ishizeki, A., and Nagasawa, T.}
\newblock A {M}öbius invariant discretization and decomposition of the
  {M}öbius energy.
\newblock {\em arXiv e-prints\/} (Apr 2019),
  \href{https://arxiv.org/abs/1904.06818}{arXiv:1904.06818 }.

\bibitem{blatt-kolasinski_2012}
{\sc Blatt, S., and Kolasi{\'n}ski, S.}
\newblock Sharp boundedness and regularizing effects of the integral {M}enger
  curvature for submanifolds.
\newblock {\em Adv. Math. 230}, 3 (2012), 839--852.

\bibitem{blatt-etal_2016}
{\sc Blatt, S., Reiter, P., and Schikorra, A.}
\newblock Harmonic analysis meets critical knots. {C}ritical points of the
  {M}\"obius energy are smooth.
\newblock {\em Trans. Amer. Math. Soc. 368}, 9 (2016), 6391--6438.

\bibitem{blatt-vorderobermeier_2019}
{\sc Blatt, S., and Vorderobermeier, N.}
\newblock On the analyticity of critical points of the {M}\"{o}bius energy.
\newblock {\em Calc. Var. Partial Differential Equations 58}, 1 (2019), Art.
  16, 28.

\bibitem{david-etal_2001}
{\sc David, G., Kenig, C., and Toro, T.}
\newblock Asymptotically optimally doubling measures and {R}eifenberg flat sets
  with vanishing constant.
\newblock {\em Comm. Pure Appl. Math. 54}, 4 (2001), 385--449.

\bibitem{valdinoci-etal_2012}
{\sc Di~Nezza, E., Palatucci, G., and Valdinoci, E.}
\newblock Hitchhiker's guide to the fractional {S}obolev spaces.
\newblock {\em Bull. Sci. Math. 136}, 5 (2012), 521--573.

\bibitem{dunning_2011}
{\sc Dunning, R.~P.}
\newblock Optimally immersed planar curves under {M}\"{o}bius energy.
\newblock {\em J. Knot Theory Ramifications 20}, 10 (2011), 1381--1390.

\bibitem{evans-gariepy_2015}
{\sc Evans, L.~C., and Gariepy, R.~F.}
\newblock {\em Measure theory and fine properties of functions}, revised~ed.
\newblock Textbooks in Mathematics. CRC Press, Boca Raton, FL, 2015.

\bibitem{federer_1969}
{\sc Federer, H.}
\newblock {\em Geometric measure theory}.
\newblock Die Grundlehren der mathematischen Wissenschaften, Band 153.
  Springer-Verlag New York Inc., New York, 1969.

\bibitem{freedman-etal_1994}
{\sc Freedman, M.~H., He, Z.-X., and Wang, Z.}
\newblock M\"obius energy of knots and unknots.
\newblock {\em Ann. of Math. (2) 139}, 1 (1994), 1--50.

\bibitem{golub-loan_1996}
{\sc Golub, G.~H., and Van~Loan, C.~F.}
\newblock {\em Matrix computations}, third~ed.
\newblock Johns Hopkins Studies in the Mathematical Sciences. Johns Hopkins
  University Press, Baltimore, MD, 2013.

\bibitem{guillemin-pollack_1974}
{\sc Guillemin, V., and Pollack, A.}
\newblock {\em Differential topology}.
\newblock Prentice-Hall, Inc., Englewood Cliffs, N.J., 1974.

\bibitem{he_2000}
{\sc He, Z.-X.}
\newblock The {E}uler-{L}agrange equation and heat flow for the {M}\"obius
  energy.
\newblock {\em Comm. Pure Appl. Math. 53}, 4 (2000), 399--431.

\bibitem{hong-wang_2010}
{\sc Hong, G., and Wang, L.}
\newblock A new proof of {R}eifenberg's topological disc theorem.
\newblock {\em Pacific J. Math. 246}, 2 (2010), 325--332.

\bibitem{ishizeki-nagasawa_2014}
{\sc Ishizeki, A., and Nagasawa, T.}
\newblock A decomposition theorem of the {M}\"obius energy {I}: {D}ecomposition
  and {M}\"obius invariance.
\newblock {\em Kodai Math. J. 37}, 3 (2014), 737--754.

\bibitem{ishizeki-nagasawa_2015}
{\sc Ishizeki, A., and Nagasawa, T.}
\newblock A decomposition theorem of the {M}\"obius energy {II}: variational
  formulae and estimates.
\newblock {\em Math. Ann. 363}, 1-2 (2015), 617--635.

\bibitem{ishizeki-nagasawa_2016}
{\sc Ishizeki, A., and Nagasawa, T.}
\newblock The invariance of decomposed {M}\"obius energies under inversions
  with center on curves.
\newblock {\em J. Knot Theory Ramifications 25}, 2 (2016), 1650009, 22.

\bibitem{kaefer_2019}
{\sc K\"{a}fer, B.}
\newblock A {R}eifenberg type characterization for {$m$}-dimensional
  {$C^1$}-submanifolds of {${\bf R}^n$}.
\newblock {\em Ann. Acad. Sci. Fenn. Math. 44}, 2 (2019), 693--721.

\bibitem{kaefer_2020}
{\sc K\"{a}fer, B.}
\newblock {\em Scale-invariant geometric curvature functionals, and
  characterization of {L}ipschitz- and {$C^1$}-submanifolds}.
\newblock PhD thesis, RWTH Aachen University, 2020.
\newblock under review.

\bibitem{kolasinski_2011}
{\sc Kolasi{\'n}ski, S.}
\newblock {\em Integral Menger curvature for sets of arbitrary dimension and
  codimension}.
\newblock PhD thesis, Institute of Mathematics, University of Warsaw, 2011.
\newblock online available from
  \href{https://arxiv.org/abs/1011.2008}{https://arxiv.org/abs/1011.2008}.

\bibitem{kolasinski_2012}
{\sc Kolasi{\'n}ski, S.}
\newblock Geometric {S}obolev-like embedding using high-dimensional
  {M}enger-like curvature.
\newblock {\em Trans. Amer. Math. Soc. 367}, 2 (2015), 775--811.

\bibitem{kolasinski-etal_2013a}
{\sc Kolasi{\'n}ski, S., Strzelecki, P., and von~der Mosel, H.}
\newblock Characterizing {$W^{2,p}$} submanifolds by {$p$}-integrability of
  global curvatures.
\newblock {\em Geom. Funct. Anal. 23}, 3 (2013), 937--984.

\bibitem{kolasinski-szumanska_2013}
{\sc Kolasi{\'n}ski, S., and Szuma{\'n}ska, M.}
\newblock Minimal {H}\"older regularity implying finiteness of integral
  {M}enger curvature.
\newblock {\em Manuscripta Math. 141}, 1-2 (2013), 125--147.

\bibitem{kolasinski-etal_2018}
{\sc Kolasi\'{n}ski, S.~a., Strzelecki, P., and von~der Mosel, H.}
\newblock Compactness and isotopy finiteness for submanifolds with uniformly
  bounded geometric curvature energies.
\newblock {\em Comm. Anal. Geom. 26}, 6 (2018), 1251--1316.

\bibitem{kube_2018}
{\sc Kube, D.}
\newblock {\"U}ber die {M}{\"o}biusenergie planarer immergierter {K}urven.
\newblock Master's thesis, RWTH Aachen University, 2018.

\bibitem{kusner-sullivan_1997}
{\sc Kusner, R.~B., and Sullivan, J.~M.}
\newblock M\"obius energies for knots and links, surfaces and submanifolds.
\newblock In {\em Geometric topology ({A}thens, {GA}, 1993)}, vol.~2 of {\em
  AMS/IP Stud. Adv. Math.} Amer. Math. Soc., Providence, RI, 1997,
  pp.~570--604.

\bibitem{naumann-simader_2007}
{\sc Naumann, J., and Simader, C.~G.}
\newblock Measure and integration on {L}ipschitz-manifolds.
\newblock Report 07-15, Humboldt University Berlin, 2007.
\newblock online available from
  \href{https://edoc.hu-berlin.de/handle/18452/3425}{https://edoc.hu-berlin.de/handle/18452/3425}.

\bibitem{ohara_1991a}
{\sc O'Hara, J.}
\newblock Energy of a knot.
\newblock {\em Topology 30}, 2 (1991), 241--247.

\bibitem{ohara_2003}
{\sc O'Hara, J.}
\newblock {\em Energy of knots and conformal geometry}, vol.~33 of {\em Series
  on Knots and Everything}.
\newblock World Scientific Publishing Co. Inc., River Edge, NJ, 2003.

\bibitem{ohara_2020}
{\sc {O'Hara}, J.}
\newblock {Self-repulsiveness of energies for closed submanifolds}.
\newblock {\em arXiv e-prints\/} (Apr. 2020),
  \href{https://arxiv.org/abs/2004.02351}{arXiv:2004.02351 }.

\bibitem{ohara-solanes_2018}
{\sc O'Hara, J., and Solanes, G.}
\newblock Regularized {R}iesz energies of submanifolds.
\newblock {\em Math. Nachr. 291}, 8-9 (2018), 1356--1373.

\bibitem{rawdon-worthington_2010}
{\sc Rawdon, E.~J., and Worthington, J.}
\newblock Error analysis of the minimum distance energy of a polygonal knot and
  the {M}\"obius energy of an approximating curve.
\newblock {\em J. Knot Theory Ramifications 19}, 8 (2010), 975--1000.

\bibitem{reifenberg_1960}
{\sc Reifenberg, E.~R.}
\newblock Solution of the {P}lateau {P}roblem for {$m$}-dimensional surfaces of
  varying topological type.
\newblock {\em Acta Math. 104\/} (1960), 1--92.

\bibitem{reiter_2012}
{\sc Reiter, P.}
\newblock Repulsive knot energies and pseudodifferential calculus for
  {O}'{H}ara's knot energy family {$E^{(\alpha)},\alpha\in[2,3)$}.
\newblock {\em Math. Nachr. 285}, 7 (2012), 889--913.

\bibitem{reiter-schumacher_2020}
{\sc Reiter, P., and Schumacher, H.}
\newblock Sobolev gradients for the {M}öbius energy.
\newblock {\em arXiv e-prints\/} (May 2020),
  \href{https://arxiv.org/abs/2005.07448}{arXiv:2005.07448 }.

\bibitem{scholtes_2014b}
{\sc Scholtes, S.}
\newblock Discrete {M}\"obius energy.
\newblock {\em J. Knot Theory Ramifications 23}, 9 (2014), 1450045, 16.

\bibitem{simon_1996a}
{\sc Simon, L.}
\newblock Reifenberg's topological disk theorem.
\newblock Report AB Analysis, T{\"u}bingen University, 1996.
\newblock online available from
  \href{https://www.math.uni-tuebingen.de/ab/analysis/pub/leon/reifenberg/reifenberg.html}{https://www.math.uni-tuebingen.de/ab/analysis/pub/leon/reifenberg/reifenberg.html}.

\bibitem{strzelecki-vdm_2011a}
{\sc Strzelecki, P., and von~der Mosel, H.}
\newblock Integral {M}enger curvature for surfaces.
\newblock {\em Adv. Math. 226}, 3 (2011), 2233--2304.

\bibitem{strzelecki-vdm_2013b}
{\sc Strzelecki, P., and von~der Mosel, H.}
\newblock Tangent-point repulsive potentials for a class of non-smooth
  {$m$}-dimensional sets in {$\Bbb{R}^n$}. {P}art {I}: {S}moothing and
  self-avoidance effects.
\newblock {\em J. Geom. Anal. 23}, 3 (2013), 1085--1139.

\end{thebibliography}
         \bibliographystyle{acm}

\end{document}